\numberwithin{equation}{section}
\definecolor{lavender}{rgb}{0.5,0,1.0}
\definecolor{NormalGreen}{RGB}{0,220,0}
\definecolor{ChillBlue}{RGB}{0,182,255}
\definecolor{MyOrange}{RGB}{255,150,0}
\definecolor{Maroon}{RGB}{150,0,0}
\definecolor{Pink}{RGB}{255,0,255}
\definecolor{MyPurple}{RGB}{180,0,255} 
\newtheorem{theorem}{Theorem}[section]
\newtheorem*{theorem*}{Theorem}
\newtheorem{lemma}[theorem]{Lemma}
\newtheorem{corollary}[theorem]{Corollary}
\newtheorem{proposition}[theorem]{Proposition}
\newtheorem{conjecture}[theorem]{Conjecture}
\newtheorem{question}[theorem]{Question}
\theoremstyle{definition}
\newtheorem{definition}[theorem]{Definition}
\newtheorem*{define*}{Definition}
\newtheorem{example}[theorem]{Example}
\newtheorem*{notation*}{Notation}
\newtheorem{remark}[theorem]{Remark}
\newtheorem*{remark*}{Remark}
\newcommand{\AD}{\mathrm{AD}}
\newcommand{\MAD}{\mathrm{MAD}}
\newcommand{\bip}{\times}
\newcommand{\PP}{\mathbf{P}}
\newcommand{\LL}{\mathbf{A}}
\newcommand{\RR}{\mathbf{B}}
\newcommand{\zz}{\mathbf{z}}
\newcommand{\wo}{w_\circ}
\newcommand{\Des}{\mathrm{Des}}
\newcommand{\vv}{\nu}
\newcommand{\R}{\zeta}
\newcommand{\QQ}{\mathsf{Q}}
\newcommand{\HHH}{\mathrm{H}}
\newcommand{\MM}{\mathtt{M}}
\newcommand{\HH}{\mathtt{H}}
\newcommand{\DD}{\mathtt{D}}
\newcommand{\UU}{\mathtt{U}}
\newcommand{\Heap}{\mathrm{Heap}}
\newcommand{\Weak}{\mathrm{Weak}}
\newcommand{\Camb}{\mathrm{Camb}}
\newcommand{\sort}{\mathsf{sort}}
\newcommand{\rr}{\mathsf{r}}
\newcommand{\s}{\mathsf{s}}
\newcommand{\uu}{\mathsf{u}}
\newcommand{\sfc}{\mathsf{c}}
\newcommand{\rank}{\mathrm{rank}}
\newcommand{\D}{\mathcal{D}}
\newcommand{\U}{\mathcal{U}}
\newcommand{\T}{\mathcal{T}}
\newcommand{\W}{\mathcal{W}}
\renewcommand{\SS}{\mathcal{S}}
\newcommand{\F}{\mathcal{F}}
\newcommand{\C}{\mathcal{C}}
\newcommand{\X}{\mathcal{X}}
\newcommand{\Y}{\mathcal{Y}}
\newcommand{\Hom}{\mathrm{Hom}}
\newcommand{\Ext}{\mathrm{Ext}}
\newcommand{\End}{\mathrm{End}}
\newcommand{\image}{\mathrm{Im}}
\renewcommand{\dim}{\mathrm{dim}}
\renewcommand{\ker}{\mathrm{ker}}
\newcommand{\coker}{\mathrm{coker}}
\newcommand{\undim}{\underline{\dim}}
\DeclareMathOperator{\mods}{\mathsf{mod}}
\DeclareMathOperator{\Db}{\mathcal{D}^b}
\DeclareMathOperator{\wide}{\mathsf{wide}}
\DeclareMathOperator{\tors}{\mathsf{tors}}
\DeclareMathOperator{\torsf}{\mathsf{torsf}}
\DeclareMathOperator{\sbrick}{\mathsf{sbrick}}
\DeclareMathOperator{\brick}{\mathsf{brick}}
\DeclareMathOperator{\smc}{2\textnormal{-}\mathsf{smc}}
\DeclareMathOperator{\sbp}{\mathrm{sbp}}
\DeclareMathOperator{\add}{\mathsf{add}}
\DeclareMathOperator{\Filt}{\mathsf{Filt}}
\DeclareMathOperator{\Gen}{\mathsf{Gen}}
\DeclareMathOperator{\Cogen}{\mathsf{Cogen}}
\newcommand{\tor}{\mathsf{T}}
\renewcommand{\top}{\mathrm{top}}
\newcommand{\lperp}[1]{\prescript{\perp}{}{#1}}
\newcommand{\lperpNum}[2]{\prescript{\perp_{#1}}{}{#2}}
\newcommand{\rperp}[1]{#1^\perp}
\newcommand{\inv}{\mathrm{inv}}
\newcommand{\join}{\vee}
\renewcommand{\Join}{\bigvee}
\newcommand{\meet}{\wedge}
\newcommand{\Meet}{\bigwedge}
\newcommand{\covered}{{\,\,<\!\!\!\!\cdot\,\,\,}}
\newcommand{\jirr}{\mathrm{JIrr}}
\newcommand{\mirr}{\mathrm{MIrr}}
\newcommand{\pidown}{\pi_\downarrow}
\newcommand{\piup}{\pi^\uparrow}
\newcommand{\pop}{\mathrm{pop}}
\newcommand{\popdown}{\mathrm{pop}^{\downarrow}}
\newcommand{\popup}{\mathrm{pop}^{\uparrow}}
\newcommand{\Orb}{\mathrm{Orb}}
\newcommand{\spine}{\mathrm{spine}}
\newcommand{\dfn}[1]{\textcolor{blue}{\emph{#1}}}
\title{Pop-Stack Operators for Torsion Classes and Cambrian Lattices}
\author{Emily Barnard}
\address{Department of Mathematical Sciences, DePaul University, Chicago, IL 60604, USA}\email{e.barnard@depaul.edu}
\author{Colin Defant}
\address{Department of Mathematics, Harvard University, Cambridge, MA 02139, USA}\email{colindefant@gmail.com}
\author{Eric J. Hanson}
\address{Department of Mathematics, North Carolina State University, Raleigh, NC 27695, USA}
\email{ejhanso3@ncsu.edu}
\begin{document}

\maketitle

\begin{abstract}
The \emph{pop-stack operator} of a finite lattice $L$ is the map $\mathrm{pop}^{\downarrow}_L\colon L\to L$ that sends each element $x\in L$ to the meet of $\{x\}\cup\text{cov}_L(x)$, where $\text{cov}_L(x)$ is the set of elements covered by $x$ in $L$. We study several properties of the pop-stack operator of $\tors\Lambda$, the lattice of torsion classes of a $\tau$-tilting finite algebra $\Lambda$ over a field $K$. We describe the pop-stack operator in terms of certain mutations of 2-term simple-minded collections. This allows us to describe preimages of a given torsion class under the pop-stack operator. 

We then specialize our attention to Cambrian lattices of a finite irreducible Coxeter group $W$. Using tools from representation theory, we provide simple Coxeter-theoretic and lattice-theoretic descriptions of the image of the pop-stack operator of a Cambrian lattice (which can be stated without representation theory). When specialized to a bipartite Cambrian lattice of type A, this result settles a conjecture of Choi and Sun. We also settle a related enumerative conjecture of Defant and Williams. When $L$ is an arbitrary lattice quotient of the weak order on $W$, we prove that the maximum size of a forward orbit under the pop-stack operator of $L$ is at most the Coxeter number of $W$; when $L$ is a Cambrian lattice, we provide an explicit construction to show that this maximum forward orbit size is actually equal to the Coxeter number. 
\end{abstract}

\tableofcontents

\section{Introduction}\label{sec:intro}

\subsection{Pop-stack} 
Let $L$ be a finite lattice with meet operation $\wedge$ and join operation $\vee$. The \dfn{pop-stack operator} $\popdown_L\colon L\to L$ and the \dfn{dual pop-stack operator} $\popup_L\colon L\to L$ are defined by
\[\popdown_L(x) = x \meet \left(\Meet \{y \mid y \covered x\}\right)\quad\text{and}\quad \popup_L(x) = x \join \left(\Join \{y \mid x \covered y\}\right),\] where we write $u\lessdot v$ to mean that $u$ is covered by $v$ in $L$. These operators have appeared in various contexts; they serve as both useful tools \cite{AP, BaH, enomoto, ES, facial_torsion, Muhle2019,ReadingShard, sakai} and objects of interest in their own right \cite{ChoiSun, ClaessonPop, ClaessonPop2, DefantPopCoxeter, DefantMeeting, DefantWilliamsSemidistrim, Hong2022, Pudwell, Ungar}. When the lattice $L$ is understood, we will omit subscripts and simply denote these operators by $\popdown$ and $\popup$. 

In \cite{ClaessonPop, DefantPopCoxeter, DefantMeeting, Hong2022, Pudwell, Ungar}, the pop-stack operator has been considered as a dynamical system. Given a map $f\colon L\to L$ and an element $x\in L$, the \dfn{forward orbit} of $x$ under $f$ is the set 
\[
\Orb_f(x)=\{x,f(x),f^2(x),f^3(x),\ldots\},
\]
where $f^t$ is the map obtained by composing $f$ with itself $t$ times. To ease notation, let us write \[\mathcal O_L(x)=\Orb_{\popdown_L}(x)\] for the forward orbit of $x$ under $\popdown_L$. If $t$ is sufficiently large, then $(\popdown_L)^t(x)$ is equal to the minimal element $\hat 0$ of $L$ (which is a fixed point of $\popdown_L$). Thus, $\lvert\mathcal O_L(x)\rvert-1$ is equal to the number of iterations of $\popdown_L$ needed to send $x$ to $\hat 0$. Given an interesting lattice $L$, one of the primary problems one can consider about its pop-stack operator is that of computing \[\max_{x\in L}|\mathcal O_L(x)|.\] For a fixed $t$, one can also consider the \dfn{$t$-pop-stack sortable} elements of $L$, which are the elements $x\in L$ such that $(\popdown_L)^t(x)=\hat 0$. 

Defant and Williams \cite{DefantWilliamsSemidistrim} also found that it is fruitful to study the image of the pop-stack operator when $L$ is a semidistrim lattice; this is because the image of $\popdown$ has numerous interesting properties, some of which relate to a certain bijective \emph{rowmotion operator} $\mathrm{row}\colon L\to L$. For example, $\lvert\popdown(L)\rvert$ and $\lvert\popup(L)\rvert$ are both equal to the number of elements $x\in L$ such that $\mathrm{row}(x)\leq x$. The images of $\popdown$ and $\popup$ are also naturally in bijection with the set of facets of a certain simplicial complex called the \emph{canonical join complex} of~$L$. 

\subsection{Lattices of torsion classes}\label{subsec:intro_torsion}
In this article, we take a representation-theoretic perspective and consider a finite-dimensional basic algebra $\Lambda$ over a field $K$. The set of torsion classes (see \cref{subsec:torsion} for the definition) of finitely-generated (right) $\Lambda$-modules forms a complete lattice \cite{IRTT} that we denote by $\tors\Lambda$. 
Examples of lattices arising in this way include the weak order and Cambrian lattices associated to any finite crystallographic Coxeter group \cite{AHIKM,mizuno,IT}. 
This paper continues a rich tradition of using lattices of torsion classes as a tool for proving new results in both representation theory and in algebraic combinatorics; see, e.g., \cite{DIRRT,enomoto,IRRT,RST,TW1,TW2} and many others.

We focus our attention on the case when $\Lambda$ is $\tau$-tilting finite (in the sense of \cite{DIJ}); this is equivalent to assuming $\tors\Lambda$ is finite. 
Our goal is to study the image and the dynamical properties of the pop-stack operator of $\tors\Lambda$.
While $\popdown$ has already appeared (sometimes under different names) in the theory of lattices of torsion classes \cite{AP, BaH, enomoto, ES, facial_torsion, sakai}, it has primarily been used as a tool rather than a dynamical operator worthy of its own investigation. 
Let us remark that the article \cite{BTZ} initiated the study of dynamical combinatorics of torsion classes by considering rowmotion.

In \cref{sec:mutation}, we consider certain pairs of sets of modules called \emph{2-term simple-minded collections} and \emph{semibrick pairs}. 
2-term simple-minded collections were first introduced in the special case when $\Lambda$ is a \emph{symmetric algebra} in \cite{RickardEquivalences}; they are special cases of the more general \emph{simple-minded collections} introduced in \cite{al-nofayee} (under the name \emph{spherical collections}). Semibrick pairs are a generalization of 2-term simple-minded collections that were introduced in \cite{HansonIgusa_picture} as a tool for studying a generalization of the \emph{picture group} of \cite{ITW}.
As we recall in \cref{subsec:smc}, there is a bijection between $\tors\Lambda$ (with $\Lambda$ $\tau$-tilting finite) and the set $\smc\Lambda$ of 2-term simple-minded collections \cite{asai}.
By the results of \cite{BCZ}, this bijection encodes information about cover relations and canonical join representations in $\tors\Lambda$.

The set $\smc\Lambda$ also comes equipped with a set of \emph{mutation operators} that categorify the mutation theory of cluster algebras.
See \cite[Section~3.7]{BY} and \cite[Section~7.2]{KY} (special cases also appeared earlier in \cite[Section~8.1]{KontsevichSoibelmanStability} and \cite[Section~3]{KingQiuExchangeGraphs}). These mutation formulas can also be applied to many of the more general semibrick pairs; see \cite{BaH_preproj,HI_pairwise}. 
Moreover, under the bijection between $\smc\Lambda$ and $\tors\Lambda$, the mutation operators give a representation-theoretic formula for how the corresponding 2-term simple-minded collection changes when one traverses a cover relation in $\tors\Lambda$. 

For each semibrick pair $(\X,\Y)$, there is classically one mutation operator for each module in $\X \cup \Y$. In the present paper, we more generally define a mutation operator for each nonempty subset $\X' \subseteq \X$ or $\Y' \subseteq \Y$. 
Our first main result (\Cref{thm:pop_mutation}) states that, under the bijection between $\tors\Lambda$ and $\smc\Lambda$, applying the pop-stack operator and its dual to a torsion class corresponds to performing certain mutations on the associated 2-term simple-minded collections. 

In \Cref{subsec:preimages}, we characterize the preimages of a prescribed torsion class under $\popdown_{\tors\Lambda}$ and $\popup_{\tors\Lambda}$.
As corollaries, we obtain descriptions of the $1$-pop-stack sortable and the $2$-pop-stack sortable elements of $\tors\Lambda$ (\cref{cor:tors_one_poppable,cor:tors_2_poppable}). 

\subsection{Cambrian lattices}
Let $W$ be a finite irreducible Coxeter group, and let $\Weak(W)$ denote the (right) weak order on $W$. Given a Coxeter element $c$ of $W$, one can construct the \emph{$c$-Cambrian lattice} $\Camb_c$, which is the sublattice of $\Weak(W)$ consisting of Reading's \emph{$c$-sortable elements} \cite{ReadingCambrian,reading2007clusters}. (The $c$-Cambrian lattice is also a lattice quotient of $\Weak(W)$.) When $W$ is crystallographic, we can realize $\Camb_c$ as the lattice of torsion classes of the \emph{tensor algebra} $KQ_c$ associated to the weighted Dynkin quiver associated to $c$ \cite{IT}.

Two very special Cambrian lattices are the Tamari lattice and the type-B Tamari lattice; the images of the pop-stack operators on these lattices were characterized and enumerated in \cite{Hong2022} and \cite{ChoiSun}, respectively. On the other hand, analogous results for the Cambrian lattice $\Camb_{c^\bip}$ associated to a type-A bipartite Coxeter element $c^\bip$ have remained elusive: a conjectural enumeration of the image was formulated by Defant and Williams \cite{DefantWilliamsSemidistrim}, while a characterization of the image was conjectured by Choi and Sun \cite{ChoiSun}. 

It turns out that our representation-theoretic perspective is quite useful for understanding the image of $\popdown_{\Camb_c}$. In \Cref{thm:algebraic_image_characterization}, we provide a representation-theoretic characterization of the image of $\popdown_{\tors\Lambda}$ whenever $\Lambda$ is \emph{hereditary}. When the Coxeter group $W$ is crystallographic, the tensor algebra $KQ_c$ is hereditary, and we can reformulate our description of the image of the pop-stack operator in purely lattice-theoretic and Coxeter-theoretic terms. We then check directly (by hand and by computer) that these lattice-theoretic and Coxeter-theoretic descriptions still holds when $W$ is not crystallographic. 

Our Coxeter-theoretic description of the image of $\popdown_{\Camb_c}$ is surprisingly simple: we will prove (\cref{thm:combinatorial_image_description}) that a $c$-sortable element $w\in \Camb_c$ is in the image of $\popdown_{\Camb_c}$ if and only if the right descents of $w$ all commute and $w$ has no left inversions in common with $c^{-1}$. 

To state our purely lattice-theoretic description of the image of $\popdown_{\Camb_c}$, let us write $s_1,\ldots,s_n$ for the simple reflections of $W$; these are the atoms of $\Camb_c$. For $1\leq i\leq n$, let $p_i$ be the unique maximal element of the set \[\{x \in \Camb_c \mid s_i \leq x \text{ and } s_j \not\leq x \text{ for all }s_j \in S \setminus \{s_i\}\}.\] We will prove that an element $w\in \Camb_c$ is in the image of $\popdown_{\Camb_c}$ if and only if the interval $[\popdown_{\Camb_c}(w),w]$ (in $\Camb_c$) is Boolean and $p_i \not\leq w$ for all $1\leq i\leq n$. When $W$ is crystallographic, the elements $p_1,\ldots,p_n$ correspond in a natural way to the projective indecomposable modules of the tensor algebra $KQ_c$; this is one reason why our representation-theoretic perspective was so useful for discovering and proving our description of the image of $\popdown_{\Camb_c}$. 
 
Along the way to proving our characterization of the image of $\popdown_{\Camb_c}$, we prove (as part of \cref{thm:distributive_intervals}) that for $w\in \Camb_c$, the right descents of $w$ all commute with each other if and only if the interval $[\popdown_{\Camb_c}(w),w]$ of $\Camb_c$ is equal to the interval $[\popdown_{\Weak(W)}(w),w]$ of $\Weak(W)$. By combining this surprising result with our characterization of the image of $\popdown_{\Camb_c}$, we obtain an equally surprising dynamical corollary (\cref{cor:dynamical_weak_Camb}); namely, for $w\in\Camb_c$, we have \begin{equation}\label{eq:surprising_intro}(\popdown_{\Weak(W)})^t(\popdown_{\Camb_c}(w))=(\popdown_{\Camb_c})^{t+1}(w)
\end{equation} for all $t\geq 0$.  

When $W$ is of type $A_n$ and $c=c^\bip$ is a bipartite Coxeter element, our description of the image of $\popdown_{\Camb_c}$ resolves the aforementioned conjecture of Choi and Sun \cite{ChoiSun}. We will also construct a bijection from the image of $\popdown_{\Camb_{c^\bip}}$ to a certain set of Motzkin paths (\Cref{thm:main_bijection}); this allows us to resolve the aforementioned enumerative conjecture of Defant and Williams \cite{DefantWilliamsSemidistrim}. This result, the bijective proof of which utilizes the combinatorics of \emph{arc diagrams}, provides an exact enumeration of the facets of the canonical join complex of a bipartite type-A Cambrian lattice. 

Finally, we turn our attention to forward orbits under $\popdown_L$ when $L$ is a lattice quotient of the weak order on a finite irreducible Coxeter group $W$. In this setting, we first show that $\max_{x\in L}\lvert\mathcal O_{L}(x)\rvert\leq h$, where $h$ is the Coxeter number of $W$. We then prove that this inequality is actually an equality when $L=\Camb_c$ for some Coxeter element $c$ of $W$. To do so, we utilize the combinatorial properties of the $c$-sorting word for the long element of $W$ to construct an element $\zz_c\in \Camb_c$ such that $\lvert\mathcal O_{\Camb_c}(\zz_c)\rvert=h$. 

\subsection{Organization}
\cref{sec:posets,sec:rep_background} provide necessary background on posets and representation theory, respectively. In \Cref{sec:mutation}, we discuss the theory of mutation of semibrick pairs, and we extend the theory to allow mutation at multiple bricks simultaneously; the proof of one of the main results of this section (\cref{thm:mutation_summary}) is postponed until \cref{sec:appendix}. \Cref{sec:pop-stack_description} provides a representation-theoretic description of the pop-stack operator on $\tors\Lambda$, describes preimages of a torsion class under $\popdown_{\tors\Lambda}$, and characterizes the image of $\popdown_{\tors\Lambda}$. Beginning in \cref{sec:weak_cambrian}, we fixate on Cambrian lattices. \Cref{sec:weak_cambrian} provides background on Coxeter groups, root systems, and Cambrian lattices (including their realizations as lattices of torsion classes).
\cref{sec:image} is devoted to characterizing the image of the pop-stack operator on an arbitrary Cambrian lattice and deducing \cref{eq:surprising_intro}. In \cref{sec:Cambrian_A}, we provide explicit combinatorial characterizations and enumerations of the images of pop-stack operators on Cambrian lattices of type A. In \cref{sec:orbits}, we prove our results concerning the maximum sizes of forward orbits under the pop-stack operators on lattice quotients of the weak order. \cref{sec:conclusion} collects several ideas for future work.   

\section{Posets and Lattices}\label{sec:posets}

Let $P$ be a finite poset. For $x,y\in P$ with $x\leq y$, the \dfn{interval} between $x$ and $y$ is the induced subposet $[x,y]=\{z\in P\mid x\leq z\leq y\}$ of $P$. We say $y$ \dfn{covers} $x$ and write $x\lessdot y$ if $x<y$ and $[x,y]=\{x,y\}$. The \dfn{Hasse diagram} of $P$ is the graph with vertex set $P$ in which $x$ and $y$ are adjacent whenever $x\lessdot y$ or $y\lessdot x$. We say $P$ is \dfn{connected} if its Hasse diagram is connected. A \dfn{rank function} on $P$ is a function $\rank\colon P\to\mathbb Z$ such that $\rank(y)=\rank(x)+1$ whenever $x\lessdot y$. We say $P$ is \dfn{ranked} if there exists a rank function on $P$. An \dfn{antichain} is a poset $P$ in which any two distinct elements are incomparable (that is, if $x,y\in P$ are such that $x\leq y$, then $x=y$). The \dfn{dual} of $P$ is the poset $P^*$ with the same underlying set as $P$ defined so that $x\leq y$ in $P^*$ if and only if $y\leq x$ in $P$. A subset $X$ of $P$ is called \dfn{convex} if for all $x,y\in X$ and all $z\in P$ with $x\leq z\leq y$, we have $z\in X$. An \dfn{order ideal} is a subset $I$ of $P$ such that if $x\in P$ and $y\in I$ satisfy $x\leq y$, then $x\in I$. An \dfn{upper set} of $P$ is a subset $U\subseteq P$ such that $P\setminus U$ is an order ideal. We write $J(P)$ for the set of order ideals of $P$, ordered by inclusion. 

A \dfn{lattice} is a poset $L$ such that any two elements $x,y\in L$ have a greatest lower bound, which is called their \dfn{meet} and denoted by $x\wedge y$, and a least upper bound, which is called their \dfn{join} and denoted by $x\vee y$. A finite lattice is \dfn{distributive} if it is isomorphic to $J(P)$ for some finite poset $P$. A lattice is \dfn{Boolean} if it is isomorphic to the power set of a set, ordered by inclusion. Hence, if $P$ is an antichain, then $J(P)$ is Boolean. 

A lattice $L$ is \dfn{complete} if every (possibly infinite) subset has a meet (i.e., greatest lower bound) and a join (i.e., least upper bound). We write $\bigwedge X$ and $\bigvee X$ for the meet and join, respectively, of a subset $X$ of a complete lattice. Given lattices $L$ and $L'$, a \dfn{lattice homomorphism} is a map $\phi\colon L\to L'$ such that $\phi(x\wedge y)=\phi(x)\wedge\phi(y)$ and $\phi(x\vee y)=\phi(x)\vee\phi(y)$ for all $x,y\in L$. We say $L'$ is a \dfn{lattice quotient} if there is a surjective lattice homomorphism from $L$ to $L'$. 

Assume $L$ is a finite lattice. Then $L$ has a unique minimal element $\hat 0=\bigwedge L$ and a unique maximal element $\hat 1=\bigvee L$. An element of $L$ that covers $\hat 0$ is called an \dfn{atom}, while an element covered by $\hat 1$ is called a \dfn{coatom}. An element $j\in L$ is called \dfn{join-irreducible} if there does not exist a set $X\subseteq L\setminus\{j\}$ such that $j=\bigvee X$. Equivalently, $j$ is join-irreducible if it covers exactly one element of $L$. (Note that $\hat 0$ is not join-irreducible because it is equal to $\bigvee\emptyset$.) Dually, an element $m\in L$ is \dfn{meet-irreducible} if there does not exist a set $X\subseteq L\setminus\{m\}$ such that $j=\bigwedge X$. Equivalently, $m$ is meet-irreducible if it is covered by exactly one element of $L$. (Note that $\hat 1=\bigwedge\emptyset$ is not meet-irreducible.) Let $\jirr_L$ (resp.\ $\mirr_L$) be the set of join-irreducible (resp.\ meet-irreducible) elements of $L$. For $j\in\jirr_L$ and $m\in\mirr$, let $j_*$ be the unique element covered by $j$, and let $m^*$ be the unique element that covers $m$. A set $A\subseteq L$ is \dfn{join-irredundant} (resp.\ \dfn{meet-irredundant}) if $\bigvee A'<\bigvee A$ (resp.\ $\bigwedge A'>\bigwedge A$) for every proper subset $A'$ of $A$.  The \dfn{canonical join representation} of an element $x\in L$ (if it exists) is the unique join-irredundant set $A\subseteq\jirr_L$ satisfying the following: \begin{itemize}
\item $x=\bigvee A$.
\item For every join-irredundant set $B\subseteq\jirr_L$ such that $x=\bigvee B$, there exist $a\in A$ and $b\in B$ such that $a\leq b$. 
\end{itemize}
Dually, the \dfn{canonical meet representation} of $x$ (if it exists) is the unique meet-irredundant set $A\subseteq\mirr_L$ satisfying the following: \begin{itemize}
\item $x=\bigwedge A$.
\item For every meet-irredundant set $B\subseteq\mirr_L$ such that $x=\bigvee B$, there exist $a\in A$ and $b\in B$ such that $a\geq b$. 
\end{itemize}

We say a lattice $L$ is \dfn{semidistributive} if for all $x,y,z\in L$, we have \[x\wedge y=x\wedge z\implies x\wedge y=x\wedge(y\vee z)\quad\text{and}\quad x\vee y=x\vee z\implies x\vee y=x\vee(y\wedge z).\] 
Suppose $L$ is finite and semidistributive. 
It is known that every element $v$ of $L$ has a canonical join representation $\D(v)$ and a canonical meet representation $\U(v)$; in fact, the existence of both representations for every $v \in L$ is equivalent to semidistributivity (see \cite[Theorem~2.24]{FJN}). 
Moreover, the collection of canonical join representations (resp.\ canonical meet representations) of elements of $L$ forms a simplicial complex called the \dfn{canonical join complex} (resp.\ \dfn{canonical meet complex}) of $L$. There is a unique bijection $\kappa\colon\jirr_L\to\mirr_L$ such that $\kappa(j)\wedge j=j_*$ and $\kappa(j)\vee j=(\kappa(j))^*$ for all $j\in\jirr_L$. The map $A\mapsto\kappa(A)$ is an isomorphism from canonical join complex of $L$ to the canonical meet complex of $L$ \cite[Corollary~5]{BarnardCJC}. Moreover, the number of facets in each of these simplicial complexes is equal to both $\lvert\popdown_L(L)\rvert$ and $\lvert\popup_L(L)\rvert$ by  \cite[Theorem~9.13]{DefantWilliamsSemidistrim}. 
Indeed, the facets of the canonical join complex (resp.\ canonical meet complex) of $L$ are precisely the canonical meet representations (resp.\ canonical join representations) of the elements of $\popdown_L(L)$ (resp.\ $\popup_L(L)$). Let $\PP_L(q)$ be the generating function that counts the facets of the canonical join complex (equivalently, the canonical meet complex) according to their sizes. Then 
\begin{equation}\label{eq:P}
\PP_L(q)=\sum_{v\in \popdown_L(L)}q^{|\U(v)|}=\sum_{v\in \popup_L(L)}q^{|\D(v)|}.
\end{equation}
The canonical join complex of $L$ is equal to the canonical meet complex of the dual lattice $L^*$, so 
\[
\PP_L(q)=\PP_{L^*}(q).
\]

The \dfn{Galois graph} of a finite semidistributive lattice $L$ is the loopless directed graph with vertex set $\jirr_L$ in which there is an arrow $j\to j'$ if and only if $j\neq j'$ and $j\not\leq\kappa(j')$. For each edge $x\lessdot y$ in the Hasse diagram of $L$, there is a unique join-irreducible element $j_{x,y}\in\jirr_L$ such that $j_{x,y}\leq y$ and $\kappa(j_{x,y})\geq x$ (see \cite[Proposition~2.2~\&~Lemma~3.3]{BarnardCJC}). We call $j_{x,y}$ the \dfn{shard label} of the edge $x\lessdot y$. The canonical join representation and canonical meet representation of an element $v\in L$ are given by $\mathcal D(v)=\{j_{x,v}:x\lessdot v\}$ and $\mathcal U(v)=\{\kappa(j_{v,y}):v\lessdot y\}$. Moreover, the canonical join complex of $L$ is just the collection of independent sets of the Galois graph of $L$. 

Note that intervals in semidistributive lattices are semidistributive. The following lemma was stated in \cite{DefantWilliamsSemidistrim} for \emph{semidistrim} lattices, which are more general than semidistributive lattices, but we only need to consider semidistributive lattices. (See also \cite[Section~4]{RST}.) 

\begin{lemma}[{\cite[Corollary~7.10]{DefantWilliamsSemidistrim}}]\label{lem:Cor710}
Let $L$ be a finite semidistributive lattice, and let $[u,v]$ be an interval in $L$. There is a bijection from $\{j\in\jirr_L\mid j\leq v\text{ and }\kappa(j)\geq u\}$ to $\jirr_{[u,v]}$ given by $j\mapsto u\vee j$. This bijection is an isomorphism from an induced subgraph of the Galois graph of $L$ to the Galois graph of $[u,v]$. If $x\lessdot y$ is an edge in $[u,v]$ whose shard label in $L$ is $j_{x,y}$, then the shard label of $x\lessdot y$ in $[u,v]$ is $u\vee j_{x,y}$. 
\end{lemma}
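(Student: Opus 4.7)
The plan is to build the bijection via shard labels of cover relations: cover relations in $[u,v]$ agree with cover relations in $L$ (since $[u,v]$ is a convex subposet), and each such cover carries a shard label $j_{x,y} \in \jirr_L$ on one side and a shard label in $\jirr_{[u,v]}$ on the other. The join-irreducibles of $[u,v]$ biject with cover relations $j' \greaterdot j'_*$ in $[u,v]$, which provides the inverse direction.

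First I would verify that if $j \in \jirr_L$ satisfies $j \leq v$ and $\kappa(j) \geq u$, then $u \vee j$ is join-irreducible in $[u,v]$ covering exactly $u \vee j_*$. The key subfacts are: (i) $j \not\leq u$ — otherwise $j \leq u \wedge \kappa(j) \leq j \wedge \kappa(j) = j_*$, a contradiction; (ii) $j \not\leq u \vee j_*$ — otherwise, since $\kappa(j) \geq u$ and $\kappa(j) \geq j_*$, we would get $j \leq \kappa(j)$, again a contradiction; and (iii) the standard fact that in a semidistributive lattice, $x \vee j$ covers $x$ whenever $j \in \jirr_L$, $j \not\leq x$, and $\kappa(j) \geq x$. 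Applying (iii) with $x = u \vee j_*$ shows $u \vee j$ covers $u \vee j_*$ in $L$, hence in $[u,v]$. Conversely, given $j' \in \jirr_{[u,v]}$ covering $j'_*$, the shard label $j := j_{j'_*,j'} \in \jirr_L$ satisfies $j \leq j' \leq v$ and $\kappa(j) \geq j'_* \geq u$. The identity $j \wedge j'_* = j_*$ (a standard consequence of the shard label characterization) yields $u \vee j_* \leq j'_*$, whence $u \vee j \leq j'$; the reverse inequality follows because $u \vee j \not\leq j'_*$ (as $j \not\leq j'_*$). So $j \mapsto u \vee j$ is a bijection with inverse given by taking the shard label of the downward cover.

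For the shard label statement, given a cover $x \lessdot y$ in $[u,v]$ with shard label $j := j_{x,y}$ in $L$, I would verify that $u \vee j$ satisfies the characterizing properties of a shard label in $[u,v]$: namely, $(u \vee j) \vee x = y$, which is immediate since $u \leq x$ and $j \vee x = y$; and $(u \vee j) \wedge x$ equals the unique element of $[u,v]$ covered by $u \vee j$, which is $u \vee j_*$. For the latter, the inequality $(u \vee j) \wedge x \geq u \vee (j \wedge x) = u \vee j_*$ is immediate, and since $u \vee j$ covers $u \vee j_*$ in $[u,v]$, the only other option is $(u \vee j) \wedge x = u \vee j$, which would force $j \leq x$, contradicting the fact that $j$ is the shard label of $x \lessdot y$.

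Finally, for the Galois graph statement, the dual of the bijection just established identifies $\mirr_{[u,v]}$ with $\{m \in \mirr_L \mid m \geq u,\ \kappa^{-1}(m) \leq v\}$ via $m \mapsto v \wedge m$, and yields the formula $\kappa_{[u,v]}(u \vee j) = v \wedge \kappa(j)$. Thus for $j_1, j_2$ in our set, the relation $u \vee j_1 \not\leq \kappa_{[u,v]}(u \vee j_2) = v \wedge \kappa(j_2)$ is equivalent (since $u \vee j_1 \leq v$) to $u \vee j_1 \not\leq \kappa(j_2)$, and since $u \leq \kappa(j_2)$ this reduces to $j_1 \not\leq \kappa(j_2)$. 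Combined with the fact that $u \vee j_1 = u \vee j_2$ iff $j_1 = j_2$ (injectivity), this shows the bijection carries the induced subgraph of the Galois graph of $L$ on our set isomorphically onto the Galois graph of $[u,v]$. The most delicate step is step one, specifically the verification that $u \vee j$ covers $u \vee j_*$ in $L$; the rest is a matter of carefully applying the shard label characterization.
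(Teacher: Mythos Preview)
The paper does not give its own proof of this lemma; it is quoted directly from \cite[Corollary~7.10]{DefantWilliamsSemidistrim}. So there is no comparison to make, and the question is simply whether your argument is correct.

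It is not, because your ``standard fact'' (iii) is false. Take $L=\Weak(A_2)$ (the hexagon), $j=s_1$, and $x=s_2$. Then $\kappa(s_1)=s_2s_1\geq s_2$ and $s_1\not\leq s_2$, yet $s_2\vee s_1=\wo$ does not cover $s_2$ (we have $s_2\lessdot s_2s_1\lessdot \wo$). In the language of your application, take $u=s_2$, $v=\wo$, $j=s_1$: then $u\vee j=\wo$ is indeed join-irreducible in the $3$-chain $[s_2,\wo]$, but the element it covers there is $s_2s_1$, not $u\vee j_*=s_2$. So your assertion that $u\vee j$ always covers $u\vee j_*$ is wrong, and with it the mechanism you use both to show that $u\vee j\in\jirr_{[u,v]}$ and (implicitly) to prove injectivity. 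There is also a small slip in (i): the chain $j\leq u\wedge\kappa(j)\leq j\wedge\kappa(j)$ goes the wrong way, since $j\leq u$ gives $j\wedge\kappa(j)\leq u\wedge\kappa(j)$; the correct one-line argument is $j\leq u\leq\kappa(j)\Rightarrow j\leq\kappa(j)\Rightarrow j=j\wedge\kappa(j)=j_*$.

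Your surjectivity argument and your computation for the Galois graph (using $\kappa_{[u,v]}(u\vee j)=v\wedge\kappa(j)$) are fine once the bijection is in place, and your shard-label computation is correct. What is missing is a valid reason that $u\vee j$ is join-irreducible in $[u,v]$. One route: show first that for any cover $x\lessdot y$ inside $[u,v]$ the shard label $j_{x,y}$ lies in your set (this you essentially do), and that $y=u\vee j_{x,y}$ whenever $y\in\jirr_{[u,v]}$; then prove well-definedness and injectivity of $j\mapsto u\vee j$ by showing that the unique lower cover of $u\vee j$ in $[u,v]$ is $(u\vee j)\wedge\kappa(j)$ (in the example above this gives $\wo\wedge s_2s_1=s_2s_1$, as it should), using meet-semidistributivity to see that any $z\in[u,v]$ covered by $u\vee j$ satisfies $z\leq\kappa(j)$ and hence $z=(u\vee j)\wedge\kappa(j)$.
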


\begin{example}\label{exam:boolean}
Let $P$ be a finite antichain, and consider the Boolean lattice $J(P)$. The join-irreducible elements of $J(P)$ are the singleton subsets of $P$. If $A\lessdot B$ is an edge in $J(P)$, then there exists $b\in B$ such that $A=B\setminus\{b\}$. The shard label of $A\lessdot B$ is $j_{A,B}=\{b\}$. The Galois graph of $J(P)$ has no edges. 
\end{example}

\begin{example}\label{exam:dihedral}
Let $L$ be the lattice obtained by adding a minimal element $\hat 0$ and a maximal element $\hat 1$ to a disjoint union of two chains $a_1\lessdot\cdots\lessdot a_{m-1}$ and $a_m\lessdot\cdots\lessdot a_{2m-2}$ (so $a_1$ and $a_m$ are the atoms of $L$, while $a_{m-1}$ and $a_{2m-2}$ are the coatoms). This lattice is isomorphic to the weak order on the dihedral group $I_2(m)$ (see \cref{sec:weak_cambrian}). We have $\jirr_L=\mirr_L=\{a_1,\ldots,a_{2m-2}\}$, and the bijection $\kappa$ is given by $\kappa(a_i)=a_{i-1}$, where we let $a_0=a_{2m-2}$. For $1\leq i\leq m-1$, there is an arrow from $a_i$ to $a_{i'}$ in the Galois graph of $L$ if and only if $1\leq i'\leq i-1$ or $m+1\leq i'\leq 2m-2$. For $m\leq i\leq 2m-2$, there is an arrow from $a_i$ to $a_{i'}$ in the Galois graph of $L$ if and only if $2\leq i'\leq m-1$ or $m\leq i'\leq i-1$. See \cref{fig:exam_dihedral}. 
\end{example} 

\begin{figure}[ht]
\begin{center}\includegraphics[height=8.6cm]{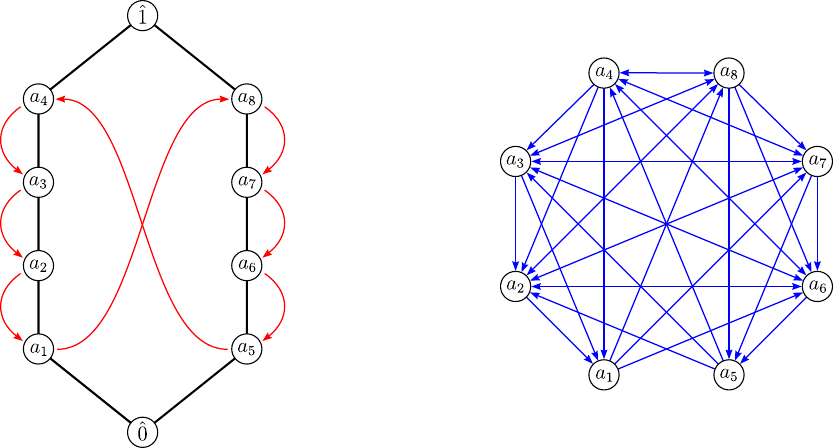}
  \end{center}
  \caption{The Hasse diagram (left) and Galois graph (right) of the lattice $L$ from \cref{exam:dihedral} with $m=5$. The red arrows on the left indicate the bijection $\kappa$.}
  \label{fig:exam_dihedral}
\end{figure}

\section{Finite-Dimensional Algebras}\label{sec:rep_background}

In this section, we recall background information on the representation theory of quivers and finite-dimensional algebras. We refer to the textbooks \cite{ARS,ASS,schiffler} as standard background references.

Let $\Lambda$ be a finite-dimensional basic\footnote{Recall that a module $M$ is \dfn{basic} if there does not exist a nonzero module $N$ such that $N \oplus N$ is a direct summand of $M$. An algebra $\Lambda$ is basic if it is basic as a free $\Lambda$-module.} algebra over a field $K$. In the second half of the paper, we will mainly consider the case where $\Lambda$ is the \emph{tensor algebra} over a \emph{weighted Dynkin quiver}; discussion of this special case is deferred to \cref{subsec:path_algebras}.

We denote by $\mods \Lambda$ the category of finitely-generated right $\Lambda$-modules and by $\Db(\mods\Lambda)$ the bounded derived category of $\mods\Lambda$ with shift functor $[1]$. We recall that $\Lambda$ is \dfn{hereditary} if $\Ext^m_\Lambda(M,N) = 0$ for all $M, N \in \mods\Lambda$ and $m > 1$. Given a basic object $M \in \mods\Lambda$, we denote by $|M|$ the number of indecomposable direct summands of $M$. For $M, N \in \mods\Lambda$, we will sometimes write $M \hookrightarrow N$ (resp.\ $M \twoheadrightarrow N$) to represent a monomorphism/injection (resp.\ epimorphism/surjection) in $\Hom_\Lambda(M,N)$. A module $X \in \mods\Lambda$ is called a \dfn{brick} if every nonzero endomorphism of $X$ is invertible. In particular, every brick must be indecomposable.

Let $n = |\Lambda|$, and choose an indexing $P(1),\ldots, P(n)$ of the indecomposable direct summands of $\Lambda$. Then for every indecomposable and projective module $Q \in \mods\Lambda$, there exists a unique $i$ such that $Q \cong P(i)$. Moreover, up to isomorphism, $\mods\Lambda$ contains $n$ simple modules. These can be indexed $S(1),\ldots,S(n)$ so that $\Hom_\Lambda(P(i),S(j)) \neq 0$ if and only if $i = j$. Moreover, each $S(i)$ is a brick, and we have $\Hom_\Lambda(P(i),S(i)) \cong \End_\Lambda(S(i))$. Given an arbitrary module $M \in \mods\Lambda$, we define its \dfn{dimension vector} to be
\begin{equation}\label{eqn:dim_vect}\undim M = \left(\dim_{\End_\Lambda(S(i))} \Hom_\Lambda(P(i),M)\right)_{i \in [n]} \in \mathbb{N}^n.\end{equation}

\begin{remark}
    As defined, the dimension vector $\undim M$ depends on the indexing of the projective modules. To avoid this dependency, one can instead define $\undim M$ to be the class $[M]$ of $M$ in the \dfn{Grothendieck group} $K_0(\mods\Lambda)$. Indeed, this group is free abelian and has $\{[S(i)] \mid i \in [n]\}$ as a basis. Thus, identifying each $[S(i)] \in K_0(\mods\Lambda)$ with the standard basis element $e_i \in \mathbb{Z}^n$ identifies the class $[M]$ with the vector $\undim M$ as defined above.   
\end{remark}

The purpose of the remainder of this section is to recall the definitions of torsion pairs and 2-term simple-minded collections, as well as the relationship between them. In doing so, we follow much of the exposition of \cite[Section~3]{BaH}.

\subsection{Subcategories and functorial finiteness}\label{subsec:ff}

By a \dfn{subcategory} of $\mods \Lambda$, we will always mean a subcategory that is full and closed under isomorphisms. For a subcategory $\C \subseteq \mods\Lambda$, we define several additional subcategories as follows:
\begin{itemize}
    \item $\add(\C)$ is the subcategory of $\mods\Lambda$ consisting of all direct summands of finite direct sums of the objects in $\C$.
    \item $\Gen(\C)$ is the subcategory of $\mods\Lambda$ consisting of all objects that can be written as quotients of objects in $\add(\C)$.
    \item $\Cogen(\C)$ is the subcategory of $\mods\Lambda$ consisting of all objects that can be written as subobjects of objects in $\add(\C)$.
    \item $\Filt(\C)$ is the subcategory of $\mods\Lambda$ consisting of all objects that can be written as iterated extensions of objects in $\C$; that is, $M \in \Filt(\C)$ if and only if there exists a chain
    $$0 = M_0 \subsetneq M_1 \subsetneq \cdots \subsetneq M_k = M$$
    of modules such that each factor $M_i/M_{i-1}$ is in $\add(\C)$.
    \item $\tor(\C) = \Filt(\Gen(\C))$.
        \item $\C^\perp = \C^{\perp_0} = \{Y \in \mods\Lambda \mid \Hom_\Lambda(-,Y)|_\C = 0\}$.
        \item $\lperp{\C} = \lperpNum{0}{\C} = \{Y \in \mods \Lambda \mid \Hom_\Lambda(Y,-)|_\C = 0\}$.
    \item $\C^{\perp_1} = \{Y \in \mods\Lambda \mid \Ext^1_\Lambda(-,Y)|_\C = 0\}$ 
    \item $\lperpNum{1}{\C} = \{Y \in \mods \Lambda \mid \Ext^1_\Lambda(Y,-)|_\C = 0\}$.
    \item $\C^{\perp_{0,1}} = \C^\perp \cap \C^{\perp_1}$ and $\lperpNum{0,1}{\C} = \lperp{\C} \cap \lperpNum{1}{\C}$.
\end{itemize}
We also apply the above definitions to the objects of $\mods\Lambda$ by considering (the isomorphism class of) a given object as a subcategory.

Now let $\C \subseteq \mods\Lambda$ be an arbitrary subcategory and $M \in \mods\Lambda$. A \dfn{minimal left $\C$-approximation} of $M$ is a morphism $g_{M,\C}\colon M \rightarrow C$ with $C \in \C$ such that 
\begin{enumerate}[(i)]
\item the induced map $(g_{M,\C})^*\colon\Hom_\Lambda(C,D)\rightarrow \Hom_\Lambda(M,D)$ is surjective for all $D \in \C$ and 
\item if $f\colon C \rightarrow C$ is such that $f \circ g_{M,\C} = g_{M,\C}$, then $f$ is an isomorphism. 
\end{enumerate} 
Dually, a \dfn{minimal right $\C$-approximation} of $M$ is a morphism $g_{\C,M}\colon C \rightarrow M$ with $C \in \C$ such that 
\begin{enumerate}[(i*)]
\item the induced map $(g_{\C,M})_*\colon\Hom_\Lambda(D,C)\rightarrow \Hom_\Lambda(D,M)$ is surjective for all $D \in \C$ and 
\item if $f\colon C \rightarrow C$ is such that $g_{\C,M} \circ f = g_{\C,M}$, then $f$ is an isomorphism.
\end{enumerate} It is well known that minimal left and right $\C$-approximations are unique up to isomorphism whenever they exist. If every object of $\mods \Lambda$ admits both a minimal left $\C$-approximation and a minimal right $\C$-approximation, then $\C$ is said to be \dfn{functorially finite}. Almost all of the subcategories we consider in this paper will be functorially finite.

\subsection{Torsion classes}\label{subsec:torsion}

A pair $(\T,\F)$ of subcategories of $\mods\Lambda$ is called a \dfn{torsion pair} if 
\begin{enumerate}[(i)] 
\item $\Hom_\Lambda(T,F) = 0$ for all $T \in \T$ and $F \in \F$ and
\item every $M \in \mods \Lambda$ is the middle term of an exact sequence
$t_\T M \hookrightarrow M \twoheadrightarrow f_\F M$
with $t_\T M \in \T$ and $f_\F M \in \F$.
\end{enumerate}
In this case, we say that $\T$ is a \dfn{torsion class} and that $\F$ is a \dfn{torsion-free class}. We denote by $\tors\Lambda$ and $\torsf \Lambda$ the sets of torsion classes and torsion-free classes in $\mods\Lambda$, respectively. Torsion pairs were introduced by Dickson in \cite{dickson} as a generalization of the torsion and torsion-free abelian groups. We refer to the expository article \cite{thomas_intro} for background information on torsion pairs, including formal statements for the facts mentioned in this section.

It is well known that a subcategory $\T \subseteq \mods\Lambda$ (resp.\ $\F \subseteq \mods\Lambda$) is a torsion class (resp.\ torsion-free class) if and only if it is closed under extensions and quotients (resp.\ extensions and submodules). Moreover, every torsion class determines a unique torsion pair and vice versa via the associations $\T \mapsto (\T,\T^\perp)$ and $(\T,\F) \mapsto \T$. Similarly, every torsion-free class determines a unique torsion pair and vice versa via the associations $\F \mapsto (\lperp{\F},\F)$ and $(\T,\F) \mapsto \F$.

For $\C \subseteq \mods\Lambda$ an arbitrary subcategory, we have that $\tor( \C)$ is the smallest torsion class containing $\C$; the corresponding torsion pair is $(\tor(\C), \rperp{\C})$. Dually, $\Filt(\Cogen (\C))$ is the smallest torsion-free class containing $\C$, and the corresponding torsion pair is $(\lperp{\C},\Filt(\Cogen(\C)))$.

We consider $\tors\Lambda$ and $\torsf \Lambda$ as posets under the containment relation. Each of these is a complete lattice whose meet operation is given by intersection and whose join operation is given by $\Join \mathcal{A} = \Filt\left(\bigcup_{\T \in \mathcal{A}} \T\right)$. The associations $\T \mapsto \T^\perp$ and $\F \mapsto \lperp{\F}$ thus give lattice anti-isomorphisms between $\tors\Lambda$ and $\torsf\Lambda$. In particular, the join operation in $\tors\Lambda$ can be rewritten as ${\T \join \U = \lperp({\T^\perp \cap \U^\perp})}$. The maximum element of $\tors\Lambda$ is $\hat{1} = \mods\Lambda$ (the torsion class consisting of all modules), and the minimum element is $\hat{0} = 0$ (the torsion class consisting only of the zero module). The atoms of $\tors\Lambda$ are precisely those torsion classes of the form $\Filt(S)$ for $S \in \mods\Lambda$ a simple module.

In this paper, we restrict our attention to algebras $\Lambda$ for which the lattice $\tors\Lambda$ is finite.  These are precisely the \dfn{$\tau$-tilting finite algebras} introduced in \cite{DIJ}. They are also precisely the algebras $\Lambda$ for which every torsion class in $\mods\Lambda$ is functorially finite. A related class is that of \dfn{representation-finite} algebras, which are characterized by the property that $\mods\Lambda$ contains only finitely many indecomposable modules (up to isomorphism). If $\Lambda$ is hereditary, then it is $\tau$-tilting finite if and only if it is representation-finite. For $\Lambda$ not hereditary, representation-finiteness implies $\tau$-tilting finiteness, but not vice versa. Another special class is that of \dfn{silting discrete} algebras \cite{AiharaMizuno}; an algebra $\Lambda$ belongs to this class when every algebra that is derived equivalent to $\Lambda$ (including $\Lambda$ itself) is $\tau$-tilting finite. All representation-finite hereditary algebras are silting discrete.

From now on, $\Lambda$ will always denote a $\tau$-tilting finite algebra. While many of the results of the following subsections admit generalizations to arbitrary finite-dimensional algebras, this allows us to streamline the exposition.

\subsection{Semibricks and 2-term simple-minded collections}\label{subsec:smc}

Recall that a module $X \in \mods\Lambda$ is called a \dfn{brick} if $\End_\Lambda(X)$ is a division ring. A set $\X$ of bricks is called a \dfn{semibrick} if $X \in Y^\perp \cap \lperp{Y}$ for all distinct $X,Y \in \X$. We adopt the convention of using the term $\X$ to represent both a set of bricks and the module $\bigoplus_{X \in \X} X$. We denote by $\brick\Lambda$ and $\sbrick\Lambda$ the sets of (isomorphism classes of) bricks and semibricks in $\mods\Lambda$, respectively. This leads to the following definition, which we state in this form in light of \cite[Remark~4.11]{BY}. See \cref{subsec:intro_torsion} for historical notes and references.

\begin{definition}
    Let $\X, \Y \in \sbrick(\Lambda)$.
    \begin{enumerate}[(1)]
        \item We say $(\X,\Y)$ is a \dfn{semibrick pair} if $\Y \in \X^{\perp_{0,1}}$.
        \item We say a semibrick pair $(\X,\Y)$ is a \dfn{2-term simple-minded collection} if the only triangulated subcategory of $\Db(\mods\Lambda)$ that contains $\X \oplus \Y[1]$ and is closed under direct summands is $\Db(\mods\Lambda)$ itself.
        \item We say a semibrick pair $(\X,\Y)$ is \dfn{completable} if there exists a 2-term simple-minded collection $(\X',\Y')$ with $\X \subseteq \X'$ and $\Y \subseteq \Y'$.
    \end{enumerate}
    We denote by $\sbp\Lambda$ the set of semibrick pairs and by $\smc\Lambda$ the set of 2-term simple-minded collections.
\end{definition}

\begin{samepage}
\begin{remark}\
    \begin{enumerate}[(1)]
        \item Not every semibrick pair is completable, even in the $\tau$-tilting finite case; see, e.g., \cite[Counterexample~1.9]{HansonIgusa_picture}.
        \item If $(\X,\Y)$ is a 2-term simple-minded collection, then $|\X| + |\Y| = |\Lambda|$. If the algebra $\Lambda$ is silting discrete, then the converse also holds by \cite[Corollary~6.12]{WemyssHara}; that is, if $(\X,\Y)$ is a semibrick pair with $|\X| + |\Y| = |\Lambda|$, then $(\X,\Y)$ must be a 2-term simple-minded collection.
    \end{enumerate}
\end{remark}\end{samepage}

\subsection{Semidistributivity and the brick labeling}

The lattice of torsion classes is known to be a (completely) semidistributive lattice \cite{DIRRT,GM}. Therefore, each edge in the Hasse diagram of $\tors\Lambda$ has a join-irreducible shard label as described at the end of \cref{sec:posets}. In this section, we recall from \cite{asai} ($\tau$-tilting finite case) and \cite{BCZ,DIRRT} (general case) how this labeling can be formulated and interpreted representation-theoretically. We note that many of the results recalled in this section have been specialized to the $\tau$-tilting finite case.

\begin{theorem}[\cite{BCZ,BTZ}]\label{thm:brick_label}
    Let $\Lambda$ be a $\tau$-tilting finite algebra. 
    \begin{enumerate}[(1)]
        \item There is a bijection $\brick\Lambda \rightarrow \jirr_{\tors\Lambda}$ given by $X \mapsto \tor(X)$.
        \item There is a bijection $\brick\Lambda \rightarrow \mirr_{\tors\Lambda}$ given by $X \mapsto \lperp{X}$.
        \item The bijection $\kappa\colon\jirr_{\tors\Lambda}\to\mirr_{\tors\Lambda}$ is given by $\kappa(\tor(X)) = \lperp{X}$ for all $X\in\brick\Lambda$. 
    \end{enumerate}
\end{theorem}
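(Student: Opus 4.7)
The plan is to base the argument on the \emph{brick labeling} of the Hasse diagram of $\tors\Lambda$ developed in the cited references, which assigns to each cover relation $\U \lessdot \T$ in $\tors\Lambda$ a brick $X_{\U,\T}$, unique up to isomorphism, satisfying
\[\T = \tor(\U \cup \{X_{\U,\T}\}) \qquad \text{and} \qquad \U = \T \cap \lperp{X_{\U,\T}}.\]
This labeling is known to coincide with the shard labeling of $\tors\Lambda$ as a semidistributive lattice (as recalled at the end of \cref{sec:posets}), and that coincidence is what will yield the identification of $\kappa$ in part (3).

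For parts (1) and (2), my preliminary observation is that for any brick $X$, the subcategory $\T_0 := \tor(X) \cap \lperp{X}$ is a torsion class (since $\lperp{X}$ is closed under extensions, quotients, and submodules) strictly contained in $\tor(X)$ (because $X \notin \lperp{X}$). Granting the nontrivial input, to be invoked from the cited references, that $\T_0 \lessdot \tor(X)$ is in fact a cover relation labeled by $X$, it follows immediately that $\tor(X)$ is join-irreducible with $(\tor(X))_* = \T_0$. To construct the inverse, for a join-irreducible $\T$ with unique covered element $\T_*$, I let $X$ be the brick label of $\T_* \lessdot \T$; then $\tor(X) \subseteq \T$ while $\tor(X) \not\subseteq \T_*$, so $\tor(X) \join \T_* = \T$, forcing $\tor(X) = \T$ by join-irreducibility. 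Injectivity is immediate from the uniqueness of brick labels: if $\tor(X_1) = \tor(X_2) =: \T$, both $X_i$ label the unique cover below $\T$, so $X_1 \cong X_2$. Part (2) is entirely dual, the key dual statement being that $\lperp{X} \lessdot \tor(X) \join \lperp{X}$ is also a cover relation labeled by $X$.

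For part (3), I would verify the defining characterization of $\kappa$ recalled in \cref{sec:posets}: it suffices to check that
\[\tor(X) \meet \lperp{X} = (\tor(X))_* \qquad \text{and} \qquad \tor(X) \join \lperp{X} = (\lperp{X})^*.\]
The first identity is exactly the cover relation $\T_0 \lessdot \tor(X)$ from the previous step, and the second is its dual. The main obstacle in the whole argument is the cover claim $\T_0 \lessdot \tor(X)$: ruling out any torsion class strictly between $\T_0$ and $\tor(X)$ requires showing that, given any module $M \in \tor(X) \setminus \T_0$ (so $M$ admits a nonzero map to $X$), one can reconstruct $X$ via extensions and quotients inside $\tor(\T_0 \cup \{M\})$. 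This is the heart of the brick labeling theorem, which I would cite directly rather than reprove.
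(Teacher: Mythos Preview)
The paper does not prove this statement; it is quoted from \cite{BCZ,BTZ} as background. Your outline is a reasonable summary of the argument in those references, but there is one genuine gap.

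You assert that once the cover relation $\T_0 := \tor(X) \cap \lperp{X} \lessdot \tor(X)$ is granted, ``it follows immediately that $\tor(X)$ is join-irreducible with $(\tor(X))_* = \T_0$.'' It does not: exhibiting \emph{one} element covered by $\tor(X)$ does not rule out others. Your injectivity argument then presupposes uniqueness of that cover (``both $X_i$ label the unique cover below $\T$''), so the reasoning becomes circular at this point. What is missing is a second input from \cite{BCZ}: the brick labels of the covers below any fixed $\T$ form a semibrick (i.e., $\D(\T)$ is pairwise Hom-orthogonal). Once you also cite this, the argument closes: any brick label $Y$ of a cover $\U \lessdot \tor(X)$ lies in $\tor(X) = \Filt(\Gen(X))$, so $\Hom_\Lambda(X,Y) \neq 0$; since $X$ itself is the brick label of $\T_0 \lessdot \tor(X)$, both $X$ and $Y$ lie in the semibrick $\D(\tor(X))$, forcing $Y \cong X$ and hence $\U = \tor(X) \cap \lperp{Y} = \T_0$. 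With that additional citation, the rest of your sketch (the construction of the inverse and the verification of the two defining identities for $\kappa$) is correct.
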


The \dfn{brick label} of a cover relation $\T \covered \T'$ in $\tors\Lambda$ is the unique brick $X$ for which $j_{\T,\T'} = \tor(X)$. For a fixed $\T \in \tors\Lambda$, we (slightly abusing notation) denote by $\D(\T)$ and $\U(\T)$ the sets of bricks that label cover relations of the form $\T' \covered \T$ and $\T \covered \T'$, respectively.

\begin{lemma}[{\cite[Corollary~3.9]{BCZ}}]\label{lem:proper_quotient}
    Let $\T \in \tors\Lambda$.
    \begin{enumerate}[(1)]
        \item If $Y \in \U(\T)$ and $Y \twoheadrightarrow Y'$ is a surjection with nonzero kernel, then $Y \in \T$.
        \item If $X \in \D(\T)$ and $X' \hookrightarrow X$ is an injection with nonzero cokernel, then $X' \in \T^\perp$.
    \end{enumerate}
\end{lemma}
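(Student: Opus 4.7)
My plan is to reduce each claim to a description of the wide subcategory ``sandwiched'' between the two torsion classes appearing in the cover relation labeled by $Y$ (respectively $X$). For the cover $\T \covered \T'$ with brick label $Y$, the interval $[\T, \T']$ in $\tors\Lambda$ has exactly two elements; by the $\tau$-tilting reduction perspective, which identifies intervals in $\tors\Lambda$ with lattices of torsion classes inside wide subcategories, the associated wide subcategory $\T' \cap \T^\perp$ is forced to have only the trivial torsion classes. Its unique simple object must then be $Y$, so $\T' \cap \T^\perp = \Filt(Y)$. The dual argument yields $\T \cap (\T')^\perp = \Filt(X)$ for the cover $\T' \covered \T$ with label $X$ relevant to part~(2). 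Every object of $\Filt(Y)$ admits a filtration whose factors are all isomorphic to $Y$, so its $K$-dimension is a nonnegative integer multiple of $\dim_K Y$; the analogue holds for $\Filt(X)$.

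For part~(1), given the surjection $Y \twoheadrightarrow Y'$ with nonzero kernel, I would consider the canonical decomposition of $Y'$ with respect to $(\T, \T^\perp)$, namely $0 \to t_\T(Y') \to Y' \to F \to 0$ with $F \in \T^\perp$, and aim to prove $F = 0$, which would give $Y' = t_\T(Y') \in \T$ (this is the statement I read the lemma as asserting). Since $Y \in \T'$ and $\T'$ is closed under quotients, $Y' \in \T'$ and hence also $F \in \T'$; combined with $F \in \T^\perp$, this places $F$ in $\T' \cap \T^\perp = \Filt(Y)$. The composition $Y \twoheadrightarrow Y' \twoheadrightarrow F$ is a surjection, giving $\dim_K F \le \dim_K Y$, and the dimension identity for $\Filt(Y)$ leaves only the two possibilities $F = 0$ or $F \cong Y$. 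To rule out $F \cong Y$, I would invoke the brick property: the induced composition $Y \twoheadrightarrow Y$ would be a surjective endomorphism of a finite-dimensional brick and hence an isomorphism, but it factors through the proper quotient $Y'$ and so its kernel contains $\ker(Y \twoheadrightarrow Y') \ne 0$, a contradiction. Hence $F = 0$.

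Part~(2) is dual. Given $X' \hookrightarrow X$ with nonzero cokernel, I would set $I := t_\T(X')$ and aim to show $I = 0$, which gives $X' \in \T^\perp$. The chain $I \hookrightarrow X' \hookrightarrow X$ exhibits $I$ as a submodule of $X$, and the relation $\kappa(\tor(X)) = \lperp{X} \supseteq \T'$ from \cref{thm:brick_label} gives $\Hom(\T', X) = 0$, so $\Hom(\T', I) = 0$ as well. Therefore $I \in \T \cap (\T')^\perp = \Filt(X)$. The inclusion $I \hookrightarrow X$ together with the dimension identity for $\Filt(X)$ leaves only $I = 0$ or $I \cong X$; the latter would force $I$ to coincide with $X$ as a submodule, contradicting $I \subseteq X' \subsetneq X$. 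The main obstacle is the wide subcategory identification in the opening paragraph, which relies on $\tau$-tilting reduction and the interval structure of $\tors\Lambda$; once it is in place, both parts reduce to elementary dimension bookkeeping combined with the brick property.
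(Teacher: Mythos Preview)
The paper does not give its own proof; the lemma is quoted from \cite[Corollary~3.9]{BCZ}. Your argument is correct, and you rightly read part~(1) as asserting $Y' \in \T$ rather than the printed $Y \in \T$ (a typo: any $Y \in \U(\T)$ lies in $\T^\perp$ and hence cannot lie in $\T$; the paper's own use of the lemma in the proof of \cref{prop:SM_compat} confirms this reading).

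The only point to sharpen is your justification of $\V \cap \U^\perp = \Filt(B)$ for a cover $\U \covered \V$ with brick label $B$. The $\tau$-tilting reduction results you invoke identify the interval $[\U,\V]$ with the lattice of torsion classes of the extension-closed subcategory $\V \cap \U^\perp$, but do not by themselves show that this subcategory is a module category with a single simple object; the step ``two torsion classes $\Rightarrow$ one simple $\Rightarrow \Filt(B)$'' is a little too quick as stated. The cleanest route is to invoke the brick-labeling theorem of \cite{BCZ} or \cite{DIRRT} directly, which establishes $\V \cap \U^\perp = \Filt(B)$ prior to and independently of the present corollary, so there is no circularity. Once that identity is in hand, your torsion decomposition, dimension-counting, and brick arguments go through exactly as written.
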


\cref{thm:brick_label,lem:proper_quotient} immediately yield the following characterization of the pop-stack operators.

\begin{corollary}\label{cor:brick_label}
    For $\T \in \tors\Lambda$, we have
    \[\popdown_{\tors\Lambda}(\T) = \T \cap \lperp{\D(\T)}\quad\text{and}\quad\popup_{\tors\Lambda}(\T) = \Filt(\T \cup \U(\T)).\] 
\end{corollary}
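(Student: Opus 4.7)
The idea is to use \Cref{thm:brick_label} to translate each cover relation incident to $\T$ into a concrete representation-theoretic identity for the covering torsion class, and then plug these identities directly into the definitions
\[
\popdown_{\tors\Lambda}(\T) \;=\; \T \cap \Meet_{\T' \covered \T} \T' \qquad\text{and}\qquad \popup_{\tors\Lambda}(\T) \;=\; \T \vee \Join_{\T \covered \T''} \T''.
\]
Since the meet in $\tors\Lambda$ is intersection and the join is $\Filt(\bigcup \cdot)$, the whole argument reduces to describing each $\T'\covered\T$ and $\T\covered\T''$ in terms of the brick labeling.

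For the downward statement, fix a cover $\T' \covered \T$ with brick label $X \in \D(\T)$. By \Cref{thm:brick_label}, the shard label of this edge is $j_{\T',\T} = \tor(X)$ with $\kappa(\tor(X)) = \lperp{X}$. The defining properties of $j_{\T',\T}$ give $\tor(X) \subseteq \T$ (so $X \in \T$) and $\T' \subseteq \lperp{X}$. I claim $\T' = \T \cap \lperp{X}$: the inclusion ``$\subseteq$'' is immediate, and the reverse inclusion follows because $\T \cap \lperp{X}$ is a torsion class strictly contained in $\T$ (as $X \in \T$ but $X \notin \lperp{X}$ since $\End_\Lambda(X) \neq 0$), so the covering relation forces $\T \cap \lperp{X} \subseteq \T'$. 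Intersecting over all covers of $\T$ from below, which are in bijection with $\D(\T)$, then yields
\[
\popdown_{\tors\Lambda}(\T) \;=\; \T \cap \bigcap_{X \in \D(\T)} (\T \cap \lperp{X}) \;=\; \T \cap \lperp{\D(\T)}.
\]

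For the dual statement, fix a cover $\T \covered \T''$ with brick label $Y \in \U(\T)$. Symmetrically, $\tor(Y) \subseteq \T''$ and $\T \subseteq \lperp{Y}$, so $Y \in \T''\setminus\T$. The join $\T \vee \tor(Y) = \Filt(\T \cup \tor(Y))$ is a torsion class containing $\T$ properly and contained in $\T''$, so by the covering property $\T'' = \Filt(\T \cup \tor(Y))$. Now the key input is \Cref{lem:proper_quotient}(1): every proper quotient of $Y$ lies in $\T$, which gives $\Gen(Y) \subseteq \T \cup \{Y\}$ and therefore $\tor(Y) = \Filt(\Gen(Y)) \subseteq \Filt(\T \cup \{Y\})$. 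This upgrades the description of $\T''$ to $\T'' \subseteq \Filt(\T \cup \{Y\}) \subseteq \Filt(\T \cup \U(\T))$. Taking the join over all upward covers of $\T$ gives $\popup_{\tors\Lambda}(\T) \subseteq \Filt(\T \cup \U(\T))$, while the reverse inclusion is immediate since $\T \subseteq \T''$ for every upward cover and each $Y \in \U(\T)$ lies in its corresponding $\T''$.

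The main subtlety (and where I expect the cleanest write-up will require care) is the upward step: one must avoid the temptation to claim $\T'' = \Filt(\T \cup \{Y\})$ directly as a torsion class, and instead pass through $\tor(Y)$, using \Cref{lem:proper_quotient} to absorb the quotients of $Y$ into $\T$. The downward direction, by contrast, is purely formal once the shard label is identified with $\tor(X)$ and its image under $\kappa$ with $\lperp{X}$.
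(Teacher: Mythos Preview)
Your approach is correct and matches the paper's (which merely says the corollary is immediate from \Cref{thm:brick_label} and \Cref{lem:proper_quotient}); the downward argument via $\T' = \T \cap \lperp{X}$ is exactly right.

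There is one slip in the upward direction: the claim ``$\Gen(Y) \subseteq \T \cup \{Y\}$'' is false as written, since for instance $Y \oplus Y \in \Gen(Y)$ but $Y \oplus Y \notin \T$ (else $Y \in \T$) and $Y \oplus Y \not\cong Y$. What you need, and what your argument actually uses, is the weaker statement $\Gen(Y) \subseteq \Filt(\T \cup \{Y\})$. This follows by induction on $n$ for a surjection $Y^n \twoheadrightarrow M$: the image $N$ of the first summand is a quotient of $Y$, hence lies in $\T \cup \{Y\}$ by \Cref{lem:proper_quotient}(1), and $M/N$ is a quotient of $Y^{n-1}$, so lies in $\Filt(\T \cup \{Y\})$ by induction. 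With this correction in place, your conclusion $\tor(Y) = \Filt(\Gen(Y)) \subseteq \Filt(\T \cup \{Y\})$ stands and the rest of the argument goes through unchanged.
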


    We now recall how \cref{thm:brick_label} relates to 2-term simple-minded collections. Once again, we note that the following theorem is simplified since we consider only the case where every torsion class is functorially finite. This result is essentially contained in \cite[Theorems~2.3 and~2.12]{asai}, but we give a short argument utilizing \cite[Corollary~5.1.8]{BTZ} and other results from \cite{asai}. Note that it is implicitly included in \cref{thm:torsion_smc}\ref{thm:torsion_smc_1} that the sets $\D(\T)$ and $\U(\T)$ are semibricks for any $\T \in \tors\Lambda$.

\begin{samepage}
\begin{theorem}\label{thm:torsion_smc}
    \hfill  
    \begin{enumerate}[(1)]
        \item\label{thm:torsion_smc_1} There are bijections $\sbrick\Lambda \rightarrow \tors\Lambda$ given by $\X \mapsto \tor(\X)$ and $\X \mapsto \lperp{\X}$. Their inverses are given by $\T \mapsto \D(\T)$ and $\T \mapsto \U(\T)$, respectively.
        \item\label{thm:torsion_smc_2} If $(\X,\Y) \in \sbp\Lambda$, then $\tor(\X) \subseteq \lperp{\Y}$, with equality if and only if $(\X,\Y) \in \smc\Lambda$. In particular, there is a bijection $\smc\Lambda \rightarrow \tors\Lambda$ given by $(\X,\Y) \mapsto \tor(\X) = \lperp{\Y}$. 
        The inverse is given by $\T \mapsto (\D(\T),\U(\T))$.
        \item\label{thm:torsion_smc_3} There are bijections $\smc\Lambda \rightarrow \sbrick\Lambda$ given by $(\X,\Y) \mapsto \X$ and $(\X,\Y) \mapsto \Y$.
    \end{enumerate}
\end{theorem}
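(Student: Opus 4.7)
The plan is to prove the three parts in the order they are stated, using the semidistributivity of $\tors\Lambda$ together with \cref{thm:brick_label} as the foundation.

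For part (1), I would translate the bijection $X \mapsto \tor(X)$ of \cref{thm:brick_label} into the language of the Galois graph of $\tors\Lambda$. By \cref{thm:brick_label}(3), an arrow $\tor(X) \to \tor(Y)$ in the Galois graph exists precisely when $X \neq Y$ and $\tor(X) \not\subseteq \kappa(\tor(Y)) = \lperp{Y}$. Since $\lperp{Y}$ is itself a torsion class, the latter is equivalent to $X \notin \lperp{Y}$, i.e., $\Hom_\Lambda(X,Y) \neq 0$. Thus the absence of both directed arrows between $\tor(X)$ and $\tor(Y)$ corresponds exactly to the semibrick condition on $\{X,Y\}$. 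Consequently, the bijection $X \mapsto \tor(X)$ identifies semibricks with the independent sets of the Galois graph, which are the faces of the canonical join complex of $\tors\Lambda$. Because every element of a finite semidistributive lattice has a unique canonical join representation, the map $\X \mapsto \tor(\X) = \bigvee_{X \in \X} \tor(X)$ is a bijection $\sbrick\Lambda \to \tors\Lambda$ with inverse $\T \mapsto \D(\T)$. A dual argument using $X \mapsto \lperp{X}$ and the canonical meet complex handles the other bijection with inverse $\T \mapsto \U(\T)$.

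For part (2), the forward inclusion $\tor(\X) \subseteq \lperp{\Y}$ is immediate: the semibrick-pair condition implies $\X \subseteq \lperp{\Y}$, and $\lperp{\Y}$ is a torsion class containing $\X$. For the equivalence between equality and being a 2-term simple-minded collection, I would invoke results of \cite{asai} and \cite[Corollary~5.1.8]{BTZ}. If $(\X, \Y) \in \smc\Lambda$, Asai's bijection associates to it a unique torsion class $\T$, and unwinding via part (1) forces $\X = \D(\T)$ and $\Y = \U(\T)$, yielding $\T = \tor(\X) = \lperp{\Y}$. Conversely, if $(\X, \Y)$ is a semibrick pair with $\tor(\X) = \lperp{\Y} =: \T$, then applying the two bijections of part (1) separately forces $\X = \D(\T)$ and $\Y = \U(\T)$, and the cited results guarantee that $(\D(\T), \U(\T))$ is a 2-term simple-minded collection. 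Part (3) is then a formal consequence: any $(\X, \Y) \in \smc\Lambda$ is determined by the torsion class $\T = \tor(\X) = \lperp{\Y}$, so composing the bijection of part (2) with the inverses $\D$ and $\U$ of part (1) yields the two required bijections $\smc\Lambda \to \sbrick\Lambda$.

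The main obstacle is the reverse direction of part (2): showing that for every $\T \in \tors\Lambda$, the pair $(\D(\T), \U(\T))$ is genuinely a 2-term simple-minded collection. Besides the Ext-vanishing $\Ext^1_\Lambda(\D(\T), \U(\T)) = 0$ (which goes beyond the Hom-vanishing that one reads off directly from the inclusions $\D(\T) \subseteq \T$ and $\U(\T) \subseteq \T^\perp$), one must confirm that the triangulated subcategory of $\Db(\mods\Lambda)$ generated by $\D(\T) \oplus \U(\T)[1]$ is all of $\Db(\mods\Lambda)$. This is the deep input from \cite{asai,BTZ}; once it is in place, everything else reduces to the combinatorics of canonical join and meet representations in semidistributive lattices.
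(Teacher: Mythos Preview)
Your proposal is correct. For part~(1) you take a genuinely different route from the paper: rather than citing \cite[Theorem~2.3]{asai} directly for the bijections and \cite[Proposition~1.26]{asai} together with \cite[Corollary~5.1.8]{BTZ} for the description of the inverses, you derive the bijection from \cref{thm:brick_label} by computing the Galois graph of $\tors\Lambda$ explicitly. Your observation that an arrow $\tor(X)\to\tor(Y)$ exists precisely when $\Hom_\Lambda(X,Y)\neq 0$ identifies semibricks with independent sets of the Galois graph, whence the bijection $\X\mapsto\tor(\X)=\bigvee_{X\in\X}\tor(X)$ with inverse $\D$ drops out of the general theory of canonical join representations in finite semidistributive lattices. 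This buys a more transparent lattice-theoretic explanation of \emph{why} the bijection holds, at the cost of still depending on \cref{thm:brick_label} (itself cited from \cite{BCZ,BTZ}); the paper's approach is shorter but more of a black box. For parts~(2) and~(3) your argument aligns with the paper's: both reduce the substantive point---that $(\D(\T),\U(\T))$ is a 2-term simple-minded collection, including the Ext-vanishing and the generation condition---to the results of \cite{asai}, and you correctly flag this as the genuine input that cannot be extracted from lattice combinatorics alone.
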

\end{samepage}

\begin{proof}\ 

    \ref{thm:torsion_smc_1} The fact that these associations are bijections is from \cite[Theorem~2.3]{asai}. The explicit description of the inverses follows from \cite[Proposition~1.26]{asai} and \cite[Corollary~5.1.8]{BTZ}.

    \ref{thm:torsion_smc_2} That $\tor(X) \subseteq \lperp{\Y}$ follows immediately from the definition. The fact that we have equality if and only if $(\X,\Y) \in \smc\Lambda$ then follows from \cite[Theorem~2.3]{asai}.

    \ref{thm:torsion_smc_3} This follows from \cite[Theorem~2.3(i)]{asai}.
\end{proof}

In particular, \cref{thm:torsion_smc} implies that any semibrick pair of the form $(\X,\emptyset)$ or $(\emptyset,\Y)$ is completable. For $(\X,\Y) \in \smc\Lambda$, we write $\tor(\X,\Y) = \tor(\X) = \lperp{\Y}$.

\subsection{Wide and Serre subcategories}\label{subsec:wide}

A subcategory $\W \subseteq \mods\Lambda$ is called \dfn{wide} if it is closed under extensions, kernels, and cokernels. Alternatively, a subcategory is wide if it is an exact-embedded abelian subcategory. We denote by $\wide\Lambda$ the set of wide subcategories of $\mods\Lambda$.

It is a classical result of \cite{ringel} that there is a bijection $\sbrick\Lambda \rightarrow \wide\Lambda$ given by $\X \mapsto \Filt(\X)$. The inverse sends a wide subcategory $\W$ to the set of objects that are simple in $\W$ (i.e., those $X \in \W$ that admit no proper submodules in $\W$). Combining this with \cref{thm:torsion_smc}, we see that, under the $\tau$-tilting finite hypothesis, there are two bijections $\tors\Lambda \rightarrow \wide\Lambda$ given by $\T \mapsto \Filt(\D(\T))$ and $\T \mapsto \Filt(\U(\T))$. The inverses of these bijections are given by $\W \mapsto \tor(\W)$ and $\W \mapsto \lperp{\W}$, respectively. Moreover, these bijections correspond to the so-called \emph{Ingalls--Thomas bijections} of \cite{IT,MS}; that is, we have the following. (See also \cite[Remark~4.20]{enomoto}, which relates this result to the rowmotion operators.)

\begin{proposition}[{\cite[Theorem~2.3]{asai}}]\label{prop:IT} Let $\T \in \tors\Lambda$. 
    \begin{enumerate}[(1)]
        \item We have $\Filt(\D(\T)) = \{M \in \T \mid \ker f \in \T\text{ for all }N \in \T\text{ and }f\colon N \rightarrow M\}$.
        \item We have $\Filt(\U(\T)) = \{M \in \T^\perp \mid \coker f \in \T^\perp\text{ for all } N \in \T^\perp\text{ and }f\colon M \rightarrow N\}$.
    \end{enumerate}
\end{proposition}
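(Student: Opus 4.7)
I would prove (1) by establishing $\Filt(\D(\T)) = W_L(\T)$, where $W_L(\T)$ denotes the right-hand side, via the strategy that both describe the same wide subcategory attached to $\T$ by the classical Ingalls--Thomas correspondence; (2) will then follow by duality. The first step is to verify $W_L(\T)$ is wide (i.e., closed under kernels, cokernels, and extensions), which is a standard snake-lemma diagram chase. Ringel's theorem (recalled in \cref{subsec:wide}) then reduces the equality $W_L(\T) = \Filt(\D(\T))$ to showing these two wide subcategories share the same set of simple objects, namely $\D(\T)$.

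For the inclusion $\D(\T) \subseteq \{\text{simples of } W_L(\T)\}$, fix a brick $X \in \D(\T)$ and a morphism $f\colon N \to X$ with $N \in \T$, and let $K = \ker f$ and $I = \image f$. Since $I$ is a quotient of $N \in \T$, we have $I \in \T$; combined with the inclusion $I \hookrightarrow X$, if $I \subsetneq X$, then \cref{lem:proper_quotient}(2) forces $I \in \T^\perp$, hence $I \in \T \cap \T^\perp = 0$ and $K = N \in \T$. In the remaining case when $f$ is surjective, I would take the canonical $(\T,\T^\perp)$-decomposition $0 \to t_\T K \to K \to f_\T K \to 0$ of $K$ and pass to the quotient by $t_\T K$ to obtain a short exact sequence $0 \to f_\T K \to N/t_\T K \to X \to 0$ in which $N/t_\T K \in \T$ (as a quotient of $N$) and $f_\T K \in \T^\perp$; exploiting the shard-label characterization of $X$ from \cref{thm:brick_label} (that $X$ labels the specific cover $\T \cap \lperp{X} \lessdot \T$) together with a further application of \cref{lem:proper_quotient}(2) forces $f_\T K = 0$, so $K \in \T$. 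This establishes $X \in W_L(\T)$, and simplicity within $W_L(\T)$ follows because any nonzero proper subobject $X' \subsetneq X$ lying in $W_L(\T) \subseteq \T$ would also lie in $\T^\perp$ by \cref{lem:proper_quotient}(2), contradicting $X' \neq 0$. Conversely, to show any simple $Y$ of $W_L(\T)$ lies in $\D(\T)$, I would verify via the shard-label characterization that the interval $[\T \cap \lperp{Y}, \T]$ contains no intermediate torsion class: any such intermediate would produce a proper subobject of $Y$ inside $W_L(\T)$, violating simplicity.

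For (2), I would argue by formal duality: the $K$-linear duality $D = \Hom_K(-,K)\colon\mods\Lambda \to \mods\Lambda^{\mathrm{op}}$ sends torsion classes to torsion-free classes, interchanges kernels with cokernels and injections with surjections, and exchanges the roles of $\D$ and $\U$, so translating (1) for the torsion class in $\mods\Lambda^{\mathrm{op}}$ corresponding to $\T^\perp \in \torsf\Lambda$ yields (2). The main obstacle is the surjective subcase in the second paragraph: ruling out a nonzero torsion-free part $f_\T K$ requires carefully exploiting the shard-label structure of $X$ beyond what \cref{lem:proper_quotient} alone provides, since the torsion-free part sits inside an element of $\T$ and one must leverage the specific position of $X$ as a cover label. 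Once this technical core is handled, the rest of the argument is a straightforward assembly of Ringel's theorem, \cref{thm:brick_label}, and routine diagram chasing.
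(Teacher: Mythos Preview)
The paper does not prove this proposition; it is cited from \cite[Theorem~2.3]{asai} as background, so there is no proof in the paper to compare your proposal against.

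Your strategy is sound and can be completed. For the surjective subcase you correctly flag as the main obstacle, the missing ingredient is the identity $\T \cap (\T')^\perp = \Filt(X)$ for the cover $\T' = \T \cap \lperp{X} \lessdot \T$, which is part of the brick-labeling results of \cite{BCZ}. With $M := N/t_\T K$ and $L := f_\T K$ in your notation, the composite $t_{\T'}M \hookrightarrow M \twoheadrightarrow X$ vanishes (as $t_{\T'}M \in \lperp{X}$), forcing $t_{\T'}M \subseteq L$; since $t_{\T'}M \in \T$ and $L \in \T^\perp$ this gives $t_{\T'}M = 0$, hence $M \in \T \cap (\T')^\perp = \Filt(X)$. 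Now take the bottom term $X \cong M_1 \hookrightarrow M$ of a $\Filt(X)$-filtration and compose with $M \twoheadrightarrow X$: the result is an endomorphism of the brick $X$, so it is either zero (forcing $X \cong M_1 \subseteq L \in \T^\perp$, impossible) or an isomorphism (splitting the sequence, so $L$ is a summand of $M \in \Filt(X)$, hence $L \in \T \cap \T^\perp = 0$).

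For the converse direction, your ``no intermediate torsion class'' idea works but needs one step you left implicit. If $Y$ is simple in $W_L(\T)$ and $f\colon M \to Y$ is nonzero with $M \in \T$, then $\image f \in W_L(\T)$ (kernels of maps $N \to \image f$ coincide with kernels of the composites $N \to Y$), and simplicity of $Y$ forces $\image f = Y$. Thus every nonzero map from $\T$ to $Y$ is surjective with kernel in $\T$; iterating on the kernel shows $\T = \Filt((\T\cap\lperp{Y}) \cup \{Y\})$, from which both the cover relation $\T \cap \lperp{Y} \lessdot \T$ and the identification of $Y$ as its brick label follow directly. Your phrasing ``any such intermediate would produce a proper subobject'' is the contrapositive of this, but the surjectivity lemma is what makes it go through.
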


\begin{remark}\label{rem:ff}
 Since $\Lambda$ is $\tau$-tilting finite, every wide subcategory of $\mods\Lambda$ is functorially finite by \cite[Corollary~3.11]{MS}. Equivalently (see \cite[Proposition~4.12]{enomoto_ff}), for every $\W \in \wide\Lambda$, there exists a finite-dimensional algebra $\Lambda'$ such that $\W$ is exact-equivalent to $\mods\Lambda'$.
 \end{remark}

We conclude this section with a brief discussion of Serre subcategories. A subcategory ${\W \subseteq \mods\Lambda}$ is called \dfn{Serre} if it is closed under extensions, submodules, and quotient modules. The following is well known.

\begin{proposition}\label{prop:serre}
    Let $\W \subseteq \mods\Lambda$. Then the following are equivalent.
    \begin{enumerate}[(1)]
        \item The subcategory $\W$ is Serre.
        \item The subcategory $\W$ is at least two of the following: a wide subcategory, a torsion class, and a torsion-free class.
        \item The subcategory $\W$ is a wide subcategory, a torsion class, and a torsion-free class.
        \item We have $\W = \Filt(\SS)$ for some set $\SS$ of modules that are simple in $\mods\Lambda$.
        \item There exists a projective module $P \in \mods\Lambda$ such that $\W = \rperp{P}$.
        \item There exists an injective module $I \in \mods\Lambda$ such that $\W = \lperp{I}$.
        \item The subcategory $\W$ is wide, and every object that is simple in $\W$ is also simple in $\mods\Lambda$.
    \end{enumerate}
\end{proposition}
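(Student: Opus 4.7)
The plan is to establish the equivalences in three blocks: first (1) $\Leftrightarrow$ (2) $\Leftrightarrow$ (3) from direct manipulation of closure properties; then (1) $\Leftrightarrow$ (4) $\Leftrightarrow$ (5), with (1) $\Leftrightarrow$ (6) by duality; and finally (1) $\Leftrightarrow$ (7) via the Ringel bijection $\sbrick\Lambda\to\wide\Lambda$ recalled in \cref{subsec:wide}. The main subtlety is showing that the combination ``wide and torsion class'' (or ``wide and torsion-free class'') already forces closure under the missing operation; once that is in hand, everything else reduces to standard module-theoretic facts available because $\Lambda$ is finite-dimensional.

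For the first block, (1) $\Rightarrow$ (3) $\Rightarrow$ (2) is immediate from definitions. The key step is (2) $\Rightarrow$ (1). Suppose $\W$ is wide and a torsion class; given a submodule $N \hookrightarrow M$ with $M \in \W$, the quotient $M/N$ lies in $\W$ by quotient-closure of torsion classes, and then $N = \ker(M \twoheadrightarrow M/N)$ lies in $\W$ by kernel-closure of wide subcategories. Dually, if $\W$ is wide and a torsion-free class, then $\W$ is closed under quotients using cokernel-closure. The remaining case (torsion class and torsion-free class) is immediate because the two definitions together give precisely closure under extensions, submodules, and quotients.

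For (1) $\Leftrightarrow$ (4), let $\SS$ be the set of simple modules of $\mods\Lambda$ belonging to a Serre subcategory $\W$; since each $M\in\W$ admits a composition series and the composition factors of any sub- or quotient module of $M$ are among those of $M$, the Jordan--H\"older theorem gives $\W=\Filt(\SS)$. Conversely, $\Filt(\SS)$ for a set of simples is extension-closed by definition and sub/quotient-closed by the same composition-factor argument. For (4) $\Rightarrow$ (5), take $P = \bigoplus_{S\notin \SS}P(S)$, where the sum runs over simple modules of $\mods\Lambda$ not in $\SS$ (a finite sum since $\Lambda$ is finite-dimensional); using $\Hom_\Lambda(P(S),M)\neq 0$ iff $S$ is a composition factor of $M$ gives $\rperp{P}=\Filt(\SS)$. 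The converse (5) $\Rightarrow$ (1) is standard: $\rperp{P}$ is closed under extensions and submodules by left-exactness of $\Hom_\Lambda(P,-)$ and under quotients by projectivity of $P$. Condition (6) is obtained by a dual argument using the injective envelope of each simple not in $\SS$.

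Finally, for (1) $\Rightarrow$ (7), observe that in a Serre subcategory a module is simple in $\W$ if and only if it is simple in $\mods\Lambda$, since every $\mods\Lambda$-submodule of an object of $\W$ is already an object of $\W$. Conversely, given (7), the Ringel bijection $\sbrick\Lambda\to\wide\Lambda$ recovers $\W$ as $\Filt(\X)$ where $\X$ is the semibrick of objects simple in $\W$; the hypothesis that every such object is simple in $\mods\Lambda$ then gives $\W = \Filt(\SS)$ for a set $\SS$ of simple $\Lambda$-modules, which is (4). The central technical hurdle is the first block (showing wide plus one side of a torsion pair suffices for Serre); the remaining implications are routine once composition series and projective covers are brought in.
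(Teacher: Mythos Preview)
The paper does not give a proof of this proposition; it simply introduces it as ``well known'' and states it without argument. Your proof is correct and follows the standard route one would expect for this folklore result. The argument for (2) $\Rightarrow$ (1) in the wide-plus-torsion-class case is the only place requiring a small trick (passing to the quotient first so that the kernel-closure of wideness can be applied), and you handle it cleanly; the remaining implications are, as you say, routine consequences of Jordan--H\"older and the exactness properties of $\Hom_\Lambda(P,-)$ for $P$ projective (dually for injectives). There is nothing to compare against in the paper itself.
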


Given a wide subcategory $\W$, we will also say that a wide subcategory $\W' \subseteq \W$ is \dfn{Serre in $\W$} if every object that is simple in $\W'$ is also simple in $\W$.

\section{Mutation of Semibrick Pairs}\label{sec:mutation}

In this section, we extend the theory of mutation of semibrick pairs established in \cite[Section~3]{HI_pairwise} and \cite[Section~3.4]{BaH_preproj}. This extends the mutation formulas for 2-term simple-minded collections from \cite[Section~7.2]{KY} and \cite[Section~3.7]{BY}. In those papers, one mutates a semibrick pair (resp.\ 2-term simple-minded collection) at a single brick. In the present paper, we extend this to be able to mutate at multiple bricks simultaneously.

Recall that $\Lambda$ denotes a $\tau$-tilting finite algebra, and let $(\X,\Y)$ be a semibrick pair. For $X \in \X$ and $\Y' \subseteq \Y$, denote by $g_{\Y',X}\colon Y'_X \rightarrow X$ a minimal right $\Filt(\Y')$-approximation of $X$. (Note that $Y'_X = 0$ if $\Y' = \emptyset$.) Likewise for $Y \in \Y$ and $\X' \subseteq \X$, denote by $g_{Y,\X'}\colon Y \rightarrow X'_Y$ a minimal left $\Filt(\X')$-approximation of $Y$. (Note that $X'_Y = 0$ if $\X' = \emptyset$.) We note that both $g_{\Y',X}$ and $g_{Y,\X'}$ exist and are unique up to isomorphism because $\Filt(\X')$ and $\Filt(\Y')$ are functorially finite wide subcategories; see \cref{subsec:ff,subsec:wide}.

\begin{definition}\label{def:SM}
    We say a semibrick pair $(\X,\Y)$ is \dfn{singly mutation (SM) compatible} if for all $X \in \X$, $Y\in\Y$, $\X'\subseteq\X$, and $\Y' \subseteq \Y$, the map $g_{\Y',X}$ is either injective or surjective and the map $g_{Y,\X'}$ is either injective or surjective.
\end{definition}

\begin{remark}
Note that SM compatibility is stronger than, but similar to, the notions of \emph{singly left mutation compatible} and \emph{singly right mutation compatible} used in \cite{BaH_preproj,HansonIgusa_picture,HI_pairwise}
\end{remark}

\begin{proposition}\label{prop:SM_compat}
    Every completable semibrick pair is SM compatible.
\end{proposition}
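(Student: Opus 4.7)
\emph{Proof proposal.}
The plan is to reduce to the (weaker) result from \cite{BaH_preproj, HansonIgusa_picture, HI_pairwise} establishing that every completable semibrick pair is singly left and singly right mutation compatible; that is, for a completable pair $(\X_0,\Y_0)$, each map $g_{\Y_0,X_0}$ for $X_0\in\X_0$ and each map $g_{Y_0,\X_0}$ for $Y_0\in\Y_0$ is injective or surjective. SM compatibility then differs from the old notions only in that one is allowed to take strict subsets of $\X$ and $\Y$ as the approximating families, so one expects to be able to pass to a suitable sub-pair and invoke the old result there.

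The first step I would record is that completability is hereditary under sub-pairs: if $(\X,\Y)$ is completable with completion $(\X^*,\Y^*)\in\smc\Lambda$, and if $\X_0\subseteq\X$ and $\Y_0\subseteq\Y$, then $(\X_0,\Y_0)$ is again a semibrick pair (any subset of a semibrick is a semibrick, and the vanishing conditions $\Hom_\Lambda(\X_0,\Y_0)=0=\Ext^1_\Lambda(\X_0,\Y_0)$ are inherited from $(\X,\Y)$) and is completable to the same $(\X^*,\Y^*)$. This observation follows directly from the definitions.

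With this in hand, the main argument is immediate. Given $X\in\X$ and $\Y'\subseteq\Y$, I would apply the prior result to the completable sub-pair $(\{X\},\Y')$; since the approximation $g_{\Y',X}$ depends only on $X$ and $\Y'$ and is computed identically in the sub-pair as in the original pair, this yields that $g_{\Y',X}$ is injective or surjective. Dually, for $Y\in\Y$ and $\X'\subseteq\X$, applying the prior result to $(\X',\{Y\})$ shows that $g_{Y,\X'}$ is injective or surjective. Taking these together is exactly SM compatibility of $(\X,\Y)$.

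I do not expect any serious obstacle here: the entire argument rests on the routine hereditariness of completability under sub-pairs, followed by a direct invocation of the prior singly-mutation-compatibility result. Should the prior result not be packaged in precisely the sub-pair form required, one can simply re-run its proof for the sub-pair $(\{X\},\Y')$ (respectively $(\X',\{Y\})$); the key representation-theoretic inputs—namely \cref{lem:proper_quotient} and \cref{prop:IT}, together with the fact that $\Filt(\Y')\subseteq(\T^*)^\perp$ for $\T^*=\tor(\X^*,\Y^*)$—are already available at this point in the paper and suffice to force the desired dichotomy on $g_{\Y',X}$ and $g_{Y,\X'}$.
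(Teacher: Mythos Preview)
Your reduction to sub-pairs is valid as a logical step, but the premise is off: the notions of ``singly left/right mutation compatible'' in \cite{BaH_preproj,HansonIgusa_picture,HI_pairwise} concern approximations by a \emph{single brick} (the special case $|\X'|=1$ or $|\Y'|=1$ of \cref{def:SM}), not by the full semibrick $\X_0$ or $\Y_0$. Applying that result to the sub-pair $(\{X\},\Y')$ therefore only tells you that each $g_{\{Y'\},X}$ for individual $Y'\in\Y'$ is mono or epi---not that the approximation $g_{\Y',X}$ by all of $\Filt(\Y')$ is. Knowing the single-brick approximations are each mono or epi does not formally imply the same for the multi-brick approximation, so the citation does not close the gap.

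Your fallback, however, is exactly right and is what the paper does: pass to a completion $(\X,\Y)\in\smc\Lambda$ with torsion class $\T=\tor(\X)$; if $g_{Y,\X'}$ is not injective then its image is a proper quotient of $Y$ and hence lies in $\T$ by \cref{lem:proper_quotient}; then \cref{prop:IT} forces the image into $\Filt(\X)$, and since $\Filt(\X')$ is Serre in $\Filt(\X)$, minimality forces $g_{Y,\X'}$ to be surjective (the case of $g_{\Y',X}$ being dual). Once you have these ingredients the argument runs directly for arbitrary $\X'\subseteq\X$ and $\Y'\subseteq\Y$, so the detour through sub-pairs buys nothing.
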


\begin{proof}
    It suffices to prove the result only for 2-term simple-minded collections. We use an argument similar to that of \cite[Proposition~5.2.1]{BTZ}. Let $(\X,\Y) \in \smc\Lambda$.
    
    Let $(\X,\Y) \in \smc\Lambda$, and denote $\T := \tor(\X)$ and $\F := \Filt(\Cogen \Y)$. Recall from \cref{thm:torsion_smc} that $(\T,\F)$ is a torsion pair, that $\T = \lperp{\Y}$ and $\F = \rperp{\X}$, and that $\X = \D(\T)$ and $\Y = \U(\T)$.
    
    Let $\X' \subseteq \X$ and $Y \in \Y$, and suppose that $g_{Y,\X'}$ is not injective.
    Then $Z := \mathrm{im}(g_{Y,\X'}) \in \T$ by \cref{lem:proper_quotient}. Denote by $\iota\colon Z \rightarrow X'_Y$ the inclusion map. Then for $W \in \T$ and $f\colon W \rightarrow Z$, we have $\ker(f) = \ker(\iota \circ f)$. Since $X'_Y \in \Filt(\X') \subseteq \Filt(\D(\T))$, \cref{prop:IT} implies that $\ker(f) \in \T$, and therefore that $Z \in \Filt(\X)$. The fact that $\Filt(\X')$ is Serre in $\Filt(\X)$ thus implies that $Z \in \Filt(\X')$. By the minimality of $g_{Y,\X'}$, we conclude that $Z = X'_Y$; i.e., $g_{Y,\X'}$ is surjective. The argument that each map $g_{X,\Y'}$ is either injective or surjective is dual.
\end{proof}

We now prepare to define the mutation of an SM compatible semibrick pair $(\X,\Y)$ at either a subset $\X' \subseteq \X$ or a subset $\Y' \subseteq \Y$. We will first formulate the mutation formulas within the category $\mods\Lambda$ (\cref{def:mutation}) and then explain how they can be restated using the language of the bounded derived category (\cref{rem:mutation_def}).

Fix an SM compatible semibrick pair $(\X,\Y)$ and subsets $\X'\subseteq\X$ and $\Y'\subseteq\Y$. Let $X \in \X \setminus \X'$. Since the wide subcategory $\Filt(\X)$ is functorially finite, it is well known that it contains a projective generator; that is, there exists $P_\X \in \Filt(\X)$ such that $\Filt(\X)$ is the subcategory obtained by closing $\add(P_\X)$ under cokernels. See \cite[Proposition~4.12]{enomoto_ff} for an explicit proof. Thus, we can consider a short exact sequence
    $$\Omega_{\X}X \hookrightarrow P_{\X,X} \twoheadrightarrow X,$$
where $P_{\X,X} \in \add(P_\X)$ and $\Omega_{\X}X \in \Filt(\X)$ are the projective cover and first syzygy of $X$ in the wide subcategory $\Filt(\X)$. Let $\gamma_{X,\X'}\colon \Omega_\X X \rightarrow X'_X$ be a minimal left $\Filt(\X')$-approximation of $\Omega_\X X$. Then $\gamma_{X,\X'}$ is surjective because $\Filt(\X')$ is a Serre subcategory of $\Filt(\X)$. Hence, we can form the following pushout diagram:
\begin{equation}\label{eqn:mutation_1}
    \begin{tikzcd}
        & \Omega_\X X \arrow[r,hookrightarrow]\arrow[d,two heads,"\gamma_{X,\X'}"] & P_{\X,X} \arrow[r,two heads]\arrow[d,two heads] & X \arrow[d,equals]\\
        \eta_{X,\X'}: &  X'_{X} \arrow[r,hookrightarrow] & E_{X,\X'} \arrow[r,two heads] & X.
    \end{tikzcd}
\end{equation}
We can consider the short exact sequence $\eta_{X,\X'} \in \Ext^1_\Lambda(X,X'_X)$ as a morphism $X \rightarrow X'_X[1]$ in the bounded derived category $\Db(\mods\Lambda)$. Then $\eta_{X,\X'}[1]$ is a minimal left $\Filt(\X')$-approximation of $X[-1]$ with cone $E_{X,\X'}$.

Dually, let $Y \in \Y \setminus \Y'$. Since $\Filt(\Y)$ is functorially finite, it contains an injective cogenerator; that is, there exists $I_\Y \in \Filt(\Y)$ such that $\Filt(\Y)$ is the subcategory obtained by closing $\add(I_\Y)$ under kernels. Thus, we can consider a short exact sequence
    $$Y \hookrightarrow I_{\Y,Y} \twoheadrightarrow \Sigma_{\Y} Y,$$
where $Y_{\Y,Y}$ and $\Sigma_\Y Y$ are the injective envelope and first cosyzygy of $Y$ in the wide subcategory $\Filt(\Y)$. Let $\gamma_{\Y',Y}\colon Y'_Y \rightarrow \Sigma_\Y Y$ be a minimal right $\Filt(\Y')$-approximation of $\Sigma_\Y Y$. Then $\gamma_{\Y',Y}$ is injective because $\Filt(\Y')$ is a Serre subcategory of $\Filt(\Y)$. Hence, we can form the following pullback diagram:
\begin{equation}\label{eqn:mutation_2}
    \begin{tikzcd}
        \eta_{\Y',Y}: & Y \arrow[r,hookrightarrow]\arrow[d,equals] & E_{\Y',Y} \arrow[r,two heads]\arrow[d,hookrightarrow] & Y'_Y \arrow[d,hookrightarrow,"\gamma_{\Y',Y}"]\\
        &  Y \arrow[r,hookrightarrow] & I_{\Y,Y} \arrow[r,two heads] & \Sigma_\Y Y.
    \end{tikzcd}
\end{equation}
As above, we can consider the short exact sequence $\eta_{\Y',Y} \in \Ext^1_\Lambda(Y'_Y,Y)$ as a morphism $Y'_Y \rightarrow Y[1]$ in the bounded derived category $\Db(\mods\Lambda)$. Then $\eta_{\Y',Y}$ is a minimal right $\Filt(\Y)$-approximation of $Y[1]$ with cocone $E_{\Y',Y}$.

We now present our generalized definition.

\begin{definition}\label{def:mutation}
    Let $(\X,\Y)$ be an SM compatible semibrick pair.
    \begin{enumerate}[(1)]
        \item\label{def:mutation_1} Let $\X' \subseteq \X$. Let
        \begin{eqnarray*}
                \mu_{\X'}(\X,\Y)_d &=& \{\coker(g_{Y,\X'}) \mid g_{Y,\X'} \text{ is injective}\} \cup \{E_{X,\X'} \mid X \in \X \setminus \X'\};\\
                \mu_{\X'}(\X,\Y)_u &=& \{\ker(g_{Y,\X'}) \mid g_{Y,\X'} \text{ is surjective}\} \cup \X'.
        \end{eqnarray*}
        The \dfn{left mutation} of $(\X,\Y)$ at $\X'$ is $\mu_{\X'}(\X,\Y) = (\mu_{\X'}(\X,\Y)_d,\mu_{\X'}(\X,\Y)_u)$.
    
        \item\label{def:mutation_2} Let $\Y' \subseteq \Y$. Let
            \begin{eqnarray*}
                \mu_{\Y'}(\X,\Y)_d &=& \{\coker(g_{\Y',X}) \mid g_{\Y',X} \text{ is injective}\} \cup \Y';\\
                \mu_{\Y'}(\X,\Y)_u &=& \{\ker(g_{\Y',X}) \mid g_{\Y',X} \text{ is surjective}\} \cup \{E_Y \mid Y \in \Y \setminus \Y'\}.
            \end{eqnarray*}
            The \dfn{right mutation} of $(\X,\Y)$ at $\Y'$ is $\mu_{\Y'}(\X,\Y) = (\mu_{\Y'}(\X,\Y)_d,\mu_{\Y'}(\X,\Y)_u)$.
        \end{enumerate}
\end{definition}

We note that $\mu_{\X'}(\X,\Y)$ and $\mu_{\Y'}(\X,\Y)$ are well defined by the existence and uniqueness (up to isomorphism) of projective covers, injective envelopes, minimal left $\Filt(\X')$-approximations, minimal right $\Filt(\Y')$-approximations, finite limits, and finite colimits. Note also that ${\mu_\emptyset(\X,\Y) = (\X,\Y)}$ by the definition of either left or right mutation. Since $\X \cap \Y = \emptyset$ for any semibrick pair, there is therefore no ambiguity in the notation.

Recalling that if $f\colon  M \rightarrow N$ is a monomorphism (resp.\ epimorphism) in $\mods\Lambda$, then its cone (resp.\ cocone) in $\Db(\mods\Lambda)$ is $\coker f$ (resp.\ $\ker f[1]$), we have the following reformulation of \cref{def:mutation}.

\begin{remark}\label{rem:mutation_def}
    Let $(\X,\Y) \in \sbp\Lambda$ be SM compatible, and consider $\X \oplus \Y[1] \in \Db(\mods\Lambda)$.
    \begin{enumerate}[(1)]
        \item Let $\X' \subseteq \X$. For $Z \in (\X \setminus \X') \cup \Y[1]$, let $g_{Z,\X'}: Z[-1] \rightarrow X'_Z$ be a minimal left $\Filt(\X')$-approximation in $\Db(\mods\Lambda)$. Then 
        $$\mu_{\X'}(\X,\Y)_d \oplus \mu_{\X'}(\X,\Y)_u = \{\mathrm{cone}(g_{Z,\X'}) \mid Z \in (\X \setminus \X') \cup \Y[1]\} \oplus \X'[1]\}.$$
        \item Let $\Y' \subseteq \Y$. For $Z \in (\Y[1] \setminus \Y'[1]) \cup \X$, let $g_{\Y',Z}[1]\colon  \Y'_Z[1] \rightarrow Z[1]$ be a minimal right $\Filt(\Y')[1]$-approximation in $\Db(\mods\Lambda)$. Then 
        $$\mu_{\Y'}(\X,\Y)_d \oplus \mu_{\Y'}(\X,\Y)_u = \{\mathrm{cocone}(g_{\Y',Z}[1]) \mid Z \in (\Y[1] \setminus \Y'[1]) \cup \X\} \oplus \Y'.$$
    \end{enumerate}
    In particular, in the cases $|\X'| = 1$ and $|\Y'| = 1$, \cref{def:mutation_1,def:mutation_2} of \cref{def:mutation} correspond to the notions of left and right mutations of semibrick pairs from \cite[Section~3]{HI_pairwise}.
\end{remark}

We conclude this section by stating the following result, which justifies the name \emph{mutation} and tabulates several useful facts about mutations of semibrick pairs. In \cref{sec:appendix}, we prove \cref{thm:mutation_summary} in multiple steps using arguments similar to those appearing in \cite[Section~3]{HI_pairwise} and \cite[Section~7.2]{KY}.

\begin{theorem}\label{thm:mutation_summary}
    Let $(\X,\Y) \in \sbp\Lambda$ be SM compatible. Let $\X' \subseteq \X$, and denote $\X_1 = \mu_{\X'}(\X,\Y)_d$ and $\Y_1 = \mu_{\X'}(\X,\Y)_u$. Then the following hold.
        \begin{enumerate}[(1)]
            \item\label{thm:mutation_1a} $(\X_1,\Y_1)$ is a semibrick pair.
            \item\label{thm:mutation_1b} If $(\X_1',\Y_1')$ is an SM compatible semibrick pair such that $\X_1 \subseteq \X'_1$ and $\Y_1 \subseteq \Y'_1$, then $\X \subseteq \mu_{\X'}(\X'_1,\Y'_1)_d$ and $\Y \subseteq \mu_{\X'}(\X'_1,\Y'_1)_u$.
            \item\label{thm:mutation_1c} $(\X,\Y) \in \smc\Lambda$ if and only if $(\X_1,\Y_1) \in \smc\Lambda$.
            \item\label{thm:mutation_1d} $(\X,\Y)$ is completable if and only if $(\X_1,\Y_1)$ is completable.
            \item\label{thm:mutation_1e} If $(\X,\Y) \in \smc\Lambda$, then $\tor(\mu_{\X'}(\X,\Y)) = \tor(\X,\Y) \cap \lperp{\X'}.$
        \end{enumerate}
            Similarly, let $\Y' \subseteq \Y$, and denote $\X_2 = \mu_{\Y'}(\X,\Y)_d$ and $\Y_2 = \mu_{\Y'}(\X,\Y)_u$. Then the following hold.
        \begin{enumerate}[(1*)]
            \item\label{thm:mutation_2a} $(\X_2,\Y_2)$ is a semibrick pair.
            \item\label{thm:mutation_2b} If $(\X_2',\Y_2')$ is an SM compatible semibrick pair such that $\X_2 \subseteq \X'_2$ and $\Y_2 \subseteq \Y'_2$, then $\X \subseteq \mu_{\Y'}(\X'_2,\Y'_2)_d$ and $\Y \subseteq \mu_{\Y'}(\X'_2,\Y'_2)_u$.
            \item\label{thm:mutation_2c} $(\X,\Y) \in \smc\Lambda$ if and only if $(\X_2,\Y_2) \in \smc\Lambda$.
            \item\label{thm:mutation_2d} $(\X,\Y)$ is completable if and only if $(\X_2,\Y_2)$ is completable.
            \item\label{thm:mutation_2e} If $(\X,\Y) \in \smc\Lambda$, then $\tor(\mu_{\Y'}(\X,\Y)) = \Filt(\Y' \cup \tor(\X,\Y))$.
        \end{enumerate}
\end{theorem}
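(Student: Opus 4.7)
The plan is to prove the unstarred statements about left mutation; the starred statements about right mutation then follow by the duality between $\tors\Lambda$ and $\torsf\Lambda$ (equivalently, by passing to the opposite algebra), which interchanges the two sides of $\Hom$ and of semibrick pairs. The main workspace is the bounded derived category $\Db(\mods\Lambda)$, via the reformulation in \cref{rem:mutation_def}, in which mutating at $\X' \subseteq \X$ amounts to taking cones of minimal left $\Filt(\X')$-approximations of the objects in $(\X \setminus \X') \cup \Y[1]$ together with the shift $\X'[1]$. Since $\Filt(\X')$ is a functorially finite wide subcategory (\cref{rem:ff}) whose simples are $\X'$, its homological algebra is that of a module category and interacts cleanly with that of $\mods\Lambda$.

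For part (1), one verifies that every object in $\X_1 \cup \Y_1$ is a brick and that the required Hom and $\Ext^1$ orthogonality hold, by applying long exact sequences to the defining short exact sequences and using SM compatibility. For example, when $g_{Y,\X'}$ is surjective with kernel $K_Y$, the sequence $K_Y \hookrightarrow Y \twoheadrightarrow X'_Y$, combined with the defining lifting property of the minimal approximation $g_{Y,\X'}$, forces $\Hom(K_Y, X) = 0$ for each $X \in \X'$; analogous computations handle the remaining orthogonality and brick conditions in a manner parallel to the single-mutation case \cite{HI_pairwise, KY}. For part (2), use the universal property of the pushout in \eqref{eqn:mutation_1} and its dual: given any SM-compatible completion $(\X_1', \Y_1')$ of $(\X_1, \Y_1)$, the mutation $\mu_{\X'}(\X_1', \Y_1')$ produces approximation diagrams whose cones reconstruct the elements of $\X$ and $\Y$.

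For part (3), work in $\Db(\mods\Lambda)$: by \cref{rem:mutation_def}, $\X_1 \oplus \Y_1[1]$ equals $\{\mathrm{cone}(g_{Z,\X'}) : Z \in (\X \setminus \X') \cup \Y[1]\} \oplus \X'[1]$. Combining the semibrick-pair structure from part (1) with the triangle relating each $Z$, its cone, and the corresponding $X'_Z[1] \in \Filt(\X')[1]$, the triangulated subcategories of $\Db(\mods\Lambda)$ generated by $\X \oplus \Y[1]$ and by $\X_1 \oplus \Y_1[1]$ coincide, so one set is a 2-term simple-minded collection if and only if the other is. Part (4) then follows by combining (2) and (3): an SMC completion of $(\X,\Y)$ mutates to one of $(\X_1,\Y_1)$, and conversely via back-mutation.

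For part (5), \cref{thm:torsion_smc}(2) applied to the SMC $(\X_1, \Y_1)$ gives $\tor(\mu_{\X'}(\X,\Y)) = \lperp{\Y_1}$, so it suffices to show $\lperp{\Y_1} = \lperp{\Y} \cap \lperp{\X'} = \tor(\X,\Y) \cap \lperp{\X'}$. The inclusion $\lperp{\Y_1} \subseteq \lperp{\X'}$ is immediate since $\X' \subseteq \Y_1$, so also $\lperp{\Y_1} \subseteq \lperp{\Filt(\X')}$; for $M \in \lperp{\Y_1}$ and $Y \in \Y$, if $g_{Y,\X'}$ is surjective then the sequence $K_Y \hookrightarrow Y \twoheadrightarrow X'_Y$ with $K_Y \in \Y_1$ and $X'_Y \in \Filt(\X')$ yields $\Hom(M, Y) = 0$ by left-exactness, and if $g_{Y,\X'}$ is injective then $Y \hookrightarrow X'_Y$ gives $\Hom(M, Y) \hookrightarrow \Hom(M, X'_Y) = 0$. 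Conversely, any $M \in \lperp{\Y} \cap \lperp{\X'}$ satisfies $\Hom(M, K_Y) \subseteq \Hom(M, Y) = 0$ and $\Hom(M, \X') = 0$, hence $M \in \lperp{\Y_1}$. The main obstacle is part (3): while single mutation ($|\X'|=1$) is classical, verifying simultaneous mutation at a subset $\X'$ requires a careful derived-category argument to establish both Hom/Ext orthogonality and triangulated generation in one stroke, and it is here that \eqref{eqn:mutation_1} together with the wide-subcategory structure of $\Filt(\X')$ do the essential work.
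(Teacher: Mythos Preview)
Your outline is broadly correct and, interestingly, reverses the paper's methodological choice: the paper explicitly opts to work ``almost exclusively in the category $\mods\Lambda$'' and only invokes $\Db(\mods\Lambda)$ for one technical lemma, whereas you propose to stay in the derived category throughout via \cref{rem:mutation_def}, following \cite{KY,HI_pairwise}. Both routes are viable; the paper's module-category approach makes the short exact sequences \eqref{eqn:mutation_1}--\eqref{eqn:mutation_2} do most of the work, while your approach would uniformly phrase everything as cone computations. Your arguments for parts (3), (4), and (5) match the paper's essentially verbatim.

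There is one genuine subtlety you gloss over in part (1). To show that $E_{X,\X'}$ (for $X \in \X \setminus \X'$) satisfies $\Ext^1_\Lambda(E_{X,\X'},Z) = 0$ for $Z \in \Filt(\X')$, the long exact sequence of $\eta_{X,\X'}$ sandwiches this Ext group between $\Ext^1_\Lambda(X'_X,Z)$ and $\Ext^2_\Lambda(X,Z)$, and you need the connecting map $\Ext^1_\Lambda(X'_X,Z) \to \Ext^2_\Lambda(X,Z)$ to be injective. This is not formal: the paper isolates it as a separate lemma and proves it by invoking the realization functor of \cite[Lemma~7.8]{KY} to compare $\Ext^2$ in $\mods\Lambda$ with $\Ext^2$ in an auxiliary abelian category $\mods\Gamma$ where $\Filt(\X)$ sits as a Serre subcategory. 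In your derived-category language the same obstruction appears as needing to control $\Hom_{\Db}(\mathrm{cone},\X'[2])$, and the same auxiliary construction (or an equivalent $t$-structure argument) is required. Your sketch should flag this step rather than fold it into ``analogous computations.''
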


\section{Pop-Stack Operators for Torsion Classes}\label{sec:pop-stack_description}

Recall the definition of the pop-stack and dual pop-stack operators from \Cref{sec:intro}. For $\Lambda$ a $\tau$-tilting finite algebra, recall the description of $\popdown_{\tors\Lambda}$ and $\popup_{\tors\Lambda}$ from \cref{cor:brick_label}. In this section, we study further properties of these operators. In \cref{subsec:mutation_pop}, we explain how the pop-stack operators interact with the mutation of 2-term simple-minded collection. In \cref{subsec:preimages}, we describe preimages under the pop-stack operators. In \cref{subsec:image}, we describe the images of the pop-stack operators.

\subsection{Pop-stack and mutation}\label{subsec:mutation_pop}

We now prove our first main result, which describes the relationship between the pop-stack operators and the mutation of 2-term simple-minded collections.

\begin{theorem}\label{thm:pop_mutation}
    For $(\X,\Y) \in \smc\Lambda$, let $\mu_\downarrow(\X,\Y) := \mu_\X(\X,\Y)$ and $\mu_\uparrow(\X,\Y) := \mu_\Y(\X,\Y)$. There are commutative diagrams as follows:
    $$
    \begin{tikzcd}
        \smc \Lambda \arrow[r,"\tor(-)"] \arrow[d,"\mu_\downarrow(-)"] & \tors\Lambda\arrow[d,"\popdown(-)"] && \smc \Lambda \arrow[r,"\tor(-)"] \arrow[d,"\mu_\uparrow(-)"] & \tors\Lambda\arrow[d,"\popup(-)"]\\
        \smc\Lambda \arrow[r,"\tor(-)"] & \tors\Lambda && \smc\Lambda \arrow[r,"\tor(-)"] & \tors\Lambda.
    \end{tikzcd}
    $$
\end{theorem}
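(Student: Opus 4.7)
The proof proposal is essentially to assemble results already stated in the excerpt, since the commutative diagrams reduce to identifying two descriptions of the same torsion class. The plan is as follows.

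First, I would verify that the mutations $\mu_\downarrow(\X,\Y) = \mu_\X(\X,\Y)$ and $\mu_\uparrow(\X,\Y) = \mu_\Y(\X,\Y)$ are even well defined for $(\X,\Y) \in \smc\Lambda$. Every $2$-term simple-minded collection is trivially completable (by itself), so Proposition~\ref{prop:SM_compat} shows that $(\X,\Y)$ is SM compatible. Hence \cref{def:mutation} applies. Moreover, by \cref{thm:mutation_summary}\ref{thm:mutation_1c} and \ref{thm:mutation_2c}, both $\mu_\downarrow(\X,\Y)$ and $\mu_\uparrow(\X,\Y)$ lie in $\smc\Lambda$, so the compositions $\tor \circ \mu_\downarrow$ and $\tor \circ \mu_\uparrow$ make sense as maps $\smc\Lambda \to \tors\Lambda$.

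Next, set $\T = \tor(\X,\Y) = \tor(\X) = \lperp{\Y}$. By \cref{thm:torsion_smc}\ref{thm:torsion_smc_2}, the inverse to $\tor$ sends $\T$ to $(\D(\T),\U(\T))$, so $\D(\T) = \X$ and $\U(\T) = \Y$. Combining this with \cref{cor:brick_label} gives
\[
\popdown_{\tors\Lambda}(\T) \;=\; \T \cap \lperp{\X}
\qquad\text{and}\qquad
\popup_{\tors\Lambda}(\T) \;=\; \Filt(\T \cup \Y).
\]

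Now I would apply parts \ref{thm:mutation_1e} and \ref{thm:mutation_2e} of \cref{thm:mutation_summary}, which compute the torsion classes of the mutated collections as
\[
\tor(\mu_\downarrow(\X,\Y)) \;=\; \tor(\X,\Y) \cap \lperp{\X} \;=\; \T \cap \lperp{\X},
\]
\[
\tor(\mu_\uparrow(\X,\Y)) \;=\; \Filt\bigl(\Y \cup \tor(\X,\Y)\bigr) \;=\; \Filt(\T \cup \Y).
\]
Comparing the two pairs of displayed equalities gives $\tor(\mu_\downarrow(\X,\Y)) = \popdown_{\tors\Lambda}(\tor(\X,\Y))$ and $\tor(\mu_\uparrow(\X,\Y)) = \popup_{\tors\Lambda}(\tor(\X,\Y))$, which is exactly the commutativity of the two diagrams.

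The main obstacle is not in the present argument but rather in its inputs: the substantive work lies in establishing \cref{thm:mutation_summary}, and in particular parts \ref{thm:mutation_1e} and \ref{thm:mutation_2e} that identify the torsion classes of the mutated simple-minded collections; this is deferred to \cref{sec:appendix}. Given those results together with \cref{cor:brick_label} and \cref{thm:torsion_smc}, the present theorem is a short assembly of equalities.
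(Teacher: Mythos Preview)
Your proposal is correct and follows essentially the same approach as the paper: both arguments combine \cref{thm:mutation_summary}\ref{thm:mutation_1e},\ref{thm:mutation_2e} with \cref{cor:brick_label} (and the identification $\D(\T)=\X$, $\U(\T)=\Y$ from \cref{thm:torsion_smc}) to match the two descriptions of the torsion class. Your version is slightly more explicit about well-definedness, but otherwise the proofs coincide.
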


\begin{proof}
    Let $(\X,\Y) \in \smc\Lambda$. Then by \cref{thm:mutation_summary}\ref{thm:mutation_1e} and \cref{cor:brick_label}, we have that
    \begin{eqnarray*}
        \tor(\mu_\downarrow(\X,\Y)) &=& \tor(\X) \cap \lperp{\X}\\
            &=& \tor(\X) \cap \left(\bigcap \left\{\tor(\X) \cap \lperp{X} \mid X \in \X\right\}\right)\\
            &=& \popdown(\tor(\X,\Y)).
    \end{eqnarray*}
    This shows that the left diagram commutes. Similarly, \cref{thm:mutation_summary}\ref{thm:mutation_2e} and \cref{cor:brick_label} imply that
    \begin{eqnarray*}
        \tor(\mu_\uparrow(\X,\Y)) &=& \Filt(\lperp\Y \cup \Y)\\
            &=& \Filt\left(\lperp{\Y} \cup \left(\bigcup \left\{\Filt(\lperp{\Y} \cup Y)\mid Y \in \Y\right\}\right)\right)\\
            &=& \popup(\tor(\X,\Y)).
    \end{eqnarray*}
    This shows that the right diagram commutes.
\end{proof}

\begin{remark}\label{rem:ungar}
    One can also generalize the proof of \cref{thm:pop_mutation} to show that the torsion classes of the form $\tor(\mu_{\X'}(\X,\Y))$ (with $\X' \subseteq \X$) are precisely those obtained by intersecting $\tor(\X)$ with a subset of the torsion classes it covers. (The dual result likewise holds for right mutation.) In the language of \cite{ungar_chains,ungar_games}, these are precisely the torsion classes obtained by applying \emph{Ungar moves} to $\tor(\X)$. Intervals of the form $[\tor(\mu_{\X'}(\X,\Y)),\tor(\X)]$ have also appeared in many of the previously-cited papers on lattices of torsion classes such as \cite{AP,facial_torsion}.
\end{remark}

\subsection{Preimages under the pop-stack operators}\label{subsec:preimages}

We now characterize the preimages of a given torsion class under the pop-stack operator and its dual. As a consequence, we describe the 1-pop-stack sortable elements and the 2-pop-stack sortable elements.

\begin{theorem}\label{thm:preimage}
    Let $\T, \T' \in \tors\Lambda$. Then  $\popdown_{\tors\Lambda}(\T') = \T$ if and only if both of the following hold.
        \begin{enumerate}[(1)]
            \item\label{thm:preimage_1a} There is an inclusion $\D(\T') \subseteq \U(\T)$.
            \item\label{thm:preimage_1b} For every $X \in \D(\T)$, there exists $Z \in \Filt(\D(\T'))$ admitting a surjection $Z \twoheadrightarrow X$.
        \end{enumerate}
        Moreover, if $\popdown_{\tors\Lambda}(\T') = \T$, then $(\D(\T'),\U(\T')) = \mu_{\D(\T')}(\D(\T),\U(\T))$.
        
        \noindent Dually, $\popup_{\tors\Lambda}(\T) = \T'$ if and only if both of the following hold.
        \begin{enumerate}[(1*)]
            \item There is an inclusion $\U(\T) \subseteq \D(\T')$. 
            \item For every $Y \in \U(\T')$, there exists $Z \in \Filt(\U(\T))$ admitting an injection $X \hookrightarrow Z$.
        \end{enumerate}
        Moreover, if $\popup_{\tors\Lambda}(\T) = \T'$, then $(\D(\T),\U(\T)) = \mu_{\U(\T)}(\D(\T'),\U(\T'))$.
\end{theorem}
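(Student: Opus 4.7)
The plan is to leverage \cref{thm:pop_mutation}, which identifies $\popdown_{\tors\Lambda}(\T')$ with $\tor(\mu_{\D(\T')}(\D(\T'),\U(\T')))$, together with the bijection between $\smc\Lambda$ and $\tors\Lambda$ supplied by \cref{thm:torsion_smc}, to convert the lattice-theoretic identity $\popdown_{\tors\Lambda}(\T')=\T$ into an equation between explicit 2-term simple-minded collections. The dual statement about $\popup_{\tors\Lambda}$ will follow by a parallel argument (or by passing to the opposite algebra).

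For the forward direction, assume $\popdown_{\tors\Lambda}(\T')=\T$. Then \cref{thm:pop_mutation} combined with the injectivity of the map $\tor\colon\smc\Lambda\to\tors\Lambda$ from \cref{thm:torsion_smc}\ref{thm:torsion_smc_2} yields the equality $(\D(\T),\U(\T))=\mu_{\D(\T')}(\D(\T'),\U(\T'))$ of 2-term simple-minded collections. Unpacking \cref{def:mutation_1} of \cref{def:mutation}, the second component of this left mutation automatically contains $\D(\T')$, giving \ref{thm:preimage_1a}. Moreover, each $X\in\D(\T)$ arises as $\coker(g_{Y,\D(\T')})$ for some injective approximation $Y\hookrightarrow Z_Y$ with $Z_Y\in\Filt(\D(\T'))$, and the surjection $Z_Y\twoheadrightarrow X$ witnesses \ref{thm:preimage_1b}. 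For the ``moreover'' clause, \ref{thm:preimage_1a} lets us interpret $\mu_{\D(\T')}(\D(\T),\U(\T))$ as a right mutation, and \cref{thm:mutation_summary}\ref{thm:mutation_1b} produces componentwise containment of $(\D(\T'),\U(\T'))$ in this right mutation. Since both pairs are 2-term simple-minded collections of common total size $|\Lambda|$, containment upgrades to equality.

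For the backward direction, assume \ref{thm:preimage_1a} and \ref{thm:preimage_1b}, so that $\mu_{\D(\T')}(\D(\T),\U(\T))$ is well-defined as a right mutation. The crucial observation is that \ref{thm:preimage_1b} forces each minimal right $\Filt(\D(\T'))$-approximation $g_{\D(\T'),X}$ (for $X\in\D(\T)$) to be surjective: any surjection $Z\twoheadrightarrow X$ with $Z\in\Filt(\D(\T'))$ factors through $g_{\D(\T'),X}$, which therefore is itself surjective. Substituting into \cref{def:mutation_2} of \cref{def:mutation} makes the cokernel contribution empty, so $\mu_{\D(\T')}(\D(\T),\U(\T))_d=\D(\T')$. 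The mutated pair is a 2-term simple-minded collection by \cref{thm:mutation_summary}\ref{thm:mutation_2c}, and 2-term simple-minded collections are determined by their first component by \cref{thm:torsion_smc}\ref{thm:torsion_smc_3}, so $\mu_{\D(\T')}(\D(\T),\U(\T))=(\D(\T'),\U(\T'))$. The involutivity statement \cref{thm:mutation_summary}\ref{thm:mutation_2b}, again combined with size-matching, then gives $\mu_{\D(\T')}(\D(\T'),\U(\T'))=(\D(\T),\U(\T))$ as a left mutation, and \cref{thm:pop_mutation} converts this back to $\popdown_{\tors\Lambda}(\T')=\T$.

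The main obstacle is the bookkeeping needed to track the two roles played by $\D(\T')$: it indexes a left mutation as a subset of the first component of $(\D(\T'),\U(\T'))$, but a right mutation as a subset of the second component of $(\D(\T),\U(\T))$ (once \ref{thm:preimage_1a} is in hand). Every application of \cref{thm:mutation_summary} produces only componentwise containment, and each such step must be followed by invoking the constraint $|\X|+|\Y|=|\Lambda|$ on 2-term simple-minded collections to upgrade the containments to equalities.
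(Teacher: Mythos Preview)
Your proof is correct and follows essentially the same approach as the paper: both directions pivot on \cref{thm:pop_mutation}, the mutation formulas of \cref{def:mutation}, and the involutivity/completability statements in \cref{thm:mutation_summary}, with the size constraint $|\X|+|\Y|=|\Lambda|$ converting containments to equalities.

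The one noteworthy difference is in the forward direction for condition \ref{thm:preimage_1b}. You read it off directly from the \emph{left} mutation $\mu_{\D(\T')}(\D(\T'),\U(\T'))$: since $\X'=\X=\D(\T')$, every $X\in\D(\T)$ arises as $\coker(g_{Y,\D(\T')})$ for an injective approximation $g_{Y,\D(\T')}\colon Y\hookrightarrow X'_Y$ with $X'_Y\in\Filt(\D(\T'))$, and the quotient $X'_Y\twoheadrightarrow X$ witnesses \ref{thm:preimage_1b}. The paper instead first establishes the ``moreover'' clause (via \cref{thm:mutation_summary}\ref{thm:mutation_1b}) and then extracts \ref{thm:preimage_1b} from the \emph{right} mutation formula. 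Your route is slightly more direct; the paper's route has the advantage that the ``moreover'' clause is proved along the way rather than afterward. In the backward direction the two arguments are essentially identical, though you correctly invoke \cref{thm:mutation_summary}\ref{thm:mutation_2b} (the right-mutation involutivity), whereas the paper's citation of \ref{thm:mutation_1b} at that step appears to be a minor slip.
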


\begin{proof}
    We prove only the first half of the theorem; the proof of the second half is dual.

    First suppose $\popdown_{\tors\Lambda}(\T') = \T$. Then \[(\D(\T),\U(\T)) = \mu_\downarrow(\D(\T'),\U(\T')) = \mu_{\D(\T')}(\D(\T'),\U(\T'))\] by \cref{thm:pop_mutation}, so in particular, \ref{thm:preimage_1a} holds. It follows from \cref{thm:mutation_summary}\ref{thm:mutation_1b} that \[{\mu_{\D(\T')}(\D(\T),\U(\T)) = (\D(\T'),\U(\T'))}.\] Finally, because $\mu_{\D(\T')}(\D(\T),\U(\T)) = (\D(\T'),\U(\T'))$, \ref{thm:preimage_1b} must hold by \cref{def:mutation}.

    Suppose now that \ref{thm:preimage_1a} and \ref{thm:preimage_1b} both hold. Then $\mu_{\D(\T')}(\D(\T),\U(\T))_d = \D(\T')$ by \cref{def:mutation}, and therefore $\tor(\mu_{\D(\T')}(\D(\T),\U(\T))) = \T'$. \cref{thm:mutation_summary}\ref{thm:mutation_1b} then implies that \[\mu_\downarrow(\D(\T'),\U(\T')) = \mu_{\D(\T')}(\D(\T'),\U(\T')) = (\D(\T),\U(\T)).\] We conclude that $\popdown_{\tors\Lambda}(\T') = \T$ by \cref{thm:pop_mutation}.
\end{proof}

We now characterize the 1-pop-stack sortable elements of $\tors\Lambda$.

\begin{corollary}\label{cor:tors_one_poppable}
    Let $\T \in \tors\Lambda$. The following are equivalent.
    \begin{enumerate}[(1)]
        \item\label{cor:tors_1} We have $\popdown_{\tors\Lambda}(\T) = \hat 0$.
        \item\label{cor:tors_2} Every brick in $\D(\T)$ is simple.
        \item\label{cor:tors_3} The torsion class $\T$ is a Serre subcategory of $\mods\Lambda$.
        \item\label{cor:tors_4} There exists a nonzero injective module $I$ such that $\T = \lperp{I}$.
    \end{enumerate}
    Dually, the following are equivalent.
    \begin{enumerate}[(1*)]
        \item We have $\popup_{\tors\Lambda}(\T) = \mods\Lambda$.
        \item Every brick in $\U(\T)$ is simple.
        \item The torsion-free class $\T^\perp$ is a Serre subcategory of $\mods\Lambda$.
        \item There exists a nonzero projective module $P$ such that $\T = \Gen(P)$.
    \end{enumerate}
\end{corollary}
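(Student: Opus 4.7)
The plan is to prove the chain of equivalences $(1) \Leftrightarrow (2) \Leftrightarrow (3) \Leftrightarrow (4)$. The first equivalence is immediate from the preimage characterization in \cref{thm:preimage}, while the remaining two follow from the Ingalls--Thomas bijection (\cref{subsec:wide}) together with \cref{prop:serre}. The dual chain follows by entirely analogous arguments (or by passing to $\Lambda^{\mathrm{op}}$), so I would only write out the primal version in detail.

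For $(1) \Leftrightarrow (2)$, I would apply \cref{thm:preimage} with the target torsion class set equal to $\hat 0$ and its preimage set equal to $\T$. Since no element is strictly below $\hat 0$, we have $\D(\hat 0) = \emptyset$, which makes condition \ref{thm:preimage_1b} vacuous. The atoms of $\tors\Lambda$ are precisely the torsion classes $\Filt(S) = \tor(S)$ as $S$ ranges over the simple modules, and the brick label of each cover $\hat 0 \lessdot \Filt(S)$ equals $S$ itself (this follows from the inverse of the bijection in \cref{thm:torsion_smc}\ref{thm:torsion_smc_1} applied to $\Filt(S)$, since $\D(\Filt(S)) = \{S\}$). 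Thus $\U(\hat 0)$ coincides with the set of simple $\Lambda$-modules, and condition \ref{thm:preimage_1a} reduces exactly to the assertion that every brick in $\D(\T)$ is simple.

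For $(2) \Leftrightarrow (3)$, I would invoke the Ingalls--Thomas bijection: the wide subcategory corresponding to $\T$ is $\Filt(\D(\T))$, and by Ringel's theorem its simple objects (in the abelian-category sense) are precisely the bricks in $\D(\T)$. By \cref{prop:serre}(7), $\Filt(\D(\T))$ is Serre if and only if each brick in $\D(\T)$ is simple in $\mods\Lambda$. Assuming this, $\Filt(\D(\T))$ is a torsion class containing $\D(\T)$, and since $\Filt(\D(\T)) \subseteq \tor(\D(\T)) = \T$ while $\tor(\D(\T))$ is the \emph{smallest} torsion class containing $\D(\T)$, equality $\T = \Filt(\D(\T))$ follows, and $\T$ is Serre. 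Conversely, a Serre torsion class is both a torsion class and a wide subcategory, so by the bijectivity of the Ingalls--Thomas maps, $\T = \Filt(\D(\T))$; the simple objects of this wide subcategory, namely $\D(\T)$, are then necessarily simple in $\mods\Lambda$.

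The equivalence $(3) \Leftrightarrow (4)$ is then a direct application of \cref{prop:serre}(6). The dual chain $(1^*) \Leftrightarrow (2^*) \Leftrightarrow (3^*) \Leftrightarrow (4^*)$ follows by the same template, using $\U(\T)$ in place of $\D(\T)$, replacing $\lperp{(\cdot)}$ approximations with $(\cdot)^\perp$ ones, and invoking \cref{prop:serre}(5) in place of (6). No part of this argument poses a serious obstacle; the two technical points requiring care are the precise computation $\U(\hat 0) = \{\text{simples of } \mods\Lambda\}$ in Step 1 and the self-correspondence $\T = \Filt(\D(\T))$ for Serre torsion classes in Step 2.
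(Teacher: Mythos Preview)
Your proof is correct and uses essentially the same ingredients as the paper: \cref{thm:preimage} for the link to $(1)$ and \cref{prop:serre} for the equivalences among $(2)$, $(3)$, $(4)$. The only organizational difference is that you exploit the full biconditional in \cref{thm:preimage} (observing that $\D(\hat 0)=\emptyset$ makes condition~\ref{thm:preimage_1b} vacuous) to obtain $(1)\Leftrightarrow(2)$ in one stroke, whereas the paper uses \cref{thm:preimage} only for $(1)\Rightarrow(2)$ and closes the cycle with a separate argument $(3)\Rightarrow(1)$ via \cref{thm:pop_mutation}; your route is slightly more economical.
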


\begin{proof}
    We prove only the first result as the second is dual. The equivalences among \ref{cor:tors_2}, \ref{cor:tors_3}, and \ref{cor:tors_4} are contained in \cref{prop:serre}.

    To see that \ref{cor:tors_1} implies \ref{cor:tors_2}, suppose $\popdown_{\tors\Lambda}(\T) = \hat 0$. Then every brick in $\U(\popdown_{\tors\Lambda}(\T))$ is simple. But $\D(\T') \subseteq \U(\popdown_{\tors\Lambda}(\T))$ by \cref{thm:preimage}.

    We now show that \ref{cor:tors_3} implies \ref{cor:tors_1}. Suppose $\T$ is a Serre subcategory. Then in particular, $\T$ is closed under submodules. Thus, for $Y \in \U(\T)$, the map $g_{Y,\X}$ must be surjective. \cref{thm:pop_mutation} then implies that $\D(\popdown_{\tors\Lambda}(\T)) = \emptyset$, so $\popdown_{\tors\Lambda}(\T) = 0$.
\end{proof}

The following characterizes all 2-pop-stack sortable elements of $\tors\Lambda$; it also describes the image under $\popdown_{\tors\Lambda}$ of each 2-pop-stack sortable element.

\begin{corollary}\label{cor:tors_2_poppable}
    Let $\T \in \tors\Lambda$, and let $\mathcal{S}$ be a set of simple modules. Then $\popdown_{\tors\Lambda}(\T) = \Filt(\SS)$ if and only if the following all hold.
    \begin{enumerate}[(1)]
        \item\label{cor:tors_2_poppable_1} We have $\Hom_\Lambda(\SS,\D(\T)) = 0 = \Ext^1_\Lambda(\SS,\D(\T))$.
        \item\label{cor:tors_2_poppable_2} The socle\footnote{Recall that the socle $\mathrm{soc}M$ of a module $M$ is the sum of all of its semisimple submodules.} $\mathrm{soc}(\D(\T))$ of $\D(\T)$ satisfies $\D(\T)/\mathrm{soc}(\D(\T)) \in \Filt(\SS)$.
        \item\label{cor:tors_2_poppable_3} There does not exist $S \in \SS$ with $\Hom_\Lambda(\D(\T),S) = 0$. (Equivalently, there is a surjection $\D(\T) \twoheadrightarrow \SS$.)
    \end{enumerate}
\end{corollary}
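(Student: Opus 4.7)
The plan is to apply Theorem \ref{thm:preimage} with $\Filt(\SS)$ playing the role of the image. Since $\Filt(\SS)$ is a Serre subcategory by Proposition \ref{prop:serre}, its associated semibrick is $\D(\Filt(\SS)) = \SS$: the simples in $\Filt(\SS)$ are precisely the simple modules of $\mods\Lambda$ lying in $\SS$, and by \cref{prop:IT}(1) these coincide with the bricks labeling covers below $\Filt(\SS)$. Condition (B) of Theorem \ref{thm:preimage} then asks that each $S \in \SS$ receive a surjection from an object of $\Filt(\D(\T))$. Since $S$ is simple, a short induction on the length of a $\D(\T)$-filtration of such a $Z$ shows this is equivalent to the existence of $X \in \D(\T)$ with $\Hom_\Lambda(X, S) \neq 0$, i.e., condition (3). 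Thus the corollary reduces to establishing the equivalence (A)$\wedge$(3) $\iff$ (1)$\wedge$(2)$\wedge$(3).

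For the forward direction, suppose $\popdown_{\tors\Lambda}(\T) = \Filt(\SS)$. Condition (A) gives $\D(\T) \subseteq \U(\Filt(\SS))$, which exhibits $(\SS, \D(\T))$ as a subset of the semibrick pair $(\SS, \U(\Filt(\SS)))$, yielding condition (1). For condition (2), I use \cref{cor:brick_label} to write $\popdown_{\tors\Lambda}(\T) = \T \cap \lperp{\D(\T)}$ and verify $X/\mathrm{soc}(X) \in \T \cap \lperp{\D(\T)} = \Filt(\SS)$ for each $X \in \D(\T)$. Membership in $\T$ is immediate since $X/\mathrm{soc}(X)$ is a quotient of $X \in \T$. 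For $\lperp{\D(\T)}$, any morphism $X/\mathrm{soc}(X) \to X'$ with $X' \in \D(\T)$ pulls back along $X \twoheadrightarrow X/\mathrm{soc}(X)$ to a morphism $X \to X'$ annihilating the nonzero submodule $\mathrm{soc}(X)$; this forced map is zero by the semibrick property of $\D(\T)$ when $X' \neq X$, and by the identity $\End_\Lambda(X) = k \cdot \mathrm{id}_X$ when $X' = X$ (no nonzero scalar multiple of $\mathrm{id}_X$ kills $\mathrm{soc}(X) \neq 0$). Summing over $X \in \D(\T)$ produces (2).

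For the backward direction, assuming (1), (2), (3), I verify $\popdown_{\tors\Lambda}(\T) = \Filt(\SS)$ directly via $\popdown_{\tors\Lambda}(\T) = \T \cap \lperp{\D(\T)}$. The inclusion $\Filt(\SS) \subseteq \T \cap \lperp{\D(\T)}$ uses (3) to get $\SS \subseteq \T$ (each $S \in \SS$ is a quotient of some $X \in \D(\T) \subseteq \T$, and $\T$ is closed under quotients) together with the $\Hom$-vanishing half of (1). For the reverse inclusion, the key structural observation is that every simple quotient $S'$ of a module $M \in \T \cap \lperp{\D(\T)}$ lies in $\SS$: such $S'$ inherits membership in both $\T$ and $\lperp{\D(\T)}$ as a quotient of $M$, so as a simple object of $\tor(\D(\T))$ it is a quotient of some $X \in \D(\T)$; the condition $\Hom_\Lambda(S', X) = 0$ forbids $S'$ from being a summand of $\mathrm{soc}(X)$, so the surjection $X \twoheadrightarrow S'$ factors through $X/\mathrm{soc}(X) \in \Filt(\SS)$ by (2), forcing $S' \in \Filt(\SS)$ and therefore $S' \in \SS$.

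The main obstacle I anticipate is closing out the reverse inclusion, since the kernel of a simple surjection $M \twoheadrightarrow S'$ is a submodule of $M$ and need not remain in $\T$ (torsion classes are not generally closed under submodules). I would circumvent this by proving the stronger intersection identity $\T \cap \lperp{\D(\T)} \cap \Filt(\SS)^\perp = 0$ and applying it to the $\Filt(\SS)^\perp$-part $fM$ of $M$ with respect to the torsion pair $(\Filt(\SS), \Filt(\SS)^\perp)$, which inherits membership in $\T \cap \lperp{\D(\T)}$ as a quotient of $M$ and thus lies in the triple intersection; vanishing of the triple intersection then yields $fM = 0$ and hence $M \in \Filt(\SS)$. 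Any hypothetical nonzero $N$ in the triple intersection would, by the key observation, have all its simple quotients in $\SS$, while $N \in \Filt(\SS)^\perp$ forces $\Hom_\Lambda(\SS, N) = 0$; ruling out such $N$ will require combining these opposing constraints with (1) and (2), most plausibly via an induction on composition length that passes through the $\T$-torsion part of the kernel of a simple surjection to stay inside $\T$, or by tracking composition factors using the bound afforded by (2) that composition factors of modules in $\T = \tor(\D(\T))$ lie in $\SS \cup \{\text{simples appearing in } \mathrm{soc}(\D(\T))\}$.
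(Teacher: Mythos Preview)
Your forward direction is essentially correct. One small fix: you write $\End_\Lambda(X) = k \cdot \mathrm{id}_X$, but $X$ is only assumed to be a brick, so $\End_\Lambda(X)$ is a division ring, not necessarily $k$. The argument survives unchanged, since any nonzero endomorphism of $X$ is invertible and hence injective, so cannot annihilate the nonzero submodule $\mathrm{soc}(X)$.

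The backward direction, however, has a genuine gap that your proposed fixes do not close. You correctly reduce to showing the triple intersection $\T \cap \lperp{\D(\T)} \cap \Filt(\SS)^\perp$ vanishes, and your ``key observation'' (that every simple quotient of such an $N$ lies in $\SS$) is valid. But the contradiction does not materialize: $N \in \Filt(\SS)^\perp$ only says $\Hom_\Lambda(\SS,N) = 0$, which is compatible with $\mathrm{top}(N) \in \add(\SS)$. Your suggested induction via the $\T$-torsion part of a kernel fails because $\lperp{\D(\T)}$ is not closed under submodules, so $t_\T K$ need not lie in the triple intersection. Tracking composition factors likewise does not yield a contradiction, since nothing prevents an $N$ with top in $\SS$ and socle in the simples appearing in $\mathrm{soc}(\D(\T))$.

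The paper's proof of the backward direction proceeds differently and identifies the missing ingredient: one must show that $\mathrm{soc}(X)$ is \emph{simple} for each $X \in \D(\T)$. This is done via the mutation machinery: condition~(1) makes $(\SS,\D(\T))$ a semibrick pair, condition~(2) forces each $g_{X,\SS}$ to be surjective with semisimple kernel $\ker g_{X,\SS} = \mathrm{soc}(X)$, and then \cref{prop:mutation} shows $\mu_\SS(\SS,\D(\T)) = (\emptyset, \SS \cup \{\ker g_{X,\SS}\})$ is a semibrick pair, forcing each $\ker g_{X,\SS}$ to be a brick, hence simple. Once $\mathrm{soc}(X)$ is simple, every proper quotient of $X$ factors through $X/\mathrm{soc}(X) \in \Filt(\SS)$, so $X$ is a minimal extending module for $\Filt(\SS)$, i.e., $\D(\T) \subseteq \U(\Filt(\SS))$. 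This is exactly condition~\ref{thm:preimage_1a} of \cref{thm:preimage}, and together with your condition~(3) it gives $\popdown_{\tors\Lambda}(\T) = \Filt(\SS)$. Your direct approach does not recover this simplicity of the socle, and without it the argument cannot be completed along the lines you sketch.
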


\begin{proof}
    Suppose first that $\popdown_{\tors\Lambda}(\T) = \Filt(\SS)$. Since $\D(\Filt(\SS)) = \SS$, it follows from \cref{thm:preimage} that $\D(\T) \subseteq \U(\Filt(\SS))$. Then the fact that \ref{cor:tors_2_poppable_1} holds follows from the definition of a 2-term simple-minded collection. \cref{thm:preimage} also implies that for every $S \in \SS$, there exists $Z \in \Filt(\D(\T))$ that admits a surjection $Z \twoheadrightarrow S$. This shows that \ref{cor:tors_2_poppable_3} holds. To prove \ref{cor:tors_2_poppable_2}, consider ${X \in \D(\T) \subseteq \U(\Filt(\SS))}$. Since $\Filt(\SS)$ is closed under submodules, the map $g_{X,\SS}\colon X \rightarrow S_X$ must be surjective. Moreover, we have that $\popdown_{\tors\Lambda}(\Filt(\SS)) = 0$ by \cref{cor:tors_one_poppable}. \cref{thm:pop_mutation} implies that $\ker g_{X,\SS}$ is in $\U(0)$, which is precisely the set of simple modules. If follows that $\ker g_{X,\SS} \subseteq \mathrm{soc}(X)$ and therefore that $X/\mathrm{soc}(X) \in \Filt(\SS)$.

    To prove the converse, suppose $\T$ satisfies \cref{cor:tors_2_poppable_1,cor:tors_2_poppable_2,cor:tors_2_poppable_3}. Then \ref{cor:tors_2_poppable_1} implies that $(\SS,\D(\T))$ is a semibrick pair. Moreover, for $X \in \D(\T)$, the map $g_{X,\SS}$ must be surjective since $\Filt(\SS)$ is closed under submodules. This map must also have a semisimple kernel by \ref{cor:tors_2_poppable_2}. We then have that $\mu_{\SS}(\SS,\D(\T)) = (\emptyset,\SS \cup \SS')$ for $\SS' = \{\ker g_{X,\SS} \mid X \in \D(\T)\}$. \cref{prop:mutation} therefore implies that $\ker g_{X,\SS}$ is simple for all $X$. Letting $\SS''$ denote the set of all simple modules, we find that $(\emptyset,\SS'')$ is a 2-term simple-minded collection satisfying $\SS \cup \SS' \subseteq \SS''$. By \cref{prop:mutation_completable}, $(\SS,\Y):=\mu_{\SS}(\emptyset,\SS'')$ is a 2-term simple-minded collection satisfying $\D(\T) \subseteq \Y$. Now note that $\Y = \U(\Filt(\SS))$ by \cref{thm:torsion_smc}. Hence, $\D(\T) \subseteq \U(\Filt(\SS))$. Together with \ref{cor:tors_2_poppable_3}, this allows us to use \cref{thm:preimage} to conclude that $\popdown_{\tors\Lambda}(\T) = \Filt(\SS)$.
\end{proof}

\subsection{The image of pop-stack}\label{subsec:image}

We now build toward a complete description of the images of $\popdown_{\tors\Lambda}$ and $\popup_{\tors\Lambda}$ (and, hence, the facets of the canonical join complex of $\tors\Lambda$) for any $\tau$-tilting finite algebra $\Lambda$. Our first result is proven more generally for arbitrary semidistrim lattices in \cite[Section~9]{DefantWilliamsSemidistrim}. We give a new representation-theoretic proof for lattices of torsion classes.

\begin{proposition}\label{prop:image_up_down}
    Let $\T \in \tors\Lambda$. 
    \begin{enumerate}[(1)]
        \item\label{prop:image_1} We have $\popdown_{\tors\Lambda}(\popup_{\tors\Lambda}(\T)) \subseteq \T$.
        \item\label{prop:image_2} We have $\popup_{\tors\Lambda}(\popdown_{\tors\Lambda}(\T)) \supseteq \T$.
        \item\label{prop:image_3} We have $\popdown_{\tors\Lambda}(\popup_{\tors\Lambda}(\popdown_{\tors\Lambda}(\T))) = \popdown_{\tors\Lambda}(\T)$.
        \item\label{prop:image_4} We have $\popup_{\tors\Lambda}(\popdown_{\tors\Lambda}(\popup_{\tors\Lambda}(\T))) = \popup_{\tors\Lambda}(\T)$.
    \end{enumerate}
\end{proposition}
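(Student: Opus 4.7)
The plan is to derive all four parts from the brick-label description of the pop-stack operators in \cref{cor:brick_label}, the mutation formulas of \cref{def:mutation} (which compute the $\D$'s and $\U$'s of $\popdown(\T)$ and $\popup(\T)$ via \cref{thm:pop_mutation}), and the preimage characterization of \cref{thm:preimage}. Parts \ref{prop:image_1} and \ref{prop:image_2} will fall out directly. Reading off \cref{def:mutation}, the $d$-component of $\mu_\Y(\X,\Y)$ always contains $\Y$ and the $u$-component of $\mu_\X(\X,\Y)$ always contains $\X$; hence $\D(\popup(\T)) \supseteq \U(\T)$ and $\U(\popdown(\T)) \supseteq \D(\T)$. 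Combined with the identities $\T = \lperp{\U(\T)}$ and $\T = \tor(\D(\T))$ from \cref{thm:torsion_smc}\ref{thm:torsion_smc_1}, the first inclusion yields $\popdown(\popup(\T)) = \popup(\T) \cap \lperp{\D(\popup(\T))} \subseteq \lperp{\U(\T)} = \T$, and the second, together with the fact that $\popup(\popdown(\T))$ is a torsion class containing $\U(\popdown(\T))$, yields $\popup(\popdown(\T)) \supseteq \tor(\D(\T)) = \T$.

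For part \ref{prop:image_3}, I would set $\T_0 = \popdown(\T)$ and verify that the pair $(\T_0, \popup(\T_0))$ satisfies both conditions of \cref{thm:preimage}. The key structural observation is that $(\D(\T_0), \U(\T_0)) = \mu_\X(\D(\T), \U(\T))$ forces every brick $X \in \D(\T_0)$ to arise as $X = \coker(g)$ for some minimal left $\Filt(\D(\T))$-approximation $g\colon Y \hookrightarrow X_Y$ of some $Y \in \U(\T)$, so that the short exact sequence $Y \hookrightarrow X_Y \twoheadrightarrow X$ exhibits a surjection onto $X$ from $X_Y \in \Filt(\D(\T)) \subseteq \Filt(\U(\T_0))$. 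Here the containment $\D(\T) \subseteq \U(\T_0)$ is again visible in the mutation formula for the $u$-component of $\mu_\X$. This surjection instantly gives condition (2) of \cref{thm:preimage} with $Z = X_Y$, and it forces the minimal right $\Filt(\U(\T_0))$-approximation of $X$ to be surjective; by SM compatibility (\cref{prop:SM_compat}) this approximation is therefore not a proper injection, so the mutation formula for $\mu_\Y(\D(\T_0), \U(\T_0))_d$ collapses to just $\U(\T_0)$, which is condition (1). Part \ref{prop:image_4} follows by the obvious dual argument, now using that each $Y \in \U(\popup(\T))$ is a kernel of a surjective approximation and hence injects into an object of $\Filt(\U(\T)) \subseteq \Filt(\D(\popup(\T)))$.

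The main obstacle I anticipate is articulating the structural observation above: that the bricks of $\D(\popdown(\T))$ are cokernels whose source already lies in $\Filt(\D(\T))$, and that $\Filt(\D(\T))$ in turn sits inside $\Filt(\U(\popdown(\T)))$. Once one unpacks the mutation formulas of \cref{def:mutation} enough to see that these bricks are built from short exact sequences whose ingredients are exactly the objects that populate $\U(\popdown(\T))$, both halves of the \cref{thm:preimage} verification fall out at once, and the rest is bookkeeping.
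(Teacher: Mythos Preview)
Your argument is correct and follows essentially the same line as the paper: for \ref{prop:image_3} both proofs rest on the observation that every brick in $\D(\popdown(\T))$ arises as the cokernel of an injective left $\Filt(\D(\T))$-approximation, hence is a quotient of an object of $\Filt(\D(\T)) \subseteq \Filt(\U(\popdown(\T)))$, which forces $\D(\popup(\popdown(\T))) = \U(\popdown(\T))$. The only difference is packaging---you route the conclusion through \cref{thm:preimage}, while the paper verifies $\X_2 = \Y_1$ directly and finishes via \cref{cor:brick_label}.
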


\begin{proof}
    We prove only \ref{prop:image_1} and \ref{prop:image_3} as the proofs of \ref{prop:image_2} and \ref{prop:image_4} are similar. 

    We have $\U(\T) \subseteq \D(\popup_{\tors\Lambda}(\T)) \subseteq \U(\popdown_{\tors\Lambda}(\popup_{\tors\Lambda}(\T)))$ by \cref{thm:pop_mutation}. By \cref{thm:brick_label}, this implies \ref{prop:image_1}.

    To prove \ref{prop:image_3}, let $(\X,\Y) = (\D(\T),\U(\T))$. Let $(\X_1,\Y_1) = \mu_\downarrow(\X,\Y)$ and $(\X_2,\Y_2) = \mu_\uparrow(\X_1,\Y_1)$. By \cref{thm:pop_mutation}, we need to show that $\mu_\downarrow(\X_2,\Y_2) = (\X_1,\Y_1)$. By \cref{def:mutation}, this is equivalent to showing that $\X_2 = \Y_1$, which is equivalent to showing that $g_{\Y_1,Z}$ is surjective for every $Z \in \X_1$.

    Now recall that $\X_1 = \{\coker(g_{Y,\X}) \mid Y \in \Y \text{ and } g_{Y,\X} \text{ is injective}\}$. Thus, for $Z \in \X_1$, we have a quotient map $X_Y \twoheadrightarrow Z$ for some $X_Y \in \Filt(\X)$. Since $\X \subseteq \Y_1$, this map must factor through the minimal right $\Filt(\Y_1)$-approximation $g_{\Y_1,Z}\colon (Y_1)_Z \rightarrow Z$. This implies that $g_{\Y_1,Z}$ must be surjective.
\end{proof}

As a consequence, we can characterize the image of the pop-stack operators as follows.

\begin{corollary}\label{cor:algebraic_image_characterization}
    Let $\T \in \tors\Lambda$. Then the following are equivalent.
    \begin{enumerate}[(1)]
        \item There exists $\T' \in \tors\Lambda$ such that $\T = \popdown_{\tors\Lambda}(\T')$.
        \item We have $\popdown_{\tors\Lambda}(\popup_{\tors\Lambda}(\T)) = \T$.
        \item The semibricks $\U(\T)$ and $\D(\popup_{\tors\Lambda}(\T))$ coincide.
        \item For all $X \in \D(\T)$, the map $g_{\U(\T),X}$ is surjective.
        \item For all $X \in \D(\T)$, there exists $Z \in \Filt(\U(\T))$ such that there is a surjective map $Z \twoheadrightarrow X$.
    \end{enumerate}
    Dually, the following are equivalent.
    \begin{enumerate}[(1*)]
        \item There exists $\T' \in \tors\Lambda$ such that $\T = \popup_{\tors\Lambda}(\T')$.
        \item We have $\popdown_{\tors\Lambda}(\popup_{\tors\Lambda}(\T)) = \T$.
        \item The semibricks $\D(\T)$ and $\U(\popdown_{\tors\Lambda}(\T))$ coincide.
        \item For all $Y \in \U(\T)$, the map $g_{Y,\D(\T)}$ is injective.
        \item For all $Y \in \U(\T)$, there exists $Z \in \Filt(\D(\T))$ such that there is an injective map $Y \hookrightarrow Z$.
    \end{enumerate}
\end{corollary}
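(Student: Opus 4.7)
The plan is to establish the cycle of implications $(1) \Rightarrow (2) \Rightarrow (3) \Rightarrow (4) \Rightarrow (5) \Rightarrow (1)$ for the first half and handle the dual statement by a symmetric argument. The implication $(2) \Rightarrow (1)$ is immediate with $\T' = \popup_{\tors\Lambda}(\T)$, and $(1) \Rightarrow (2)$ follows directly from \cref{prop:image_up_down}\ref{prop:image_3}. For $(4) \Leftrightarrow (5)$, the universal property of the minimal right $\Filt(\U(\T))$-approximation shows that any surjection $Z \twoheadrightarrow X$ with $Z \in \Filt(\U(\T))$ factors through $g_{\U(\T),X}$, so surjectivity of $g_{\U(\T),X}$ is equivalent to the existence of such a $Z$.

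For $(2) \Leftrightarrow (3)$, I will apply \cref{thm:preimage} with $\T' = \popup_{\tors\Lambda}(\T)$. Its dual part, applied to the tautology $\popup_{\tors\Lambda}(\T) = \popup_{\tors\Lambda}(\T)$, automatically yields the inclusion $\U(\T) \subseteq \D(\popup_{\tors\Lambda}(\T))$. Under (2), the first condition in \cref{thm:preimage} supplies the reverse inclusion, yielding (3). For the converse, assuming (3), I will compute directly using \cref{cor:brick_label} together with the identity $\lperp{\U(\T)} = \T$ from \cref{thm:torsion_smc}\ref{thm:torsion_smc_1}:
\[
\popdown_{\tors\Lambda}(\popup_{\tors\Lambda}(\T)) \;=\; \popup_{\tors\Lambda}(\T) \cap \lperp{\D(\popup_{\tors\Lambda}(\T))} \;=\; \popup_{\tors\Lambda}(\T) \cap \T \;=\; \T,
\]
where the last equality uses that $\popup_{\tors\Lambda}$ is extensive.

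The substance of the argument lies in $(3) \Leftrightarrow (4)$. By \cref{thm:pop_mutation} and \cref{def:mutation}\ref{def:mutation_2},
\[
\D(\popup_{\tors\Lambda}(\T)) \;=\; \{\coker(g_{\U(\T),X}) \mid X \in \D(\T) \text{ and } g_{\U(\T),X} \text{ is injective}\} \cup \U(\T).
\]
By \cref{prop:SM_compat}, the pair $(\D(\T),\U(\T))$ is SM compatible, so each $g_{\U(\T),X}$ is injective or surjective. The key observation is that $\Hom_\Lambda(X,Y) = 0$ for all $X \in \D(\T)$ and $Y \in \U(\T)$ by the definition of a 2-term simple-minded collection. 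This forbids any $g_{\U(\T),X}$ from being an isomorphism (else $X \cong Y_X \in \Filt(\U(\T))$ would admit a quotient in $\U(\T)$, violating the $\Hom$-vanishing), and by the same token forbids any nonzero $\coker(g_{\U(\T),X})$ from lying in $\U(\T)$ (else the natural surjection $X \twoheadrightarrow \coker(g_{\U(\T),X})$ would again violate $\Hom$-vanishing). It follows that the cokernel set displayed above is empty if and only if no $g_{\U(\T),X}$ is injective, which, combined with SM compatibility, is equivalent to every $g_{\U(\T),X}$ being surjective. This yields $(3) \Leftrightarrow (4)$.

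The dual chain of equivalences follows by exactly the same arguments with the roles of $\D/\U$, submodule/quotient, and injective/surjective interchanged throughout, appealing to the dual parts of each cited result. The main obstacle is the careful bookkeeping in $(3) \Leftrightarrow (4)$: one must exploit the $\Hom$-orthogonality of the 2-term simple-minded collection $(\D(\T),\U(\T))$ together with the SM-compatibility dichotomy to translate between the mutation formula for $\D(\popup_{\tors\Lambda}(\T))$ and the surjectivity condition on the approximation maps.
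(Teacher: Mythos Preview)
Your argument is correct and is essentially the argument the paper leaves implicit: the corollary is stated without proof as an immediate consequence of \cref{prop:image_up_down}, \cref{thm:preimage}, \cref{thm:pop_mutation}, and \cref{def:mutation}, and you have supplied exactly those details. The only cosmetic point is that your opening line announces a cycle $(1)\Rightarrow(2)\Rightarrow\cdots\Rightarrow(5)\Rightarrow(1)$, whereas what you actually (and correctly) do is prove the chain of bidirectional equivalences $(1)\Leftrightarrow(2)\Leftrightarrow(3)\Leftrightarrow(4)\Leftrightarrow(5)$.
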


We now wish to further characterize the image of the pop-stack operators for representation-finite hereditary algebras. We first prove the following result (still in the generality of an arbitrary $\tau$-tilting finite algebra).

\begin{samepage}
\begin{lemma}\label{lem:no_projective}
    Let $\T \in \tors\Lambda$.
    \begin{enumerate}[(1)]
        \item\label{lem:projective_1} If there exists a nonzero projective module $P \in \T$, then $\T$ is not in the image of $\popdown_{\tors\Lambda}$.
        \item\label{lem:projective_2} If there exists a nonzero injective module $I \in \T^\perp$, then $\T$ is not in the image of $\popup_{\tors\Lambda}$.
    \end{enumerate}
\end{lemma}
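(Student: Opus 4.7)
The plan is to prove part~(1) by contradiction using characterization~(5) of \cref{cor:algebraic_image_characterization}, and to deduce part~(2) by the obvious dual argument. Suppose $\T$ lies in the image of $\popdown_{\tors\Lambda}$ and that $P \in \T$ is a nonzero projective. \cref{cor:algebraic_image_characterization} provides, for each $X \in \D(\T)$, a surjection $Z_X \twoheadrightarrow X$ with $Z_X \in \Filt(\U(\T))$. \cref{prop:IT}(2) places $\Filt(\U(\T))$ inside $\T^\perp$, so $\Hom_\Lambda(P, Z_X) = 0$, and projectivity of $P$ forces $\Hom_\Lambda(P, X) = 0$ for every $X \in \D(\T)$.

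To contradict this, I will produce some $X \in \D(\T)$ with $\Hom_\Lambda(P, X) \neq 0$. Since $P$ is a nonzero finitely-generated module over a finite-dimensional algebra, it has a simple quotient $P \twoheadrightarrow S$, and $S \in \T$ because torsion classes are closed under quotients. \cref{thm:torsion_smc}(1) gives $\T = \tor(\D(\T)) = \Filt(\Gen(\D(\T)))$, and simplicity of $S$ forces $S \in \Gen(\D(\T))$; hence some $X \in \D(\T)$ surjects onto $S$. Projectivity of $P$ then lifts $P \twoheadrightarrow S$ through $X \twoheadrightarrow S$ to a nonzero morphism $P \to X$, completing the contradiction.

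For part~(2), the dual argument proceeds as follows. Suppose $\T$ is in the image of $\popup_{\tors\Lambda}$ and that $I \in \T^\perp$ is a nonzero injective. The dual of \cref{cor:algebraic_image_characterization}(5) furnishes, for each $Y \in \U(\T)$, an injection $Y \hookrightarrow Z_Y$ with $Z_Y \in \Filt(\D(\T)) \subseteq \T$; injectivity of $I$ together with $\Hom_\Lambda(Z_Y, I) = 0$ then yields $\Hom_\Lambda(Y, I) = 0$ for every $Y \in \U(\T)$. Meanwhile, $I$ has a nonzero simple submodule $S$, which lies in $\T^\perp = \Filt(\Cogen(\U(\T)))$ and hence, by simplicity, in $\Cogen(\U(\T))$; so $S$ embeds into some $Y \in \U(\T)$, and extending $S \hookrightarrow I$ through this embedding via injectivity produces a nonzero map $Y \to I$, contradicting $\Hom_\Lambda(Y, I) = 0$.

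The only genuine obstacle is to line up the two ingredients so that they actually collide: the image characterization kills every Hom from $P$ into a brick of $\D(\T)$, while $P \in \T$ together with $\T = \Filt(\Gen(\D(\T)))$ and the projectivity of $P$ forces at least one such Hom to be nonzero. Once the inclusion $\U(\T) \subseteq \T^\perp$ is in hand (from \cref{prop:IT}), each half of the contradiction is a one-line consequence of exactness of $\Hom_\Lambda(P, -)$, respectively $\Hom_\Lambda(-, I)$.
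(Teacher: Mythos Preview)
Your proof is correct and is considerably more direct than the paper's. Both arguments end by invoking \cref{cor:algebraic_image_characterization}, but the paper establishes the existence of a brick $X\in\D(\T)$ that is a quotient of $P$ and cannot be covered by $\Filt(\U(\T))$ via an induction on the length of the shortest path from $\Gen(P)$ to $\T$ in $\tors\Lambda$, tracking how such a brick transforms under single mutations along cover relations. You bypass this induction entirely: from $\Filt(\U(\T))\subseteq\T^\perp\subseteq P^\perp$ and the projectivity of $P$, you deduce $\Hom_\Lambda(P,X)=0$ for every $X\in\D(\T)$ whenever $\T$ lies in the image; then the simple-quotient trick together with $\T=\Filt(\Gen(\D(\T)))$ produces an $X\in\D(\T)$ with $\Hom_\Lambda(P,X)\neq 0$. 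Your route is shorter and uses only exactness of $\Hom_\Lambda(P,-)$; the paper's induction, while more laborious, yields slightly finer information (it pins down a specific quotient $P\twoheadrightarrow X$ with $X\in\D(\T)$), though that extra information is not needed for the lemma as stated.
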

\end{samepage}

\begin{proof}
    We prove only \ref{lem:projective_1} since \ref{lem:projective_2} is dual. It suffices to consider the case where $P$ is indecomposable.

    We claim that there exists $X \in \D(\T)$ such that there is a surjection $P \twoheadrightarrow X$ and there does not exist $Z \in \Filt(\U(\T))$ admitting a surjection $Z \twoheadrightarrow X$. We prove the claim by induction on the length of the shortest path from $\Gen(P)$ to $\T$ in $\tors\Lambda$.

    For the base case, we consider $\T = \Gen(P)$. In this case, there is a short exact sequence
    $$0 \rightarrow M \rightarrow P \xrightarrow{f} X \rightarrow 0$$
    such that $M \in \T$ and $\D(\T) = X$. (If $P$ is a brick, then $X = P$ and $M = 0$. Otherwise, $M$ is the sum of the images of the noninvertable endomorphisms of $P$.) Now suppose for a contradiction that there exist $Z \in \Filt(\U(\T))$ and a surjection $Z \xrightarrow{g} X$. Then by the definition of \emph{projective}, there is a nonzero map $h\colon P \rightarrow Z$ such that $f = g \circ h$. But $Z \in \Filt(\U(\T)) \subseteq P^\perp$, which is a contradiction.

    For the induction step, suppose $\Gen(P) \subsetneq \T$, and choose some $\T' \covered \T$ such that $\Gen(P) \subseteq \T'$. Let $Y \in \U(\T')$ be the brick label of the cover relation $\T'\covered\T$. By the induction hypothesis, there exists $X \in \D(\T')$ such that there is a surjection $P \twoheadrightarrow X$ and there does not exist $Z \in \Filt(\U(\T'))$ admitting a surjection $Z \twoheadrightarrow X$. Let $g_{Y,X}\colon Y_X \rightarrow X$ be a minimal right $\Filt(Y)$-approximation of $X$. Then $g_{Y,X}$ is injective since there is no surjection from an object in $\Filt(\U(\T'))$ onto $X$.  Thus, $\coker(g_{Y,X}) \in \D(\T)$ by \cref{thm:mutation_summary}. Moreover, $\coker(g_{Y,X})$ is a quotient of $P$ since $X$ is, and we have that $\T^\perp \subseteq P^\perp$. Consequently, no map from an object in $\Filt(\U(\T))$ to $\coker(g_{Y,X})$ can be surjective for the same reason as in the base case.
    
    Given the claim, the result follows from \cref{cor:algebraic_image_characterization}.
\end{proof}

Note that a torsion class $\T$ contains a projective module $P$ if and only if $\Gen(P) \subseteq \T$. Moreover, recall that the atoms of $\tors\Lambda$ are precisely the torsion classes of the form $\Filt(S)$ for $S \in \mods\Lambda$ simple. We can thus give a combinatorial interpretation of what it means for a torsion class to contain a projective module as follows. 

\begin{proposition}\label{prop:proj}
        For $\T \in \tors\Lambda$, the following are equivalent.
        \begin{enumerate}[(1)]
            \item\label{prop:proj_1} There exists a projective module $P$ such that $\T = \Gen(P)$.
            \item\label{prop:proj_2} There exists a set $\mathcal A$ of atoms of $\tors\Lambda$ such that $\T$ is the maximal element of $\tors\Lambda$ that lies (weakly) above all of the atoms in $\mathcal{A}$ and does not lie (weakly) above any of the atoms that are not in $\mathcal{A}$.
            \item\label{prop:proj_3} There exists a set $\mathcal{S}$ of simple modules such that $\T$ is the largest torsion class that contains all of the modules in $\mathcal{S}$ and does not contain any simple module that is not in $\mathcal{S}$.
        \end{enumerate}
        When these conditions hold, the semibrick $\mathcal{S}$ is the top of the module $P$ (equivalently, $P$ is the projective cover of $\mathcal{S}$); in particular, $P = 0$ if and only if $\mathcal{A} = \emptyset = \mathcal{S}$.
\end{proposition}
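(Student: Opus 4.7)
The plan is to begin by noting that \ref{prop:proj_2} $\Leftrightarrow$ \ref{prop:proj_3} is essentially a tautology. Recall from \cref{subsec:torsion} that the atoms of $\tors\Lambda$ are precisely the torsion classes of the form $\Filt(S)$ for $S$ a simple module, and since any torsion class is closed under extensions and quotients, we have $\Filt(S) \subseteq \T$ if and only if $S \in \T$. Hence the map $\mathcal{A} \mapsto \{S : \Filt(S) \in \mathcal{A}\}$ is a bijection between subsets of atoms and sets of simple modules that identifies the conditions in \ref{prop:proj_2} and \ref{prop:proj_3}.

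For \ref{prop:proj_1} $\Rightarrow$ \ref{prop:proj_3}, suppose $\T = \Gen(P)$ with $P$ projective. Since $\Gen(P) = \Gen(P')$ when $P'$ is the basic summand of $P$, I may assume $P = \bigoplus_{i \in I} P(i)$ for some $I \subseteq [n]$, and set $\mathcal{S} = \{S(i) : i \in I\} = \mathrm{top}(P)$. Using the convention from \cref{sec:rep_background} that $\Hom_\Lambda(P(i),S(j)) \neq 0$ if and only if $i = j$, the simple modules in $\Gen(P)$ are exactly those in $\mathcal{S}$. To prove the maximality claim, let $\T'$ be any torsion class containing $\mathcal{S}$ but no simple module outside $\mathcal{S}$. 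For $M \in \T'$, the quotient $\mathrm{top}(M) = M/\mathrm{rad}(M)$ is semisimple and lies in $\T'$ (since $\T'$ is closed under quotients), hence all its simple summands are in $\mathcal{S}$. It follows that the projective cover of $M$ belongs to $\add(P)$, so $M$ is a quotient of an object of $\add(P)$; therefore $M \in \Gen(P) = \T$, and we conclude $\T' \subseteq \T$.

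For \ref{prop:proj_3} $\Rightarrow$ \ref{prop:proj_1}, given $\mathcal{S}$, I would set $P = \bigoplus_{S(i) \in \mathcal{S}} P(i)$, the projective cover of $\bigoplus_{S \in \mathcal{S}} S$. Then $\Gen(P)$ contains $\mathcal{S}$ and no simple module outside $\mathcal{S}$, and the maximality argument just given shows that $\Gen(P)$ is the largest torsion class with this property; hence $\T = \Gen(P)$. The same construction verifies the final sentence of the proposition: $\mathcal{S} = \mathrm{top}(P)$, and $P = 0$ precisely when $\mathcal{S} = \emptyset$ (equivalently, $\mathcal{A} = \emptyset$). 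I do not anticipate a serious obstacle; the one subtle point is that the projective cover of any module $M$ in a torsion class with prescribed simple content must lie in $\add(P)$, but this reduces to the standard fact that the indecomposable summands of a projective cover of $M$ correspond exactly to the simple summands of $\mathrm{top}(M)$.
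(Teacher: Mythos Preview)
Your proof is correct and follows essentially the same approach as the paper. The only minor difference is that, in the maximality step of \ref{prop:proj_1} $\Rightarrow$ \ref{prop:proj_3}, the paper argues via the semibrick $\D(\T')$ (using that $\T' = \tor(\D(\T'))$) whereas you argue directly for every $M \in \T'$; your version is slightly more elementary but the underlying idea---that the top of any module in $\T'$ lies in $\add(\mathcal{S})$, forcing its projective cover into $\add(P)$---is identical.
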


\begin{proof}
    The equivalence of \ref{prop:proj_2} and \ref{prop:proj_3} follows from the description of the atoms of $\tors\Lambda$.

    Let us prove that \ref{prop:proj_1} implies \ref{prop:proj_3}. Let $P$ be a projective module, and let $\SS$ be the set of simple modules that are direct summands of $\top(P)$. Then $\SS$ is a quotient of $P$, so $\Filt(\SS) \subseteq \Gen(P)$. Moreover, for a simple $S \notin \SS$, we have $\Hom_\Lambda(P,S) = 0$, so $S \notin \Gen(P)$. It remains to show that $\Gen(P)$ is maximal with respect to these properties. Let $\T$ be another torsion class satisfying $\SS \in \T$ and $S \notin \T$ for any simple $S \notin \SS$. Then $\top(\D(\T)) \in \add(\SS) \subseteq \Filt(\SS)$. Thus, the projective cover of $\D(\T)$ lies in $\add(P)$, so $\D(\T) \in \Gen(P)$. It follows that $\T \subseteq \Gen(P)$.

    We now show that \ref{prop:proj_3} implies \ref{prop:proj_1}. Let $\SS$ be a set of simple modules, and let $\T$ be the largest torsion class of $\tors\Lambda$ that contains $\SS$ and does not contain any simple that does not lie in $\SS$. Then, as shown above, we have $\T = \Gen(P)$, where $P$ is the projective cover of $\SS$.
\end{proof}

We now give a characterization of the image of the pop-stack operator when $\Lambda$ is hereditary (and representation-finite). We give a combinatorial interpretation of this characterization in \cref{thm:combinatorial_image_description}.

\begin{theorem}\label{thm:algebraic_image_characterization}
    Suppose $\Lambda$ is hereditary, and let $\T \in \tors\Lambda$. Then $\T$ is in the image of $\popdown_{\tors\Lambda}$ if and only if both of the following hold:
    \begin{enumerate}[(1)]
        \item\label{thm:algebraic_1} $\Ext^1_{\Lambda}(X,X') = 0$ for all $X, X' \in \D(\T)$.
        \item\label{thm:algebraic_2} There does not exist a nonzero projective module $P \in \T$.
    \end{enumerate}
    Dually, $\T$ is in the image of $\popup_{\tors\Lambda}$ if and only if both of the following hold.
    \begin{enumerate}[(1*)]
        \item $\Ext^1_{\Lambda}(Y,Y') = 0$ for all $Y, Y' \in \U(\T)$.
        \item There does not exist a nonzero injective module $I \in \T^\perp$.
    \end{enumerate}
\end{theorem}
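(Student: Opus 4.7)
For the forward direction, condition (2) is immediate from \cref{lem:no_projective}\ref{lem:projective_1}. To establish condition (1), fix $X, X' \in \D(\T)$ and, by \cref{cor:algebraic_image_characterization}, obtain a short exact sequence $0 \to K \to Z \to X \to 0$ with $Z \in \Filt(\U(\T))$. The long exact sequence obtained by applying $\Hom_\Lambda(X',-)$ contains the following vanishings: $\Ext^2_\Lambda(X',K) = 0$ by the hereditary hypothesis, $\Hom_\Lambda(X',Z) = 0$ by the torsion-pair separation $X' \in \T$ and $Z \in \T^\perp$, and $\Ext^1_\Lambda(X',Z) = 0$ by extending the semibrick-pair Ext-vanishing $\Ext^1_\Lambda(X',Y) = 0$ (for $Y \in \U(\T)$) inductively along any $\Filt(\U(\T))$-filtration of $Z$. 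Threading these through the long exact sequence forces $\Ext^1_\Lambda(X',X) = 0$.

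For the reverse direction, assume (1) and (2). By \cref{cor:algebraic_image_characterization}, it suffices to show that for each $X \in \D(\T)$ the minimal right $\Filt(\U(\T))$-approximation $g_{\U(\T),X}\colon Z \to X$ is surjective. \cref{prop:SM_compat} ensures this map is either injective or surjective; suppose for contradiction that $g_{\U(\T),X}$ is injective but not surjective, so that $C := \coker g_{\U(\T),X}$ is a nonzero brick lying in $\D(\popup_{\tors\Lambda}(\T)) \setminus \U(\T)$ by the mutation formulas and in $\T$ since $\T$ is closed under quotients. The plan is to exhibit a surjection from a module in $\Filt(\U(\T))$ onto $X$; any such surjection would factor through $g_{\U(\T),X}$ by the approximation property and force $g_{\U(\T),X}$ to be surjective, yielding the desired contradiction.

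The hardest step is producing this surjection. The hereditary hypothesis and (2) jointly supply a crucial first ingredient: the projective cover $\pi\colon P \twoheadrightarrow X$ satisfies $P \in \T^\perp$, because the torsion subobject $tP \subseteq P$ is a submodule of a projective, hence itself projective in a hereditary algebra, and must vanish by (2). Thus $\pi$ provides a surjection onto $X$ from a module already in the torsion-free class. To promote $\pi$ into a surjection from $\Filt(\U(\T))$, I would use condition (1)---which, via Schur's lemma applied to the Ext-orthogonal semibrick $\D(\T)$, forces $\Filt(\D(\T)) = \add(\D(\T))$ to be semisimple and rules out the possibility that $C$ lies in $\D(\T)$ unless $C = X$---combined with an analysis of the 2-term silting object (equivalently, the support $\tau$-tilting module) associated to $(\D(\T),\U(\T))$, to show that either $P$ itself admits a $\Filt(\U(\T))$-filtration or some indecomposable module in $\Filt(\U(\T))$ surjects onto $X$ through an Auslander--Reiten-type construction. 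The delicate part is leveraging (1) to align the wide subcategory $\Filt(\U(\T))$ with the Ext-projective structure of $\T^\perp$ in the hereditary setting. The dual characterization of the image of $\popup_{\tors\Lambda}$ then follows by applying the analogous argument to the opposite algebra $\Lambda^{op}$, which is again hereditary.
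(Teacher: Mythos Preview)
Your forward direction is correct and essentially identical to the paper's: both use the short exact sequence $0 \to K \to Z \to X \to 0$ with $Z \in \Filt(\U(\T))$ coming from \cref{cor:algebraic_image_characterization}, apply $\Hom_\Lambda(X',-)$, and kill $\Ext^1_\Lambda(X',X)$ using $\Ext^1_\Lambda(X',Z)=0$ and the hereditary hypothesis $\Ext^2_\Lambda(X',K)=0$. (In fact your version is marginally cleaner, since it handles $X=X'$ uniformly rather than invoking rigidity of bricks separately.)

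Your reverse direction, however, has a genuine gap. The observation that the projective cover $P$ of $X$ lies in $\T^\perp$ (because $tP$ is projective in a hereditary algebra and hence zero by (2)) is correct and promising, but you have not shown how to upgrade the surjection $P \twoheadrightarrow X$ to one from $\Filt(\U(\T))$. Knowing $P \in \T^\perp$ is strictly weaker than $P \in \Filt(\U(\T))$, and your appeal to ``an analysis of the 2-term silting object'' or ``an Auslander--Reiten-type construction'' is not an argument---it is a description of where you hope an argument might live. The semisimplicity of $\Filt(\D(\T))$ under (1) is true but does not by itself bridge this gap.

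The paper's strategy is quite different: it proves the contrapositive by induction on $|\D(\T)|$. Assuming (1) holds and some $g_{\U(\T),X}$ is injective, it shows $X$ is projective, contradicting (2). The base case $|\D(\T)|=1$ forces $\popup(\T)=\mods\Lambda$, whence $\U(\T)$ consists of simples and $X$ is a projective cover. For the induction step, one passes to the perpendicular wide subcategory $Z^{\perp_{0,1}}$ for another $Z \in \D(\T)$; by induction $X$ is projective there, and a careful torsion-pair analysis (splitting an arbitrary $M$ via $(\T,\T^\perp)$ and then $(\Gen Z, Z^\perp)$, and constructing a universal extension by $Z$ to land in $Z^{\perp_{0,1}}$) lifts projectivity to all of $\mods\Lambda$. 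This reduction via perpendicular categories is the missing idea in your sketch.
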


\begin{proof}
    We prove only the first statement since the second is dual. We first note that \ref{thm:algebraic_2} is necessary by \cref{lem:no_projective}.

    Suppose that \ref{thm:algebraic_1} holds and that $\T$ is not in the image of $\popdown_{\tors \Lambda}$. By \cref{cor:algebraic_image_characterization}, this means there exists $X \in \D(\T)$ such that $g_{\U(\T),X}$ is injective. We will prove that $X$ is projective by induction on $|\D(\T)|$. First suppose $|\D(\T)| = 1$. Then \cref{thm:pop_mutation} implies that ${|\D(\popup_{\tors \Lambda}(\T))| = |\Lambda|}$. Now, $|\D(\popup_{\tors \Lambda}(\T))| + |\U(\popup_{\tors \Lambda}(\T))| = |\Lambda|$ by \cref{thm:torsion_smc} and \cite[Corollary~5.5]{KY}\footnote{This is a classical and well-known fact that was proved much earlier than \cite{KY}, but this logical deduction suits the organization of this paper.}, so $\popup_{\tors \Lambda}(\T) = \mods\Lambda$. In particular, $\U(\T) \subseteq \D(\popup_{\tors \Lambda}(\T))$ consists of only simple modules, so $\T^\perp$ is a Serre subcategory. By \cref{prop:serre}, this means $\T^\perp = X^\perp = P^\perp$ for some projective module $P$. It is straightforward to show that $X = P$ must be the projective cover of the unique simple module that does not lie in $\U(\T)$.

    For the induction step, suppose that the result holds for $|\D(\T)| = k$ and that $|\D(\T)| = k + 1$. Let $X \neq Z \in \D(\T)$, and let $\D' := \D(\T) \setminus \{Z\}$. By the assumption of \ref{thm:algebraic_2}, we have that $\D' \cup \U(\T) \subseteq Z^{\perp_{0,1}}$. Furthermore, since $\Lambda$ is hereditary, $Z^{\perp_{0,1}}$ is a wide subcategory\footnote{The \emph{perpendicular subcategory} $Z^{\perp_{0,1}}$ was first considered in \cite{GL}.}. Consequently, $\T' := Z^\perp \cap \T$ is a torsion class of $Z^{\perp_{0,1}}$ that satisfies $\D(\T') = \D'$ and $\U(\T') = \U(\T)$. In particular, \ref{thm:algebraic_1} holds for the torsion class $\T'$, and the injective map $g_{X,\U(\T)}$ is a minimal right $\Filt(\U(\T'))$-approximation. By the induction hypothesis, we conclude that $X$ is projective in $Z^{\perp_{0,1}}$; i.e., $\Ext^1_{\Lambda}(X,M) = 0$ for all $M \in Z^{\perp_{0,1}}$.

    Recall from \cref{subsec:torsion} that every $M \in \mods \Lambda$ admits a short exact sequence
    ${t_\T M \hookrightarrow M \twoheadrightarrow f_\F M}$
    with $t_\T M \in \T$ and $f_\F M \in \T^\perp = \Filt(\Cogen(\U(\T)))$. The module $t_\T M$ then fits into a short exact sequence $t_{\Gen(Z)}(t_\T M) \hookrightarrow t_\T M \twoheadrightarrow f_{Z^\perp}(t_\T M)$ with $t_{\Gen(Z)}(t_\T M) \in \Gen(Z)$ and $f_{Z^\perp}(t_\T M) \in Z^\perp$. Note that $f_{Z^\perp}(t_\T M)$ is in $\T'$ since it is a quotient of $t_\T M$ and $t_\T M \in \T$. By the above paragraph, we have  $\Ext^1_\Lambda(X,f_{Z^\perp}(t_\T M)) = 0$. Similarly, $\Ext^1_\Lambda(X,t_{\Gen(Z)}(t_\T M)) = 0$ since $\Ext^1_\Lambda(X,Z) = 0$ (by assumption) and $\Lambda$ is hereditary. Thus, to show that $X$ is projective in $\mods\Lambda$, it suffices to show that $\Ext^1_{\Lambda}(X,N) = 0$ for all $N \in \Cogen(\U(\T))$. Let $Y \in \U(\T)$, and let $N \subseteq Y$ be a submodule. Then $N \in Z^\perp$. If $\Ext^1_{\Lambda}(Z,N) = 0$, we already know that $\Ext^1_{\Lambda}(X,N) = 0$. Otherwise, the fact that $\Ext^1_{\Lambda}(Z,Z) = 0$ and $\Hom_{\Lambda}(Z,N) = 0$ means that there exist $Z^k \in \add(Z)$ and a short exact sequence
    $$0 \rightarrow N \rightarrow E \rightarrow Z^k \rightarrow 0$$
    such that the induced map $\Hom_{\Lambda}(Z,Z^k) \rightarrow \Ext^1_{\Lambda}(Z,N)$ is surjective. If we take $k$ to be minimal with respect to this property, then there is an induced long exact sequence
    $$0 = \Hom_{\Lambda}(Z,E) \rightarrow \Hom_{\Lambda}(Z,Z^k) \rightarrow \Ext^1_{\Lambda}(Z,N) \rightarrow \Ext^1_{\Lambda}(Z,E) \rightarrow \Ext^1_{\Lambda}(Z,Z) = 0.$$
    (We know that the first term is $0$ because $Z$ is a brick and $\Hom_{\Lambda}(Z,N) = 0$, so any nonzero morphism $Z \rightarrow E$ would have trivial intersection with $N$ and would consequently be a splitting of a composition $E \twoheadrightarrow Z^k \twoheadrightarrow Z$.) It follows that $E \in Z^{\perp_{0,1}}$ and therefore that $\Ext^1_{\Lambda}(X,E) = 0$ (since $X$ is projective in that wide subcategory). Hence, there is an induced exact sequence
    $$0 = \Hom_{\Lambda}(X,Z^k) \rightarrow \Ext^1_{\Lambda}(X,N) \rightarrow \Ext^1_{\Lambda}(X,E) = 0.$$
    We conclude that $\Ext^1_{\Lambda}(X,N) = 0$. Since $\Cogen(\U(\T))$ is the closure under $\Filt$ of all indecomposable quotients of the bricks in $\U(\T)$, this proves that $X$ is projective in $\mods\Lambda$.
    
    It remains to show that if $\T$ is in the image of $\popdown_{\tors \Lambda}$, then \ref{thm:algebraic_1} holds. To see this, suppose that $\T$ is in the image of $\popdown_{\tors \Lambda}$, and let $X_1, X_2 \in \D(\T)$. If $X_1 = X_2$, then there is nothing to show since all bricks over $\Lambda$ are also rigid. Otherwise, by \cref{cor:algebraic_image_characterization}, the map $g_{\U(\T),X_2}: U_{X_2} \rightarrow X_2$ must be injective. Therefore, there is an induced exact sequence
    $$\Ext^1_{\Lambda}(X_1,\ker(g_{\U(\T),X_2})) \rightarrow \Ext^1_{\Lambda}(X_1,U_{X_2}) \rightarrow \Ext^1_{\Lambda}(X_1,X_2) \rightarrow 0,$$
    where the last 0 comes from the fact that $\Lambda$ is hereditary. Since $\Ext^1_{\Lambda}(X_1,U_{X_2}) = 0$, it follows that $\Ext^1_{\Lambda}(X_1,X_2) = 0$; i.e., \ref{thm:algebraic_1} holds.
\end{proof}

\begin{remark}
If $\Lambda$ is not hereditary, then the Ext-orthogonality of the bricks in $\D(\T)$ is generally not a necessary or sufficient condition for $\T$ to be in the image of $\popdown_{\tors\Lambda}$. See \cref{rem:ext_orthogonal} for an explicit example.
\end{remark}

We conclude this section by combining several known results into a lattice-theoretic interpretation of the Ext-orthogonality condition in \cref{thm:algebraic_image_characterization}. As we discuss in \cref{rem:preprojective}, a special case of this result can also be interpreted (and proved) using Coxeter combinatorics.

\begin{lemma}\label{lem:ext_boolean}
    Let $\Lambda$ be an arbitrary $\tau$-tilting finite algebra, and let $\T \in \tors\Lambda$. The following are equivalent.
    \begin{enumerate}[(1)]
        \item\label{lem:ext_1} $\Ext^1_\Lambda(X,X') = 0$ for all distinct $X,X' \in \D(\T)$.
        \item\label{lem:ext_2} The interval $[\popdown_{\tors\Lambda}(\T),\T] \subseteq \tors\Lambda$ is a distributive lattice.
        \item\label{lem:ext_3} The interval $[\popdown_{\tors\Lambda}(\T),\T] \subseteq \tors\Lambda$ is a Boolean lattice.
        \item\label{lem:ext_4} The map $\X \mapsto \Filt(\T \cup  \X)$ is an isomorphism from the Boolean lattice of subsets of $\D(\T)$ to the interval $[\popdown_{\tors\Lambda}(\T),\T]$ of $\tors\Lambda$. 
        \item\label{lem:ext_5} There exist a set of finite-dimensional local $K$-algebras $\{\Lambda_X \mid X \in \D(\T)\}$ and an equivalence of categories $(\popdown_{\tors\Lambda}(\T))^\perp \cap \T \cong \mods(\prod_{X \in \D(\T)} \Lambda_X)$. 
    \end{enumerate}
\end{lemma}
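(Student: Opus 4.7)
The plan is to reduce the statement to a question about the torsion lattice of a single auxiliary algebra. By the theory of intervals in lattices of torsion classes (compare \cref{lem:Cor710} and the results of \cite{DIRRT,MS}), the interval $[\popdown_{\tors\Lambda}(\T),\T]$ is canonically isomorphic to $\tors\W$, where $\W := \popdown_{\tors\Lambda}(\T)^\perp \cap \T$ is a wide subcategory of $\mods\Lambda$. Combining \cref{cor:brick_label} (which gives $\popdown_{\tors\Lambda}(\T) = \T \cap \lperp{\D(\T)}$) with \cref{prop:IT}, one verifies that $\W = \Filt(\D(\T))$ and hence that the simples of $\W$ are exactly the bricks in $\D(\T)$. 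By \cref{rem:ff}, $\W$ is exact-equivalent to $\mods\Lambda'$ for some $\tau$-tilting finite algebra $\Lambda'$ whose simples correspond to $\D(\T)$. Because $\W$ is closed under extensions in $\mods\Lambda$, we have $\Ext^1_\Lambda(X,X') = \Ext^1_{\Lambda'}(X,X')$ for all $X, X' \in \D(\T)$, so (1) becomes the Ext-orthogonality of the simples of $\Lambda'$ and (5) becomes the statement that $\Lambda' \cong \prod_{X \in \D(\T)} \Lambda_X$ with each $\Lambda_X$ local.

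Working inside $\tors\Lambda'$, I would first prove (1) $\iff$ (5) using standard algebra: the number of arrows from vertex $i$ to vertex $j$ in the Gabriel quiver of $\Lambda'$ equals $\dim_{\End_{\Lambda'}(S_j)}\Ext^1_{\Lambda'}(S_i,S_j)$, so (1) is equivalent to the Gabriel quiver having no arrows between distinct vertices, which in turn is equivalent to $\Lambda'$ decomposing as a product of local subalgebras (one per vertex). The implication (5) $\implies$ (4) rests on the fact that a finite-dimensional local algebra has only two torsion classes, zero and the whole module category; hence if $\Lambda' \cong \prod_X \Lambda_X$ with each $\Lambda_X$ local, then $\tors\Lambda' \cong 2^{\D(\T)}$, and transporting this Boolean lattice across the interval isomorphism furnishes the map described in (4). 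The implications (4) $\implies$ (3) and (3) $\implies$ (2) are immediate.

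It remains to prove (2) $\implies$ (1), which I would carry out contrapositively via a pentagon construction. Suppose there exist distinct $X, X' \in \D(\T)$ with $\Ext^1_\Lambda(X,X') \neq 0$, and pick a non-split extension $0 \to X' \to M \to X \to 0$. Working in $\tors\Lambda'$, where $X, X'$ are now simple, I claim that $0, \tor(X), \tor(M), \tor(X'), \tor(X) \join \tor(X')$ form an $N_5$-sublattice. A composition-factor analysis (using that every module in $\tor(M) = \Filt(\add(M \oplus X))$ has at least as many $X$-factors as $X'$-factors) shows that $\tor(X) \subsetneq \tor(M) \subsetneq \tor(X) \join \tor(X')$, that $\tor(X')$ is incomparable with both $\tor(X)$ and $\tor(M)$, and that $\tor(X) \meet \tor(X') = \tor(M) \meet \tor(X') = 0$ while $\tor(X) \join \tor(X') = \tor(M) \join \tor(X')$. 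The presence of $N_5$ contradicts distributivity, yielding (2) $\implies$ (1). The main obstacle I foresee is the identification $\W = \Filt(\D(\T))$ in the reduction step: the inclusion $\Filt(\D(\T)) \subseteq \W$ is straightforward, but the reverse requires showing that every object simple in $\W$ lies in $\D(\T)$, which will rely on the correspondence between atoms of $[\popdown_{\tors\Lambda}(\T),\T]$, simples of $\W$, and brick labels of the relevant cover relations via \cref{thm:brick_label} and \cref{lem:Cor710}.
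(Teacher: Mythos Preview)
Your reduction step is essentially identical to the paper's: both identify $\W = \popdown_{\tors\Lambda}(\T)^\perp \cap \T$ with $\Filt(\D(\T))$ and pass to an auxiliary algebra $\Lambda'$ whose simples are the bricks in $\D(\T)$, so that the interval becomes $\tors\Lambda'$. The paper obtains $\W = \Filt(\D(\T))$ by citing \cite[Theorem~6.3]{AP} together with \cref{cor:brick_label} and \cref{prop:IT}, which cleanly resolves the ``main obstacle'' you flag; your proposed route via atoms and brick labels also works but is less direct.

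Where you genuinely diverge is in the final step. The paper simply invokes \cite[Theorem~1.1]{LuoWei} to obtain the equivalence of all five conditions once one has reduced to the full torsion lattice of $\Lambda'$. You instead supply a self-contained argument: the Gabriel-quiver analysis for $(1)\Leftrightarrow(5)$, the observation that local algebras have exactly two torsion classes for $(5)\Rightarrow(4)$, and an explicit $N_5$-pentagon (built from $0$, $\tor(X)$, $\tor(M)$, $\tor(X')$, $\tor(X)\vee\tor(X')$ for a non-split extension $M$) for $(2)\Rightarrow(1)$. These arguments are correct; in particular, your verification that $X'\notin\tor(M)$ (since $\top(M^n)\in\add X$) and that the five elements close under meet and join is sound. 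The trade-off is that your approach is more elementary and avoids a black-box citation, while the paper's is much shorter. Your sketch of $(5)\Rightarrow(4)$ only establishes that the interval is Boolean of the correct cardinality; identifying the isomorphism with the specific map written in \ref{lem:ext_4} would require a bit more care, but this is minor.
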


\begin{proof}
    Denote $\U = \popdown_{\tors\Lambda}(\T)$. By \cite[Theorem~6.3]{AP}, \cref{cor:brick_label}, and \cref{prop:IT}, we have that $\U^\perp \cap \T = \Filt(\D(\T))$. Since this is a functorially finite wide subcategory, there exists a finite-dimensional algebra $\Lambda'$ such that $\Filt(\D(\T))$ is equivalent to $\mods\Lambda'$. Moreover, \cite[Theorem~1.4]{AP} says that the interval $[\U,\T]$ is isomorphic to $\tors \Lambda'$. (See also \cite[Theorem~3.12]{jasso} and \cite[Section~4.2]{DIRRT}.) It therefore suffices to assume $\T = \mods\Lambda$, in which case $\D(\T)$ is the set of simple modules and $\popdown_{\tors\Lambda}(\T) = 0$. The result then follows from \cite[Theorem~1.1]{LuoWei}. 
    \end{proof}

\section{The Weak Order and Cambrian Lattices}\label{sec:weak_cambrian} 

\subsection{Coxeter groups} A standard reference for much of the material in this subsection is \cite{BjornerBrenti}. 

Let $(W,S)$ be a finite Coxeter system. This means that $S$ is a finite set and that $W$ is a finite group with a presentation of the form \[\langle S\mid(ss')^{m(s,s')}=e\text{ for all }s,s'\in S\rangle,\] where $e$ is the identity element of $W$ and we have \[m(s,s)=1\quad\text{and}\quad m(s,s')=m(s',s)\in\{2,3,\ldots\}\]for all distinct $s,s'\in S$. (We often refer only to the Coxeter group, tacitly assuming $S$ is part of the data of $W$.)

The elements of $S$ are called the \dfn{simple reflections}. A \dfn{reflection} is an element of the form $wsw^{-1}$ for $w\in W$ and $s\in S$. The \dfn{Coxeter graph} of $W$ is the graph $\Gamma_W$ with vertex set $S$ in which two simple reflections $s$ and $s'$ are connected by an edge whenever $m(s,s')\geq 3$; this edge is labeled with the number $m(s,s')$ if $m(s,s')\geq 4$.
We will assume that $W$ is \dfn{irreducible}, which means that $\Gamma_W$ is connected. We say $W$ is \dfn{simply-laced} if $m(s,s')\leq 3$ for all $s,s'\in S$. There is a well known characterization of finite irreducible Coxeter groups (see \cite[Appendix~A1]{BjornerBrenti}). 

We will use sans serif font when we write words over the alphabet $S$; this allows us to distinguish a word $\s_1\cdots\s_k$ from the element $s_1\cdots s_k\in W$ that it represents.

A \dfn{reduced word} for an element $w\in W$ is a word over the alphabet $S$ that represents $w$ and is as short as possible. The number of letters in a reduced word for $w$ is called the \dfn{length} of $w$ and is denoted $\ell(w)$. A \dfn{right inversion} (resp.\ \dfn{left inversion}) of $w$ is a reflection $t$ such that $\ell(wt)<\ell(w)$ (resp.\ $\ell(tw)<\ell(w)$). 
The \dfn{(right) weak order} is the partial order $\leq$ on $W$ defined so that $u\leq v$ if and only if there exists a reduced word for $v$ that has a reduced word for $u$ as a prefix.
Equivalently, $u\leq v$ if and only if every left inversion of $u$ is a left inversion of $v$. Let $\Weak(W)$ denote the poset $(W,\leq)$. 
It is well known that $\Weak(W)$ is a ranked semidistributive lattice ($\ell$ is a rank function). 
In fact, if $W$ is crystallographic, then $\Weak(W)$ is isomorphic to the lattice of torsion classes of the \emph{preprojective algebra} of type $W$ (see \cref{rem:preprojective}).

A \dfn{cover reflection} of an element $w\in W$ is a reflection $t$ of $W$ such that $\ell(tw)=\ell(w)-1$ and $tw=ws$ for some $s\in S$.
Thus, the cover reflections of $w$ corresponds bijectively to the elements covered by $w$ in $\Weak(W)$. Because $\Weak(W)$ is a semidistributive lattice, each element $w\in W$ has a canonical join representation \[\mathcal D(w)=\{j_{tw,w}\mid t\text{ is a cover reflection of }w\}.\] The following result, which appears as \cite[Theorem~8.1]{ReadingSpeyer2011} (see also \cite[Proposition~4.13]{bicat}), describes the elements of $\mathcal D(w)$ more explicitly. 

\begin{proposition}\label{prop:coxeter_cjr}
Let $t$ be a cover reflection of an element $w\in W$. The shard label $j_{tw,w}$ of the edge $tw\lessdot w$ in $\Weak(w)$ is the unique minimal element of \[\{x\in \Weak(W)\mid x\leq w\text{ and }t\text{ is a left inversion of }x\}.\]
\end{proposition}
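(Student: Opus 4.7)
The approach is to verify directly, using the abstract characterization of shard labels in the semidistributive lattice $\Weak(W)$, that $j_{tw,w}$ is the unique minimum of the set in question. First I would rewrite the set: since $u \leq v$ in $\Weak(W)$ is equivalent to $\mathrm{Inv}_L(u) \subseteq \mathrm{Inv}_L(v)$, and since the cover relation $tw \lessdot w$ adds precisely the left inversion $t$ (so $\mathrm{Inv}_L(tw) = \mathrm{Inv}_L(w) \setminus \{t\}$), the set $\{x \leq w : t \in \mathrm{Inv}_L(x)\}$ coincides with $M := \{x \leq w : x \not\leq tw\}$, which is easier to handle with the general lattice machinery.

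Next I would verify that $j_{tw,w}$ lies in $M$. By the very definition of the shard label, $j_{tw,w} \leq w$. Suppose toward contradiction that $j_{tw,w} \leq tw$. Since $\kappa(j_{tw,w}) \geq tw$, we would then have $j_{tw,w} \leq \kappa(j_{tw,w})$, forcing $j_{tw,w} \wedge \kappa(j_{tw,w}) = j_{tw,w}$. But the defining identity $\kappa(j) \wedge j = j_*$ makes this meet equal to $(j_{tw,w})_*$, which is strictly less than the join-irreducible $j_{tw,w}$, a contradiction. Hence $j_{tw,w} \not\leq tw$, and so $j_{tw,w} \in M$.

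The main step is to show that $j_{tw,w}$ is a lower bound for $M$. Given any $x \in M$, the join $x \vee tw$ lies in $[tw,w]$ and strictly exceeds $tw$ (since $x \not\leq tw$); because $tw \lessdot w$ is a cover, the interval $[tw,w]$ has only two elements, and so $x \vee tw = w$. This exhibits $\{x, tw\}$ as a join representation of $w$. By the canonical join refinement property (a standard consequence of the minimality characterization of canonical join representations in a finite semidistributive lattice), every element of the canonical join representation $\mathcal{D}(w) = \{j_{y,w} : y \lessdot w\}$ lies below some element of any join representation of $w$. Applied to $w = x \vee tw$, each $j_{y,w}$ is below either $x$ or $tw$. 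In particular, $j_{tw,w} \in \mathcal{D}(w)$, and since $j_{tw,w} \not\leq tw$ by the previous paragraph, we must have $j_{tw,w} \leq x$. Combining the two paragraphs, $j_{tw,w}$ is the unique minimum of $M$.

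The main subtlety is the invocation of the canonical join refinement property, which is what allows one to pass from the abstractly-defined shard label $j_{tw,w}$ to a concrete lower bound on an arbitrary element of $M$; the rest of the argument reduces to straightforward bookkeeping with inversion sets and the semidistributive-lattice machinery developed in \cref{sec:posets}.
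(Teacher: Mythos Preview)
Your argument is correct. The paper does not supply its own proof of this proposition; it simply cites \cite[Theorem~8.1]{ReadingSpeyer2011} (and \cite[Proposition~4.13]{bicat}). Your approach is therefore genuinely different in spirit from the cited source: Reading--Speyer work with the explicit Coxeter-theoretic description of join-irreducibles and shard labels, whereas you give a purely lattice-theoretic argument valid in any finite semidistributive lattice once the cover $tw\lessdot w$ is translated into the condition $x\leq w,\ x\not\leq tw$. The translation step is the only place Coxeter theory enters, via the identification of weak order with containment of left-inversion sets.

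One small point worth tightening: the ``canonical join refinement property'' you invoke is indeed standard, but note that the paper's own definition of canonical join representation (in \cref{sec:posets}) is phrased somewhat weakly (``there exist $a\in A$ and $b\in B$'' rather than ``for every $a\in A$ there exists $b\in B$''). The stronger refinement statement you use---that every canonical joinand of $w$ lies below some element of \emph{any} join representation of $w$---follows in a finite join-semidistributive lattice directly from $\mathrm{SD}_\vee$: if $x\vee tw=w=z\vee tw$ then $(x\wedge z)\vee tw=w$, so $\{z\leq w: z\vee tw=w\}$ is closed under meets and hence has a minimum, which is exactly $j_{tw,w}$. Stating it this way avoids the detour through refining $\{x,tw\}$ to join-irreducibles and makes the argument fully self-contained relative to the paper's background section.
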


The \dfn{long element} of $W$ is the unique element $\wo$ of $W$ that has maximum length. It is known that $\wo^2=e$. If $J\subseteq S$, then the subgroup $W_J$ of $W$ generated by $J$ is called a \dfn{(standard) parabolic subgroup} of $W$. The pair $(W_J,J)$ is a finite Coxeter system; the long element of $W_J$ is denoted by $\wo(J)$. 

A \dfn{descent} of an element $w\in W$ is a simple reflection $s$ such that $\ell(ws)<\ell(w)$; in other words, it is a simple reflection that is also a right inversion of $w$. Let $\Des(w)$ denote the set of descents of $w$. A simple reflection is a descent of $w$ if and only if there is a reduced word for $w$ that ends in $s$. If $u^{-1}\leq v^{-1}$, then $\Des(u)\subseteq\Des(v)$. The pop-stack operator on $\Weak(W)$ has an alternative description \cite{DefantPopCoxeter} given by 
\begin{equation}\label{eq:pop_weak}
\popdown_{\Weak(W)}(x)=x\wo(\Des(x)).
\end{equation}

Let $n = |S|$. For $\alpha=(\alpha_1,\ldots,\alpha_n)$ and $ \beta=(\beta_1,\ldots,\beta_n)$ in $\mathbb{R}^n$, let 
\begin{equation}
    (\alpha,\beta) = \sum_{i,j \in [n]} -2\cos(\pi/m(i,j))\cdot \alpha_i\beta_j\label{eqn:symmetric}.
\end{equation} 
Because $W$ is finite, it is known that $(-,-)$ is a positive definite symmetric bilinear form. For $i\in[n]$, we let $e_i$ denote the $i$-th standard basis vector of $\mathbb R^n$, and we let $e_i^\vee = 2e_i/(e_i,e_i)$. 
By abuse of notation, we use $s_i \in S$ to denote the linear transformation $s_i\colon \mathbb{R}^n \rightarrow \mathbb{R}^n$ given by $s_i(\alpha) = \alpha - (e_i^\vee,\alpha)e_i$. This definition extends to a linear representation of $W$ on $\mathbb{R}^n$. A vector $\alpha \in \mathbb{Z}^n$ is called a \dfn{root} of $W$ if there exists $w \in W$ such that $\alpha = w(e_i)$ for some $i$. A root is \dfn{positive} if its coordinates are all nonnegative. We denote by $\Phi_W$ and $\Phi_W^+$ the sets of roots and positive roots of $W$, respectively.

The action of $W$ induces a permutation on the set of roots; that is, $\Phi_W = \{w(\alpha) \mid \alpha \in \Phi_W\}$ for all $w \in W$. Given a reflection $t$ in $W$, we can write $t=ws_iw^{-1}$ for some $w\in W$ and $s_i\in S$; let $\beta_t$ be the unique positive root in $\{\pm w(e_i)\}$. It is known that $\beta_t$ does not depend on the choices of $w$ and $s_i$. Moreover, the map $t\mapsto\beta_t$ is a bijection from the set of reflections of $W$ to the set $\Phi_W^+$ of positive roots. 

For $w \in W$, let
$$\mathrm{inv}(w) = \{\alpha \in \Phi^+_W \mid w^{-1}(\alpha) \in -\Phi_W^+\}.$$ It is known that $\beta_t\in\inv(w)$ if and only if $t$ is a left inversion of $w$. Thus, for $u,v\in W$, we have $u\leq v$ in the weak order if and only if $\mathrm{inv}(u) \subseteq \mathrm{inv}(v)$.

\subsection{Coxeter elements and Cambrian lattices}\label{subsec:cambrian} 
A \dfn{(standard) Coxeter element} of $W$ is an element $c$ obtained by multiplying the simple reflections in some order (with each appearing once in the product). Thus, a reduced word for $c$ is a word representing $c$ in which each simple reflection appears exactly once. Let us orient each edge $\{s,s'\}$ in $\Gamma_W$ from $s$ to $s'$ if and only if $s$ appears before $s'$ in some (equivalently, every) reduced word for $c$. The result is an acyclic orientation of $\Gamma_W$. This construction establishes a one-to-one correspondence between Coxeter elements of $W$ and acyclic orientations of $\Gamma_W$. We denote the acyclic orientation corresponding to a Coxeter element $c$ by $Q_c$. As is standard in representation theory, we will sometimes call $Q_c$ a \dfn{quiver} instead of a directed graph. We write $S=\{s_1,\ldots,s_n\}$ and denote by $(Q_c)_0 = [n]$ the set of vertices of $Q_c$, identifying each index $i\in [n]$ with $s_i$. Likewise, we denote by $(Q_c)_1$ the set of arrows of $Q_c$. Our convention is to use the notation $a_{i,j} \in (Q_c)_1$ to represent an arrow pointing from the vertex $i$ to the vertex $j$.

Fix a reduced word $\sfc$ for a Coxeter element $c$, and consider the infinite word $\sfc^\infty=\sfc^{(1)}\sfc^{(2)}\cdots$, where each $\sfc^{(k)}$ is a copy of $\sfc$. Following Reading \cite{reading2007clusters}, we define the \dfn{$\sfc$-sorting word} of an element $w\in W$ to be the reduced word $\sort_{\sfc}(w)$ for $w$ that is lexicographically first as a subword of $\sfc^\infty$. Let ${\bf I}_c^{(k)}(w)$ be the set of simple reflections that are taken from $\sfc^{(k)}$ when we form $\sort_{\sfc}(w)$ as the lexicographically first subword of $\sfc^\infty$. Although ${\bf I}_c^{(k)}(w)$ depends on the Coxeter element $c$, it does not depend on the choice of the reduced word $\sfc$. The element $w$ is called \dfn{$c$-sortable} if ${\bf I}_c^{(1)}(w)\supseteq {\bf I}_c^{(2)}(w)\supseteq\cdots$. 

The set of $c$-sortable elements of $W$ forms a sublattice of $\Weak(W)$ called the \dfn{$c$-Cambrian lattice}, which we denote by $\Camb_c$. We will recall in \cref{subsec:path_algebras} that $\Camb_c$ can be realized as the lattice of torsion classes of a finite-dimensional (hereditary) algebra when $W$ is crystallographic.

\begin{example}
A basic yet important example of a Coxeter group is the dihedral group $I_2(m)$, whose Coxeter graph is \!\!\!$\begin{array}{l}
	\begin{tikzpicture}[roundnode/.style={circle, inner sep=0pt, minimum size=3mm}]
		
		\node (A5) at (0.75,0) {$s_2$};
		\node (A4) at (-0.75,0) {$s_1$};
            \node[above] at (0,0) {$m$};
            
		\path (A5) edge [thick] (A4);
	\end{tikzpicture}
\end{array}$\!\!\! (if $m=2$, then there is no edge). Let $c=s_1s_2$. The $c$-sortable elements of $I_2(m)$ are $s_2$ and the $m+1$ elements $e,s_1,s_1s_2,s_1s_2s_1,\ldots,\wo$. 
\end{example}

For each $w\in W$, the set $\Camb_c\cap\{v\in W\mid v\leq w\}$ has a unique maximal element in the weak order; we denote this element by $\pi_\downarrow^c(w)$. The map $\pi_\downarrow^c$ is a surjective lattice homomorphism from $\Weak(W)$ to $\Camb_c$, so $\Camb_c$ is a lattice quotient of $\Weak(W)$ \cite{ReadingCambrian,reading2007clusters}. The fibers of $\pi_\downarrow^c$ are the equivalence classes of an equivalence relation on $W$ known as the \dfn{$c$-Cambrian congruence}. 

According to \cite[Theorem~3.2]{DefantMeeting}, we have 
\begin{equation}\label{eq:pop_Cambrian}
\popdown_{\Camb_c}=\pi_\downarrow^c\circ\popdown_{\Weak(W)}. 
\end{equation}

The following lemma, which is immediate from the definition of a $c$-sortable element (see also \cite[Lemmas~2.4~\&~2.5]{reading2007clusters}), gives a recursive characterization of $c$-sortable elements. 

\begin{lemma}\label{lem:Cambrian_recurrence}
Let $W$ be a finite Coxeter group, and let $\mathsf{c}$ be a reduced word for a Coxeter element $c$ of $W$. Let $s$ be the first letter in $s$. Let $w\in W$. If $\ell(sw)>\ell(w)$, then $w$ is $c$-sortable if and only if it is an $(sc)$-sortable element of the standard parabolic subgroup $W_{S\setminus\{s\}}$. If $\ell(sw)<\ell(w)$, then $w$ is $c$-sortable if and only if $sw$ is $(scs)$-sortable. 
\end{lemma}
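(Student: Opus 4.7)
The plan is to unwind the greedy definition of the $c$-sorting word as the lexicographically first subword of $\mathsf{c}^\infty$ equal to $w$, and then check the nesting condition defining $c$-sortability directly. Write $\mathsf{c} = s\mathsf{c}'$. Then $\mathsf{c}'$ is a reduced word for the Coxeter element $sc$ of $W_{S\setminus\{s\}}$ (since $sc$ uses each simple reflection of $S\setminus\{s\}$ exactly once), and $\mathsf{c}'s$ is a reduced word for the Coxeter element $scs$ of $W$ (any length-$n$ word representing a Coxeter element must be reduced). Within $\mathsf{c}^\infty$, the single occurrence of $s$ in the $k$-th copy of $\mathsf{c}$ sits at position $(k-1)n+1$, so $s \in {\bf I}_c^{(k)}(w)$ precisely when the greedy algorithm selects that position.

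For the first assertion, assume $\ell(sw) > \ell(w)$. Then $s$ is not a left inversion of $w$, so the greedy algorithm skips position $1$ and $s \notin {\bf I}_c^{(1)}(w)$. If $w$ is $c$-sortable, the nesting ${\bf I}_c^{(1)}(w) \supseteq {\bf I}_c^{(2)}(w) \supseteq \cdots$ propagates this to $s \notin {\bf I}_c^{(k)}(w)$ for every $k$, so $\sort_{\mathsf{c}}(w)$ uses no letter $s$. This places $w$ in $W_{S\setminus\{s\}}$ and exhibits $\sort_{\mathsf{c}}(w)$ as a subword of $(\mathsf{c}')^\infty$. A direct lex-first comparison, combined with the fact that every reduced word for an element of $W_{S\setminus\{s\}}$ avoids $s$, identifies this with the $sc$-sorting word $\sort_{\mathsf{c}'}(w)$ computed inside $W_{S\setminus\{s\}}$ and shows that ${\bf I}_c^{(k)}(w) = {\bf I}_{sc}^{(k)}(w)$ for every $k$. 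Hence $c$-sortability of $w$ is equivalent to $(sc)$-sortability of $w$ in $W_{S\setminus\{s\}}$; the reverse implication runs the same identification backwards, noting that $w \in W_{S\setminus\{s\}}$ forces $\ell(sw) > \ell(w)$ automatically.

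For the second assertion, assume $\ell(sw) < \ell(w)$, so some reduced word for $w$ begins with $s$ and the greedy algorithm selects position $1$. Since the suffix of $\mathsf{c}^\infty$ after position $1$ equals $(\mathsf{c}'s)^\infty$, the remainder of the greedy process produces $\sort_{\mathsf{c}'s}(sw)$, yielding
\[
\sort_{\mathsf{c}}(w) = s \cdot \sort_{\mathsf{c}'s}(sw).
\]
The $k$-th copy of $\mathsf{c}'s$ inside this suffix occupies positions $(k-1)n+2$ through $kn+1$ of $\mathsf{c}^\infty$, so the single $s$-letter of that copy is the $s$ at position $kn+1$ -- the very position that controls whether $s \in {\bf I}_c^{(k+1)}(w)$. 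Writing $A_k := {\bf I}_c^{(k)}(w) \cap (S \setminus \{s\})$, this bookkeeping gives ${\bf I}_{scs}^{(k)}(sw) = A_k \cup ({\bf I}_c^{(k+1)}(w) \cap \{s\})$, in contrast with ${\bf I}_c^{(k)}(w) = A_k \cup ({\bf I}_c^{(k)}(w) \cap \{s\})$. The nesting condition for $w$ thus requires $(s \in {\bf I}_c^{(k)}(w))_{k\geq 1}$ and $(A_k)_{k\geq 1}$ each non-increasing, whereas the nesting condition for $sw$ requires $(s \in {\bf I}_c^{(k)}(w))_{k\geq 2}$ and $(A_k)_{k\geq 1}$ each non-increasing; these coincide because $s \in {\bf I}_c^{(1)}(w)$ makes the initial inequality $s \in {\bf I}_c^{(1)}(w) \geq s \in {\bf I}_c^{(2)}(w)$ automatic. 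The main technical point, and the only subtle one, is precisely this one-position shift between the copy-indexings of $\mathsf{c}^\infty$ and $(\mathsf{c}'s)^\infty$; once it is tracked carefully, both assertions follow directly from the greedy structure of the sorting word.
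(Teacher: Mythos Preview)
Your proof is correct and follows exactly the approach the paper indicates: the paper does not give a proof of this lemma at all, stating only that it is ``immediate from the definition of a $c$-sortable element (see also \cite[Lemmas~2.4~\&~2.5]{reading2007clusters}).'' Your argument is precisely the unpacking of that definition, carried out carefully via the greedy construction of the sorting word and the block bookkeeping, so there is nothing to compare.
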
 

We will also make use of the following characterization of $c$-sortable elements in terms of canonical join representations. 

\begin{lemma}\label{lem:sortable_CJR}
Let $c$ be a Coxeter element of a finite Coxeter group $W$. An element $w\in W$ is $c$-sortable if and only if every element of the canonical join representation of $w$ in $\Weak(W)$ is $c$-sortable. If $w$ is $c$-sortable, then the canonical join representation of $w$ in $\Weak(W)$ is equal to the canonical join representation of $w$ in $\Camb_c$. 
\end{lemma}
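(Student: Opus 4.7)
My plan is to exploit the dual role of $\Camb_c$: it is both a sublattice of $\Weak(W)$ (the $c$-sortable elements are closed under meets and joins) and a lattice quotient of $\Weak(W)$ via the surjective lattice homomorphism $\pi_\downarrow^c$, with the $c$-sortable elements being precisely the fixed points of $\pi_\downarrow^c$. An immediate consequence is that joins in $\Camb_c$ of $c$-sortable elements agree with their joins in $\Weak(W)$. The $(\Leftarrow)$ direction of the first assertion is then essentially free: if every element of $\mathcal{D}_{\Weak(W)}(w)$ is $c$-sortable, then $w$ is a join of $c$-sortable elements in $\Weak(W)$, hence $c$-sortable.

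For the $(\Rightarrow)$ direction, I would assume $w$ is $c$-sortable, set $J = \mathcal{D}_{\Weak(W)}(w)$, and apply $\pi_\downarrow^c$ to the identity $w = \bigvee J$ to obtain $w = \bigvee_{j \in J} \pi_\downarrow^c(j)$. Suppose for contradiction that some $j_0 \in J$ has $\pi_\downarrow^c(j_0) < j_0$. For each $j \in J$ with $\pi_\downarrow^c(j) < j$, let $K_j$ denote the canonical join representation of $\pi_\downarrow^c(j)$ in $\Weak(W)$, so every element of $K_j$ is a join-irreducible strictly below $j$. After thinning the set $\{j \in J : \pi_\downarrow^c(j) = j\} \cup \bigcup_{j : \pi_\downarrow^c(j) < j} K_j$ to a join-irredundant antichain $B$ of join-irreducibles with $\bigvee B = w$, the standard refinement property of canonical join representations in finite semidistributive lattices (that $\mathcal{D}(w) \preceq B$ in the refinement order, i.e., for every $a \in J$ there exists $b \in B$ with $a \leq b$) provides some $b \in B$ with $j_0 \leq b$. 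Any such $b$ either equals some $c$-sortable $j \in J$ or lies in some $K_j$ with $b \leq \pi_\downarrow^c(j) < j$; in the first case the antichain property of $J$ forces $j_0 = j$, contradicting $j_0$ not being $c$-sortable, while in the second case it forces $j_0 = j$ and hence $j_0 \leq \pi_\downarrow^c(j_0) < j_0$, also a contradiction.

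For the second assertion, having established $\mathcal{D}_{\Weak(W)}(w) \subseteq \Camb_c$, I would verify that $\mathcal{D}_{\Weak(W)}(w)$ is also the canonical join representation of $w$ in $\Camb_c$: its elements form an antichain, join to $w$ in $\Camb_c$ (by the sublattice property), are join-irreducible in $\Camb_c$ (the join-irreducibles of a quotient of a finite semidistributive lattice correspond to the uncontracted join-irreducibles of the original lattice), and the canonical join representation property descends from $\Weak(W)$ to $\Camb_c$, since any join-irredundant antichain of join-irreducibles in $\Camb_c$ joining to $w$ is also such an antichain in $\Weak(W)$. The main technical obstacle is the thinning step in the $(\Rightarrow)$ argument, which requires care to produce a join-irredundant antichain from a union of canonical join representations whose union need not already be an antichain; an alternative and possibly cleaner route would be to induct on $\ell(w)$ using the recursive characterization of $c$-sortability in Lemma~\ref{lem:Cambrian_recurrence}, reducing in each step either to a standard parabolic subgroup or to an element of smaller length and tracking how the canonical join representations transform.
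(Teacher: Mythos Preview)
Your argument is correct. The $(\Leftarrow)$ direction matches the paper's exactly: since $\Camb_c$ is a sublattice of $\Weak(W)$, a join of $c$-sortable elements is $c$-sortable.

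For the $(\Rightarrow)$ direction, the paper simply cites \cite[Proposition~8.2]{ReadingSpeyer2011}, whereas you supply a self-contained lattice-theoretic argument using the quotient map $\pi_\downarrow^c$ and the refinement minimality of canonical join representations. Your argument is sound: the set you assemble consists of join-irreducibles (the fixed $j$'s are in $J \subseteq \jirr_{\Weak(W)}$, and each $K_j$ is itself a canonical join representation), thinning to a join-irredundant subset is always possible and automatically yields an antichain, and the refinement property then forces the contradiction as you describe. What you gain over the paper is self-containment and a proof that works purely from the abstract lattice structure (sublattice plus quotient), without appeal to the Coxeter-theoretic machinery in Reading--Speyer; what you lose is brevity.

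For the second assertion, the paper's proof does not address it explicitly, presumably regarding it as folklore. Your verification is correct: the key points are that joins in $\Camb_c$ agree with joins in $\Weak(W)$ (sublattice), and that $\jirr_{\Camb_c}$ is exactly the set of $c$-sortable elements of $\jirr_{\Weak(W)}$ (standard for quotients of finite semidistributive lattices). With these in hand, the join-irredundancy and refinement minimality of $\mathcal{D}_{\Weak(W)}(w)$ transfer directly to $\Camb_c$, as you outline.

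One minor remark: your worry about the thinning step is not really an obstacle---any join-irredundant set is automatically an antichain, so iteratively deleting redundant elements suffices. The inductive alternative you mention via \cref{lem:Cambrian_recurrence} would also work but is unnecessary here.
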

\begin{proof}
According to \cite[Proposition~8.2]{ReadingSpeyer2011}, if $w$ is $c$-sortable, then each element of its canonical join representation is $c$-sortable.
On the other hand, if each element of the canonical join representation of $w$ is $c$-sortable, then $w$ is $c$-sortable because $\Camb_c$ is a sublattice of the weak order.
\end{proof}

\subsection{Cambrian lattices as lattices of torsion classes}\label{subsec:path_algebras}
 
Suppose for this subsection that $W$ is crystallographic (i.e., $W$ is not of type $H_3$, $H_4$, or $I_2(m)$ for $m \notin \{3,4,6\}$). Fix a Coxeter element $c \in W$. Associated to the acyclic orientation $Q_c$ of $c$ is a finite-dimensional algebra $KQ_c$ known as the \dfn{tensor algebra} of $Q_c$ (More precisely, the definition of $KQ_c$ depends on a choice of symmetrizable Cartan matrix with Weyl group $W$.) In this subsection, we recall the properties of $KQ_c$ that we will use to interpret \cref{thm:algebraic_image_characterization} in terms of Coxeter combinatorics. Notably, the definition of the algebra $KQ_c$ is not needed in order to state these results and is thus omitted from this paper. Readers interested in more details are referred to \cite{DlabRingelIndecomposable}.

Recall the notation of the projective modules $P(i)$ and simple modules $S(i)$ from \cref{sec:rep_background}. The indexing of these modules can be chosen so that $\undim(S(i)) = e_i = \beta_{s_i}$ for all $i \in [n]$. We fix this indexing for the remainder of this section. For $i \in [n]$, we then denote
\begin{equation}\label{eqn:dim_vects} 
\rho_i = s_n s_{n-1}\cdots s_{i+1}(e_i) .
\end{equation}
Note that $\{\rho_i \mid i \in [n]\} = \inv(c^{-1})$. The roots $\rho_i$ are sometimes called \dfn{projective roots}, a name that is justified by \cref{prop:gabriel_5} in \cref{prop:gabriel} below.

We now recall the following well-known properties of the algebra $KQ_c$. Note that \cref{prop:gabriel_2,prop:gabriel_3,prop:gabriel_4} together constitute 
\emph{Gabriel's Theorem}. These results can be found in the classical references \cite{DlabRingelFinite,DlabRingelIndecomposable}.

\begin{samepage}
\begin{proposition}\label{prop:gabriel}
\hspace{0.5cm}
    \begin{enumerate}[(1)]
        \item\label{prop:gabriel_1} The algebra $KQ_c$ is hereditary.
        \item\label{prop:gabriel_2} The algebra $KQ_c$ is representation-finite.
        \item\label{prop:gabriel_3} The association $M \mapsto \undim M$ is a bijection from the set of (isomorphism classes of) indecomposable modules in $\mods KQ_c$ to the set $\Phi_W^+$ of positive roots.
        \item\label{prop:gabriel_4} Every indecomposable module in $\mods KQ_c$ is a brick.
        \item \label{prop:gabriel_5} For $i \in [n]$, the dimension vector of the indecomposable projective module ${P(i)\in\mods KQ_c}$ is $\undim P(i)=\rho_i$. 
    \end{enumerate}
\end{proposition}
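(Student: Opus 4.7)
My plan is to split the proof into the classical Gabriel-theorem content (parts (1)--(4)) and the explicit formula for projective dimension vectors (part (5)). Essentially all of this is standard, so the proposal is to cite what is already established in the literature and fill in only the small amount of bookkeeping needed for (5).

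Parts (1)--(4) are all contained in the Dlab--Ringel generalization of Gabriel's theorem to tensor algebras over species with a symmetrizable Cartan matrix. Part (1) is immediate from the construction of $KQ_c$ as a tensor algebra over a hereditary species. Parts (2) and (3) are exactly the main statement of Gabriel's theorem in that setting. Part (4) follows from (3) by observing that every $\alpha \in \Phi_W^+$ satisfies $(\alpha,\alpha) = 2$, which under Gabriel's theorem translates into $\End_{KQ_c}(M)$ being a division ring (specifically, isomorphic to $\End_{KQ_c}(S(i))$ for an appropriate $i$). I would therefore simply cite \cite{DlabRingelFinite,DlabRingelIndecomposable} for these four items rather than reproducing any argument.

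For part (5), I would choose the indexing of the simples so that $c = s_1 s_2 \cdots s_n$ is a reduced word, which under the orientation convention on $Q_c$ means that every arrow points from a smaller-indexed vertex to a larger one; in particular, vertex $n$ is a sink. The argument then proceeds by downward induction on $i$. The base case $i = n$ holds because $P(n) = S(n)$ (there are no outgoing arrows at a sink), so $\undim P(n) = e_n = \rho_n$ (the defining product for $\rho_n$ being empty). For the inductive step, I would apply BGP reflection functors in their species-theoretic form at the successive sinks obtained by reflecting $Q_c$ at $n, n{-}1, \ldots, i{+}1$; each such reflection sends indecomposable representations to indecomposable representations and transforms dimension vectors by the corresponding simple reflection, so tracing $P(i)$ through this sequence of reflections produces $\undim P(i) = s_n s_{n-1} \cdots s_{i+1}(e_i) = \rho_i$.

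The main obstacle is handling the non-simply-laced case rigorously. In the simply-laced setting, (5) reduces to a direct path-counting identity in $Q_c$ that can be verified without machinery. In general, one must track ranks over the endomorphism division rings $\End_{KQ_c}(S(i))$ rather than $K$-vector space dimensions, and use the species-theoretic versions of BGP reflections. Since all of this is developed carefully in \cite{DlabRingelIndecomposable}, the cleanest route is to invoke their framework rather than re-derive the identities by hand.
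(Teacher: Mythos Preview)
Your proposal is correct and aligns with the paper. The paper gives no proof whatsoever for this proposition: it simply states that parts (2)--(4) constitute Gabriel's Theorem and that all five parts ``can be found in the classical references \cite{DlabRingelFinite,DlabRingelIndecomposable}.'' Your plan to cite the same references for (1)--(4) is therefore identical to the paper's treatment, and your BGP-reflection sketch for (5) goes strictly beyond what the paper offers (the paper subsumes (5) under the same blanket citation). One small remark: your reindexing so that $c = s_1 s_2 \cdots s_n$ is a reduced word is exactly the implicit assumption the paper is making as well, since otherwise the formula $\rho_i = s_n s_{n-1}\cdots s_{i+1}(e_i)$ and the claim $\{\rho_i\} = \inv(c^{-1})$ would not both hold; so you are not introducing any extra hypothesis.
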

\end{samepage}

\begin{remark}\label{rem:hereditary_proj}
      Let $\T \in \tors KQ_c$ be a torsion class that satisfies $\Ext_{KQ_c}^1(X,X') = 0$ for all ${X, X' \in \D(\T)}$. Then \cref{prop:gabriel_4} in \cref{prop:gabriel} implies that there exists a nonzero projective module $P \in \T$ if and only if there exists an indecomposable projective module $P(i) \in \D(\T)$. 
\end{remark}

We now recall the explicit bijection between the Cambrian lattice $\Camb_c$ and the lattice of torsion classes $\tors(KQ_c)$ established in \cite{IT}. Note that while the result is only proved explicitly for the simply-laced case in \cite{IT}, it is remarked in \cite[Section~4.4]{IT} that the results can be generalized using folding arguments.

\begin{theorem}[{\cite[Theorem~4.3]{IT}}]\label{thm:cambrian_hereditary}
    There is a lattice isomorphism $\varphi_c\colon \Camb_c \rightarrow \tors KQ_c$ characterized by the condition that \[\varphi_c(w) \cap \brick KQ_c = \{M \in \brick KQ_c\mid \undim M \in \mathrm{inv}(w)\}\] for every $c$-sortable element $w \in W$. 
\end{theorem}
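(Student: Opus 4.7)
My plan is to construct $\varphi_c$ directly by specifying the image of each $c$-sortable element, then verify both the characterizing property and the lattice isomorphism. For a $c$-sortable $w$, let $\X_w = \{M \in \brick KQ_c \mid \undim M \in \inv(w)\}$. Because $KQ_c$ is representation-finite and hereditary (\cref{prop:gabriel}), every indecomposable is a brick and any torsion class is determined by the set of indecomposables it contains (torsion classes are closed under direct summands). It therefore suffices to show that $\X_w$ is exactly the set of indecomposable objects of a (necessarily unique) torsion class $\T_w$; we then set $\varphi_c(w) = \T_w$, and the characterizing property holds by definition.

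The core of the argument is an induction on $\ell(w)$ using the recursion in \cref{lem:Cambrian_recurrence}. Let $s$ be the first letter of a reduced word for $c$; then $s$ is a source of $Q_c$. If $\ell(sw) > \ell(w)$, then $w$ is $(sc)$-sortable in $W_{S\setminus\{s\}}$ and $\inv(w)$ consists of positive roots with vanishing $s$-coordinate. The corresponding bricks lie in the Serre (hence wide) subcategory $\rperp{S(s)} \cap \lperpNum{1}{S(s)}$, which is exact-equivalent to $\mods KQ_{sc}$, where $Q_{sc}$ is $Q_c$ with vertex $s$ deleted; the inductive hypothesis gives a torsion class of this subcategory whose indecomposables are $\X_w$, and it extends uniquely to a torsion class of $KQ_c$. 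If $\ell(sw) < \ell(w)$, then $sw$ is $(scs)$-sortable; I would apply the BGP reflection functor at the source $s$, which reverses the arrows incident to $s$ (turning $Q_c$ into $Q_{scs}$) and whose effect on dimension vectors agrees with the linear action of $s$ on roots. This identifies $\X_w \setminus \{S(s)\}$ with the reflection image of $\X_{sw}$, which by induction comes from a torsion class of $KQ_{scs}$; adjoining $S(s)$ produces the desired torsion class of $KQ_c$.

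For the lattice isomorphism, injectivity of $\varphi_c$ is immediate from the characterizing property, and surjectivity follows either by running the induction backwards (reading off whether $S(s) \in \T$ and recursing on the appropriate subcategory) or by observing that both $|\Camb_c|$ and $|\tors KQ_c|$ equal the $W$-Catalan number. For order, note that $u \leq w$ in $\Weak(W)$ iff $\inv(u) \subseteq \inv(w)$, and that $\T \subseteq \T'$ in $\tors KQ_c$ iff the corresponding sets of indecomposables are nested; chaining these gives $u \leq w$ iff $\varphi_c(u) \subseteq \varphi_c(w)$, and a bijection between finite posets that is order-preserving in both directions is a lattice isomorphism. The principal obstacle is the bookkeeping around the reflection-functor step: one must verify that BGP sends bricks to bricks, that its action on dimension vectors coincides exactly with the reflection $s$ on the root system, and that $\inv(w) \setminus \{\beta_s\} = s \cdot \inv(sw)$ so the inductive hypothesis applies cleanly.
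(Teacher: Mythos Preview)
The paper does not give its own proof of this statement: it is recorded as a citation to \cite[Theorem~4.3]{IT}, with the remark that the non--simply-laced case is handled by folding arguments as in \cite[Section~4.4]{IT}. Your sketch is essentially the strategy of the original Ingalls--Thomas proof---recursion on $c$-sortable elements via \cref{lem:Cambrian_recurrence}, paired on the representation-theoretic side with restriction to the Serre subcategory $\rperp{S(s)}$ when $s$ is not a left inversion and with BGP reflection at the source $s$ when it is---so there is nothing to compare against within this paper itself. The bookkeeping concerns you flag (that BGP sends bricks to bricks, that its action on dimension vectors is the simple reflection $s$, and that $\inv(w)\setminus\{\beta_s\} = s\cdot\inv(sw)$ when $\ell(sw)<\ell(w)$) are all classical facts handled in \cite{IT} and the references on species therein.
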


\section{The Image of Pop-Stack on a Cambrian Lattice}\label{sec:image}

In this section, we recast \cref{thm:algebraic_image_characterization} as a combinatorial description of the image of $\popdown_{\Camb_c}$ for any finite irreducible Coxeter group $W$ and any Coxeter element $c$. As we discuss in the proof of \cref{thm:combinatorial_image_description}, the results of \cref{sec:pop-stack_description} only yield a proof for $W$ of crystallographic type, while the non-crystallographic types can be verified by computer. Thus, when not directly specified, we assume that $W$ is an arbitrary finite Coxeter group (not necessarily crystallographic). When $W$ is crystallographic, we use the notation $KQ_c$ as in \cref{subsec:path_algebras}. In any case, we denote by $c$ a fixed Coxeter element of $W$.

We first give an interpretation of the Ext-orthogonality condition in \cref{thm:algebraic_image_characterization}\ref{thm:algebraic_1}. For each $w \in \Camb_c$, \cref{lem:ext_boolean,thm:cambrian_hereditary} say that the torsion class $\varphi_c(w)$ satisfies the Ext-orthogonality condition if and only if $[\popdown_{\Camb_c}(w),w]$ is Boolean. \cref{thm:distributive_intervals} below provides several equivalent conditions characterizing when this holds. Before we state and prove this result, we require some lemmas. 

\begin{lemma}\label{lem:pop_distributive_boolean}
Let $L$ be a finite lattice, and let $x\in L$. If the interval $[\popdown_L(x),x]$ is distributive, then it is Boolean. 
\end{lemma}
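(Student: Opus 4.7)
The plan is to use Birkhoff's representation theorem to reduce the claim to a statement about order ideals of a finite poset. Let $y=\popdown_L(x)$, and assume the interval $[y,x]$ is distributive. We may assume $x\neq y$, as otherwise $[y,x]$ is a one-element lattice, which is trivially Boolean. Since $x\neq y$, the set $\{z\in L\mid z\lessdot x\}$ is nonempty, and by the definition of $\popdown_L$ we have $y=\bigwedge\{z\in L\mid z\lessdot x\}$ (the meet with $x$ is redundant since each $z\lessdot x$ satisfies $z<x$).

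First I would identify the coatoms of $[y,x]$. Because $[y,x]$ is a convex subset of $L$, an element $w\in[y,x]$ is a coatom of $[y,x]$ if and only if $w\lessdot x$ in $L$. Moreover, every $z\lessdot x$ in $L$ satisfies $y\leq z$ by construction of $y$, so it lies in $[y,x]$. Hence the coatoms of $[y,x]$ are exactly the elements of $\{z\in L\mid z\lessdot x\}$, and the meet of all coatoms of $[y,x]$ (taken in $[y,x]$, which agrees with the meet in $L$) equals $y$, the minimum of $[y,x]$.

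Next, since $[y,x]$ is a finite distributive lattice, Birkhoff's theorem gives an isomorphism $[y,x]\cong J(P)$ for some finite poset $P$. Under this isomorphism, the maximum of $[y,x]$ corresponds to $P$ itself, the minimum corresponds to $\emptyset$, and the coatoms of $J(P)$ are precisely the order ideals of the form $P\setminus\{m\}$ for $m$ a maximal element of $P$. The meet (intersection) of all these coatoms equals $P\setminus\max(P)$, where $\max(P)$ denotes the set of maximal elements of $P$. By the previous paragraph, this intersection must equal $\emptyset$, forcing $P=\max(P)$; that is, $P$ is an antichain.

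Therefore $[y,x]\cong J(P)$ is isomorphic to the power set of an antichain, which is Boolean. The argument is essentially a one-shot application of Birkhoff's theorem, so I do not anticipate any real obstacle; the only point to be careful about is the edge case $x=y$ and the observation that coatoms of the interval coincide with the elements of $L$ covered by $x$, which both follow directly from the definitions.
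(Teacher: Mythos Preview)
Your proof is correct and follows essentially the same approach as the paper: both use Birkhoff's theorem to write the interval as $J(P)$, observe that the meet of the coatoms of $J(P)$ equals $P\setminus\max(P)$, and conclude from $P\setminus\max(P)=\emptyset$ that $P$ is an antichain. Your version is slightly more explicit about identifying the coatoms of the interval and about the edge case $x=\popdown_L(x)$, but the substance is the same.
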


\begin{proof}
Suppose $[\popdown_L(x),x]$ is distributive, and let $P$ be a finite poset such that $[\popdown_L(x),x]$ is isomorphic to $J(P)$. The bottom and top elements of $J(P)$ are $\emptyset$ and $P$, respectively, so ${\emptyset=\popdown_{J(P)}(P)=\bigcap_{y\in\max(P)}(P\setminus\{y\})}$, where $\max(P)$ is the set of maximal elements of $P$. This implies that $P=\max(P)$, so $P$ is an antichain. Hence, $J(P)$ is Boolean. 
\end{proof}

\begin{lemma}\label{lem:2^k}
If $L$ is a finite semidistributive lattice with $k$ coatoms, then $|L|\geq 2^k$. 
\end{lemma}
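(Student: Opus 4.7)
The plan is to construct an injection $\phi\colon 2^{[k]} \hookrightarrow L$ defined by $\phi(I) = \bigwedge_{i \in I} m_i$, with the convention $\phi(\emptyset) = \hat 1$. If this map is injective, then immediately $|L| \geq 2^k$, as desired. The injectivity will follow from the auxiliary claim that for every $I \subseteq [k]$ and every $l \in [k] \setminus I$, we have $\bigwedge_{i \in I} m_i \not\leq m_l$. Granting the claim, if $\phi(I) = \phi(J)$ with $I \neq J$, then picking $i_0 \in I \setminus J$ (say) gives $\phi(J) = \phi(I) \leq m_{i_0}$, contradicting the claim applied to $J$ (since $i_0 \notin J$).

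I will prove the claim by induction on $|I|$. The base cases $|I| = 0$ (where $\hat 1 \not\leq m_l$, as $m_l < \hat 1$) and $|I| = 1$ (where distinct coatoms are incomparable) are immediate. For the inductive step, suppose toward a contradiction that $|I| \geq 2$ and $\bigwedge_{i \in I} m_i \leq m_l$ for some $l \in [k] \setminus I$. Fix any $i_0 \in I$ and set $I' = I \setminus \{i_0\}$ and $y = \bigwedge_{i \in I'} m_i$. The inductive hypothesis (applied with the same $l \in [k] \setminus I'$) gives $y \not\leq m_l$, and since $m_l$ is a coatom, we conclude $m_l \vee y = \hat 1$. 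Similarly $m_l \vee m_{i_0} = \hat 1$, since $m_l$ and $m_{i_0}$ are distinct coatoms. Join-semidistributivity applied to $m_l \vee y = m_l \vee m_{i_0}$ then yields $m_l \vee (y \wedge m_{i_0}) = m_l \vee y = \hat 1$. However, the standing assumption gives $y \wedge m_{i_0} = \bigwedge_{i \in I} m_i \leq m_l$, so $m_l \vee (y \wedge m_{i_0}) = m_l$, contradicting $m_l < \hat 1$.

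The main obstacle is really just spotting the correct application of the semidistributivity axiom; once one realizes that both $y$ and $m_{i_0}$ join with $m_l$ to yield $\hat 1$ (the former by the inductive hypothesis plus the coatomicity of $m_l$, the latter by distinct-coatom incomparability), join-semidistributivity becomes directly applicable. I would remark in the final write-up that the proof uses only join-semidistributivity, even though the hypothesis of the lemma assumes full semidistributivity.
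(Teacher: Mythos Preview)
Your proof is correct and takes a genuinely different route from the paper. The paper's argument invokes the canonical join complex machinery already set up in Section~2: the map $v\mapsto\mathcal D(v)$ is a bijection from $L$ to its canonical join complex, and since $|\mathcal D(\hat 1)|=k$ (one canonical joinand per coatom), this simplicial complex has a face of size $k$ and hence at least $2^k$ faces. Your argument instead works directly with meets of sets of coatoms and shows these $2^k$ meets are pairwise distinct, via an induction whose key step is a single application of join-semidistributivity. The paper's proof is a two-line appeal to machinery it had to develop anyway; your proof is self-contained and makes explicit that only the join-semidistributive law is needed, which is a nice observation (and your remark to that effect at the end is worth keeping).
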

\begin{proof}
The map $v\mapsto\mathcal D(v)$ is a bijection from $L$ to the canonical join complex of $L$. Since $|\mathcal D(\hat 1)|=k$, the canonical join complex of $L$ must have at least $2^k$ faces. 
\end{proof}

The next result follows from \cite[Lemma~3.8]{ReadingLatticeCongruences}; we include a short self-contained proof.

\begin{lemma}[{\cite[Lemma~3.8]{ReadingLatticeCongruences}}]\label{lem:cover_reflections}
If $y_1\lessdot z_1$ and $y_2\lessdot z_2$ are two edges of $\Weak(W)$ with the same shard label, then $z_1y_1^{-1}=z_2y_2^{-1}$. 
\end{lemma}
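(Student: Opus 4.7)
The plan is to prove a slightly stronger statement: the shard label of an edge in $\Weak(W)$ determines its cover reflection uniquely. Concretely, I will show that for any edge $y \lessdot z$ with cover reflection $t = zy^{-1}$ and shard label $j = j_{y,z}$, one has $t = j j_*^{-1}$. Since the right-hand side depends only on $j$, the lemma then follows immediately: if $j_{y_1,z_1} = j_{y_2,z_2} = j$, then $z_1 y_1^{-1}$ and $z_2 y_2^{-1}$ are both equal to $j j_*^{-1}$.

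To execute this, I would first invoke \Cref{prop:coxeter_cjr} to characterize $j$ as the unique minimum of
\[
M := \{x \in \Weak(W) : x \leq z \text{ and } t \in \inv(x)\}.
\]
Since $j$ is join-irreducible, it covers a unique element $j_* \in \Weak(W)$. Because $\ell$ is a rank function on $\Weak(W)$, we have $|\inv(j)| = |\inv(j_*)| + 1$, and since $j_* \leq j$ in weak order implies $\inv(j_*) \subseteq \inv(j)$, the difference $\inv(j) \setminus \inv(j_*)$ is a singleton. A direct calculation identifies this singleton as $\{j j_*^{-1}\}$: writing $j = j_* s$ for the simple reflection $s$ realizing the cover, the element $j j_*^{-1} = j_* s j_*^{-1}$ is a reflection, and its action satisfies $(j j_*^{-1}) \cdot j = j_*$ while $(j j_*^{-1}) \cdot j_* = j$, so it lies in $\inv(j) \setminus \inv(j_*)$.

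To finish, I would observe that $j_* < j \leq z$ gives $j_* \leq z$. The minimality of $j$ in $M$ then forces $j_* \notin M$; that is, $t \notin \inv(j_*)$. Combined with $t \in \inv(j)$ (which holds because $j \in M$), this yields $t \in \inv(j) \setminus \inv(j_*) = \{j j_*^{-1}\}$, so $t = j j_*^{-1}$, as desired.

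I do not foresee any significant obstacle. The proof is essentially a direct unpacking of \Cref{prop:coxeter_cjr} together with the standard fact that adjacent elements of the weak order have left-inversion sets differing by exactly one reflection. The only point requiring care is the identification of the unique element of $\inv(j) \setminus \inv(j_*)$ with $j j_*^{-1}$, which is routine.
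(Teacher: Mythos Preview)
Your proposal is correct and takes essentially the same approach as the paper: both use \cref{prop:coxeter_cjr} to show that the cover reflection $t$ lies in the left inversions of $j$ but not of $j_*$, forcing $t$ to equal the unique cover reflection $jj_*^{-1}$ of the join-irreducible $j$. The paper phrases the conclusion as ``$t_1$ is the unique cover reflection of $j_{y_1,z_1}$'' rather than writing out $jj_*^{-1}$ explicitly, but the content is identical.
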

\begin{proof}
Let $t_1=z_1y_1^{-1}$ and $t_2=z_2y_2^{-1}$. \cref{prop:coxeter_cjr} tells us that the join-irreducible element $j_{y_1,z_1}$ has $t_1$ as a left inversion while $(j_{y_1,z_1})_*$ does not have $t_1$ as a left inversion. If follows that $t_1$ is the unique cover reflection of $j_{y_1,z_1}$. Similarly, $t_2$ is the unique cover reflation of $j_{y_2,z_2}$. Since $j_{y_1,z_1}=j_{y_2,z_2}$, we have $t_1=t_2$. 
\end{proof}

The following lemma is a special case of \cite[Proposition~5.7]{ReadingShard}. 

\begin{lemma}[{\cite[Proposition~5.7]{ReadingShard}}]\label{lem:shard_intersection}
If $w$ and $z$ are elements of $\Weak(W)$ such that the canonical join representation of $w$ contains that of $z$, then the set of shard labels of edges in the interval $[\popdown_{\Weak(W)}(z),z]$ is contained in the set of shard labels of the edges in the interval $[\popdown_{\Weak(W)}(w),w]$. 
\end{lemma}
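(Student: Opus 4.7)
The plan is to apply \cref{lem:Cor710} to both intervals $[\popdown_{\Weak(W)}(z),z]$ and $[\popdown_{\Weak(W)}(w),w]$. That lemma identifies the set of shard labels appearing on edges of an interval $[u,v]$ in a finite semidistributive lattice $L$ with $\{j \in \jirr_L : j \leq v \text{ and } \kappa(j) \geq u\}$. The claim then reduces to showing that any $j \in \jirr_{\Weak(W)}$ with $j \leq z$ and $\kappa(j) \geq \popdown_{\Weak(W)}(z)$ also satisfies $j \leq w$ and $\kappa(j) \geq \popdown_{\Weak(W)}(w)$.

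The condition $j \leq w$ follows immediately from $\D(z) \subseteq \D(w)$: taking joins, $z = \bigvee \D(z) \leq \bigvee \D(w) = w$, so $j \leq z \leq w$.

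The main work is to show $\kappa(j) \geq \popdown_{\Weak(W)}(w)$. Using the semidistributive formula $\popdown_{\Weak(W)}(v) = v \wedge \bigwedge_{j' \in \D(v)} \kappa(j')$, together with the fact that $\D(w)$ is an independent set in the Galois graph of $\Weak(W)$ (so $j_1 \leq \kappa(j_2)$ whenever $j_1, j_2 \in \D(w)$ are distinct), one first obtains $z \leq \kappa(j')$ for every $j' \in \D(w) \setminus \D(z)$, and hence $\popdown_{\Weak(W)}(z) \leq \popdown_{\Weak(W)}(w)$. Unfortunately this inequality goes in the wrong direction to conclude $\kappa(j) \geq \popdown_{\Weak(W)}(w)$ directly from the hypothesis $\kappa(j) \geq \popdown_{\Weak(W)}(z)$. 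When $j \in \D(w)$, the conclusion is easy, because $\kappa(j)$ is one of the meet-irreducibles appearing in the expression defining $\popdown_{\Weak(W)}(w)$. The main obstacle is the case $j \notin \D(w)$. Here I would exploit the Coxeter-theoretic description of shard labels via \cref{prop:coxeter_cjr} together with \cref{lem:cover_reflections}: for each $j' \in \D(z) \subseteq \D(w)$, the shard label $j'$ appearing on an edge into $z$ and on an edge into $w$ forces these edges to have the same cover reflection, so the cover reflections of $z$ coming from $\D(z)$ are a subset of the cover reflections of $w$. The condition $\kappa(j) \geq \popdown_{\Weak(W)}(z)$ encodes a compatibility between the cover reflection $\tau(j)$ and the descent structure of $z$, and this compatibility should transfer to the descent structure of $w$ through the coincidence of cover reflections, yielding $\kappa(j) \geq \popdown_{\Weak(W)}(w)$.
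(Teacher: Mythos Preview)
The paper does not give its own proof of this lemma; it is quoted verbatim as a special case of \cite[Proposition~5.7]{ReadingShard}, with no argument supplied. So there is nothing in the paper to compare your proposal against.

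On the merits: your reduction via \cref{lem:Cor710} is correct, and the verifications that $j\le w$ and that $\kappa(j)\ge\popdown_{\Weak(W)}(w)$ when $j\in\D(w)$ are fine. The gap is the case $j\notin\D(w)$. You acknowledge this and offer only a heuristic (``this compatibility should transfer\ldots through the coincidence of cover reflections''). That heuristic does not go through as stated: the cover reflection of an edge does \emph{not} determine its shard label in general. Already in $W=A_2$, the two edges $s_1\lessdot s_1s_2$ and $s_2\lessdot s_2s_1$ share the cover reflection $s_1s_2s_1$ but have distinct shard labels $s_1s_2$ and $s_2s_1$. So knowing that the cover reflections occurring in $[\popdown(z),z]$ are a subset of those occurring in $[\popdown(w),w]$ is not by itself enough to conclude the corresponding inclusion of shard labels. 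What is true (and is the content of Reading's argument) is more geometric: the interval $[\popdown(w),w]$ is a parabolic coset whose associated cone in the Coxeter arrangement is the intersection of the shards labeling the edges into $w$; the hypothesis $\D(z)\subseteq\D(w)$ gives a containment of these cones, and every shard meeting the larger cone along a facet also meets the smaller one. Your lattice-theoretic reduction can be completed, but it requires this extra geometric input (or an equivalent combinatorial substitute), not merely the coincidence of cover reflections.
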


\begin{lemma}\label{lem:commute_implies_sortable}
Suppose $w$ is a $c$-sortable element of $W$ whose descents commute. The interval $[\popdown_{\Weak(W)}(w),w]$ of $\Weak(W)$ is Boolean, and $\popdown_{\Camb_c}(w)$ is $c$-sortable. 
\end{lemma}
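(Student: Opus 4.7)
The plan is to let $J = \Des(w)$ and observe, via \eqref{eq:pop_weak} and the hypothesis that elements of $J$ pairwise commute, that $\wo(J) = \prod_{s \in J} s$ (independent of the order in the product) and $\popdown_{\Weak(W)}(w) = w\wo(J)$. I will prove the Boolean-interval claim via a direct inversion-set computation, then deduce the $c$-sortability claim by showing each coatom $ws$ of the interval is $c$-sortable and invoking the fact that $\Camb_c$ is a sublattice of $\Weak(W)$ to conclude that the meet $w\wo(J) = \bigwedge_{s \in J} ws$ is $c$-sortable; by \eqref{eq:pop_Cambrian}, this implies $\popdown_{\Camb_c}(w) = \popdown_{\Weak(W)}(w) = w\wo(J)$.

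For the Boolean interval, I would first prove by induction on $|A|$ that for every $A \subseteq J$,
\[\inv\!\left(w\prod_{s \in A} s\right) = \inv(w) \setminus \{-w(e_s) : s \in A\}.\]
The inductive step uses that distinct commuting simple reflections $s, s'$ satisfy $s(e_{s'}) = e_{s'}$, since the symmetric bilinear form gives $(e_s^\vee, e_{s'}) = 0$ whenever $m(s,s') = 2$. This shows $\ell(w\prod_{s \in A} s) = \ell(w) - |A|$, so the $2^{|J|}$ elements $\{w\prod_{s \in A} s : A \subseteq J\}$ are pairwise distinct and lie in $[w\wo(J),w]$. Conversely, any $v \in [w\wo(J),w]$ satisfies $\inv(w\wo(J)) \subseteq \inv(v) \subseteq \inv(w)$, so $\inv(w) \setminus \inv(v)$ is a subset of $\{-w(e_s) : s \in J\}$; identifying $v$ with this subset exhausts the interval and exhibits it as the Boolean lattice $2^J$ under reverse inclusion.

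For the $c$-sortability claim, I would interpret the statement as asserting that $w\wo(J)$ is $c$-sortable, so that by \eqref{eq:pop_Cambrian} it agrees with $\popdown_{\Camb_c}(w)$. The inversion-set identity above yields $w\wo(J) = \bigwedge_{s \in J} ws$ in $\Weak(W)$, and since $\Camb_c$ is a sublattice of $\Weak(W)$, it suffices to show that each $ws$ (for $s \in J$) is $c$-sortable. I would prove this by induction on $(|S|,\ell(w))$ with the lexicographic order, using the Cambrian recurrence (\cref{lem:Cambrian_recurrence}). Let $s^\circ$ be the first letter of a reduced word for $c$. If $\ell(s^\circ w) > \ell(w)$, then $w \in W_{S \setminus \{s^\circ\}}$ and is $(s^\circ c)$-sortable there, so $J \subseteq S \setminus \{s^\circ\}$ and $ws \in W_{S \setminus \{s^\circ\}}$; induction in smaller rank gives that $ws$ is $(s^\circ c)$-sortable in $W_{S \setminus \{s^\circ\}}$, and then \cref{lem:Cambrian_recurrence} lifts this to $c$-sortability in $W$. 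If instead $\ell(s^\circ w) < \ell(w)$, then $s^\circ w$ is $(s^\circ c s^\circ)$-sortable of smaller length, and I would apply the induction hypothesis to $s^\circ w$ before lifting back via \cref{lem:Cambrian_recurrence}.

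The main obstacle is the second case of the $c$-sortability induction: one must verify that the right descent set of $s^\circ w$ (relative to the conjugated Coxeter element $s^\circ c s^\circ$) still satisfies the commuting hypothesis in an appropriate form, and that the $(s^\circ c s^\circ)$-sortable element obtained from the induction actually lifts to $c$-sortability of $ws$ in $W$ via \cref{lem:Cambrian_recurrence}. This step likely requires a careful analysis of how left multiplication by $s^\circ$ interacts with the right descent set, using either root-theoretic tools or the explicit combinatorics of the $c$-sorting word of $w$.
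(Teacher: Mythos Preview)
Your argument for the Boolean interval is correct and matches the paper's one-line observation.

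Your strategy for the $c$-sortability claim, however, is flawed at a more basic level than the obstacle you flag: it is simply not true that every coatom $ws$ (for $s\in\Des(w)$) is $c$-sortable. Take $W=A_3$, $c=s_1s_2s_3$ (so $\Camb_c$ is the Tamari lattice of $312$-avoiding permutations), and $w=3241=s_1s_2s_3s_1$. Then $w$ is $c$-sortable with $\Des(w)=\{s_1,s_3\}$ commuting, but $ws_3=3214$ contains the pattern $312$ and is not $c$-sortable. (One still has $\popdown_{\Weak(W)}(w)=2314$, which \emph{is} $c$-sortable, in agreement with the lemma.) So the reduction ``show each $ws$ is $c$-sortable, then take the meet in the sublattice $\Camb_c$'' is unavailable; the difficulty you noticed in the second case of your induction is a symptom of the falsity of the intermediate claim, not a removable technicality. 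The underlying issue is that for a general coatom $ws$ one need not have $\Des(ws)\subseteq\Des(w)$, so $ws$ can acquire new lower covers outside the Boolean interval $[w',w]$, and its canonical join representation need not be contained in $\D(w)$.

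The paper instead inducts directly on the statement that $w'=\popdown_{\Weak(W)}(w)$ is $c$-sortable, using \cref{lem:Cambrian_recurrence} with the initial letter $s^\circ$ of $c$. The case $\ell(s^\circ w)>\ell(w)$ is as you describe. When $\ell(s^\circ w)<\ell(w)$, the paper splits on whether $s^\circ\leq w'$. If so, one checks $\Des(s^\circ w)=\Des(w)$, hence $\popdown_{\Weak(W)}(s^\circ w)=s^\circ w'$, and induction (with $c$ replaced by $s^\circ c s^\circ$) gives that $s^\circ w'$ is $(s^\circ c s^\circ)$-sortable, whence $w'$ is $c$-sortable. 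If not, then $s^\circ w\lessdot w$; here, crucially, left multiplication by $s^\circ$ gives $\Des(s^\circ w)\subseteq\Des(w)$, so $\Des(s^\circ w)=\Des(w)\setminus\{s'\}$ for the unique $s'$ with $ws'=s^\circ w$. All lower covers of this \emph{particular} coatom lie in $[w',w]$, and \cref{lem:Cor710} together with \cref{exam:boolean} yields $\D(s^\circ w)=\D(w)\setminus\{s^\circ\}$; then \cref{lem:sortable_CJR} gives that $s^\circ w$ is $c$-sortable with $\popdown_{\Weak(W)}(s^\circ w)=w'$, and induction (now with the same $c$) finishes. The point is that only this one coatom needs to be $c$-sortable, and for it the containment $\Des(s^\circ w)\subseteq\Des(w)$ holds for a reason specific to left multiplication by the initial letter of $c$.
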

\begin{proof}
The fact that the interval $[\popdown_{\Weak(W)}(w),w]$ is Boolean is immediate from the observation that it consists of the elements of the form $wv^{-1}$ such that $v$ is the product of some subset of $\Des(w)$. Let us now show that $\popdown_{\Weak(W)}(w)$ is $c$-sortable. Our proof will proceed by induction on $|S|$ and $\ell(w)$ (the base cases are trivial). To ease notation, let us write $w'=\popdown_{\Weak(W)}(w)$. 

Let $s$ be the first letter in some reduced word for $c$. Suppose first that $\ell(sw)>\ell(w)$. According to \cref{lem:Cambrian_recurrence}, $w$ is an $(sc)$-sortable element of $W_{S\setminus\{s\}}$. By induction, $\popdown_{\Weak(W_{S\setminus\{s\}})}(w)$ is an $(sc)$-sortable element of $W_{S\setminus\{s\}}$. Invoking \cref{lem:Cambrian_recurrence} again, we find that $\popdown_{\Weak(W_{S\setminus\{s\}})}(w)$ is $c$-sortable. The desired result now follows from the fact that $w'=\popdown_{\Weak(W_{S\setminus\{s\}})}(w)$. 

Now assume that $\ell(sw)<\ell(w)$. Then $(sw)^{-1}\leq w^{-1}$, so every left inversion of $(sw)^{-1}$ is a left inversion of $w^{-1}$. Hence, every right inversion of $sw$ is a right inversion of $w$. It follows that $\Des(sw)\subseteq\Des(w)$. We consider two cases. 

\medskip 

\noindent {\bf Case 1.} Suppose $s\leq w'$ (equivalently, $\ell(sw')=\ell(w')-1$). Let $v=sw'$. We have $w=sv\wo(\Des(w))$, and $\ell(w)=\ell(v)+\ell(\wo(\Des(w)))+1$. Hence, $sw=v\wo(\Des(w))$, and $\ell(sw)=\ell(v)+\ell(\wo(\Des(w)))$. This shows that $sw$ has a reduced word that contains a reduced word for $\wo(\Des(w))$ as a suffix. Hence, $\Des(w)=\Des(\wo(\Des(w)))\subseteq\Des(sw)$. 

This shows that $\Des(w)=\Des(sw)$, so $\popdown_{\Weak(W)}(sw)=sw\wo(\Des(w))=v$. Because $w$ is $c$-sortable, we know by \cref{lem:Cambrian_recurrence} that $sw$ is $(scs)$-sortable. Furthermore, the descents of $sw$ commute. Because $\ell(sw)<\ell(w)$, we can use induction (replacing $c$ by $scs$) to find that $\popdown_{\Weak(W)}(sw)=v$ is $(scs)$-sortable. But since $v=sw'$ and $\ell(sw')<\ell(w')$, we can invoke \cref{lem:Cambrian_recurrence} once again to find that $w'$ is $c$-sortable. 

\medskip 

\noindent {\bf Case 2.} Suppose $s\not\leq w'$ (equivalently, $\ell(sw')=\ell(w')+1$). Since $w'$ is the meet of the elements covered by $w$ in $\Weak(W)$, there must exist an element $x$ that is covered by $w$ such that $s\not\leq x$. The unique left inversion of $w$ that is not a left inversion of $x$ is $wx^{-1}$. Since $s$ is a left inversion of $w$ but not a left inversion $x$, we must have $wx^{-1}=s$. Hence, $x=sw$. 

We have seen that $\Des(sw)\subseteq\Des(w)$, and we have just shown that $sw\lessdot w$. There exists $s'\in S$ such that $sw=ws'$. Because $[w',w]$ is Boolean, we must have $\Des(sw)=\Des(w)\setminus\{s'\}$, and it follows from \cref{lem:Cor710,exam:boolean} that $\mathcal D(sw)=\mathcal D(w)\setminus\{s\}$. Hence, \[\popdown_{\Weak(W)}(sw)=sw\prod_{s''\in\Des(sw)}s''=ws'\prod_{s''\in\Des(w)\setminus\{s'\}}s''=w\prod_{s''\in\Des(w)}s''=w'.\]  Since $w$ is $c$-sortable, it follows from \cref{lem:sortable_CJR} that $sw$ is also $c$-sortable. The descents of $sw$ commute, so we can use induction to find that $\popdown_{\Weak(W)}(sw)=w'$ is $c$-sortable. 
\end{proof}

\begin{theorem}\label{thm:distributive_intervals}
    Let $c$ be a Coxeter element of a finite Coxeter group $W$. For every $c$-sortable element $w$, the following are equivalent. 
    \begin{enumerate}[(1)]
        \item\label{thm:distributive_1} The descents of $w$ commute.
        \item\label{thm:distributive_2} The interval $[\popdown_{\Weak(W)}(w),w]$ of $\Weak(W)$ is a distributive lattice.
        \item\label{thm:distributive_3} The interval $[\popdown_{\Weak(W)}(w),w]$ of $\Weak(W)$ is a Boolean lattice.
        \item\label{thm:distributive_4} The interval $[\popdown_{\Camb_c}(w),w]$ of $\Camb_c$ is a distributive lattice.
        \item\label{thm:distributive_5} The interval $[\popdown_{\Camb_c}(w),w]$ of $\Camb_c$ is a Boolean lattice.
        \item\label{thm:distributive_6} The interval $[\popdown_{\Camb_c}(w),w]$ of $\Camb_c$ equals the interval $[\popdown_{\Weak(W)}(w),w]$ of $\Weak(W)$. 
    \end{enumerate}
\end{theorem}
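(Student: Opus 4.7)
My plan is to establish the six equivalences via the cycle $(1)\Leftrightarrow(3)$, $(3)\Leftrightarrow(2)$, $(5)\Leftrightarrow(4)$, $(1)\Rightarrow(6)$, and $(5)\Rightarrow(1)$; combining $(1)\Rightarrow(3)$ with $(1)\Rightarrow(6)$ will immediately yield $(1)\Rightarrow(5)$, closing the loop. The equivalences $(2)\Leftrightarrow(3)$ and $(4)\Leftrightarrow(5)$ follow at once from \cref{lem:pop_distributive_boolean}, together with the trivial fact that every Boolean lattice is distributive.

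For $(1)\Leftrightarrow(3)$, if the descents of $w$ commute, then $W_{\Des(w)}$ is a Boolean parabolic subgroup and the map $v\mapsto wv$ gives an order-reversing bijection from $W_{\Des(w)}$ onto the interval $[\popdown_{\Weak(W)}(w),w]$, proving $(1)\Rightarrow(3)$. Conversely, in a Boolean interval the two coatoms corresponding to distinct $s,s'\in\Des(w)$ must meet at an element of length $\ell(w)-2$; in $\Weak(W)$ that meet is $w\wo(\{s,s'\})$, of length $\ell(w)-m(s,s')$, so Booleanness forces $m(s,s')=2$ and the descents commute.

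For $(1)\Rightarrow(6)$, I will combine three ingredients. \cref{lem:commute_implies_sortable} shows that $\popdown_{\Weak(W)}(w)$ is $c$-sortable, so \eqref{eq:pop_Cambrian} gives $\popdown_{\Camb_c}(w)=\popdown_{\Weak(W)}(w)$. \cref{lem:sortable_CJR} tells us that every element of $\D(w)$ is $c$-sortable. Finally, since the interval $[\popdown_{\Weak(W)}(w),w]_{\Weak(W)}$ is Boolean (by $(1)\Rightarrow(3)$), each of its elements can be written as a join of the form $\popdown_{\Weak(W)}(w)\vee\bigvee_{j\in A}j$ for some $A\subseteq\D(w)$. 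Because $\Camb_c$ is a sublattice of $\Weak(W)$ that contains $\popdown_{\Weak(W)}(w)$ and every $j\in\D(w)$, all such joins lie in $\Camb_c$, so the weak-order interval is contained in $\Camb_c$ and therefore coincides with the Cambrian interval.

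The main obstacle is the remaining implication $(5)\Rightarrow(1)$. When $W$ is crystallographic, I will realize $\Camb_c$ as $\tors KQ_c$ via \cref{thm:cambrian_hereditary} and apply \cref{lem:ext_boolean} to translate $(5)$ into the vanishing of $\Ext^1$ among the bricks in $\D(\varphi_c(w))$. The dimension vectors of these bricks are the positive roots associated to the cover reflections of $w$, namely $\pm w(\alpha_s)$ for $s\in\Des(w)$, and for distinct bricks in a semibrick over a hereditary algebra the Euler form gives $(\undim X,\undim X')=-\dim\Ext^1(X,X')-\dim\Ext^1(X',X)$. Ext-orthogonality therefore forces $(\alpha_s,\alpha_{s'})=-2\cos(\pi/m(s,s'))=0$ by \eqref{eqn:symmetric} and $W$-invariance of the form, so $m(s,s')=2$. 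For the non-crystallographic types $H_3$, $H_4$, and $I_2(m)$ with $m\notin\{2,3,4,6\}$, I will verify $(5)\Rightarrow(1)$ directly: the $I_2(m)$ cases reduce to the explicit dihedral structure recalled in \cref{exam:dihedral}, and $H_3, H_4$ are dispatched by computer, in keeping with the strategy announced in the introduction.
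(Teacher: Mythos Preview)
Your cycle never leaves condition~(6): you prove $(1)\Rightarrow(6)$ but give no implication from~(6) back to any of the other statements, so~(6) is left dangling and the six conditions are not shown to be equivalent. The paper fills this gap via a short argument you have not mentioned: Cambrian lattices are trim \cite{IT}, intervals of trim lattices are trim, and a finite lattice is distributive if and only if it is ranked and trim \cite{ThomasTrim}. Under~(6), the interval is simultaneously an interval of $\Weak(W)$ (hence ranked) and an interval of $\Camb_c$ (hence trim), so it is distributive; this yields $(6)\Rightarrow(2)$ and closes the loop.

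For $(5)\Rightarrow(1)$, your route genuinely differs from the paper's. The paper gives a uniform Coxeter-theoretic argument, valid for all finite $W$ at once, using shard labels, \cref{lem:cover_reflections,lem:shard_intersection}, and the explicit Galois graph of the dihedral weak order in \cref{exam:dihedral}. Your approach via \cref{lem:ext_boolean} and the Euler form is a legitimate alternative in the crystallographic case, but two details need tightening. First, you assert without justification that the bricks in $\D(\varphi_c(w))$ have dimension vectors $\beta_{wsw^{-1}}$; this is true but has to be extracted from \cref{thm:cambrian_hereditary,thm:brick_label} together with \cref{lem:sortable_CJR}. Second, the displayed identity $(\undim X,\undim X')=-\dim\Ext^1(X,X')-\dim\Ext^1(X',X)$ with the form of \eqref{eqn:symmetric} holds literally only in the simply-laced case: when $m(s_i,s_j)\in\{4,6\}$ the value $(e_i,e_j)=-2\cos(\pi/m)$ is irrational, whereas the symmetrized Euler form is integer-valued. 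The repair is easy (work instead with the symmetrized Euler form, or note that both forms vanish on a pair of roots exactly when the associated reflections commute), but the identity as written is false outside the simply-laced setting. Your argument for $(1)\Rightarrow(6)$ via the sublattice property is correct and a bit more direct than the paper's counting argument based on \cref{lem:2^k}.
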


\begin{proof}
It is immediate from \cref{lem:pop_distributive_boolean} that \ref{thm:distributive_2} and \ref{thm:distributive_3} are equivalent and that \ref{thm:distributive_4} and \ref{thm:distributive_5} are equivalent. \cref{lem:commute_implies_sortable} tells us that \ref{thm:distributive_1} implies \ref{thm:distributive_3}. To see that \ref{thm:distributive_3} implies \ref{thm:distributive_1}, suppose $[\popdown_{\Weak(W)}(w),w]$ is Boolean, and suppose $s$ and $s'$ are distinct descents of $W$. Let $w'=ws\wedge ws'$. The interval $[w',w]$ in $\Weak(W)$ is contained in $[\popdown_{\Weak(W)}(w),w]$, so $[w',w]$ is Boolean. We also know that $[w',w]$ is isomorphic to the weak order on the dihedral group $I_2(m(s,s'))$. This implies that $m(s,s')=2$, so $s$ and $s'$ commute. 

According to \cref{lem:sortable_CJR}, the canonical join representations of $w$ in $\Weak(W)$ and $\Camb_c$ are equal; thus, we can unambiguously write $\mathcal D(w)$ for this canonical join representation. 

Let us show that \ref{thm:distributive_1} implies \ref{thm:distributive_6}. Suppose the descents of $w$ commute. Let ${k=\mathcal D(w)=|\Des(w)|}$. According to \cref{lem:commute_implies_sortable}, the interval $[\popdown_{\Weak(W)}(w),w]$ in $\Weak(W)$ is Boolean, and the element $\popdown_{\Weak(W)}(w)$ is $c$-sortable. This implies that $[\popdown_{\Camb_c}(w),w]$ is contained in $[\popdown_{\Weak(W)}(w),w]$ and that $[\popdown_{\Weak(W)}(w),w]$ has size $2^k$. But $[\popdown_{\Camb_c}(w),w]$ is a semidistributive lattice with $k$ coatoms, so \cref{lem:2^k} tells us that it has size at least $2^k$. This implies that $[\popdown_{\Camb_c}(w),w]$ equals $[\popdown_{\Weak(W)}(w),w]$. 

In \cite{ThomasTrim}, Thomas introduced the notion of a \emph{trim} lattice; he proved that intervals in trim lattices are trim and that a lattice is distributive if and only if it is ranked and trim. Ingalls and Thomas \cite{IT} proved that Cambrian lattices are trim. These facts allow us to prove that \ref{thm:distributive_6} implies \ref{thm:distributive_2}. Indeed, $[\popdown_{\Weak(W)}(w),w]$ is an interval in the weak order on $W$, so it is ranked. If $[\popdown_{\Weak(W)}(w),w]$ is equal to an interval of $\Camb_c$, then it is trim, so it must be distributive. 

We now know that \cref{thm:distributive_1,thm:distributive_2,thm:distributive_3,thm:distributive_6} are equivalent and that \cref{thm:distributive_4,thm:distributive_5} are equivalent. It is also obvious that \ref{thm:distributive_3} and \ref{thm:distributive_6} together imply \ref{thm:distributive_5}. Therefore, to complete the proof, it suffices to show that \ref{thm:distributive_5} implies \ref{thm:distributive_1}. 

Assume $[\popdown_{\Camb_c}(w),w]$ is Boolean. Suppose by way of contradiction that $s$ and $s'$ are descents of $w$ that do not commute. Because $\{j_{ws,w},j_{ws',w}\}$ is contained in $\mathcal D(w)$, it follows from \cref{lem:sortable_CJR} that there exists a $c$-sortable element $z$ of $\Weak(W)$ whose canonical join representation is $\{j_{ws,w},j_{ws',w}\}$. Let $y$ and $y'$ be the two elements covered by $z$, and assume without loss of generality that $j_{y,z}=j_{ws,w}$ and $j_{y',z}=j_{ws',w}$. Deleting the last letter in the $c$-sorting word for $z$ yields the $c$-sorting word for a $c$-sortable element covered by $z$; this element must be $y$ or $y'$. Assume without loss of generality that $y$ is $c$-sortable. By \cref{lem:cover_reflections}, we have $zy^{-1}=wsw^{-1}$ and $z(y')^{-1}=ws'w^{-1}$. It follows that $zy^{-1}$ and $z(y')^{-1}$ do not commute. Hence, the interval $[\popdown_{\Weak(W)}(z),z]=[y\wedge y',z]$ is isomorphic to the weak order on the dihedral group $I_2(m)$ for some $m\geq 3$. Let $x$ be the unique element of $[\popdown_{\Weak(W)}(z),z]$ covered by $y$. Since $y$ is $c$-sortable, \cref{lem:sortable_CJR} tells us that the shard label $j_{x,y}$ is $c$-sortable. According to \cref{lem:shard_intersection}, $j_{x,y}$ is the shard label of an edge in $[\popdown_{\Weak(W)}(w),w]$. Since $j_{x,y}$ is $c$-sortable, it is also a shard label of an edge in $[\popdown_{\Camb_c}(w),w]$. Because $[\popdown_{\Camb_c}(w),w]$ is Boolean, it follows from \cref{lem:Cor710} that $j_{x,y}\in\mathcal D(w)$. Because $\mathcal D(w)$ is an independent set of the Galois graph of $\Weak(W)$ and $j_{y,z}=j_{ws,w}\in\mathcal D(w)$, there cannot be an arrow from $j_{y,z}$ to $j_{x,y}$ in the Galois graph of $\Weak(W)$. 
However, since $[y\wedge y',z]$ is isomorphic to the weak order on $I_2(m)$, it follows from \cref{lem:Cor710,exam:dihedral} that there \emph{is} an arrow from $j_{y,z}$ to $j_{x,y}$ in the Galois graph of $\Weak(W)$. This is our desired contradiction. 
\end{proof}

\begin{remark}\label{rem:preprojective}
   It is also possible to prove many of the equivalences in \cref{thm:distributive_intervals} using arguments from representation theory, or conversely to use \cref{thm:distributive_intervals} to prove special cases of \cref{lem:ext_boolean}. Since these arguments highlight further connections between Coxeter groups and lattices of torsion classes, we give a short overview of them here.
    
    In addition to the hereditary algebras $KQ_c$, one can also associate a (non-hereditary) \emph{preprojective algebra} $\Pi(W)$ to each finite crystallographic Coxeter group $W$. (For background outside the simply-laced case, see \cite[Section~4]{kulshammer} and the references therein.) It 
    is shown in \cite{mizuno} (simply-laced case) and \cite[Section~7]{AHIKM} (general case) that there is an isomorphism $\varphi: \Weak(W) \rightarrow \tors(\Pi(W))$ induced by the symmetric bilinear form $(-,-)$.
    
    For any Coxeter element $c$, the hereditary algebra $KQ_c$ is a quotient of the preprojective algebra. Thus the algebra quotient $\Pi(W) \rightarrow KQ_c$ induces a lattice quotient \[\Weak(W) \cong \tors(\Pi(W)) \rightarrow \tors(KQ_c) \cong \Camb_c,\] and it is shown (in the simply-laced case) in \cite{MizunoThomas} that this lattice quotient coincides with the map $\pidown^c$. The algebra quotient also induces a (fully faithful) inclusion $\brick KQ_c \subseteq \brick \Pi(W)$, which is known to preserve Ext-orthogonality. Given this, \cref{thm:distributive_intervals} becomes equivalent to the specialization of \cref{lem:ext_1,lem:ext_2,lem:ext_3,lem:ext_4} in \cref{lem:ext_boolean} to the cases $\Lambda = KQ_c$ and $\Lambda = \Pi(W)$.
\end{remark}

We now turn toward interpreting \cref{thm:algebraic_2} in \cref{thm:algebraic_image_characterization}. Recall that $S$ is precisely the set of atoms of $\Camb_c$. In the crystallographic case, \cref{thm:cambrian_hereditary} says that these atoms can be indexed (by $[n]$) so that $\varphi_c(s_i) = \Filt(S_i)$ for all $i$. In the non-crystallographic case, we fix an arbitrary indexing of $S$ by $[n]$. In either case, for $s_i \in S$, we let \[\Theta_i = \{x \in \Camb_c \mid s_i \leq x \text{ and } s_j \not\leq x \text{ for all }s_j \in S \setminus \{s_i\}\}\] and let $p_i=\bigvee\Theta_i$. (One can show that $p_i$ is actually the unique maximal element of $\Theta_i$.) If $W$ is crystallographic, then \cref{prop:proj,thm:cambrian_hereditary} imply that $\varphi_c(p_i) = \Gen (P(i))$ for each $i$. In particular, \cref{rem:hereditary_proj} implies that, for $w \in \Camb_c$, the torsion class $\varphi_c(w)$ contains a projective module if and only if $p_i \leq w$ for some $i$.

We now state the main result of this section, which provides purely lattice-theoretic and Coxeter-theoretic descriptions of the image of the pop-stack operator on any Cambrian lattice.

\begin{theorem}\label{thm:combinatorial_image_description}
    Let $W$ be a finite irreducible Coxeter group, and let $c \in W$ be a Coxeter element. For $w\in\Camb_c$, the following are equivalent. 
\begin{enumerate}[(1)]
\item $w$ is in the image of $\popdown_{\Camb_c}$. 
\item The descents of $w$ all commute, and $w$ has no left inversions in common with~$c^{-1}$. 
\item The interval $[\popdown_{\Camb_c}(w),w]$ is Boolean, and $p_i \not\leq w$ for all $i\in[n]$.
\end{enumerate} 
\end{theorem}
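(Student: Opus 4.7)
The plan is to prove (1) $\Leftrightarrow$ (3) and (2) $\Leftrightarrow$ (3) in the crystallographic case via the representation-theoretic machinery of Sections~\ref{sec:pop-stack_description} and~\ref{sec:weak_cambrian}, then verify the equivalences for the remaining types ($H_3$, $H_4$, and the dihedral $I_2(m)$ with $m \notin \{3,4,6\}$) by direct computation.

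For (1) $\Leftrightarrow$ (3) in the crystallographic case, I would transport \cref{thm:algebraic_image_characterization} through the lattice isomorphism $\varphi_c\colon \Camb_c \to \tors KQ_c$. The Ext-orthogonality clause \cref{thm:algebraic_image_characterization}\ref{thm:algebraic_1} corresponds, via \cref{lem:ext_boolean}, to $[\popdown_{\Camb_c}(w), w]$ being Boolean. For the ``no nonzero projective'' clause \cref{thm:algebraic_image_characterization}\ref{thm:algebraic_2}, the ingredients are \cref{rem:hereditary_proj} (which, under Ext-orthogonality, reduces the existence of a nonzero projective in $\varphi_c(w)$ to the existence of an indecomposable projective $P(i) \in \D(\varphi_c(w))$), the identity $\varphi_c(p_i) = \Gen(P(i))$ (following from \cref{prop:proj,thm:cambrian_hereditary}), and the equivalence $\Gen(P(i)) \subseteq \varphi_c(w) \iff P(i) \in \varphi_c(w)$. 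Chaining these shows ``no nonzero projective in $\varphi_c(w)$'' is equivalent to ``$p_i \not\leq w$ for all $i$.''

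For (2) $\Leftrightarrow$ (3), the equivalence between ``descents of $w$ commute'' and ``$[\popdown_{\Camb_c}(w), w]$ is Boolean'' is immediate from \cref{thm:distributive_intervals}. After reindexing so that $s_1 s_2 \cdots s_n$ is a reduced word for $c$, the positive roots $\rho_i$ of \eqref{eqn:dim_vects} are precisely the elements of $\inv(c^{-1})$, so ``$w$ has no left inversion in common with $c^{-1}$'' translates to $\rho_i \notin \inv(w)$ for all $i$. In the crystallographic case, $\undim P(i) = \rho_i$ by \cref{prop:gabriel}\ref{prop:gabriel_5}, and \cref{thm:cambrian_hereditary} gives $P(i) \in \varphi_c(w) \iff \rho_i \in \inv(w)$; combined with $\varphi_c(p_i) = \Gen(P(i))$, this yields $p_i \leq w \iff \rho_i \in \inv(w)$, finishing the equivalence.

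For the non-crystallographic case, the Cambrian lattices of $I_2(m)$ (size $m+2$), $H_3$ (size $32$), and $H_4$ (size $280$) are small enough to handle element by element: $I_2(m)$ by a short hand computation using \cref{exam:dihedral}, and $H_3$ and $H_4$ by a finite computer verification over all Coxeter elements. The main obstacle will be the careful bookkeeping needed to transport the representation-theoretic criteria through $\varphi_c$ and to pin down the positive roots $\rho_i$ as exactly $\inv(c^{-1})$; once this dictionary is set up, the crystallographic proof is largely a matter of assembling the preceding results, and the non-crystallographic verification, while routine, occupies the remainder of the argument.
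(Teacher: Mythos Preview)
Your proposal is correct and follows essentially the same approach as the paper's proof: transport \cref{thm:algebraic_image_characterization} through the isomorphism $\varphi_c$ using \cref{lem:ext_boolean}, \cref{thm:distributive_intervals}, \cref{prop:gabriel}, and \cref{prop:proj} to establish the crystallographic case, then handle $I_2(m)$ by hand and $H_3$, $H_4$ by computer. The paper organizes the argument slightly differently---treating all three conditions at once rather than splitting into (1)$\Leftrightarrow$(3) and (2)$\Leftrightarrow$(3)---but the ingredients and logic are the same.
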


\begin{proof}
    Suppose first that $W$ is crystallographic. By \cref{thm:cambrian_hereditary}, we have that $w$ is in the image of $\popdown_{\Camb_c}$ if and only if $\varphi_c(w)$ is in the image of $\popdown_{\tors KQ_c}$. Furthermore, it was established above that $\varphi_c(w)$ contains a nonzero projective module if and only if there exists $i \in [n]$ such that $p_i \leq w$, and we know that $p_i\leq w$ if and only if the root $\rho_i$ (defined in \cref{eqn:dim_vects}) is in $\inv(w)$. Finally, we have that the descents of $w$ all commute with one another if and only if $\Ext^1_{KQ_c}(X,X') = 0$ for all distinct $X,X' \in \D(\varphi_c(w))$ by \cref{lem:ext_boolean,thm:distributive_intervals}. Therefore, the desired result follows from \cref{thm:algebraic_image_characterization,thm:distributive_intervals} and the fact that $\{\rho_i\mid i\in[n]\}=\inv(c^{-1})$. 

    It remains to consider the non-crystallographic cases. We verified the theorem when $W$ is of type $H_3$ or $H_4$ using Sage \cite{sagemath}. When $W$ is of type $I_2(m)$, it is straightforward to check the desired result by hand.  
\end{proof}




\begin{remark}\label{rem:ext_orthogonal}
The naive analogue of \cref{thm:combinatorial_image_description} fails for the weak order on $A_n$. For example, the pop-stack operator on $\Weak(A_4)$ sends the permutation $52341$ to the permutation $25314$, but the descents of $25314$ ($s_2$ and $s_3$) do not commute with each other. In the language of representation theory, this means that there exist bricks $X, X' \in \D(\varphi(25314))$ such that ${\Ext^1_{\Pi(A_4)}(X,X') \neq 0}$ (the notation $\varphi$ is from \cref{rem:preprojective}). Thus, the naive analogue of \cref{thm:algebraic_image_characterization} likewise fails for the preprojective algebra $\Pi(A_4)$. 

It is also worth mentioning that the image of $\popdown_{\Camb_c}$ is not necessarily contained in the image of $\popdown_{\Weak(W)}$. For example, if $W=A_9$ and $c=s_1s_3s_5s_7s_9s_2s_4s_6s_8$, then one can use \cite[Theorem~1]{ABBHL} and \cref{cor:choi_sun} below to show that the permutation $1,2,8,10,6,9,4,5,3,7$ is in the image of $\popdown_{\Camb_c}$ but not the image of $\popdown_{\Weak(W)}$. 
\end{remark}

The following surprising consequence of \cref{thm:distributive_intervals,} tells us that when we compute a forward orbit of $\popdown_{\Camb_c}$, all applications of the pop-stack operator after the first can be computed in the weak order. We believe this could have interesting further implications concerning the dynamical properties of $\popdown_{\Camb_c}$. 

\begin{corollary}\label{cor:dynamical_weak_Camb}
Let $c$ be a Coxeter element of a finite irreducible crystallographic Coxeter group $W$. If $w\in \Camb_c$, then \[(\popdown_{\Weak(W)})^t(\popdown_{\Camb_c}(w))=(\popdown_{\Camb_c})^{t+1}(w)\] for all $t\geq 0$.
\end{corollary}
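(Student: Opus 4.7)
The plan is to prove the identity by induction on $t$, with the base case $t = 0$ being tautological. For the inductive step, the key observation is that the element $v := (\popdown_{\Camb_c})^{t+1}(w)$ automatically lies in the image of $\popdown_{\Camb_c}$, so all of the structural information furnished by \cref{thm:combinatorial_image_description,thm:distributive_intervals} becomes available for $v$.

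More concretely, assuming inductively that $(\popdown_{\Weak(W)})^t(\popdown_{\Camb_c}(w)) = (\popdown_{\Camb_c})^{t+1}(w) = v$, the element $v$ belongs to $\Camb_c$ (so it is $c$-sortable) and lies in $\popdown_{\Camb_c}(\Camb_c)$. By \cref{thm:combinatorial_image_description}, the descents of $v$ therefore commute. Applying the equivalence of \cref{thm:distributive_1} and \cref{thm:distributive_6} in \cref{thm:distributive_intervals} to $v$, we conclude that the intervals $[\popdown_{\Camb_c}(v),v]$ and $[\popdown_{\Weak(W)}(v),v]$ coincide. Taking minimum elements of these equal intervals yields $\popdown_{\Weak(W)}(v) = \popdown_{\Camb_c}(v)$, and hence
\[
(\popdown_{\Weak(W)})^{t+1}(\popdown_{\Camb_c}(w)) = \popdown_{\Weak(W)}(v) = \popdown_{\Camb_c}(v) = (\popdown_{\Camb_c})^{t+2}(w),
\]
which closes the induction.

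There is no real obstacle here beyond correctly identifying the inductive hypothesis: the entire argument reduces to the single observation that \emph{every iterate of $\popdown_{\Camb_c}$ past the first lies in the image of $\popdown_{\Camb_c}$}, which then forces the commuting-descents hypothesis of \cref{thm:distributive_intervals} to hold at each step. Once that is in place, the equality of intervals instantly upgrades to the equality of the two pop-stack operators on that element, and iterating gives the corollary.
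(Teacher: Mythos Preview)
Your proof is correct and essentially identical to the paper's: both induct on $t$, observe that the element $(\popdown_{\Camb_c})^{t+1}(w)$ lies in the image of $\popdown_{\Camb_c}$, invoke \cref{thm:combinatorial_image_description} to get commuting descents, and then apply \cref{thm:distributive_intervals} to conclude that $\popdown_{\Weak(W)}$ and $\popdown_{\Camb_c}$ agree on that element. The only cosmetic difference is that the paper indexes the induction so that $u=(\popdown_{\Camb_c})^t(w)$ rather than your $v=(\popdown_{\Camb_c})^{t+1}(w)$.
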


\begin{proof}
The result is trivial if $t=0$, so we may assume $t\geq 1$ and assume inductively that \[(\popdown_{\Weak(W)})^{t-1}(\popdown_{\Camb_c}(w))=(\popdown_{\Camb_c})^{t}(w).\] Let $u=(\popdown_{\Camb_c})^{t}(w)$. Since $u$ is in the image of $\popdown_{\Camb_c}$, we know by \cref{thm:combinatorial_image_description} that all of the descent of $u$ commute with each other. Therefore, it follows from \cref{thm:distributive_intervals} that $\popdown_{\Weak(W)}(u)=\popdown_{\Camb_c}(u)$; this is equivalent to the desired result.  
\end{proof}

\section{Cambrian Lattices in Type A}\label{sec:Cambrian_A}
In this section, we restrict to Cambrian lattices of type A. We first recall the permutation model for the Coxeter group $A_n$ and the definition of bipartite Coxeter elements. In \cref{subsec:arc,subsec:Motzkin}, we recall two classes of combinatorial objects: \emph{arc diagrams} and \emph{Motzkin paths}. We use these in \cref{subsec:bijection} to resolve a conjecture (stated in \cref{eq:conjectured_gf}) of Defant and Williams from \cite{DefantWilliamsSemidistrim}. This yields an explicit formula for the generating function that counts the images of the pop-stack operators on bipartite Cambrian lattices of type A.

\subsection{Permutations and Coxeter elements}\label{subsec:perms}

The Coxeter group $A_n$ is the same as the symmetric group whose elements are permutations of the set $[n+1]=\{1,\ldots,n+1\}$. We will frequently represent a permutation $w\in A_n$ in \dfn{one-line notation} as the word $w(1)\cdots w(n+1)$. The simple reflections of $A_n$ are $s_1,\ldots,s_n$, where $s_i$ is the transposition that swaps $i$ and $i+1$. The Coxeter graph $\Gamma_{A_n}$ is a path that contains an (unlabeled) edge $\{s_i,s_{i+1}\}$ for each $i\in[n]$. A simple reflection $s_i$ is a descent of $w$ if and only if $w(i)>w(i+1)$; when this is the case, we will also refer to the index $i$ as a descent of $w$. A left inversion of $w$ is a transposition $(i\,\, j)$ such that $i<j$ and $w^{-1}(i)>w^{-1}(i)$. 

Let \[c_1=\prod_{\substack{i\in[n]\\ i\text{ odd}}}s_i\quad \text{and}\quad c_2=\prod_{\substack{i\in[n]\\ i\text{ even}}}s_i.\] The \dfn{bipartite Coxeter elements} of $A_n$ are $c_{(n)}^{\bip}=c_1c_2$ and $c_{(n)}^{\bip\bip}=c_2c_1$. These will be the focus of much (but not all) of this section. The following result shows that it generally suffices to consider only $c^\bip_{(n)}$.

\begin{proposition}
The canonical join complexes of $\Camb_{c_{(n)}^\bip}$ and $\Camb_{c_{(n)}^{\bip\bip}}$ are isomorphic. 
\end{proposition}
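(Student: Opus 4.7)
The plan is to show that $\Camb_{c_{(n)}^{\bip\bip}}$ is isomorphic to the dual lattice $(\Camb_{c_{(n)}^{\bip}})^*$, then invoke the general fact that a finite semidistributive lattice and its dual always have isomorphic canonical join complexes.

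First, I would observe that $c_{(n)}^{\bip\bip} = (c_{(n)}^{\bip})^{-1}$. Since the simple reflections appearing in $c_1$ are indexed by pairwise-nonadjacent vertices of $\Gamma_{A_n}$, they pairwise commute, so $c_1$ is an involution; likewise $c_2$ is an involution. Hence $(c_1 c_2)^{-1} = c_2^{-1} c_1^{-1} = c_2 c_1$.

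Next, I would prove that $\Camb_{c^{-1}} \cong (\Camb_c)^*$ for any Coxeter element $c$ of a finite irreducible crystallographic Coxeter group. Because reversing a reduced word for $c$ produces a reduced word for $c^{-1}$, the quiver $Q_{c^{-1}}$ is obtained from $Q_c$ by reversing every arrow, so $KQ_{c^{-1}} \cong (KQ_c)^{\mathrm{op}}$. The $K$-linear duality $\Hom_K(-,K) \colon \mods KQ_c \to \mods (KQ_c)^{\mathrm{op}}$ is a contravariant exact equivalence; it interchanges quotients and subobjects, so it induces an order-preserving bijection $\tors KQ_c \cong \torsf (KQ_c)^{\mathrm{op}}$. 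Composing with the anti-isomorphism $\torsf \Lambda \cong \tors \Lambda$ given by $\F \mapsto {}^{\perp}\F$ (applied with $\Lambda = (KQ_c)^{\mathrm{op}}$), and invoking \cref{thm:cambrian_hereditary}, yields the desired lattice anti-isomorphism $\Camb_{c^{-1}} \cong (\Camb_c)^*$.

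Finally, the discussion following \cref{eq:P} shows that for any finite semidistributive lattice $L$, the map $A \mapsto \kappa_L(A)$ is an isomorphism from the canonical join complex of $L$ to its canonical meet complex; the canonical meet complex of $L$ is, by definition, the canonical join complex of $L^*$. Since $\Camb_{c_{(n)}^{\bip}}$ is semidistributive (as a quotient of $\Weak(A_n)$), combining this fact with the isomorphism $\Camb_{c_{(n)}^{\bip\bip}} \cong (\Camb_{c_{(n)}^{\bip}})^*$ from the previous two steps gives the isomorphism of canonical join complexes.

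The main obstacle is the second step: one must carefully track the contravariance of the $K$-duality and compose it with the order-reversing perp map to be sure the final bijection is truly a lattice anti-isomorphism (rather than merely a set bijection), and one must check that the arrow-reversal identification $Q_{c^{-1}} = Q_c^{\mathrm{op}}$ is consistent with the orientation convention adopted in \cref{subsec:path_algebras}. A purely combinatorial alternative would be to exhibit an anti-automorphism of $\Weak(A_n)$ (e.g.\ $w \mapsto w\wo$) that sends the $c$-Cambrian congruence to the $c^{-1}$-Cambrian congruence, bypassing representation theory entirely.
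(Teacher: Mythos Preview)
Your argument is correct, but it follows a different path from the paper's. The paper splits on the parity of $n$: when $n$ is even, the automorphism $x\mapsto \wo x\wo$ of $\Weak(A_n)$ restricts to a lattice isomorphism $\Camb_{c_{(n)}^\bip}\cong\Camb_{c_{(n)}^{\bip\bip}}$, so the canonical join complexes are trivially isomorphic; when $n$ is odd, the anti-automorphism $x\mapsto \wo x$ restricts to an anti-isomorphism between the two Cambrian lattices, and then the paper invokes the same $\kappa$-isomorphism between canonical join and meet complexes that you use. Your approach is uniform in $n$: you observe $c_{(n)}^{\bip\bip}=(c_{(n)}^{\bip})^{-1}$ and establish $\Camb_{c^{-1}}\cong(\Camb_c)^*$ via $K$-duality and \cref{thm:cambrian_hereditary}. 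This is correct and arguably cleaner in that it avoids the case split, and it proves the more general statement that $\Camb_c$ and $\Camb_{c^{-1}}$ always have isomorphic canonical join complexes. The trade-off is that you import the representation-theoretic machinery of \cref{subsec:path_algebras}, whereas the paper's proof is entirely Coxeter-theoretic and self-contained. Your closing remark about using a weak-order anti-automorphism instead is exactly the paper's odd-case argument (though the paper uses $x\mapsto \wo x$ rather than $x\mapsto x\wo$).
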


\begin{proof}
The map $x\mapsto \wo x\wo$ is an automorphism of $\Weak(A_n)$; when $n$ is even, it restricts to an isomorphism from $\Camb_{c_{(n)}^\bip}$ to $\Camb_{c_{(n)}^{\bip\bip}}$. Thus, the desired result is immediate when $n$ is even. The map $x\mapsto \wo x$ is an antiautomorphism of $\Weak(A_n)$; when $n$ is odd, it restricts to an isomorphism from $\Camb_{c_{(n)}^\bip}$ to $\Camb_{c_{(n)}^{\bip\bip}}$. The canonical join complex of the dual of $\Camb_{c_{(n)}^{\bip\bip}}$ is equal to the canonical meet complex of $\Camb_{c_{(n)}^{\bip\bip}}$. Therefore, when $n$ is odd, the desired result follows from the fact that the canonical join complex and the canonical meet complex of a semidistributive lattice are isomorphic. 
\end{proof}

Recall that if $L$ is a finite semidistributive lattice, then the generating function \[\PP_L(q)=\sum_{v\in \popdown_L(L)}q^{|\U(v)|}=\sum_{v\in \popup_L(L)}q^{|\D(v)|}\] defined in \cref{eq:P} counts the facets of the canonical join complex of $L$ according to their sizes.
In \cite[Conjecture~11.2]{DefantWilliamsSemidistrim}, Defant and Williams conjectured\footnote{This conjecture was stated in \cite{DefantWilliamsSemidistrim} in terms of an explicit formula for $\PP_{\Camb_{c^{\bip}_{(n)}}}(q)$ for each particular $n$, but we prefer to write it here in terms of the generating function that encompasses all $n\geq 1$.} that 
\begin{equation}\label{eq:conjectured_gf}
\sum_{n\geq 1}\PP_{\Camb_{c^{\bip}_{(n)}}}(q)z^n=\frac{1}{qz}\left(\frac{2}{1-qz(1-2z)+\sqrt{1+q^2z^2-2qz(1+2z)}}-1\right)-1.
\end{equation} 
We will prove this conjecture in \cref{subsec:bijection}.

\subsection{The image of pop-stack in $A_n$}\label{subsec:image_A}

For this subsection, we let $c$ denote an arbitrary (not necessarily bipartite) Coxeter element of $A_n$. Define $\vv_c\colon\{2,\ldots,n\}\to\{\LL,\RR\}$ by 
\begin{equation}\label{eq:varphi}
\vv_c(i)=\begin{cases} \LL & \mbox{if }s_{i}\mbox{ precedes }s_{i-1}\mbox{ in every reduced word for }c; \\ \RR & \mbox{if }s_{i-1}\mbox{ precedes }s_{i}\mbox{ in every reduced word for }c.\end{cases}
\end{equation} 
The map $c\mapsto\vv_c$ is a bijection from the set of Coxeter elements of $A_n$ to the set of functions from $\{2,\ldots,n\}$ to $\{\LL,\RR\}$.

\begin{theorem}[{\cite[Example~4.9]{ReadingNoncrossing}}]\label{thm:reading_c-sortable_combinatorial}
Let $c$ be a Coxeter element of $A_n$. An element $w\in A_n$ is $c$-sortable if and only if the following hold for all $i,j\in[n+1]$ such that $w(j+1)<w(i)<w(j)$: \begin{itemize}
\item If $\vv_c(i)=\LL$, then $j<i$. 
\item If $\vv_c(i)=\RR$, then $i<j$. 
\end{itemize}
\end{theorem}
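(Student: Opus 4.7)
The plan is to prove this by strong induction on $\ell(w)$, using the recursive description of $c$-sortable elements in \cref{lem:Cambrian_recurrence}. The base case $w = e$ is immediate, since $e$ is $c$-sortable for every $c$ and the hypothesis $w(j+1) < w(i) < w(j)$ is vacuous.

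For the inductive step, let $s_k$ be the first letter of some reduced word for $c$; such an $s_k$ corresponds to a source of the quiver $Q_c$, equivalently to an index satisfying $\vv_c(k)=\LL$ when $k>1$ and $\vv_c(k+1)=\RR$ when $k<n$. Combinatorially, left-multiplication by $s_k$ swaps the positions of the values $k$ and $k+1$ in the one-line notation of $w$, and $\ell(s_k w) < \ell(w)$ precisely when value $k+1$ appears to the left of value $k$. If $\ell(s_k w) > \ell(w)$, then \cref{lem:Cambrian_recurrence} asserts that $w$ is $c$-sortable if and only if $w \in W_{S\setminus\{s_k\}} \cong A_{k-1}\times A_{n-k}$ and $w$ is $(s_k c)$-sortable in that parabolic subgroup. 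I would first verify that the pattern-avoidance condition forces $w \in W_{S\setminus\{s_k\}}$ (any interleaving of values across the blocks $\{1,\ldots,k\}$ and $\{k+1,\ldots,n+1\}$ produces a descent together with an in-between value lying in $\{k,k+1\}$, creating a violating triple at an index where $\vv_c$ is defined), and then that for such $w$ the condition splits as the conjunction of the corresponding conditions on the two factors, to which induction applies. If instead $\ell(s_k w) < \ell(w)$, then $w$ is $c$-sortable if and only if $s_k w$ is $(s_k c s_k)$-sortable, and induction applies since $\ell(s_k w) < \ell(w)$. Here $\vv_{s_k c s_k}$ agrees with $\vv_c$ outside $\{k,k+1\}$ but exchanges the two values at $k$ and $k+1$, reflecting the reversal of the arrows at $s_k$ in $Q_c$ under conjugation by $s_k$.

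The main obstacle is to verify that the operation on $w$ and the corresponding change in $\vv_c$ are compatible with the pattern-avoidance condition. Triples $(i,j)$ with $w(i), w(j), w(j+1) \notin \{k,k+1\}$ are unaffected by the reduction, so the analysis reduces to triples involving one of the values $k$ or $k+1$. In the conjugation case, the exchange of the positions of values $k$ and $k+1$ in the one-line notation is precisely compensated for by the swap of $\vv$-values at $k$ and $k+1$; a careful case analysis on which entries of the triple $(w(j+1),w(i),w(j))$ lie in $\{k,k+1\}$, together with its effect on the strict inequality $w(j+1)<w(i)<w(j)$, confirms that the pattern-avoidance condition transforms correctly under the induction step.
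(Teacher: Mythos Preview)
The paper does not prove this statement; it is quoted from \cite[Example~4.9]{ReadingNoncrossing} and used as a black box. There is therefore no proof in the paper to compare your sketch against.

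Your inductive plan via \cref{lem:Cambrian_recurrence} is the natural route to an independent proof, and the conjugation step is essentially right once one reads the condition as $\vv_c(w(i))$ rather than $\vv_c(i)$: the arc-diagram description in \cref{subsec:arc}, and your own compensation argument, both require the value interpretation (for instance, in the Tamari case $w=312$ the offending index is $i=3$, where $\vv_c$ is undefined, while $\vv_c(w(i))=\vv_c(2)=\RR$ gives the expected violation). With that reading, $\vv_c(w(i))=\vv_{s_kcs_k}((s_kw)(i))$ holds identically, so the only remaining work in that branch is tracking which triples $(i,j)$ satisfy the inequality $w(j+1)<w(i)<w(j)$ before and after swapping the values $k$ and $k+1$; the triples that appear or disappear always involve $k$ or $k+1$ as the middle value, and one checks these are automatically satisfied using $\vv_c(k)=\LL$, $\vv_c(k+1)=\RR$, and the hypothesis $w^{-1}(k+1)<w^{-1}(k)$.

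The genuine gap is the parabolic branch. Showing that the pattern condition together with $\ell(s_kw)>\ell(w)$ forces $w\in W_{S\setminus\{s_k\}}$ is the crux of that case, and your one-sentence justification does not do it: you must actually locate a descent $j$ adjacent to one of the values $k,k+1$ and exhibit the violating triple. (For example, when $k=1$: if $p:=w^{-1}(1)>1$, then $p-1$ is a descent with $w(p-1)\geq 3$, and the value $2$, which lies at position $w^{-1}(2)>p>p-1$, gives a triple violating $\vv_c(2)=\RR$.) The general case requires a similar but more delicate argument, and the converse direction---that the restricted pattern condition for $s_kc$ on each factor reassembles into the pattern condition for $c$---also needs to be written out.
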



For $w\in A_n$, we recall that a simple reflection $s_i$ is in $\Des(w)$ if and only if $w(i) > w(i+1)$. We say that $i$ is a \dfn{double descent} of $w$ if $i \neq n$ and $w(i) > w(i+1) > w(i+2)$. Note that the descents of $w$ all commute with one another if and only if $w$ does not have any double descents.

\begin{example}
Let $c=s_1s_2\cdots s_n\in A_n$. Then $\nu_c(i) = \RR$ for all $i \in \{2,\ldots,n\}$. A permutation $w\in A_n$ is $c$-sortable if and only if it avoids the pattern $312$ (i.e., there do not exist indices $i_1<i_2<i_3$ such that $w(i_2)<w(i_3)<w(i_1)$). The $c$-Cambrian lattice is often called the $(n+1)$-st \dfn{Tamari lattice}. 

The one-line notation of $c^{-1}$ is $(n+1)123\cdots n$. Hence, the left inversions of $c^{-1}$ are the transpositions of the form $(i\,\, n+1)$ for $i\in[n]$. It follows from \cref{thm:combinatorial_image_description} that a $c$-sortable permutation $w$ is in the image of $\popdown_{\Camb_c}$ if and only if $w$ has no double descents and the one-line notation of $w$ ends with $n+1$. This recovers one of the main results of \cite{Hong2022}. 
\end{example}

\begin{example}\label{exam:ChoiSun}
        Let $c$ be the bipartite Coxeter element $c^\bip_{(7)}$ of $A_7$. We have
        $$\nu_{c}(i) = \begin{cases} \LL & \text{if }i \equiv 1 \pmod{2};\\\RR & \text{if }i\equiv 0\pmod{2}\end{cases}$$
        for all $i \in \{2,\ldots,7\}$. 

        One can readily compute that the left inversions of $c^{-1}$ are 
        \[s_2=(2\,\,3),\quad s_4=(4\,\,5),\quad s_6=(6\,\,7),\] 
        \[s_2s_1s_2=(1\,\,3),\quad s_2s_4s_3s_4s_2=(2\,\,5),\quad s_4s_6s_5s_6s_4=(4\,\,7),\quad s_6s_7s_6=(6\,\,8).\]
        According to \cref{thm:combinatorial_image_description}, a $c$-sortable permutation $w$ is in the image of $\popdown_{\Camb_{c}}$ if and it has no double descents and does not have any of the seven transpositions in this list as left inversion. 
\end{example}

A straightforward extension of \cref{exam:ChoiSun} yields the following corollary to \cref{thm:combinatorial_image_description}, which settles \cite[Conjecture~4.2]{ChoiSun}. 

\begin{corollary}\label{cor:choi_sun}
    An element $w\in\Camb_{c^\bip_{(n)}}$ is in the image of $\popdown_{\Camb_{c^\bip_{(n)}}}$ if and only if all of the following hold:
    \begin{itemize}
        \item $w$ does not have any double descents;
        \item $w^{-1}(2k) < w^{-1}(2k+1)$ for all integers $k$ with $3 \leq 2k+1 \leq n+1$;
        \item $w^{-1}(2k) < w^{-1}(2k + 3)$ for all integers $k$ with $5 \leq 2k + 3 \leq n+1$;
        \item $w^{-1}(n-1) < w^{-1}(n+1)$ if $n \equiv 1 \pmod{2}$;
        \item $w^{-1}(1) < w^{-1}(3)$ if $n \neq 1$.
    \end{itemize}
\end{corollary}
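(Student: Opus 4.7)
The plan is to specialize \cref{thm:combinatorial_image_description} to $W = A_n$ and $c = c^\bip_{(n)}$: by that theorem, $w \in \Camb_c$ is in the image of $\popdown_{\Camb_c}$ if and only if (i) the descents of $w$ pairwise commute and (ii) $\inv(w) \cap \inv(c^{-1}) = \emptyset$. In type A we have $m(s_i, s_j) = 2$ exactly when $|i - j| \geq 2$, so (i) is equivalent to saying $w$ has no double descent; this yields the first bullet of the corollary. The remaining four bullets must therefore encode the condition $\inv(w) \cap \inv(c^{-1}) = \emptyset$, so the remainder of the argument is a direct computation of $\inv(c^{-1})$.

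Using the root-theoretic description $\inv(c^{-1}) = \{(a,b) : a < b,\ c(a) > c(b)\}$, it suffices to find the one-line notation of $c = c_1 c_2$. Because $c_1$ and $c_2$ are products of commuting involutions, composing $c(i) = c_1(c_2(i))$ entry by entry gives $c(1) = 2$, together with $c(2k) = 2k+2$ at every interior even position and $c(2k+1) = 2k-1$ at every interior odd position $\geq 3$. Near the right end, when $n$ is even one instead has $c(n) = n+1$ and $c(n+1) = n-1$, and when $n$ is odd one has $c(n+1) = n$ while $c(n) = n-2$ still fits the interior formula. Since the restriction of $c$ to even positions and the restriction of $c$ to odd positions $\geq 3$ are each strictly increasing, the only pairs $(a,b)$ with $c(a) > c(b)$ are those comparing positions of opposite parity, together with exceptional comparisons involving position $1$ and the boundary positions at the right end.

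Working out these candidate pairs explicitly produces precisely the four families of transpositions listed in the remaining bullets of the corollary: $(2k, 2k+1)$ for each $k$ with $2k+1 \leq n+1$; $(2k, 2k+3)$ for each $k$ with $2k+3 \leq n+1$; the single pair $(1,3)$ whenever $n \geq 2$; and the single pair $(n-1, n+1)$ whenever $n$ is odd. A count shows that these total $n = \ell(c)$, so they exhaust $\inv(c^{-1})$. Rewriting ``$(a,b) \notin \inv(w)$'' as $w^{-1}(a) < w^{-1}(b)$ then converts each family into the corresponding bullet, completing the proof. The main obstacle is the parity-dependent bookkeeping at the right-hand boundary of the one-line notation, together with the verification that no further inversions appear between distant pairs of positions; both are handled by the monotonicity observation above, and neither is conceptually difficult.
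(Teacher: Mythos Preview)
Your approach is correct and is essentially the paper's own: the paper simply says the corollary follows from \cref{thm:combinatorial_image_description} by extending the computation of $\inv(c^{-1})$ in \cref{exam:ChoiSun} to general $n$. One small caveat in your prose: the claim that $c$ is strictly increasing on even positions fails when $n$ is odd, since $c(n-1)=n+1>n=c(n+1)$; you nonetheless catch this as the boundary inversion $(n-1,n+1)$, and your final enumeration together with the counting check $|\inv(c^{-1})|=\ell(c)=n$ is correct.
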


\subsection{Arc diagrams}\label{subsec:arc}

Arrange $n+1$ points along a horizontal line, and identify them with the numbers $1,\ldots,n+1$ from left to right. An \dfn{arc} is a curve that moves monotonically rightward from a point $i$ to another point $j$ (for some $i<j$), passing above or below each of the points $i+1,\ldots,j-1$. Two arcs are considered to be the same if they have the same endpoints and they pass above the same collection of numbered points. A \dfn{noncrossing arc diagram} (of type~$A_n$) is a collection of arcs that can be drawn so that no two arcs have the same left endpoint or have the same right endpoint or cross in their interiors. We write $|\delta|$ for the number of arcs in a noncrossing arc diagram $\delta$. Let $\AD_n$ be the set of noncrossing arc diagrams of type~$A_n$. 

Given a permutation $w\in A_n$, form a noncrossing arc diagram $\Delta(w)\in\AD_n$ as follows. For each descent $i$ of $w$, draw an arc from $w(i+1)$ to $w(i)$ such that for each integer $k$ satisfying ${w(i+1)<k<w(i)}$, the arc passes above (resp.\ below) the point $k$ if $w^{-1}(k)>i+1$ (resp.\ ${w^{-1}(k)<i}$). This defines a map $\Delta\colon A_n\to\AD_n$, and it is straightforward to check that $\Delta$ is a bijection. (See \cite{ReadingNoncrossing}.)

Given a Coxeter element $c$ of $A_n$, say an arc $\mathfrak{a}$ with left endpoint $i$ and right endpoint $j$ is \dfn{$c$-sortable} if for every $k\in\{i+1,\ldots,j-1\}$, $\mathfrak{a}$ passes above (resp.\ below) $k$ if $\vv_c(k)=\LL$ (resp.\ $\vv_c(k)=\RR$). Note that for all $1\leq i<j\leq n+1$, there is a unique $c$-sortable arc from $i$ to $j$. Let $\AD(c)=\Delta(\Camb_c)$ be the set of noncrossing arc diagrams of $c$-sortable elements of $A_n$. It is immediate from \Cref{thm:reading_c-sortable_combinatorial} that a noncrossing arc diagram is in $\AD(c)$ if and only if all of its arcs are $c$-sortable. Hence, $\AD(c)$ is a simplicial complex whose vertices are the $c$-sortable arcs.

\begin{example}
Suppose $n=8$, and let $c=s_1s_2s_4s_8s_3s_5s_7s_6$. Then $\vv_c^{-1}(\LL)=\{4,7,8\}$, and $\vv_c^{-1}(\RR)=\{2,3,5,6\}$. Let $w$ be the permutation $325148679$. The noncrossing arc diagram $\Delta(w)$ is depicted in \Cref{fig:Arc1}. In this figure, each of the points $i\in[9]$ is represented by a circle filled with the number $i$; for $2\leq i\leq 8$, a blue semicircle appears on the top (resp.\ bottom) of the circle if $\vv_c(i)=\LL$ (resp.\ if $\vv_c(i)=\RR$). All of the arcs in $\Delta(w)$ are $c$-sortable, so $w$ is a $c$-sortable permutation. 
\end{example}

\begin{figure}[ht]
  \begin{center}\includegraphics[height=1.02cm]{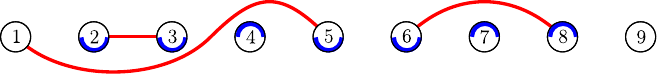}
  \end{center}
  \caption{The noncrossing arc diagram $\Delta(325148679)$. For each $2\leq i\leq 8$, a blue semicircle appears on the top (resp.\ bottom) of the circle containing $i$ if $\vv_c(i)=\LL$ (resp.\ if $\vv_c(i)=\RR$), where $c=s_1s_2s_4s_8s_3s_5s_7s_6$.}\label{fig:Arc1}
\end{figure}

\begin{remark}
    Noncrossing arc diagrams and other similar objects have been used to model the bricks over hereditary and preprojective algebras of type A. In particular, ($c$-sortable) arcs are in bijection with bricks, and ($c$-sortable) noncrossing arc diagrams are in bijection with semibricks. One can then use noncrossing arc diagrams to compute information about morphisms and extensions between bricks, and thus also to model 2-term simple-minded collections. See, e.g., \cite{BCZ,BaH_preproj,enomoto_bruhat,GIMO,hanson,HY,mizuno2} and references therein. 
\end{remark}

Cambrian lattices are semidistributive, so we can consider the canonical join complex and the canonical meet complex of $\Camb_c$ (and we know these simplicial complexes are isomorphic by \cite[Corollary~5]{BarnardCJC}). An element $v\in \Camb_c$ is join-irreducible if and only if it has exactly one descent, and this occurs if and only if $\Delta(v)$ contains a single arc. Therefore, $\Delta$ establishes a one-to-one correspondence between the join-irreducible elements of $\Camb_c$ and the $c$-sortable arcs. Then for each $w\in\Camb_c$, the noncrossing arc diagram $\Delta(w)$ corresponds to the canonical join representation of $w$. It follows that the simplicial complex $\AD(c)$ is isomorphic to the canonical join complex of $\Camb_c$. Say a noncrossing arc diagram in $\AD(c)$ is \dfn{maximal} if it is a facet of $\AD(c)$. In other words, a noncrossing arc diagram in $\AD(c)$ is maximal if it is not properly contained in another noncrossing arc diagram in $\AD(c)$. Let $\MAD(c)$ denote the set of maximal noncrossing arc diagrams in $\AD(c)$. 

The preceding discussion yields the identity \begin{equation}\label{eq:MArc}
\PP_{\Camb_c}(q)=\sum_{\delta\in\MAD(c)}q^{|\delta|}.
\end{equation}

\subsection{Motzkin paths} \label{subsec:Motzkin}
A \dfn{Motzkin path} is a lattice path in the plane that consists of up (i.e., $(1,1)$) steps, down (i.e., $(1,-1)$) steps, and horizontal (i.e., $(1,0)$) steps, starts at the origin, never passes below the horizontal axis, and ends on the horizontal axis. Let $\UU$, $\DD$, and $\HH$ denote up, down, and horizontal steps, respectively. Given a word $P$ over the alphabet $\{\UU,\DD,\HH\}$, let $\#_{\UU}(P)$, $\#_{\DD}(P)$, and $\#_\HH(P)$ denote the number of $\UU's$, the number of $\DD$'s, and the number of $\HH$'s in $P$, respectively. We can think of a Moztkin path as a word $M$ over the alphabet $\{\UU,\DD,\HH\}$ such that $\#_\UU(M)=\#_\DD(M)$ and $\#_\UU(P)\geq\#_\DD(P)$ for every prefix $P$ of $M$. For example, the word $\UU\HH\HH\DD\HH\UU\DD\UU\HH\UU\DD\DD$ represents the Motzkin path in \Cref{fig:Motzkin}. 

\begin{figure}[ht]
  \begin{center}\includegraphics[height=1.2cm]{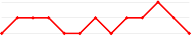}
  \end{center}
  \caption{The Motzkin path $\UU\HH\HH\DD\HH\UU\DD\UU\HH\UU\DD\DD$. }\label{fig:Motzkin}
\end{figure}

A \dfn{peak} of a Motzkin path $M$ is a point $(j,k)$ where an up step in $M$ ends and a down step in $M$ begins; the \dfn{height} of this peak is the number $k$. If we view $M$ as a word over $\{\UU,\DD,\HH\}$, then a peak corresponds to a consecutive occurrence of $\UU\DD$, and the height of the peak is $\#_\UU(P)-\#_\DD(P)$, where $P$ is the prefix of $M$ that ends with the up step involved in the peak. The two peaks of the Motzkin path in \Cref{fig:Motzkin} are the points $(6,1)$ and $(10,2)$. 

Let $\mathcal M_n$ denote the set of Motzkin paths of length $n$, and let $\overline{\mathcal M}_n$ be the subset of $\mathcal M$ consisting of Motzkin paths that do not have any peaks of height $1$. Let $\mathcal M=\bigcup_{n\geq 0}\mathcal M_n$ and $\overline{\mathcal M}=\bigcup_{n\geq 0}\overline{\mathcal M}_n$. Let \[{\bf M}(q,z)=\sum_{n\geq 0}\sum_{M\in\mathcal M_n}q^{\#_\UU(M)}z^{n}\quad\text{and}\quad{\overline{\bf M}}(q,z)=\sum_{n\geq 0}\sum_{M\in\overline{\mathcal M}_n}q^{\#_\UU(M)}z^{n}.\]
If $M$ is a nonempty Motzkin path, then either $M=\UU M'\DD M''$ for some $M',M''\in\mathcal M$, or $M=\HH M'''$ for some $M'''\in\mathcal M$. This translates into the functional equation 
\[
{\bf M}(q,z)-1=qz^2{\bf M}(q,z)^2+z{\bf M}(q,z),
\]
which we can solve to find that 
\[
{\bf M}(q,z)=\frac{1-z-\sqrt{1-2z+(1-4q)z^2}}{2qz^2}.
\]
If $M$ is a nonempty Motzkin path with no peaks of height $1$, then either $M=\UU M'\DD M''$ for some nonempty $M'\in\mathcal M$ and some $M''\in\overline{\mathcal M}$, or $M=\HH M'''$ for some $M'''\in\overline{\mathcal M}$. This translates into the functional equation 
\[
\overline{{\bf M}}(q,z)-1=qz^2({\bf M}(q,z)-1)\overline{\bf M}(q,z)+z\overline{{\bf M}}(q,z).
\]
We can solve this equation and use the above formula for ${\bf M}(q,z)$ to obtain 
\begin{align}
\overline{{\bf M}}(q,z)&=\frac{1}{1-z-qz^2({\bf M}(z,q)-1)} \nonumber\\ &=\frac{2}{1-z+2qz^2+\sqrt{1-2z+(1-4q)z^2}}. \label{eq:M_bar}
\end{align}

Using \Cref{eq:M_bar}, one can readily check that the expression on the right-hand side of \cref{eq:conjectured_gf} is \[\frac{1}{qz}\left(\overline{\bf M}(1/q,qz)-1\right)-1=\sum_{n\geq 1}\sum_{M\in\overline{\mathcal M}_{n+1}}q^{n-\#_\UU(M)}z^{n}.\] Therefore, in order to prove \cref{eq:conjectured_gf}, it suffices (by \cref{eq:MArc}) to exhibit a bijection $\Psi\colon\MAD(c_{(n)}^{\bip})\to\overline{\mathcal M}_{n+1}$ such that $|\delta|=n-\#_\UU(\Psi(\delta))$ for every $\delta\in\MAD(c_{(n)}^{\bip})$. 

\subsection{The bijection} \label{subsec:bijection}

Throughout the remainder of this section, let us fix a positive integer $n$ and write $c^{\bip}=c_{(n)}^{\bip}$. The map $\vv_{c^{\bip}}\colon\{2,\ldots,n\}\to\{\LL,\RR\}$ is given by \[\vv_{c^\bip}(i)=\begin{cases} \LL & \mbox{if }i\mbox{ is odd}; \\ \RR & \mbox{if }i\mbox{ is even}.\end{cases}\] As in \Cref{subsec:arc}, we consider noncrossing arc diagrams drawn on $n+1$ points that are arranged horizontally and identified with the numbers $1,\ldots,n+1$ from left to right. 

\begin{lemma}\label{lem:no_UD}
If $\delta\in\MAD(c^\bip)$ and $i\in\{2,\ldots,n\}$, then $i-1$ is the left endpoint of an arc in $\delta$, or $i+1$ is the right endpoint of an arc in $\delta$ (or both). 
\end{lemma}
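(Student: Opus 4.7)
My plan is to prove the lemma by contradiction. Assume that some $i \in \{2,\ldots,n\}$ violates the conclusion for $\delta \in \MAD(c^\bip)$, so that $i-1$ is not the left endpoint of any arc in $\delta$ and $i+1$ is not the right endpoint of any arc in $\delta$. I will produce a $c^\bip$-sortable arc $\alpha$ such that $\delta\cup\{\alpha\}\in\AD(c^\bip)$, contradicting the maximality of $\delta$. The natural candidate is the unique $c^\bip$-sortable arc $\alpha$ with left endpoint $i-1$ and right endpoint $i+1$; by the parity description of $\vv_{c^\bip}$, this arc passes above $i$ when $i$ is odd and below $i$ when $i$ is even. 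The hypotheses on $\delta$ rule out shared left endpoints or shared right endpoints between $\alpha$ and any arc in $\delta$, and also force $\alpha \notin \delta$, so the only condition left to verify is that $\alpha$ does not cross any arc of $\delta$ in their interiors.

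To check this, I will carry out a case analysis on the endpoints $a < b$ of a putative offending arc $\mathfrak{a} \in \delta$. Arcs with $[a,b]$ disjoint from $[i-1,i+1]$, or meeting it only at a common point that is a right endpoint of one arc and a left endpoint of the other, have interiors disjoint from that of $\alpha$. The standing hypotheses rule out $a = i-1$ and also $b = i+1$, so the remaining potentially problematic configurations are (I) $a < i-1$ and $b > i+1$, (II) $a < i-1$ and $b = i$, and (III) $a = i$ and $b > i+1$. In each case, $c^\bip$-sortability of $\mathfrak{a}$ determines its side at any interior point $k$: above if $k$ is odd and below if $k$ is even. Since $i-1$, $i$, $i+1$ have alternating parities, $\mathfrak{a}$'s sides at whichever of these points are interior to $\mathfrak{a}$ are forced to alternate, and in Case~(I) this gives the pattern (below, above, below) when $i$ is odd and (above, below, above) when $i$ is even.

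A direct geometric check---comparing the signed heights of $\mathfrak{a}$ and $\alpha$ at $i-1$, $i$, $i+1$ and applying the intermediate value theorem on each subinterval---then shows that in every case $\mathfrak{a}$ can be drawn without crossing $\alpha$. In Case~(I), one draws $\mathfrak{a}$ so that it stays on one consistent side of $\alpha$ throughout $[i-1,i+1]$ (below $\alpha$ when $i$ is odd, above $\alpha$ when $i$ is even); in Cases~(II) and~(III), analogous height comparisons on $[i-1,i]$ and $[i,i+1]$ respectively rule out any crossing. The main obstacle is simply organizing the case analysis cleanly; each individual check is elementary, and the underlying conceptual point is that the alternating parity structure encoded in $\vv_{c^\bip}$ is exactly what forces every $c^\bip$-sortable arc to be \emph{nestable} with respect to $\alpha$.
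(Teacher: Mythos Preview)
Your proof is correct and follows the same approach as the paper's own proof: both argue by contradiction using the unique $c^\bip$-sortable arc $\mathfrak{a}$ from $i-1$ to $i+1$, observing that the alternating parity structure of $\vv_{c^\bip}$ (namely $\vv_{c^\bip}(i-1)=\vv_{c^\bip}(i+1)\neq\vv_{c^\bip}(i)$) forces every $c^\bip$-sortable arc in $\delta$ to be compatible with $\mathfrak{a}$. The paper simply asserts this check is ``straightforward,'' while you supply the explicit case analysis; both arrive at the same contradiction with maximality.
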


\begin{proof}
Suppose instead that $i-1$ is not a left endpoint of an arc in $\delta$ and that $i+1$ is not a right endpoint of an arc in $\delta$. Let $\mathfrak{a}$ be the unique $c^\bip$-sortable arc whose endpoints are $i-1$ and $i+1$. Because $\vv_{c^\bip}(i-1)=\vv_{c^\bip}(i+1)\neq\vv_{c^\bip}(i)$, it is straightforward to check that $\delta\sqcup\{\mathfrak{a}\}$ is a noncrossing arc diagram in $\AD(c^\bip)$. This contradicts the maximality of $\delta$. 
\end{proof}

Suppose $\delta\in\MAD(c^{\bip})$. Let $\Psi(\delta)$ be the word $\MM_1\cdots\MM_{n+1}$, where for $1\leq i\leq n+1$, we define 
\begin{equation}\label{eq:MM}
\MM_i=\begin{cases} \UU & \mbox{if }i\leq n\mbox{ and }i+1\mbox{ is not the right endpoint of an arc in }\delta; \\ \DD & \mbox{if }i\geq 2\mbox{ and }i-1\mbox{ is not the left endpoint of an arc in }\delta; \\ \HH & \mbox{otherwise}. \end{cases}
\end{equation} 
\Cref{lem:no_UD} guarantees that $\Psi(\delta)$ is well defined (i.e., that no letter in $\Psi(\delta)$ can be both $\UU$ and $\DD$). See \Cref{fig:Arc_Motzkin} for an illustration.

\begin{figure}[ht]
  \begin{center}
  \includegraphics[height=6.827cm]{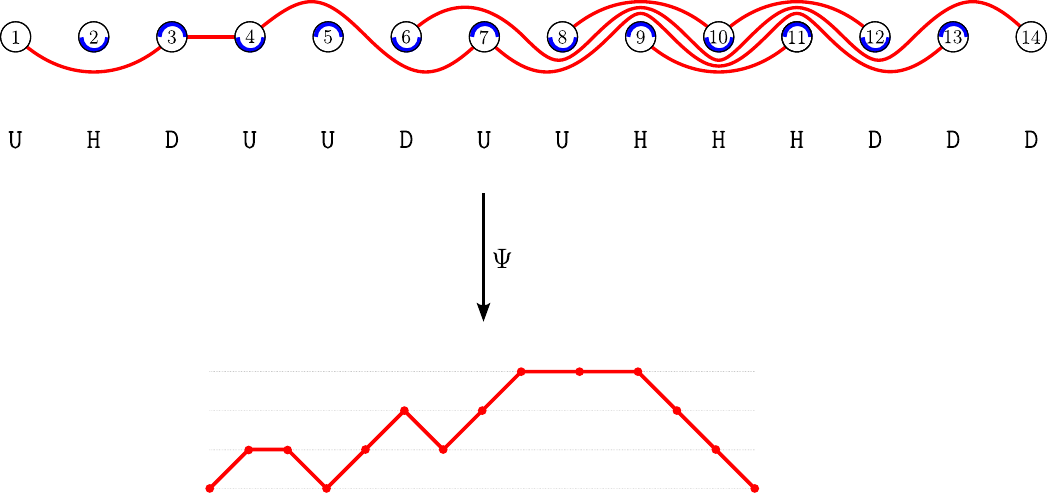}
  \end{center}
  \caption{When $n=13$, the map $\Psi$ sends a noncrossing arc diagram of type $A_{13}$ to a Motzkin path of length $14$ with no peaks of height $1$. For each $2\leq i\leq 13$, a blue semicircle appears on the top (resp.\ bottom) of the circle containing $i$ if $\vv_{c^\bip}(i)=\LL$ (resp.\ if $\vv_{c^\bip}(i)=\RR$). (The letters drawn below the noncrossing arc diagram represent the Moztkin path; they are not part of the noncrossing arc diagram.)}\label{fig:Arc_Motzkin}
\end{figure}

\begin{proposition}\label{prop:main_bijection}
For each $\delta\in\MAD(c^\bip)$, we have $\Psi(\delta)\in\overline{\mathcal M}_{n+1}$ and $|\delta|=n-\#_\UU(\Psi(\delta))$. \end{proposition}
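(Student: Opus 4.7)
Set $L$ and $R$ to be the sets of left and right endpoints of arcs in $\delta$, so that $|L| = |R| = |\delta|$, $L \subseteq \{1,\ldots,n\}$, and $R \subseteq \{2,\ldots,n+1\}$. \cref{lem:no_UD} makes the $\UU$- and $\DD$-clauses in the definition of $\MM_i$ mutually exclusive for $2 \leq i \leq n$, so $\Psi(\delta)$ is a well-defined length-$(n+1)$ word over $\{\UU,\DD,\HH\}$. A direct count from the definition of $\Psi$ gives
$$\#_\UU(\Psi(\delta)) = |\{i \in [n] : i+1 \notin R\}| = n - |\delta|,$$
and symmetrically $\#_\DD(\Psi(\delta)) = n - |\delta|$. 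This simultaneously settles the enumerative formula $|\delta| = n - \#_\UU(\Psi(\delta))$ and the balance condition $\#_\UU = \#_\DD$ required of a Motzkin path.

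For the prefix condition, fix $1 \leq k \leq n$ and let $P = \MM_1\cdots\MM_k$. The same counting yields
$$\#_\UU(P) - \#_\DD(P) = 1 + |L \cap \{1,\ldots,k-1\}| - |R \cap \{2,\ldots,k+1\}|.$$
Partitioning the arcs of $\delta$ by whether the left endpoint is at most $k-1$ and whether the right endpoint is at most $k+1$, the only arc contributing just to the second count is $(k,k+1)$ (when it belongs to $\delta$), while the arcs contributing just to the first count form the straddler set $B_k := \{(i,j) \in \delta : i \leq k-1 \text{ and } j \geq k+2\}$. Hence the difference equals $|B_k|$ if $(k,k+1) \in \delta$ and $1 + |B_k|$ otherwise, which is $\geq 0$ in either case. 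At $k = n+1$ the straddler set is automatically empty and the difference is $0$, confirming that $\Psi(\delta) \in \mathcal{M}_{n+1}$.

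It remains to rule out peaks of height $1$. Suppose toward contradiction that $\MM_k = \UU$ and $\MM_{k+1} = \DD$ for some $k \in \{1,\ldots,n\}$, with the height after position $k$ equal to $1$. The definitions force $k+1 \notin R$ and $k \notin L$, so in particular $(k,k+1) \notin \delta$, and the prefix formula then forces $|B_k| = 0$. I will verify that $\delta \cup \{(k,k+1)\}$ is a $c^\bip$-sortable noncrossing arc diagram, contradicting the maximality of $\delta$. Sortability of $(k,k+1)$ is automatic (no intermediate points), and its endpoints clash with no arc of $\delta$ by the two hypotheses. For the noncrossing condition I enumerate possible $(i,j) \in \delta$ case by case: the cases $i = k$ and $j = k+1$ would reinstate the already excluded endpoint conditions, the case $i \leq k-1,\, j \geq k+2$ places $(i,j)$ in $B_k$, and the remaining cases are either disjoint from $(k,k+1)$ or meet it only at one endpoint of opposite type (the right endpoint of some $(i,k)$ meeting the left endpoint of $(k,k+1)$, or the right endpoint of $(k,k+1)$ meeting the left endpoint of some $(k+1,j)$), so no crossing arises. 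With $|B_k| = 0$ every crossing possibility is ruled out, completing the contradiction. The main obstacle is precisely this crossing analysis: the height-$1$ hypothesis translates exactly into the vanishing of the straddler count $|B_k|$, which is the sole geometric obstruction to extending $\delta$ by the arc $(k,k+1)$.
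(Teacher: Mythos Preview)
Your proof is correct and follows essentially the same strategy as the paper's: both count $\#_\UU$ and $\#_\DD$ via the left/right endpoint sets, both relate the prefix height to the presence of the arc $(k,k+1)$ and the arcs straddling the gap, and both rule out height-$1$ peaks by showing that the arc $(k,k+1)$ could be added to $\delta$, contradicting maximality. Your explicit formula $\#_\UU(P)-\#_\DD(P)=1+|B_k|-[(k,k+1)\in\delta]$ makes the bookkeeping a bit more transparent than the paper's version, but the content is the same.
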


\begin{proof}
Let $\Psi(\delta)=\MM_1\cdots\MM_{n+1}$, where the steps $\MM_1,\ldots,\MM_{n+1}$ are as defined in \cref{eq:MM}. It is immediate from \cref{eq:MM} that $n-\#_\UU(\Psi(\delta))$ and $n-\#_\DD(\Psi(\delta))$ are both equal to $|\delta|$. In particular, $\#_\UU(\Psi(\delta))=\#_\DD(\Psi(\delta))$.

Suppose $k\in\{0,\ldots,n\}$ is such that the prefix $P=\MM_1\cdots\MM_k$ of $\Psi(\delta)$ satisfies $\#_U(P)=\#_\DD(P)$. If $k\geq 1$, then $k-\#_\UU(P)$ is the number of elements of $\{2,\ldots,k+1\}$ that are right endpoints of arcs in $\delta$. Similarly, if $k\geq 1$, then $k-1-\#_\DD(P)$ is the number of elements of $\{1,\ldots,k-1\}$ that are left endpoints of arcs in $\delta$. It follows that either $k=0$, or there is an arc from $k$ to $k+1$ in $\delta$. In either case, it is immediate from \cref{eq:MM} that $\MM_{k+1}\neq \DD$. Moreover, none of the arcs with left endpoints in $\{1,\ldots,k\}$ have right endpoints in $\{k+2,\ldots,n+1\}$. If we had $\MM_{k+1}=\UU$ and $\MM_{k+2}=\DD$, then we could add the arc from $k+1$ to $k+2$ to $\delta$ in order to form a larger noncrossing arc diagram in $\AD(c^\bip)$, contradicting the maximality of $\delta$. Together, this shows that $\Psi(\delta)$ cannot pass below the horizontal axis and cannot have any peaks of height $1$. Hence, $\Psi(\delta)\in\overline{\mathcal M}_{n+1}$. 
\end{proof}

We can now state and prove the main theorem of this section; as mentioned at the end of \Cref{subsec:Motzkin}, this theorem and \Cref{prop:main_bijection} imply the identity in \cref{eq:conjectured_gf}, thereby settling the conjecture of Defant and Williams.

\begin{theorem}\label{thm:main_bijection}
The map $\Psi\colon\MAD(c^\bip)\to\overline{\mathcal M}_{n+1}$ is a bijection. 
\end{theorem}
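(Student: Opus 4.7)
The plan is to construct an inverse map $\Phi\colon \overline{\mathcal M}_{n+1} \to \MAD(c^\bip)$ and verify $\Phi\circ\Psi=\mathrm{id}$ and $\Psi\circ\Phi=\mathrm{id}$. Given $M=\MM_1\cdots\MM_{n+1}\in\overline{\mathcal M}_{n+1}$, I would first invert the rules of \cref{eq:MM} to read off the intended endpoint data:
\[L=\{j\in[n]:\MM_{j+1}\in\{\UU,\HH\}\}\quad\text{and}\quad R=\{j\in\{2,\ldots,n+1\}:\MM_{j-1}\in\{\DD,\HH\}\};\]
the identity $\#_\UU(M)=\#_\DD(M)$ forces $|L|=|R|$. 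I would then define $\Phi(M)$ to be the unique collection of pairwise-noncrossing $c^\bip$-sortable arcs whose sets of left and right endpoints are $L$ and $R$, respectively.

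The key technical step will be a crossing criterion for $c^\bip$-sortable arcs, which, based on small cases, I expect to take roughly the following form. Two $c^\bip$-sortable arcs with endpoints $(\ell_1,r_1)$ and $(\ell_2,r_2)$, with $\ell_1\leq\ell_2$, are compatible (can be drawn non-crossing) if and only if either their intervals are disjoint or share only an endpoint, or are linked (with $\ell_1<\ell_2<r_1<r_2$) and $\ell_2\not\equiv r_1\pmod{2}$, or are nested (with $\ell_1<\ell_2\leq r_2<r_1$) and $\ell_2\equiv r_2\pmod{2}$. To prove this, I would track the sign of each arc at each interior integer using the alternating pattern of $\vv_{c^\bip}$. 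With the criterion in hand, an inductive argument (peeling off the outermost pair from the pairing) establishes that there is a unique noncrossing $c^\bip$-sortable pairing of $(L,R)$ whenever $(L,R)$ arises from a path in $\overline{\mathcal M}_{n+1}$.

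Next, I would verify $\Phi(M)\in\MAD(c^\bip)$: any additional candidate arc $(p,q)$ would force $\MM_{p+1}=\DD$ and $\MM_{q-1}=\UU$ (since $p\notin L$ and $q\notin R$), and a case analysis using the crossing criterion shows that such an arc must cross some arc of $\Phi(M)$. The identity $\Psi\circ\Phi=\mathrm{id}$ then follows from construction, since $\Phi(M)$ has left- and right-endpoint sets $L$ and $R$, and $\Psi$ depends only on this data via \cref{eq:MM}. Finally, $\Phi\circ\Psi=\mathrm{id}$ holds because $\Psi(\delta)$ records exactly the LE and RE sets of $\delta$, and the uniqueness of the noncrossing $c^\bip$-sortable pairing forces $\Phi(\Psi(\delta))=\delta$.

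The hardest part will be proving the crossing criterion and using it to establish both existence and uniqueness of the noncrossing pairing. This relies on a careful parity-based case analysis governed by the zigzag shapes of $c^\bip$-sortable arcs, together with the Motzkin condition that $M$ has no peak of height $1$ (which is what rules out configurations forcing either a crossing or an unpaired endpoint).
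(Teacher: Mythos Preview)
Your plan is viable and your conjectured crossing criterion for $c^\bip$-sortable arcs is correct, but the route differs from the paper's. The paper does not isolate a global parity-based crossing criterion. Instead it builds the inverse $\Phi$ by a left-to-right greedy procedure: scanning the points $1,\ldots,n+1$, it maintains a collection of ``partial arcs'' (arcs whose left endpoints have been placed but whose right endpoints have not), creates a new partial arc at $k$ whenever $\MM_{k+1}\neq\DD$, and terminates one at $k$ whenever $\MM_{k-1}\neq\UU$; the noncrossing constraint uniquely determines which partial arc is the one to terminate. The Motzkin inequalities guarantee there is always a partial arc available, and the absence of height-$1$ peaks is exactly what makes the result maximal (this is checked by a short three-case argument on a hypothetical extra arc $\mathfrak a_{i,j}$ according to whether $j-i$ is $1$, $2$, or $\geq 3$). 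Injectivity of $\Psi$ drops out because every choice in the construction was forced.

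Your approach trades this local greedy picture for a global one: prove the parity criterion, deduce that the endpoint data $(L,R)$ extracted from $M$ admits a \emph{unique} compatible pairing, and verify maximality. The payoff is a structural statement (the crossing criterion) that is of independent interest and makes uniqueness conceptually transparent. The cost is that your ``peel off the outermost pair'' induction is not yet pinned down: because linked arcs can be compatible here, the usual noncrossing-matching induction (match the first element, recurse on the two pieces) does not apply verbatim, and you need to say precisely which arc to peel and why the remainder still satisfies the inductive hypothesis. In practice, the cleanest way to make that induction precise is exactly the paper's greedy scan (process points left to right, always close the unique partial arc forced by noncrossing), so once you fill in this step your argument and the paper's essentially converge. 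Your maximality sketch will also need the same small case split as the paper's: the short arcs (length $1$ and $2$) are where the ``no height-$1$ peak'' hypothesis and \cref{lem:no_UD} are actually used, and the crossing criterion alone does not dispose of them.
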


\begin{proof}
Given an arc $\mathfrak{a}$ and a vertical line $\mathfrak l$ that lies strictly between the endpoints of $\mathfrak{a}$, let $\mathfrak{a}_{\mathfrak l}$ be the curve obtained from $\mathfrak{a}$ by deleting the portion of $\mathfrak{a}$ that lies to the right of $\mathfrak l$. We call a curve $\mathfrak{a}_{\mathfrak l}$ obtained in this manner a \dfn{partial arc}. 

Consider $M=\MM_1\cdots\MM_{n+1}\in\overline{\mathcal M}_{n+1}$. Let us try to construct ${\delta\in\MAD(c^\bip)}$ such that $\Psi(\delta)=M$. We can build $\delta$ step by step from left to right. This process is illustrated in \Cref{fig:step_by_step}. For the first step, we create a small partial arc that starts at the point $1$. In general, at the $k$-th step, we consider the point $k$. At this point in time, there may be some partial arcs that were produced in the previous $k-1$ steps. If $k\geq 2$ and $\MM_{k-1}\neq \UU$, then we extend one of these partial arcs and attach it to the point $k$ in order to create an arc, and we extend the rest of the partial arcs above (if $k$ is odd) or below (if $k$ is even) the point $k$. 
The condition that arcs cannot cross uniquely determines which partial arc we must attach to the point $k$. 
During the same $k$-th step, if $k\leq n$ and $\MM_{k+1}\neq \DD$, then we create a new small partial arc that starts at the point $k$. 

To see that the above procedure succeeds in producing a noncrossing arc diagram in $\AD(c^\bip)$, we must check two things: 
\begin{itemize}
\item If $k\geq 2$ and $\MM_{k-1}\neq \UU$, then there is actually a partial arc from the first $k-1$ steps that we can attach to the point $k$. 
\item The total number of steps during which we create a new partial arc is equal to the total number of steps during which we attach a partial arc to a point to create an arc. 
\end{itemize}
For the first item, note that the number of partial arcs remaining after the first $k-1$ steps is $(k-1-\#_\DD(\MM_2\cdots\MM_k))-(k-2-\#_\UU(\MM_1\cdots\MM_{k-2}))=\#_\UU(\MM_1\cdots\MM_{k-2})-\#_\DD(\MM_2\cdots\MM_k)+1$. Because $M$ is a Motzkin path, this quantity is nonnegative; furthermore, it can only be $0$ if the steps $\MM_{k-1}$ and $\MM_k$ are $\UU$ and $\DD$ (respectively) and form a peak of height $1$. But $M$ has no peaks of height $1$, so the number of partial arcs remaining after $k-1$ steps must actually be at least $1$. The second bulleted item follows from the fact that $\#_\UU(M)=\#_\DD(M)$. 

Let us now show that the noncrossing arc diagram $\delta$ constructed through the above procedure is actually maximal in $\AD(c^\bip)$; it will then follow immediately from the construction that $\Psi(\delta)=M$. Note also that the steps outlined above for constructing $\delta$ were forced upon us; in other words, $\Psi$ is injective.

Let $\mathfrak{a}_{i,j}$ denote the unique $c^\bip$-sortable arc from $i$ to $j$. Suppose by way of contradiction that there exist $i<j$ such that $\mathfrak{a}_{i,j}$ is not in $\delta$ and $\delta\sqcup\{\mathfrak{a}_{i,j}\}$ is a noncrossing arc diagram. We will assume $i$ is even; the case when $i$ is odd is almost identical. We consider three cases. 

\medskip 

\noindent {\bf Case 1.} Assume $j=i+1$. Then we have $\MM_i=\UU$ and $\MM_{i+1}=\DD$, so the steps $\MM_i$ and $\MM_{i+1}$ form a peak. Since $\mathfrak{a}_{i,i+1}$ does not cross any of the arcs in $\delta$, none of the arcs in $\delta$ can pass below $i$ and above $i+1$. Hence, every arc in $\delta$ whose left endpoint is in $\{1,\ldots,i-1\}$ has its right endpoint in $\{2,\ldots,i\}$. It follows from the construction of $\delta$ that $\#_\UU(\MM_1\cdots\MM_{i-1})=\#_\DD(\MM_1\cdots\MM_{i-1})$. But this implies that the peak formed by $\MM_i$ and $\MM_{i+1}$ has height $1$, which contradicts the fact that $M$ has no peaks of height $1$. 

\medskip 

\noindent {\bf Case 2.} Assume $j=i+2$. Then the point $i$ is not the left endpoint of an arc in $\delta$, and the point $i+2$ is not the right endpoint of an arc in $\delta$. However, this implies that $\MM_{i+1}$ is both $\UU$ and $\DD$, which is impossible. 

\medskip 

\noindent {\bf Case 3.} Assume $j\geq i+3$. Note that $\MM_{i+1}=\DD$. Since $\MM_{i+1}\neq\UU$, there must be an arc $\mathfrak{a}'$ in $\delta$ whose right endpoint is $i+2$. The arc $\mathfrak{a}_{i,j}$ passes above $i+1$ and below $i+2$. The left endpoint of $\mathfrak{a}'$ cannot be $i+1$ since, if it were, the arcs $\mathfrak{a}_{i,j}$ and $\mathfrak{a}'$ would cross. Therefore, $\mathfrak{a}'$ passes above $i+1$. The lower endpoint of $\mathfrak{a}'$ also cannot be $i$ because $i$ is the left endpoint of $\mathfrak{a}_{i,j}$. Therefore, $\mathfrak{a}'$ passes below $i$. However, this forces $\mathfrak{a}_{i,j}$ and $\mathfrak{a}'$ to cross, which is a contradiction. 
\end{proof}

\begin{figure}[ht]
  \begin{center}\includegraphics[height=12.708cm]{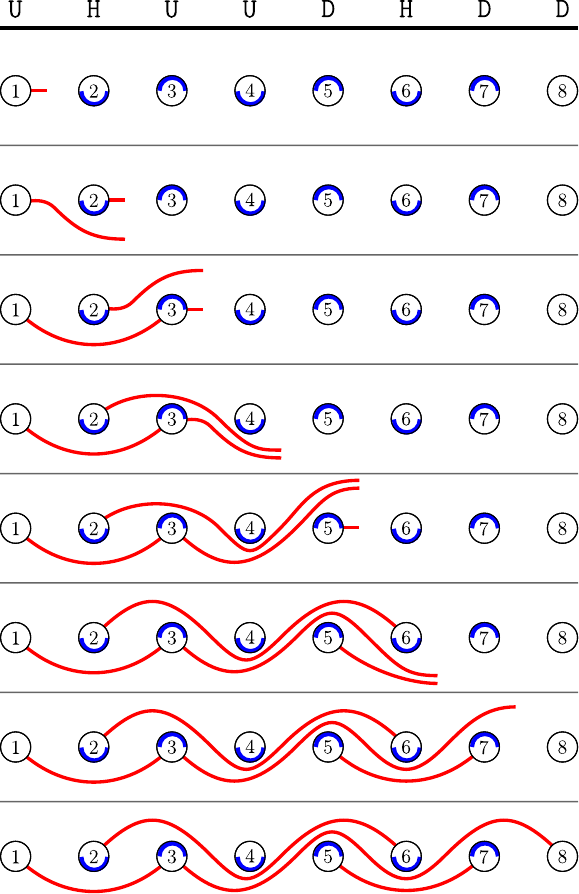}
  \end{center}
  \caption{The step-by-step procedure for constructing the noncrossing arc diagram $\delta$ such that $\Psi(\delta)=\UU\HH\UU\UU\DD\HH\DD\DD$.}\label{fig:step_by_step}
\end{figure}

\section{Maximum-Size Pop-Stack Orbits}\label{sec:orbits}

Let $W$ be a finite irreducible Coxeter group. All Coxeter elements of $W$ are conjugate to each other, so they have the same group-theoretic order; this order is called the \dfn{Coxeter number} of $W$ and is denoted by $h$. (It is also known that $h=2N/n$, where $N$ and $n$ are the number of reflection in $W$ and the number of simple reflections in $W$, respectively.)
Recall that, given a finite lattice $L$, we write $\mathcal O_L(x)$ for the forward orbit of an element $x\in L$ under the pop-stack operator. 
If $c$ is a Coxeter element of $W$, then $\Camb_c$ is a quotient of $\Weak(W)$.
Let $L$ be an arbitrary lattice quotient $W_{\equiv}$ on $W$.
We begin this section by showing that $h$ is an upper bound for $\max_{x\in W_{\equiv}}|\mathcal O_{W_{\equiv}}(x)|$ when $L$ is an arbitrary quotient of $\Weak(W)$ (\cref{thm:upper_quotient}).
In the process, we also provide a formula for computing $\pop_{W_\equiv}$ for any lattice quotient $W_{\equiv}$ of the weak order.
We then prove that this upper bound is tight when $W_{\equiv}$ is a Cambrian lattice (\cref{thm:max_orbit}). We also discuss how the techniques used in this section relate back to representation theory in \cref{subsec:heap_rep}.

\subsection{Quotients of the weak order}
As above, let $L$ be a finite lattice.
An equivalence relation $\equiv$ on $L$ is a \dfn{lattice congruence} provided that equivalence classes respect the meet and join operations in $L$.
That is, $\equiv$ is a lattice congruence if for all $x,y,z\in L$ satisfying $x\equiv y$, we have $x\join z \equiv y\join z$ and $x\meet z \equiv y\meet z$.

Let $\equiv$ be a lattice congruence. Let $[x]_\equiv$ denote the equivalence class of an element $x\in L$ under $\equiv$. One can define a lattice structure on the set $L/\equiv$ of equivalence classes as follows.
We set $[x]_\equiv\le [y]_{\equiv}$ if and only if there exist $x'\in [x]_\equiv$ and $y'\in [y]_{\equiv}$ such that $x'\le y'$. We then have $[x]_\equiv \meet [y]_\equiv = [x\meet y]_\equiv$ and $[x]_\equiv \join [y]_\equiv = [x\join y]_\equiv$. The lattice $L/\equiv$ is called a \dfn{lattice quotient} of $L$. Observe that there is a surjective lattice homomorphism $\eta_\equiv\colon L\to L/\equiv$ that sends each element $x\in L$ to its equivalence class $[x]_\equiv$.
(See \cite[Proposition~9.5-4]{reading_book}.)

Given an equivalence relation $\equiv$ on $L$, one can check that $\equiv$ is a lattice congruence order-theoretically using the following result.
\begin{proposition}[{\cite[Proposition~9-5.2]{reading_book}}]
An equivalence relation $\equiv$ on a finite lattice $L$ is a lattice congruence if and only if the following three conditions hold:
    \begin{itemize}
    \item Each equivalence class is an interval of $L$.
    \item The map $\pidown^\equiv$ that sends each element $x\in L$ to the unique minimal element of its equivalence class is order-preserving.
    \item The map $\piup_\equiv$ that sends each element $x\in L$ to the unique maximal element of its equivalence class is order-preserving.
    \end{itemize}
\end{proposition}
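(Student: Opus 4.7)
The plan is to treat the two directions separately. The forward direction is a direct computation from the definition of a lattice congruence, while the backward direction requires a more delicate argument exploiting the idempotence of $\pidown^{\equiv}$ and $\piup_{\equiv}$ together with their order-preservation.

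For the forward direction, assume $\equiv$ is a lattice congruence. For any equivalence class $C$ and any $x \equiv y$ in $C$ with $x \le z \le y$, the congruence property yields $z = z \meet y \equiv z \meet x = x$, so $z \in C$. Iterating the congruence property shows that $C$ is closed under pairwise meets and joins; since $L$ is finite, it follows that $\Meet C$ and $\Join C$ both lie in $C$, and therefore $C = [\Meet C, \Join C]$ is an interval. To see that $\pidown^{\equiv}$ is order-preserving, suppose $x \le y$ and set $x' = \pidown^{\equiv}(x)$ and $y' = \pidown^{\equiv}(y)$. Then $x' \meet y' \equiv x \meet y' \equiv x \meet y = x$, so $x' \meet y'$ lies in $[x]_{\equiv}$; since $x'$ is the minimum element of $[x]_{\equiv}$, this forces $x' \meet y' = x'$, i.e., $x' \le y'$. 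The argument for $\piup_{\equiv}$ is dual.

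For the backward direction, assume the three conditions hold. The first condition forces $[x]_{\equiv} = [\pidown^{\equiv}(x), \piup_{\equiv}(x)]$ for every $x$, so both maps are idempotent, and two elements are $\equiv$-equivalent if and only if they have the same image under either of them. It suffices to show that $x_1 \equiv x_2$ and $y_1 \equiv y_2$ imply $x_1 \join y_1 \equiv x_2 \join y_2$; the dual statement for meets follows by the same argument using the second condition in place of the third. Writing $u = \pidown^{\equiv}(x_i)$, $v = \piup_{\equiv}(x_i)$, $u' = \pidown^{\equiv}(y_j)$, and $v' = \piup_{\equiv}(y_j)$, each $x_i \join y_j$ lies in the interval $[u \join u', v \join v']$, so it suffices to show $u \join u' \equiv v \join v'$. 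Interpolating through $v \join u'$, this reduces to the following claim, which is the crux of the argument and the main obstacle: if $a \le b$ in $L$ and $b = \piup_{\equiv}(a)$, then $a \join c \equiv b \join c$ for every $c \in L$.

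To prove the claim, it suffices to show $\piup_{\equiv}(a \join c) = \piup_{\equiv}(b \join c)$, since every element is $\equiv$-equivalent to its image under $\piup_{\equiv}$. Order-preservation of $\piup_{\equiv}$ applied to $a \join c \le b \join c$ gives $\piup_{\equiv}(a \join c) \le \piup_{\equiv}(b \join c)$. For the reverse, order-preservation applied to $a \le a \join c$ yields $b = \piup_{\equiv}(a) \le \piup_{\equiv}(a \join c)$, and combined with the trivial $c \le \piup_{\equiv}(a \join c)$ this gives $b \join c \le \piup_{\equiv}(a \join c)$. Applying $\piup_{\equiv}$ once more and using its order-preservation and idempotence, we obtain $\piup_{\equiv}(b \join c) \le \piup_{\equiv}(\piup_{\equiv}(a \join c)) = \piup_{\equiv}(a \join c)$, completing the proof.
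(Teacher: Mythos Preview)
Your proof is correct. The forward direction is a clean application of the congruence property, and the backward direction correctly isolates the key reduction: showing that if $a \equiv b$ with $b = \piup_{\equiv}(a)$, then $a \join c \equiv b \join c$ for all $c$. Your use of order-preservation and idempotence of $\piup_{\equiv}$ to squeeze $\piup_{\equiv}(a \join c)$ and $\piup_{\equiv}(b \join c)$ between each other is exactly the right move.

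However, there is nothing to compare against: the paper does not prove this proposition. It is stated with a citation to \cite[Proposition~9-5.2]{reading_book} and used as a black box. You have supplied a self-contained proof where the paper simply invokes a reference.
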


Finally, we note that if $\equiv$ is a lattice congruence on $L$, then the map $\pidown^\equiv\colon L\to L$ is a lattice homomorphism whose image $\pidown^\equiv(L)$ (taken as an induced subposet of $L$) is isomorphic to the quotient $L/\equiv$.
(Dually, the image of $\piup_\equiv$ is also isomorphic to the quotient $L/\equiv$.)
\begin{remark}
Although the subposet $\pidown^\equiv(L)$ is a lattice in its own right, it meet and join operations may not coincide with those of $L$. Said another way, $\pidown^\equiv(L)$ is not generally a \emph{sublattice} of $L$. In general, the join operation of $\pidown^\equiv(L)$ does coincide with that of $L$; dually, the meet operation of $\piup_\equiv(L)$ does coincide with that of $L$.
However, each $c$-Cambrian lattice (of type $W$) \emph{is} both a lattice quotient and sublattice of the weak order on $W$.
\end{remark}

Below, we realize an arbitrary lattice quotient of the weak order as the subposet induced by the image of $\pidown^\equiv$, where $\equiv$ is a lattice congruence.
We have two main tasks as we consider the pop-stack operator on $\pidown^\equiv(W)$. First, we must we identify the lower covers of each $x\in \pidown^\equiv(W)$;
this is done in the following lemma.

\begin{lemma}\label{lem:quotient_covers}
Let $L$ be a finite lattice, and let $\equiv$ be a lattice congruence on $L$.
If $x\in \pidown^\equiv(L)$, then the map $\pidown^\equiv$ restricts to a bijection from the set of elements covered by $x$ in $L$ to the set of elements covered by $x$ in $\pidown^\equiv(L)$.
\end{lemma}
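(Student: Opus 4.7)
The plan is to verify separately that $\pidown^\equiv$ sends each element covered by $x$ in $L$ to an element covered by $x$ in the induced subposet $\pidown^\equiv(L)$, and then that the resulting map is both injective and surjective. Throughout, I will rely on two facts stated just before the lemma: $\pidown^\equiv\colon L\to L$ is an order-preserving lattice homomorphism, and it restricts to the identity on $\pidown^\equiv(L)$ (because each element of $\pidown^\equiv(L)$ is the minimum of its own $\equiv$-class).

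For well-definedness, fix $y \lessdot x$ in $L$. Order preservation yields $\pidown^\equiv(y) \leq \pidown^\equiv(x) = x$, with strict inequality because $y$ cannot be $\equiv$-equivalent to $x$ (as $x$ is the minimum of its class while $y < x$). To upgrade this inequality to a cover in the induced subposet, I would suppose that some $w \in \pidown^\equiv(L)$ satisfies $\pidown^\equiv(y) < w < x$ and derive a contradiction. Since $y \lessdot x$, we have $y \vee w \in [y,x] = \{y,x\}$. If $y \vee w = y$, then $w \leq y$ forces $w = \pidown^\equiv(w) \leq \pidown^\equiv(y)$, contradicting $w > \pidown^\equiv(y)$. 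If instead $y \vee w = x$, then applying the homomorphism $\pidown^\equiv$ (using $\pidown^\equiv(w) = w$ and $\pidown^\equiv(y) \leq w$) gives $x = \pidown^\equiv(y) \vee w = w$, another contradiction.

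For injectivity, suppose $y_1, y_2 \lessdot x$ have common image $z$; then $y_1 \equiv y_2$, so by the congruence property $y_1 \vee y_2 \equiv y_1$. Since $y_1 \leq y_1 \vee y_2 \leq x$ and $y_1 \lessdot x$, either $y_1 \vee y_2 = y_1$ or $y_1 \vee y_2 = x$; the second option is ruled out because $x$ is the minimum of its $\equiv$-class while $y_1 < x$. Thus $y_2 \leq y_1$, and symmetry gives $y_1 = y_2$. For surjectivity, given $z \lessdot x$ in $\pidown^\equiv(L)$, finiteness of $L$ produces some $y$ with $z \leq y \lessdot x$ in $L$; order preservation then places $\pidown^\equiv(y)$ in the interval $[z,x]$ of $\pidown^\equiv(L)$, and the cover relation $z \lessdot x$ in the induced subposet forces $\pidown^\equiv(y) \in \{z,x\}$, with $x$ excluded by the usual minimality argument. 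There is no serious obstacle; the only care needed is to distinguish covers in $\pidown^\equiv(L)$ (as an induced subposet of $L$) from covers in $L$, and every step reduces to forming a suitable join in the two-element interval $[y,x]$ and applying the homomorphism property of $\pidown^\equiv$.
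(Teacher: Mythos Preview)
Your proof is correct and follows essentially the same approach as the paper's: both establish well-definedness, injectivity, and surjectivity by the same join-in-$[y,x]$ trick combined with the fact that $x$ is minimal in its class. The only cosmetic difference is that in the well-definedness step the paper first argues that $a$ and $b$ must be incomparable before forming $a\vee b$, whereas you go straight to the dichotomy $y\vee w\in\{y,x\}$; and you invoke the homomorphism property of $\pidown^\equiv$ explicitly where the paper uses the equivalent congruence formulation $a\equiv a'\Rightarrow a\vee b\equiv a'\vee b$.
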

\begin{proof}
First assume that $a\lessdot x$ in $L$, and let $a'=\pidown^\equiv(a)$.
Since $\pidown^\equiv\colon L\to L$ is order-preserving, we have $a'\le x$. Suppose that there exists $b\in \pidown^\equiv(L)$ with $a'<b<x$.
We cannot have $a'<b<a$ because $a'$ is the largest element of $\pidown^\equiv(L)$ lying below $a$.
Moreover, we cannot have $a'<a<b$ because $a\covered x$. Therefore, $a$ and $b$ are incomparable.
This implies that $a\join b = x$ (because $a\covered x$ and $b<x$).
Since $a\equiv a'$, we have $x = a\join b \equiv a'\join b= b$.
But this contradicts the fact that $b$ and $x$ are distinct elements of $\pidown^\equiv(L)$.
From this contradiction, we deduce that $a'\covered x$ in $\pidown^\equiv(L)$. 

We have shown that $\pidown^\equiv$ restricts to a map from the set of elements covered by $x$ in $L$ to the set of elements covered by $x$ in $\pidown^\equiv(L)$; we must show that it is bijective. To show it is injective, suppose instead that there exist distinct elements $a_1,a_2$ that are covered by $x$ in $L$ such that $\pidown^\equiv(a_1)=\pidown^\equiv(a_2)$. Then $a_1\equiv a_2$, so $a_1\equiv a_1\join a_2=x$, contradicting the fact that $x$ is the unique minimal element of $[x]_\equiv$. To prove surjectivity, suppose $d\covered x$ in $\pidown^\equiv(L)$.
Then there exists $d'\in L$ such that $d\le d'\covered x$. Since $\pidown^\equiv$ is order-preserving, we have $d \le \pidown^{\equiv}(d')<x$.
Thus, $d=\pidown^\equiv(d')$.
\end{proof}

Now that we have identified the lower covers of the elements of a lattice quotient $\pidown^\equiv(L)$, we describe the pop-stack operator on a lattice quotient of $L$. It will be convenient to write $\meet_L$ and $\meet_\equiv$ for the meet operation of $L$ and the meet operation of $\pidown^\equiv(L)$, respectively.

\begin{proposition}\label{prop:quotient_pop}
Let $L$ be a finite lattice, and let $\equiv$ be a lattice congruence on $L$. For $x\in \pidown^\equiv(L)$, we have
\[\popdown_{\pidown^\equiv(L)}(x) = \pidown^\equiv\left(\Meet\nolimits_L \{ \pidown^\equiv(a)\mid a\covered x \text{ in $L$}\}\right).\]
\end{proposition}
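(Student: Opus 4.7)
The plan is to derive the formula by combining two ingredients: the identification of lower covers in the quotient provided by \cref{lem:quotient_covers}, and the fact that $\pidown^\equiv\colon L\to\pidown^\equiv(L)$ is a lattice homomorphism (once one identifies $\pidown^\equiv(L)$ with $L/\equiv$ in the standard way). First, by the definition of the pop-stack operator, we have
\[\popdown_{\pidown^\equiv(L)}(x) = \Meet\nolimits_{\equiv}\{d\mid d\covered x\text{ in }\pidown^\equiv(L)\},\]
where $\Meet_{\equiv}$ denotes the meet operation in $\pidown^\equiv(L)$. Applying \cref{lem:quotient_covers}, the set indexing this meet is precisely $\{\pidown^\equiv(a)\mid a\covered x\text{ in }L\}$, so
\[\popdown_{\pidown^\equiv(L)}(x) = \Meet\nolimits_{\equiv}\{\pidown^\equiv(a)\mid a\covered x\text{ in }L\}.\]

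Next, I would invoke the fact that $\pidown^\equiv$ is a (surjective) lattice homomorphism from $L$ to $L/\equiv\cong\pidown^\equiv(L)$, which in particular means that $\pidown^\equiv$ commutes with finite meets: for any finite subset $S\subseteq L$, $\pidown^\equiv(\Meet_L S) = \Meet_{\equiv}\pidown^\equiv(S)$. Applying this with $S=\{\pidown^\equiv(a)\mid a\covered x\text{ in }L\}$ and noting that $\pidown^\equiv$ fixes each element of $\pidown^\equiv(L)$ (since the minimum representative of the class of an element already in $\pidown^\equiv(L)$ is itself), we obtain
\[\Meet\nolimits_{\equiv}\{\pidown^\equiv(a)\mid a\covered x\text{ in }L\} = \pidown^\equiv\!\left(\Meet\nolimits_L\{\pidown^\equiv(a)\mid a\covered x\text{ in }L\}\right),\]
which, when combined with the previous display, gives the desired identity.

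There is no real obstacle here; the argument is essentially bookkeeping once \cref{lem:quotient_covers} is in hand. The only point that warrants care is distinguishing between the meet $\Meet_L$ taken in the ambient lattice and the meet $\Meet_{\equiv}$ taken in the quotient (which need not agree on elements of $\pidown^\equiv(L)$, since $\pidown^\equiv(L)$ is generally not a sublattice of $L$), so that the role of $\pidown^\equiv$ on the right-hand side is correctly motivated.
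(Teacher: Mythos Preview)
Your proposal is correct and follows essentially the same approach as the paper: both reduce to \cref{lem:quotient_covers} and then to the identity $x\meet_{\equiv} y = \pidown^\equiv(x\meet_L y)$ for $x,y\in\pidown^\equiv(L)$. The only difference is that you invoke the previously stated fact that $\pidown^\equiv$ is a lattice homomorphism (together with its idempotence), whereas the paper re-derives this particular instance directly via the order-theoretic chain $x\meet_L y \ge x\meet_\equiv y \ge \pidown^\equiv(x\meet_L y)$ and the characterization of $\pidown^\equiv(x\meet_L y)$ as the largest element of $\pidown^\equiv(L)$ below $x\meet_L y$.
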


\begin{proof}
By \cref{lem:quotient_covers}, it suffices to show that for $x,y\in \pidown^\equiv(L)$, we have $x\meet_{\equiv} y = \pidown^\equiv(x\meet_L y)$.
Observe that $x\meet_L y\ge x\meet_\equiv y\ge \pidown^{\equiv}(x\meet_L y)$ in $L$.
Since $x\meet_\equiv y$ and $\pidown^\equiv(x\meet_L y)$ are both in the image of $\pidown^\equiv$, the desired result follows from the fact that $\pidown^\equiv(x\meet_L y)$ is the largest element of $\pidown^\equiv(L)$ lying below $x\meet_L y$.
\end{proof}

\begin{remark}\label{rmk:cambrian_vs_general} 
Let $L$ be a the weak order on a finite Coxeter group $W$. Let $\equiv$ be the $c$-Cambrian congruence on $W$.
The fact that each $c$-Cambrian lattice is a sublattice of the weak order implies that $\pidown^\equiv$ and the meet operation of the weak order commute (see, e.g., \cite[Lemma~3.1]{DefantMeeting}).
Hence,
\begin{align*}
\popdown_{\Camb_c}(x)&=\pidown^\equiv\left(\Meet\nolimits_L \{ \pidown^\equiv(a): a\covered x \text{ in $L$}\}\right) =\pidown^\equiv\left(\Meet\nolimits_{\equiv} \{ \pidown^\equiv(a): a\covered x \text{ in $L$}\}\right)\\&=
\Meet\nolimits_{\equiv} \{ \pidown^\equiv(a): a\covered x \text{ in $L$}\}=\Meet\nolimits_L \{ \pidown^\equiv(a): a\covered x \text{ in $L$}\}\\
&=\pidown^\equiv\left(\Meet\nolimits_L \{ a: a\covered x \text{ in $W$}\}\right)=\pidown^\equiv\circ \popdown_L(x).
\end{align*}
We obtain equality from the first line to the second line because the meet taken in the $c$-Cambrian lattice stays in the $c$-Cambrian lattice (so the outermost $\pidown^\equiv$ acts as the identity).
The equality from the second to the third line results from the fact that $\pidown^\equiv$ and the meet operation commute.
Thus, we obtain the simpler expression for $\pop_{\Camb_c}$ from \cref{eq:pop_Cambrian}.
\end{remark}

We now specialize to the case where $L$ is the weak order on $W$ and prove our main result of this subsection.
Let $W_{\equiv}$ denote a lattice quotient of $W$ coming from a congruence $\equiv$.
\begin{theorem}\label{thm:upper_quotient}
If $W_{\equiv}$ is a lattice quotient of $\Weak(W)$, then \[\max_{x\in W_{\equiv}}\left\lvert\mathcal O_{W_{\equiv}}(x)\right\rvert\leq h.\]
\end{theorem}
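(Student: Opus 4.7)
My plan is to reduce to the corresponding bound for the weak order itself: by \cite[Theorem~1.1]{DefantPopCoxeter}, every $y\in W$ satisfies $|\Orb_{\Weak(W)}(y)|\leq h$. This will be used as a black box.

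Given $x\in W_\equiv = \pidown^\equiv(W)$, set $x_0=x$ and $x_{i+1}=\popdown_{W_\equiv}(x_i)$. The first step is to apply \cref{prop:quotient_pop} together with $\pidown^\equiv(a)\leq a$ (for any $a$) and the order-preservation of meet and of $\pidown^\equiv$ to derive the comparison
\[x_{i+1} \;=\; \pidown^\equiv\!\left(\Meet\nolimits_L\{\pidown^\equiv(a)\mid a\covered x_i\text{ in }\Weak(W)\}\right) \;\leq\; \popdown_{\Weak(W)}(x_i)\]
in $\Weak(W)$. The strategy is then to compare the $W_\equiv$-orbit of $x$ with the $\Weak(W)$-orbit $y_i:=\popdown_{\Weak(W)}^i(x)$ of $x$ inside $W$: since $y_m=e$ for some $m\leq h-1$ by Defant's theorem, it will suffice to show that the $W_\equiv$-orbit reaches $\hat 0$ no later than the $\Weak(W)$-orbit does, i.e.\ that $x_i\leq y_i$ in $\Weak(W)$ for all $i\ge 0$. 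Granting this, applying the claim at $i=m$ yields $x_m\leq e$, hence $x_m=e$ in $W_\equiv$, and therefore $|\mathcal O_{W_\equiv}(x)|\leq m+1\leq h$.

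The main obstacle is that the inductive step in the comparison $x_i\leq y_i$ cannot proceed by naive monotonicity of $\popdown_{\Weak(W)}$, which fails on $\Weak(W)$ in general (e.g.\ in $A_2$, $s_2s_1\leq \wo$ but $\popdown_{\Weak(W)}(s_2s_1)=s_2\not\leq e=\popdown_{\Weak(W)}(\wo)$). Instead, I would use the concrete descent formula $\popdown_{\Weak(W)}(z)=z\wo(\Des(z))$ together with the facts that each $x_i$ lies in the image $\pidown^\equiv(\Weak(W))$ and that both sequences are iterated from the common starting point $x$, in order to rule out the configurations in which monotonicity would break. The key tool is the identification of covers below $x_i$ in $\Weak(W)$ with covers below $x_i$ in $W_\equiv$ furnished by \cref{lem:quotient_covers}, combined with the combinatorial constraints that the lattice congruence $\equiv$ places on how the Hasse diagram of $\Weak(W)$ decomposes.
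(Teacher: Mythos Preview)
Your derivation of the key inequality $\popdown_{W_\equiv}(x)\leq \popdown_{\Weak(W)}(x)$ for $x\in W_\equiv$ is correct and matches the paper's. The gap is in what follows. You attempt to compare the $W_\equiv$-orbit $(x_i)$ with the $\Weak(W)$-orbit $(y_i)$ starting from the \emph{same} point $x$, aiming to prove $x_i\leq y_i$ by induction. As you yourself observe, the inductive step would require passing from $x_i\leq y_i$ and $x_{i+1}\leq\popdown_{\Weak(W)}(x_i)$ to $x_{i+1}\leq\popdown_{\Weak(W)}(y_i)$, and $\popdown_{\Weak(W)}$ is not order-preserving. Your proposed fix---invoking the descent formula, \cref{lem:quotient_covers}, and unspecified ``combinatorial constraints'' from the congruence---is not an argument but a hope; nothing you wrote rules out the bad configurations, and for a general congruence $\equiv$ there is no evident reason they cannot occur along the orbit.

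The paper sidesteps this entirely by citing the stronger black box: \cite[Theorem~1.5]{DefantPopCoxeter} says that any map $f\colon W\to W$ satisfying $f(x)\leq\popdown_{\Weak(W)}(x)$ for all $x$ (a \emph{compulsive} map) has all forward orbits of size at most $h$. The inequality you already established is exactly the compulsive condition for the map $g$ that equals $\popdown_{W_\equiv}$ on $W_\equiv$ and $\popdown_{\Weak(W)}$ elsewhere, so the result follows immediately. In short: replace your appeal to \cite[Theorem~1.1]{DefantPopCoxeter} with \cite[Theorem~1.5]{DefantPopCoxeter} and delete the attempted orbit comparison; the proof then becomes a two-line application of what you have already shown.
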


\begin{proof}
We may assume that $W_{\equiv}$ is the subposet $\pidown^\equiv(W)$ of $\Weak(W)$, where $\equiv$ is a lattice congruence on $\Weak(W)$. 
We write $\meet_{\equiv}$ and $\lessdot_{\equiv}$ to denote meets and cover relations, respectively, in $W_{\equiv}$ (while $\meet$ and $\lessdot$ will denote meets and cover relations in $\Weak(W)$). 
Define a map $g\colon W\to W$ by 
\[g(x) =
\begin{cases}
\popdown_{W_{\equiv}}(x) & \text{if }x\in W_{\equiv};\\
\popdown_{\Weak(W)}(x) & \text{ otherwise}.
\end{cases}
\]
Note that \[\max_{x\in W_{\equiv}}\left\lvert\mathcal O_{W_{\equiv}}(x)\right\rvert\leq\max_{w\in6 W}\left\lvert\Orb_g(w)\right\rvert\]
(recall that $\Orb_g(w)$ denotes the forward orbit of $w$ under $g$). 

Following \cite{DefantPopCoxeter}, we say a map $f\colon W\to W$ is \dfn{compulsive} if $f(x)\leq\popdown_{\Weak(W)}(x)$ for all $x\in W$. 
\cite[Theorem~1.5]{DefantPopCoxeter} states that if $f$ is compulsive, then \[\max_{w\in W}\left\lvert\Orb_f(w)\right\rvert\leq h.\] 
Hence, it suffices to show that $g$ is compulsive.

Consider $x\in W_{\equiv}\setminus\hat 0$.
By \cref{lem:quotient_covers}, we have \[g(x)=\popdown_{W_{\equiv}}(x)=\bigwedge\nolimits_\equiv\{y\in W_\equiv\mid y\lessdot_{\equiv} x\}=\bigwedge\nolimits_{\equiv}\{\pidown^\equiv(a)\mid a\lessdot x\}.\] But then since $\pidown^\equiv(a)\leq a$ for all $a$, we have 
\[\bigwedge\nolimits_{\equiv}\{\pidown^\equiv(a)\mid a\lessdot x\} \leq \bigwedge\{\pidown^\equiv(a)\mid a\lessdot x\} \leq \bigwedge\{a\mid a\lessdot x\}=\popdown_{\Weak(W)}(x).\] 
On the other hand, $g(z)=\popdown_{\Weak(W)}(z)$ whenever $z\in W\setminus W_{\equiv}$ or $z=\hat 0$. This shows that $g$ is compulsive. 
\end{proof}

\begin{remark}
One cannot weaken the hypothesis in \cref{thm:upper_quotient} by assuming that $L$ is just a meet-semilattice quotient or a join-semilattice quotient. For example, suppose $W=A_2$. Let $\equiv$ be the meet-semilattice congruence on $\Weak(W)$ with one nontrivial equivalence class consisting of $\hat{0}$, $s_1$, and $s_1s_2$. The lattice $L=W/\equiv$ is a $4$-element chain, so $\max_{x\in L}|\mathcal O_{L}(x)|=4>3=h$. 
\end{remark}

\subsection{Heaps and combinatorial AR quivers}\label{sec:heaps}
Our goal is now to show that the upper bound in \Cref{thm:upper_quotient} is tight when $L$ is a Cambrian lattice $\Camb_c$. We will do this by constructing an explicit element $\zz_c\in\Camb_c$ whose forward orbit under $\popdown_{\Camb_c}$ has size exactly $h$. Our construction makes use of the \emph{combinatorial AR quivers} of \cite[Section~9]{cataland}. As we explain in \cref{subsec:heap_rep}, these are intimately related with the representation theory of the tensor algebra $KQ_c$.

It will be convenient to think of the letters in a word over the alphabet $S$ as being distinct entities even if they represent the same simple reflection. For example, the two occurrences of $\s_2$ in the word $\s_2\s_1\s_2\s_3$ are considered to be different. If $\s$ and $\s'$ are letters representing simple reflections $s$ and $s'$, then we let $m(\s,\s')=m(s,s')$ (recall that $m(s,s')$ is the order of $ss'$). 
We can apply a \dfn{commutation move} to a word by swapping two consecutive letters $\s$ and $\s'$ if $m(\s,\s')=2$ (this move is not allowed if $m(\s,\s')=1$). The \dfn{commutation class} of a word $\QQ$ over $S$ is the set of words that can be obtained from $\QQ$ via a sequence of commutation moves. We write $\QQ\equiv\QQ'$ if $\QQ$ and $\QQ'$ are in the same commutation class. 

Let $\QQ$ be a word over $S$. The \dfn{heap} of $\QQ$ is the poset $\Heap(\QQ)$ whose elements are the letters in $\QQ$ (which we view as distinct from one another) and whose order relation is defined so that if $\s$ and $\s'$ are two letters (which could represent the same simple reflection), then $\s<\s'$ if and only if $\s$ appears to the left of $\s'$ in every word in the commutation class of $\QQ$. A word $\QQ'$ is in the same commutation class as $\QQ$ if and only if $\Heap(\QQ)=\Heap(\QQ')$. More generally, $\Heap(\QQ')$ is an order ideal of $\Heap(\QQ)$ if and only if there exists a word $\QQ''$ such that $\QQ'\QQ''\equiv\QQ$.  

Suppose $\HHH$ is the heap of a word $\mathsf{w}$. We write $\R(\HHH)$ for the element of $W$ represented by $\mathsf{w}$ and write $Z(\HHH)$ for the set of simple reflections that appear in $\mathsf{w}$. If $\mathsf{w}\equiv\mathsf{w}'$, then $\mathsf{w}$ and $\mathsf{w'}$ represent the same element of $W$; therefore, $\R(\HHH)$ and $Z(\HHH)$ depend only on $\HHH$ (and not the particular word $\mathsf{w}$). Let $\mathcal J(\HHH)$ denote the set of order ideals of $\HHH$.

Let $\sfc$ be a reduced word for a Coxeter element $c$, and let $\sfc^k$ denote the word obtained by concatenating $\sfc$ with itself $k$ times. Let $S=\{s_1,\ldots,s_n\}$, and let $\s_i^{(j)}$ denote the $j$-th letter in $\sfc^k$ that represents $s_i$. For example, if $\sfc=\s_1\s_3\s_2$, then \[\sfc^3=\s_1^{(1)}\s_3^{(1)}\s_2^{(1)}\s_1^{(2)}\s_3^{(2)}\s_2^{(2)}\s_1^{(3)}\s_3^{(3)}\s_2^{(3)}.\]

\begin{lemma}\label{lem:ranked}
For each $k\geq 1$, the poset $\Heap(\sfc^k)$ is ranked. 
\end{lemma}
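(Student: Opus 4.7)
The plan is to build an explicit rank function on $\Heap(\sfc^k)$ by exploiting the fact that, when $W$ is finite and irreducible, the Coxeter graph $\Gamma_W$ is a tree. Setting $n:=|S|$, I would begin by describing the cover relations of $\Heap(\sfc^k)$. The general principle for heaps of words is that $\s\lessdot\s'$ forces $\s$ and $\s'$ to not commute and to be made adjacent (with $\s$ first) by some sequence of commutation moves. Combined with the tree structure of $\Gamma_W$, which forbids any two distinct paths between vertices in $Q_c$, this yields the following complete list of covers in $\Heap(\sfc^k)$ when $n\geq 2$:
\begin{enumerate}[(I)]
\item $\s_i^{(j)}\lessdot\s_{i'}^{(j)}$ whenever $i\to i'$ is an arrow of $Q_c$ and $1\leq j\leq k$;
\item $\s_i^{(j)}\lessdot\s_{i'}^{(j+1)}$ whenever $i'\to i$ is an arrow of $Q_c$ and $1\leq j\leq k-1$.
\end{enumerate}
In particular, no pair $\s_i^{(j)}<\s_i^{(j+1)}$ is a cover, because any neighbor $s_{i'}$ of $s_i$ in $\Gamma_W$ (which exists since $W$ is irreducible and $n\geq 2$) yields a letter $\s_{i'}^{(?)}$ strictly between them. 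The case $n=1$ is trivial, as $\Heap(\sfc^k)$ is then a chain.

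Because $\Gamma_W$ is a tree, the orientation $Q_c$ admits a height function $r\colon[n]\to\mathbb{Z}$ satisfying $r_{i'}=r_i+1$ whenever $i\to i'$ is an arrow of $Q_c$. To construct $r$, I would fix any base vertex $i_0\in[n]$, set $r_{i_0}=0$, and, for each other vertex $i$, walk along the unique path in $\Gamma_W$ from $i_0$ to $i$, recording $+1$ for each arrow of $Q_c$ traversed in its direction and $-1$ for each arrow traversed against it. Uniqueness of paths in the tree ensures $r$ is well defined.

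Finally, I would define $\rank\colon\Heap(\sfc^k)\to\mathbb{Z}$ by $\rank(\s_i^{(j)}):=2(j-1)+r_i$ and verify that it jumps by exactly $1$ across each cover. For a Type~(I) cover, $\rank(\s_{i'}^{(j)})-\rank(\s_i^{(j)})=r_{i'}-r_i=1$. For a Type~(II) cover with $i'\to i$ (so $r_i=r_{i'}+1$), $\rank(\s_{i'}^{(j+1)})-\rank(\s_i^{(j)})=2+r_{i'}-r_i=1$. Hence $\rank$ is a rank function on $\Heap(\sfc^k)$.

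The main technical obstacle is the first step, the description of the covers: one must carefully exclude potential intermediates between the pairs in (I) and (II), which is precisely where the tree structure of $\Gamma_W$ enters, since any alternative path in $Q_c$ from $i$ to $i'$ would create a cycle. Once this combinatorial description is secured, the construction and verification of the rank function are routine.
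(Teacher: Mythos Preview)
Your proposal is correct and takes essentially the same approach as the paper: both construct a height function on $\Heap(\sfc)$ using the tree structure of $\Gamma_W$ (your $r$ is the paper's $\rank$ restricted to $\Heap(\sfc)$, up to an additive constant), extend it to $\Heap(\sfc^k)$ via $\rank(\s_i^{(j)})=r_i+2(j-1)$, and verify the two types of covers each increase rank by one. Your treatment is slightly more explicit in classifying the covers and in isolating the $n=1$ case, but the arguments are otherwise identical.
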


\begin{proof}
Because $\Gamma_W$ is a tree, it is straightforward to see that $\Heap(\sfc)$ is ranked. Let $\rank\colon S\to\mathbb Z$ be the unique rank function of $\Heap(\sfc)$ whose minimum value is $1$. Let us extend this rank function to a map $\rank\colon\Heap(\sfc^k)\to\mathbb Z$ by letting $\rank(\s_i^{(j)})=\rank(s_i)+2j-2$. We claim that this map is a rank function on $\Heap(\sfc^k)$. To see this, suppose $\s_i^{(j)}\lessdot\s_{i'}^{(j')}$ is a cover relation in $\Heap(\sfc^k)$. Then $\{s_i,s_{i'}\}$ is an edge in $\Gamma_W$. 
If $\s_i$ appears before $\s_{i'}$ in $\sfc$, then we have $\rank(s_{i'})=\rank(s_i)+1$ and $j'=j$.
If $\s_i$ appears after $\s_{i'}$ in $\sfc$, then we have $\rank(s_{i'})=\rank(s_i)-1$ and $j'=j+1$. In either case, we compute that $\rank(\s_{i'}^{(j')})=\rank(\s_i^{(j)})+1$.  
\end{proof}

Let $\rank\colon\Heap(\sfc^h)\to\mathbb Z$ be the rank function on $\Heap(\sfc^h)$ that was constructed in the proof of \Cref{lem:ranked}. 
When we consider a heap $\HHH$ that can be naturally identified with a convex subset of $\Heap(\sfc^h)$, the \dfn{rank} of an element $\s$ of $\HHH$ will simply be $\rank(\s)$, the rank of $\s$ in $\Heap(\sfc^h)$. We will also find it convenient to write \[\HHH^k=\{\s\in\HHH\mid\rank(\s)=k\}\quad\text{and}\quad\HHH^{\leq k}=\{\s\in\HHH\mid\rank(\s)\leq k\}.\] 

There is an involution $\psi\colon S\to S$ given by $\psi(s)=\wo s\wo$; this map is an automorphism of $\Gamma_W$. We can extend $\psi$ to the set of words over $S$ by letting $\psi(\s_1\cdots \s_M)=\psi(\s_1)\cdots\psi(\s_M)$.

Let $\HHH_c=\Heap(\sort_{\sfc}(\wo))$ and $\widetilde{\HHH}_c=\Heap(\psi(\sort_{\sfc}(\wo)))$; note that $\HHH_c$ and $\widetilde{\HHH}_c$ depend only on the Coxeter element $c$ (and not the reduced word $\sfc$). It follows from \cite[Lemma~2.6.5]{cataland} that 
\begin{equation}\label{eq:Heap_c_sorting}
\sort_{\sfc}(\wo)\psi(\sort_{\sfc}(\wo))\equiv\sfc^h.
\end{equation}
Therefore, we can think of $\HHH_c$ as an order ideal of $\Heap(\sfc^h)$ whose complement (in $\Heap(\sfc^h)$) is $\widetilde{\HHH}_c$. Define \[\zz_c=\R(\HHH_c^{\leq h-1}).\]

\begin{remark}\label{rem:spine}
We have defined $\zz_c$ using Coxeter--Catalan combinatorics because we deemed this approach to be the most amenable for proving that $|\mathcal O_{\Camb_c}(\zz_c)|=h$. However, there is also a more direct lattice-theoretic way to define $\zz_c$. The \dfn{spine} of $\Camb_c$, denoted $\spine(\Camb_c)$, is the union of the maximum-length chains of $\Camb_c$. Thomas and Williams \cite{TW1} proved that $\spine(\Camb_c)$ is a distributive sublattice of $\Camb_c$. It is also known \cite{TW1} that \[\spine(\Camb_c)=\{\R(I)\mid I\in \mathcal J(\HHH_c)\}.\]
One can show that $\zz_c=(\pop^{\uparrow}_{\spine(\Camb_c)})^{h-1}(e)$ (where $e=\hat 0$ is the identity element).  
\end{remark}

\begin{example}\label{exam:A}
Suppose $W=A_8$. The edges in $\Gamma_W$ are $\{s_i,s_{i+1}\}$ for $1\leq i\leq 7$. The Coxeter number of $W$ is $h=9$. The map $\psi$ is given by $\psi(s_k)=s_{9-k}$. Let $c=s_1s_3s_2s_4s_6s_5s_7s_8$ and $\sfc=\s_1\s_3\s_2\s_4\s_6\s_5\s_7\s_8$. The acyclic orientation of $\Gamma_W$ corresponding to $c$ is \[Q_c = \left(\includegraphics[height=0.25cm]{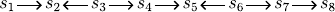}\right).\] 
\Cref{fig:A} shows the Hasse diagram of $\Heap(\sfc^9)$; it is drawn sideways so that each cover relation $\s_i^{(j)}\lessdot\s_{i'}^{(j')}$ is drawn with $\s_i^{(j)}$ to the left of $\s_{i'}^{(j')}$. As explained in \cref{subsec:heap_rep}, this Hasse diagram is referred to as the \emph{combinatorial AR quiver} of $\sfc^9$ in \cite[Section~9.2]{cataland}. Note that $\Heap(\sfc^9)$ is drawn by lining up $9$ copies of $\Heap(\sfc)$ (depicted in $9$ different colors) in a row and adding (black) edges as appropriate. The columns are labeled from $1$ to $20$ from left to right; the column labeled $i$ consists of the elements of rank $i$. The region shaded in yellow is $\HHH_c$, while the unshaded region is $\widetilde\HHH_c$. The region outlined in blue is $\HHH_c^{\leq h-1}=\HHH_c^{\leq 8}$. Thus, \[\zz_c={\color{red}s_1s_3s_2s_4s_6s_5s_7s_8}{\color{NormalGreen}s_1s_3s_2s_4s_6s_5s_7s_8}{\color{ChillBlue}s_1s_3s_2s_4s_6s_5s_7s_8}{\color{MyPurple}s_1s_3s_2s_4s_6}.\]
\end{example}

\begin{figure}[ht]
  \begin{center}{\includegraphics[height=7.287cm]{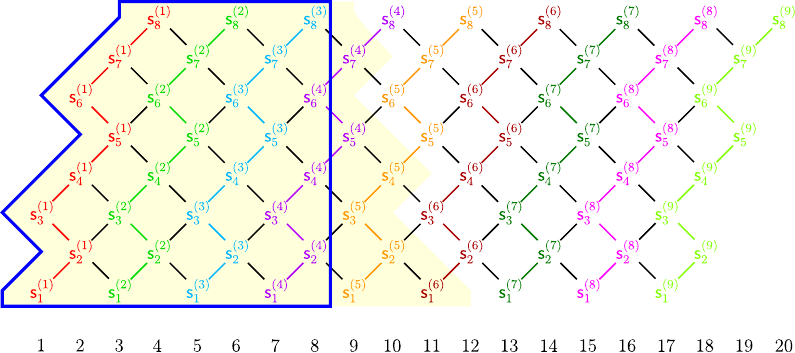}}
  \end{center}
  \caption{The Hasse diagram of $\Heap(\sfc^h)$ from \Cref{exam:A}, in which $W=A_8$. The heap $\HHH_c$ is shaded in yellow, while $\HHH_c^{\leq h-1}$ is outlined in blue.}\label{fig:A}
\end{figure}

\subsection{Realization of the upper bound}\label{subsec:maximizer}

We need a little more preparation before we can prove that $|\mathcal O_{\Camb_c}(\zz_c)|=h$. 

For $k\in\mathbb Z$, let \[\varepsilon(k)=\begin{cases} 1 & \mbox{if }k\mbox{ is odd}; \\ 2 & \mbox{if }k\mbox{ is even}.\end{cases}\] 
Note that for each $\s_i^{(j)}\in\Heap(\sfc^h)$, the parity of $\rank(\s_i^{(j)})$ only depends on $s_i$ (and not $j$). Let \[X_1=\{s_i\in S\mid\rank(\s_i^{(j)})\text{ is odd for all }j\}\quad\text{and}\quad X_2=\{s_i\in S\mid\rank(\s_i^{(j)})\text{ is even for all }j\}.\] Then $S=X_1\sqcup X_2$ is a bipartition of the tree $\Gamma_W$. 

\begin{lemma}\label{lem:column_h-1}
We have $Z(\HHH_c^{h-1})=X_{\varepsilon(h-1)}$. 
\end{lemma}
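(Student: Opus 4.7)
The containment $Z(\HHH_c^{h-1})\subseteq X_{\varepsilon(h-1)}$ is immediate from the rank formula: if $\s_i^{(j)}\in\HHH_c^{h-1}$, then $h-1=\rank(s_i)+2(j-1)$ has the same parity as $\rank(s_i)$, so $s_i\in X_{\varepsilon(h-1)}$. The real content is the reverse containment, and my plan is to deduce it from the stronger assertion that $\HHH_c$ contains \emph{every} element of $\Heap(\sfc^h)$ of rank at most $h-1$ (equivalently, every element of $\widetilde{\HHH}_c$ has rank at least $h$). Granting this, for each $s_i\in X_{\varepsilon(h-1)}$ the integer $j_i:=(h+1-\rank(s_i))/2$ is a positive integer, since the parity of $\rank(s_i)$ matches that of $h-1$ and $\rank(s_i)\leq n\leq h-1$ (a classical bound for finite irreducible Coxeter systems, verifiable type-by-type using the Coxeter numbers listed in \cite{BjornerBrenti}); then $\s_i^{(j_i)}$ has rank $h-1$ and lies in $\HHH_c$ by the strong claim.

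To prove the strong claim, set $T:=\{\s\in\Heap(\sfc^h):\rank(\s)\leq h-1\}$, which is manifestly an order ideal of $\Heap(\sfc^h)$. The plan is to show that $T=\HHH_c^{\leq h-1}$ by exhibiting $T$ as the heap of a $c$-sorting word. The crucial observation is that the ``layers'' $T_j:=\{s_i\in S:\s_i^{(j)}\in T\}=\{s_i:\rank(s_i)\leq h+1-2j\}$ form a nested sequence $S=T_1\supseteq T_2\supseteq\cdots$, since the defining inequality becomes stricter as $j$ grows. Moreover, using the explicit rank formula $\rank(\s_i^{(j)})=\rank(s_i)+2(j-1)$ together with the fact that for any two simple reflections $s_i,s_k$ adjacent in $\Gamma_W$ we have $|\rank(s_i)-\rank(s_k)|=1$, I will show that whenever $\s_i^{(j)},\s_i^{(j+1)}\in T$, some neighbor $\s_k^{(\cdot)}$ of the appropriate copy lies in $T$ at the rank strictly between them. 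This ``no-gap'' property guarantees that reading $T$ in any linear extension gives a reduced word for an element $v\in W$ with $\ell(v)=|T|$, and the nestedness of the $T_j$ then identifies this reading with $\sort_\sfc(v)$ and proves $v$ to be $c$-sortable.

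Since every element of $\Camb_c$ lies weakly below $\wo$ in $\Camb_c$, and since for two $c$-sortable elements $v\leq w$ the heap of $\sort_\sfc(v)$ is an order ideal of the heap of $\sort_\sfc(w)$ in $\Heap(\sfc^\infty)$, it will follow that $T=\Heap(\sort_\sfc(v))\subseteq\Heap(\sort_\sfc(\wo))=\HHH_c$, which completes the strong claim. The main obstacle is verifying the no-gap property cleanly enough to ensure $\R(T)$ is reduced; the proof should reduce to a short case analysis of the two possible positions (within the same copy of $\sfc$ or across adjacent copies) in which an adjacent letter of $\Gamma_W$ can separate two consecutive copies of $s_i$, using the irreducibility of $W$ to guarantee that at least one such neighbor exists.
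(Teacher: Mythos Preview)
Your strategy is genuinely different from the paper's, but as written it has two gaps that prevent it from going through.

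First, the ``no-gap'' property does not imply reducedness. In $A_2$ with $c=s_1s_2$, the word $\s_1\s_2\s_1\s_2$ has a neighbor between every pair of consecutive copies of the same generator, yet it is not reduced. Your specific $T$ does in fact yield a reduced word, but the reason is that $T$ is an \emph{upper set} of $\Heap(\uu_{h-1})$ (once one extends the heap to allow copies $\s_i^{(j)}$ with $j\leq 0$, as the paper does implicitly in \cref{exam:B}); since $\uu_{h-1}$ is reduced, so is any suffix up to commutation. You do not make this observation, and the case analysis you sketch would not establish reducedness.

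Second, your final implication is false as stated. You assert that for $c$-sortable $v\leq w$ the heap of $\sort_\sfc(v)$ is an \emph{order ideal} of that of $\sort_\sfc(w)$; but already for $v=s_2$, $w=\wo$ in $A_2$ with $c=s_1s_2$, we have $\Heap(\sort_\sfc(s_2))=\{\s_2^{(1)}\}$, which sits inside $\HHH_c=\{\s_1^{(1)},\s_2^{(1)},\s_1^{(2)}\}$ but is not an order ideal since $\s_1^{(1)}<\s_2^{(1)}$. What you actually need is the weaker containment $\Heap(\sort_\sfc(v))\subseteq\HHH_c$ for $c$-sortable $v$ (equivalently, each $s_i$ appears no more often in $\sort_\sfc(v)$ than in $\sort_\sfc(\wo)$), together with the identification $T=\Heap(\sort_\sfc(\R(T)))$. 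Both statements are plausible but neither is immediate, and you prove neither.

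The paper bypasses all of this with a direct rank computation: using the involution $\psi(s)=\wo s\wo$ and the factorization $\sort_\sfc(\wo)\,\psi(\sort_\sfc(\wo))\equiv\sfc^h$, it shows that for each $s_i$ the first copy lying in $\widetilde{\HHH}_c$ has rank exactly $\rank(\psi(s_i))+h\geq h+1$. No auxiliary reducedness or sortability arguments are needed.
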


\begin{proof}
We already know that $Z(\HHH_c^{h-1})\subseteq X_{\varepsilon(h-1)}$, so we just need to prove the reverse containment. We will show that each element of $\widetilde \HHH_c$ has rank at least $h+1$; this will imply that the elements of $\Heap(\sfc^h)$ with rank $h-1$ must belong to $\HHH_c$.

Fix a simple reflection $s_i\in S$, and let $s_{i'}=\psi(s_i)$. Let $j$ be the smallest integer such that $\s_i^{(j)}\in\widetilde \HHH_c$. Let $j'$ be the smallest integer such that $\s_{i'}^{(j')}\in\widetilde \HHH_c$. Then $j'-1$ is the number of letters in $\sort_{\sfc}(\wo)$ that represent $s_{i'}$, so it is also the number of letters in $\psi(\sort_{\sfc}(\wo))$ that represent $s_i$. But the number of letters in $\psi(\sort_{\sfc}(\wo))$ that represent $s_i$ is $h-j+1$, so $j+j'-2=h$. Hence, 
\[\rank(\s_i^{(j)})=\rank(s_i)+2j-2=\rank(s_i)+2h-2j'+2.\]
Because $\psi\colon S\to S$ is an automorphism of $\Gamma_W$, the map $\psi\colon\HHH_c\to\widetilde\HHH_c$ is a poset isomorphism. It follows that $\rank(\s_{i'}^{(j')})-\rank(\s_i^{(j)})=\rank(\s_i^{(1)})-\rank(\s_{i'}^{(1)})=\rank(s_i)-\rank(s_{i'})$. By definition, we have $\rank(\s_{i}^{(j)})=\rank(s_{i})+2j-2$ and $\rank(\s_{i'}^{(j')})=\rank(s_{i'})+2j'-2$, so  \[\rank(s_{i'})+(2j'-2)-(\rank(s_i)+2j-2)=\rank(s_i)-\rank(s_{i'}).\] Rearranging this equation yields $\rank(s_i)=\rank(s_{i'})+j'-j$. Consequently, \[\rank(\s_i^{(j)})=\rank(s_i)+2j-2=\rank(s_{i'})+j+j'-2=\rank(s_{i'})+h.\] In particular, $\rank(\s_i^{(j)})\geq h+1$. This shows that every element of $\widetilde\HHH_c$ that represents $s_i$ has rank at least $h+1$. As $s_i$ was arbitrary, it follows that all of the elements of $\widetilde\HHH_c$ have rank at least $h+1$. 

Suppose $s_m\in X_{\varepsilon(h-1)}$, and let $k$ be the unique integer such that $\rank(\s_m^{(k)})=h-1$. It follows from the preceding paragraph that $\s_m^{(k)}\not\in\widetilde\HHH_c$, so $\s_m^{(k)}\in\HHH_c$. This shows that $s_m\in Z(\HHH_c^{h-1})$. Hence, $X_{\varepsilon(h-1)}\subseteq Z(\HHH_c^{h-1})$. 
\end{proof}

\begin{example}\label{exam:A2}
Let us use \Cref{fig:A} to illustrate the proof of \Cref{lem:column_h-1}. Recall that, in this example, the map $\psi$ is given by $\psi(s_k)=s_{9-k}$. Let $i=3$ so that $i'=6$. Then $j=6$, $j'=5$, $\rank(s_i)=1$, and $\rank(s_{i'})=2$. Note that $j+j'-2=9=h$ and $\rank(\s_i^{(j)})=11=h+\rank(s_{i'})$. We have $Z(\HHH_c^{h-1})=\{s_2,s_4,s_6,s_8\}=X_2=X_{\varepsilon(h-1)}$, as \Cref{lem:column_h-1} claims. 
\end{example}

Let $c_1=\wo(X_1)=\prod_{s\in X_1}s$ and $c_2=\wo(X_2)=\prod_{s\in X_2}s$, and fix reduced words $\sfc_1$ and $\sfc_2$ for $c_1$ and $c_2$, respectively. The element $c^{\bip}=c_1c_2$ is called a \dfn{bipartite Coxeter element} of $W$. (In type A, this definition of $c^\bip$ coincides with that of either $c^\bip_{(n)}$ or $c^{\bip\bip}_{(n)}$ from \cref{subsec:perms}.) Let $\sfc^{\bip}=\sfc_1\sfc_2$. Let $\uu_k$ be the word $\sfc_1\sfc_2\sfc_1\cdots\sfc_{\varepsilon(k)}$ that begins with $\sfc_1$ and alternates between $\sfc_1$ and $\sfc_2$, using $\left\lceil k/2\right\rceil$ copies of $\sfc_1$ and $\left\lfloor k/2\right\rfloor$ copies of $\sfc_2$. In other words, \[\uu_k=\begin{cases} (\sfc^{\bip})^{k/2} & \mbox{if }k\mbox{ is even}; \\ (\sfc^{\bip})^{(k-1)/2}\sfc_1 & \mbox{if }k\mbox{ is odd}.\end{cases}\]
It is known \cite[$\S$V.6~Exercise~2]{Bourbaki} that $\sort_{\sfc^{\bip}}(\wo)=\uu_h$. Hence, for $1\leq j\leq k\leq h$, the word $\uu_k$ is a reduced word for an element $u_k\in W$, and $Z(\Heap(\uu_k)^j)=X_{\varepsilon(j)}$. 

\begin{lemma}
For each $k\in[h-1]$, we have $\Des(u_k)=X_{\varepsilon(k)}$. 
\end{lemma}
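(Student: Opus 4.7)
The plan is to split the equality $\Des(u_k) = X_{\varepsilon(k)}$ into the two inclusions $X_{\varepsilon(k)} \subseteq \Des(u_k)$ and $X_{3-\varepsilon(k)} \cap \Des(u_k) = \emptyset$. The key structural input in both directions is that the simple reflections within each side of the bipartition $S = X_1 \sqcup X_2$ pairwise commute (because $\Gamma_W$ is bipartite with parts $X_1$ and $X_2$, so no edge lies inside either part); consequently, the letters of each block $\sfc_j$ appearing in $\uu_k$ can be freely permuted among themselves by commutation moves.

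For the inclusion $X_{\varepsilon(k)} \subseteq \Des(u_k)$, I would fix $s \in X_{\varepsilon(k)}$ and rearrange the terminal block $\sfc_{\varepsilon(k)}$ of $\uu_k$ via commutations so that $s$ appears as its last letter. Substituting this rearranged block back into $\uu_k = \sfc_1\sfc_2\cdots\sfc_{\varepsilon(k)}$ produces a reduced word for $u_k$ whose last letter is $s$, witnessing $s \in \Des(u_k)$.

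For the inclusion $X_{3-\varepsilon(k)} \cap \Des(u_k) = \emptyset$, I would invoke the hypothesis $k \leq h-1$ to ensure that $\uu_{k+1} = \uu_k \cdot \sfc_{\varepsilon(k+1)}$ is a prefix of $\sort_{\sfc^\bip}(\wo) = \uu_h$ and is therefore itself a reduced word. Since $X_{3-\varepsilon(k)} = X_{\varepsilon(k+1)}$, a given $s$ in this set may be commuted to the front of $\sfc_{\varepsilon(k+1)}$, producing a reduced word for $u_{k+1}$ of the form $\uu_k \cdot s \cdot (\text{remainder of } \sfc_{\varepsilon(k+1)})$. The prefix $\uu_k \cdot s$ of this reduced word is itself a reduced word, so $\ell(u_k s) = \ell(u_k) + 1$, which forces $s \notin \Des(u_k)$.

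I do not anticipate a serious obstacle: the one delicate point is the role of the hypothesis $k \leq h-1$, which is precisely what keeps $\uu_{k+1}$ a prefix of the reduced word $\sort_{\sfc^\bip}(\wo)$. Beyond that, the argument uses only commutation moves inside the blocks $\sfc_{\varepsilon(k)}$ and $\sfc_{\varepsilon(k+1)}$, and avoids any braid relations or explicit root system computations.
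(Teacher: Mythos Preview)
Your proposal is correct and takes essentially the same approach as the paper: both split into the two inclusions, handle $X_{\varepsilon(k)}\subseteq\Des(u_k)$ by commuting inside the terminal block $\sfc_{\varepsilon(k)}$, and handle the other direction by exploiting that $\uu_{k+1}$ is reduced (this being where $k\leq h-1$ enters). The only cosmetic difference is that the paper phrases the second half as a length contradiction---writing $u_{k+1}=(u_ks)(sc_{\varepsilon(k+1)})$ and observing $\ell(u_ks)+\ell(sc_{\varepsilon(k+1)})=\ell(u_{k+1})-2$---whereas you argue directly by exhibiting $\uu_k\cdot s$ as a prefix of a reduced word.
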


\begin{proof}
The reduced word $\uu_k$ ends with $\sfc_{\varepsilon(k)}$, and $Z(\sfc_{\varepsilon(k)})=c_{\varepsilon(k)}$. Since all of the simple reflections in $X_{\varepsilon(k)}$ commute with each other, we have $X_{\varepsilon(k)}\subseteq\Des(u_k)$. 

Now suppose by way of contradiction that $s\in\Des(u_k)\cap X_{\varepsilon(k+1)}$. Let $v=u_ks$, and note that $\ell(v)=\ell(u_k)-1$. Since $\uu_{k+1}=\uu_k\sfc_{\varepsilon(k+1)}$, we have $\ell(u_{k+1})=\ell(u_k)+\ell(c_{\varepsilon(k+1)})$. Let \[v'=\prod_{s'\in X_{\varepsilon(k+1)}\setminus\{s\}}s'=sc_{\varepsilon(k+1)}.\] Then $\ell(v')=\ell(c_{\varepsilon(k+1)})-1$. We have \[u_{k+1}=u_kc_{\varepsilon(k+1)}=vssv'=vv',\] which is a contradiction because \[\ell(v)+\ell(v')=\ell(u_k)-1+\ell(c_{\varepsilon(k+1)})-1=\ell(u_{k+1})-2<\ell(u_{k+1}). \qedhere\] 
\end{proof}

\begin{example}\label{exam:B}
Let $W=D_5$. Our convention is that $S=\{s_0,s_1,s_2,s_3,s_4\}$ and that the edges of $\Gamma_W$ (each of which is unlabeled) are $\{s_0,s_2\}$, $\{s_1,s_2\}$, $\{s_2,s_3\}$, $\{s_3,s_4\}$. The Coxeter number of $W$ is $h=8$. The map $\psi$ is given by $\psi(s_0)=s_{1}$, $\psi(s_1)=s_0$, and $\psi(s_k)=s_k$ for all $k\in\{2,3,4\}$. Let $c=s_0s_2s_1s_3s_4$ and $\sfc=\s_0\s_2\s_1\s_3\s_4$. The acyclic orientation of $\Gamma_W$ corresponding to $c$ is \[\includegraphics[height=1.034cm]{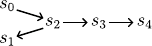}.\]  We have $X_1=\{s_0,s_1,s_3\}$ and $X_2=\{s_2,s_4\}$. The region outlined in blue in \Cref{fig:B} is $\Heap(\sfc^8)$ (drawn sideways, similarly to \Cref{fig:A}). The region shaded in yellow is $\HHH_c$. We have added the extra elements $\s_1^{(0)},\s_3^{(0)},\s_4^{(0)}$ (and appropriate edges) to the left of $\Heap(\sfc^8)$ in order to realize $\Heap(\uu_5)$ as the region enclosed by the black rectangle. The yellow shaded region that lies inside of the black rectangle is $\HHH_c^{\leq 5}$; this illustrates how we can naturally identify $\HHH_c^{\leq 5}$ with a subposet of $\Heap(\uu_5)$. 
\end{example}

\begin{figure}[ht]
  \begin{center}{\includegraphics[height=4.135cm]{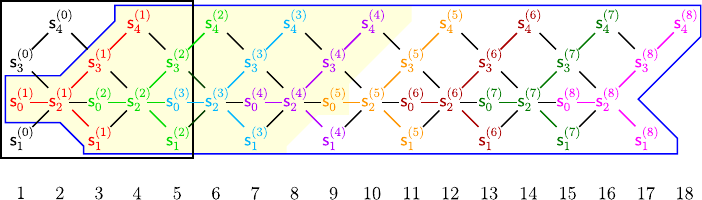}}
  \end{center}
  \caption{An illustration of \Cref{exam:B}, in which $W=D_5$. The region outlined in blue is $\Heap(\sfc^8)$. The heap $\HHH_c$ is shaded in yellow. The region enclosed by the black rectangle is $\Heap(\uu_5)$. }\label{fig:B}
\end{figure}

\begin{lemma}\label{lem:H_c_descents}
For each $k\in[h-1]$, we have $Z(\HHH_c^k)=\Des(\R(\HHH_c^{\leq k}))$. 
\end{lemma}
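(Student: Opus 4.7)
The plan is to prove this by induction on $k$, beginning with the structural observation that for $k \in [h-1]$, every maximal element of $\HHH_c^{\leq k}$ has rank exactly $k$. To see this, first note that the proof of \Cref{lem:column_h-1} establishes that every element of $\widetilde{\HHH}_c$ has rank at least $h+1$, so $\HHH_c^{\leq k} = \Heap(\sfc^h)^{\leq k}$ for all $k \leq h$. Next, I would analyze which $\s_i^{(j)} \in \Heap(\sfc^h)$ are maximal by examining upper covers: $\s_i^{(j)}$ has an upper cover in $\Heap(\sfc^h)$ whenever either some neighbor $s_{i'}$ of $s_i$ in $\Gamma_W$ satisfies $\rank(s_{i'}) = \rank(s_i) + 1$, or else $j \leq h - 1$ and some neighbor satisfies $\rank(s_{i'}) = \rank(s_i) - 1$. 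Since $\Gamma_W$ is connected with at least two vertices, the only maximal elements of $\Heap(\sfc^h)$ are $\s_i^{(h)}$ with $s_i$ a sink of $Q_c$, and these all have rank $\rank(s_i) + 2h - 2 \geq 2h - 1 > h - 1 \geq k$. Hence any element of $\Heap(\sfc^h)^{\leq k}$ of rank strictly less than $k$ must have an upper cover of rank at most $k$, and thus is not maximal in $\HHH_c^{\leq k}$.

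This structural fact implies that $\HHH_c^{\leq k}$ decomposes as $\HHH_c^{\leq k-1} \sqcup \HHH_c^k$, where $\HHH_c^k$ is an antichain consisting of all maximal elements. Moreover, distinct elements of $\Heap(\sfc^h)$ at the same rank have distinct, pairwise commuting labels (adjacent labels in $\Gamma_W$ would force their ranks to differ by $\pm 1$). Consequently, writing $u = \R(\HHH_c^{\leq k-1})$ and $T = Z(\HHH_c^k)$, we obtain a length-additive factorization $\R(\HHH_c^{\leq k}) = u \cdot \wo(T)$ with $\wo(T) = \prod_{t \in T} t$. The inclusion $T \subseteq \Des(\R(\HHH_c^{\leq k}))$ follows immediately: each $t \in T$ labels a maximal element of $\HHH_c^{\leq k}$, which can be placed last in a linear extension to yield a reduced word ending in $t$.

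For the reverse inclusion $\Des(\R(\HHH_c^{\leq k})) \subseteq T$, I would induct on $k$ (the base case $k = 0$ being trivial). The inductive hypothesis gives $\Des(u) = Z(\HHH_c^{k-1}) \subseteq X_{\varepsilon(k-1)}$, and since $T \subseteq X_{\varepsilon(k)}$ lies in the opposite part of the bipartition of $\Gamma_W$, we have $\Des(u) \cap T = \emptyset$. Thus $u \in W^T$ and $w = u \cdot \wo(T)$ is the parabolic decomposition of $w$ relative to $W_T$. A key observation, again using the structural fact, is that every $s \in Z(\HHH_c^{k-1})$ is adjacent in $\Gamma_W$ to some $t \in T$: the rank-$(k-1)$ heap element labeled by $s$ is not maximal in $\HHH_c^{\leq k}$, so it has an upper cover of rank $k$ whose label is a neighbor of $s$ lying in $T$.

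The main obstacle is completing the descent analysis for $s \notin T$. I would separate into two cases. If $s$ commutes with every element of $T$, then $\wo(T)(\alpha_s) = \alpha_s$, so $w(\alpha_s) = u(\alpha_s)$, giving $s \in \Des(w) \iff s \in \Des(u) = Z(\HHH_c^{k-1})$; but any such $s$ must be adjacent to some element of $T$, contradicting the commutation assumption. The harder case is when $s$ fails to commute with some $t \in T$. Here $\wo(T)(\alpha_s) = \alpha_s + \sum_{t \in T \cap N(s)} n_{st}\alpha_t$ is a non-simple positive root, and since $u \in W^T$ we know $u(\alpha_t) > 0$ for $t \in T$, so $w(\alpha_s) = u(\alpha_s) + \sum n_{st} u(\alpha_t)$. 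The hard part is to show that this sum is positive even when $u(\alpha_s)$ may be negative; this requires using the explicit correspondence between elements of $\HHH_c^{\leq k-1}$ and the inversion set $\inv(u^{-1})$ under the combinatorial AR-quiver labeling to verify that $\wo(T)(\alpha_s) \notin \inv(u^{-1})$. I expect this to be the most technical step, and it is where the rank structure of $\Heap(\sfc^h)$ must be used most heavily.
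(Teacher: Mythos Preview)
Your setup and the easy inclusion are correct, and your Case~1 is fine. The genuine gap is exactly where you flag it: Case~2. You need, for $s\in X_{\varepsilon(k-1)}$ adjacent to some $t\in T$, that $\wo(T)(\alpha_s)\notin\inv(u^{-1})$, and you propose to verify this by unwinding the root labeling of the heap. This can presumably be done, but it is neither short nor clearly outlined, and as stated the argument is incomplete.

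The paper sidesteps Case~2 altogether with an idea you already have the ingredients for but do not assemble. Your structural observation yields the explicit description $Z(\HHH_c^{\,r})=\{s_i\in X_{\varepsilon(r)}:\rank(s_i)\le r\}$ for $r\le h-1$. From this it is immediate that $\HHH_c^{\leq k}$ embeds as an \emph{upper set} of the bipartite heap $\Heap(\uu_k)$ (whose rank-$r$ layer is all of $X_{\varepsilon(r)}$): if $s_i\in Z(\HHH_c^{\,r})$ and $s_{i'}$ is a neighbor of $s_i$, then $\rank(s_{i'})\le\rank(s_i)+1\le r+1$, so $s_{i'}\in Z(\HHH_c^{\,r+1})$. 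Since $\uu_k$ is a reduced word for $u_k$, the upper-set property gives a length-additive factorization $u_k=v\cdot\R(\HHH_c^{\leq k})$, hence $\R(\HHH_c^{\leq k})^{-1}\le u_k^{-1}$ in weak order and therefore $\Des(\R(\HHH_c^{\leq k}))\subseteq\Des(u_k)=X_{\varepsilon(k)}$. This single containment kills your Case~2 outright (any $s$ adjacent to $T\subseteq X_{\varepsilon(k)}$ lies in $X_{\varepsilon(k-1)}$ and so cannot be a descent) and reduces the remaining argument to showing that any descent $r\in X_{\varepsilon(k)}$ actually labels a rank-$k$ element of $\HHH_c^{\leq k}$, which again follows from the upper-set property. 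Replacing your inductive root computation with this embedding in $\Heap(\uu_k)$ gives a complete and much shorter proof.
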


\begin{proof}
We have $Z(\HHH_c^k)\subseteq X_{\varepsilon(k)}$, so all of the elements of $Z(\HHH_c^k)$ commute with each other. Therefore, $\R(\HHH_c^k)=\prod_{s\in Z(\HHH_c^k)}s$. Because $\HHH_c^{\leq k-1}$ is an order ideal of $\HHH_c^{\leq k}$ whose complement (in $\HHH_c^{\leq k}$) is $\HHH_c^{k}$, we have $\R(\HHH_c^{\leq k})=\R(\HHH_c^{\leq k-1})\R(\HHH_c^k)$, so $Z(\HHH_c^k)\subseteq\Des(\R(\HHH_c^{\leq k}))$. Now choose some $r\in\Des(\R(\HHH_c^{\leq k}))$; we will show that $r\in Z(\HHH_c^k)$. 

Because $Z(\HHH_c^m)\subseteq X_{\varepsilon(m)}$ for every $m\in[k]$,
we can identify $\HHH_c^{\leq k}$ with a subposet of $\Heap(\mathsf{u}_k)$ (as illustrated in \Cref{fig:B}). We will also continue to use \cref{eq:Heap_c_sorting} to view $\HHH_c$ as an order ideal of $\Heap(\sfc^h)$ whose complement is $\widetilde\HHH_c$. We claim that $\HHH_c^{\leq k}$ is an upper set of $\Heap(\mathsf{u}_k)$. To see this, suppose $\s_i^{(j)}\in\HHH_c^{\leq k}$ and $\s_{i'}^{(j')}\in\Heap(\mathsf{u}_k)$ are such that $\s_i^{(j)}<\s_{i'}^{(j')}$ in $\Heap(\mathsf{u}_k)$. Let $s_{i''}$ be a simple reflection in $X_{\varepsilon(h-1)}$ such that $m(s_{i'},s_{i''})\neq 2$. There is a unique integer $j''\in[h]$ such that $\rank(\s_{i''}^{(j'')})=h-1$. Then $\s_{i''}^{(j'')}$ is maximal in $\Heap(\mathsf{u}_{h-1})$. Since $\s_{i'}^{(j')}$ and $\s_{i''}^{(j'')}$ are comparable (because $m(s_{i'},s_{i''})\neq 2$), we must have $\s_{i'}^{(j')}<\s_{i''}^{(j'')}$. It follows from \Cref{lem:column_h-1} that $\s_{i''}^{(j'')}\in\HHH_c^{h-1}$; in particular, $\s_{i''}^{(j'')}\in\HHH_c$. Because $\HHH_c$ is an order ideal of $\Heap(\sfc^h)$, it contains $\s_{i'}^{(j')}$. But $\rank(\s_{i'}^{(j')})\leq k$, so $\s_{i'}^{(j')}\in\HHH_c^{\leq k}$. This proves that $\HHH_c^{\leq k}$ is an upper set of $\Heap(\uu_k)$. Because $\uu_k$ is a reduced word for $u_k$, it follows that $\R(\HHH_c^{\leq k})^{-1}\leq u_k^{-1}$ (in the weak order). Consequently, $r\in\Des(\R(\HHH_c^{\leq k}))\subseteq\Des(u_k)=X_{\varepsilon(k)}$. 

Let $\rr$ be the unique element of rank $k$ in $\Heap(\uu_k)$ that represents $r$. Because $r\in\Des(\R(\HHH_c^{\leq k}))$, there must be an element $\rr'\in\HHH_c^{\leq k}$ that represents $r$. Because $\rank(\rr')\leq k=\rank(\rr)$, we have $\rr'\leq \rr$ in $\Heap(\mathsf u_k)$. But $\HHH_c^{\leq k}$ is an upper set of $\Heap(\uu_k)$, so 
$\rr\in\HHH_c^{\leq k}$. As $\rr$ has rank $k$, it must be in $\HHH_c^k$. This proves that $r\in Z(\HHH_c^k)$, as desired.   
\end{proof}

\begin{lemma}\label{lem:maximizer}
Let $c$ be a Coxeter element of $W$. We have \[\left\lvert\mathcal O_{\Camb_c}(\zz_c)\right\rvert=h.\] 
\end{lemma}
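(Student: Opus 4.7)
The plan is to exhibit an explicit forward orbit of length $h$ starting at $\zz_c$. For each $k \in \{0, 1, \ldots, h-1\}$, let $w_k = \R(\HHH_c^{\leq k})$, so that $w_0 = e = \hat{0}$ and $w_{h-1} = \zz_c$. I will show that $\popdown_{\Camb_c}(w_k) = w_{k-1}$ for every $k \in [h-1]$; combined with the fact that $w_0, w_1, \ldots, w_{h-1}$ are pairwise distinct, this gives $\mathcal O_{\Camb_c}(\zz_c) = \{w_0, w_1, \ldots, w_{h-1}\}$ and hence $|\mathcal O_{\Camb_c}(\zz_c)| = h$, matching the upper bound of \cref{thm:upper_quotient}.

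The first ingredient is that each $w_k$ lies in $\Camb_c$. Since $\HHH_c^{\leq k}$ is an order ideal of $\HHH_c$, the characterization of the spine recalled in \cref{rem:spine} (from \cite{TW1}) shows $w_k \in \spine(\Camb_c) \subseteq \Camb_c$, so $w_k$ is $c$-sortable. The second ingredient is \cref{lem:H_c_descents}, which gives $\Des(w_k) = Z(\HHH_c^k)$. Because $\HHH_c^k \subseteq \Heap(\sfc^h)^k$, every letter of $Z(\HHH_c^k)$ lies in $X_{\varepsilon(k)}$, and by construction all simple reflections in $X_{\varepsilon(k)}$ commute (this is the defining property of the bipartition $S = X_1 \sqcup X_2$ coming from the rank function). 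Thus the descents of $w_k$ pairwise commute.

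Now I can invoke \cref{thm:distributive_intervals}: since the descents of the $c$-sortable element $w_k$ commute, $[\popdown_{\Camb_c}(w_k), w_k] = [\popdown_{\Weak(W)}(w_k), w_k]$, so in particular $\popdown_{\Camb_c}(w_k) = \popdown_{\Weak(W)}(w_k)$. Applying the formula \eqref{eq:pop_weak} gives $\popdown_{\Weak(W)}(w_k) = w_k \wo(\Des(w_k)) = w_k \wo(Z(\HHH_c^k))$. Because the elements of $Z(\HHH_c^k)$ commute, $\wo(Z(\HHH_c^k)) = \prod_{s \in Z(\HHH_c^k)} s = \R(\HHH_c^k)$. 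Finally, since $\HHH_c^{\leq k-1}$ is an order ideal of $\HHH_c^{\leq k}$ with complement $\HHH_c^k$, concatenating reduced words for $\R(\HHH_c^{\leq k-1})$ and $\R(\HHH_c^k)$ yields a reduced word for $w_k$, giving $w_k \R(\HHH_c^k)^{-1} = \R(\HHH_c^{\leq k-1}) = w_{k-1}$.

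To conclude, it remains to check that $w_0, \ldots, w_{h-1}$ are distinct, which amounts to $\HHH_c^k \neq \emptyset$ for every $k \in [h-1]$ (so that $\ell(w_k) = |\HHH_c^{\leq k}|$ is strictly increasing in $k$). The argument in the proof of \cref{lem:column_h-1} showed that every element of $\widetilde{\HHH}_c$ has rank at least $h+1$, so for $k \in [h-1]$ we have $\HHH_c^k = \Heap(\sfc^h)^k$; and picking any $s_i \in X_{\varepsilon(k)}$ with $\rank(s_i) \leq k$ (which exists since the minimum of $\rank$ on $X_{\varepsilon(k)}$ is at most $2$), the letter $\s_i^{(j)}$ with $j = (k - \rank(s_i))/2 + 1 \in [h]$ is an element of $\Heap(\sfc^h)^k$. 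The potential obstacle throughout is ensuring the heap-theoretic bookkeeping aligns with the Coxeter-theoretic identities (in particular that the factorization $w_k = w_{k-1} \cdot \R(\HHH_c^k)$ is length-additive), but this is immediate from the fact that $\HHH_c^{\leq k}$ is a convex subposet of $\Heap(\sfc^h)$ whose underlying word is reduced.
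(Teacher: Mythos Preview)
Your proof is correct and follows essentially the same strategy as the paper: define $w_k = \R(\HHH_c^{\leq k})$, use \cref{lem:H_c_descents} to identify $\Des(w_k)$, compute $\popdown_{\Weak(W)}(w_k) = w_{k-1}$ via \eqref{eq:pop_weak}, and transfer to $\popdown_{\Camb_c}$. The only difference is the transfer step: you invoke \cref{thm:distributive_intervals} (commuting descents $\Rightarrow$ the two pop-stack intervals agree), whereas the paper uses \eqref{eq:pop_Cambrian} together with the observation that $w_{k-1}\in\spine(\Camb_c)$ is fixed by $\pi_\downarrow^c$; both are valid, and your added distinctness check (which the paper leaves implicit) is a nice touch.
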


\begin{proof}
As before, let $X_1$ (resp.\ $X_2$) be the set of simple reflections $s$ such that the letters representing $s$ in $\sfc^h$ have odd (resp.\ even) rank. For $0\leq j\leq h-1$, let $v_j=\R(\HHH_c^{\leq j})$. In particular, $v_0=e$, and $v_{h-1}=\zz_c$. Suppose $k\in[h-1]$. We know that $Z(\HHH_c^k)\subseteq X_{\varepsilon(k)}$, so all of the elements of $Z(\HHH_c^k)$ commute with each other. Along with \Cref{lem:H_c_descents}, this implies that \[\wo(v_k)=\wo(\Des(\R(\HHH_c^{\leq k})))=\wo(Z(\HHH_c^k))=\prod_{s\in Z(\HHH_c^k)}s=\R(\HHH_c^k).\] We have $v_k=v_{k-1}\R(\HHH_c^{k})$, so it follows from \cref{eq:pop_weak} that \[\popdown_{\Weak(W)}(v_k)=v_{k-1}\R(\HHH_c^{k})\wo(\Des(v_k))=v_{k-1}\R(\HHH_c^{k})^2=v_{k-1}.\] Because $\HHH_c^{\leq k-1}$ is an order ideal of $\HHH_c$, we know by \Cref{rem:spine} that $v_{k-1}$ is in the spine of $\Camb_c$; in particular, it is in $\Camb_c$. Invoking \cref{eq:pop_Cambrian}, we find that \[\popdown_{\Camb_c}(v_k)=\pi_\downarrow^c(\popdown_{\Weak(W)}(v_k))=\pi_\downarrow^c(v_{k-1})=v_{k-1}.\] As this is true for all $k\in[h-1]$, we deduce that \[\mathcal O_{\Camb_c}(\zz_c)=\mathcal O_{\Camb_c}(v_{h-1})=\{v_{h-1},v_{h-2},\ldots,v_0\}. \qedhere\]   
\end{proof}

Combining \cref{thm:upper_quotient} with \cref{lem:maximizer} yields our last main result.

\begin{theorem}\label{thm:max_orbit}
    Let $W$ be a finite irreducible Coxeter group, and let $c \in W$ be a Coxeter element. The maximum size of a forward orbit under $\popdown_{\Camb_c}$ is exactly the Coxeter number $h$ of $W$. 
\end{theorem}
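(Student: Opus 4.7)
The plan is to deduce the theorem immediately from the two results established in this section. The upper bound ${\max_{x\in \Camb_c}\lvert\mathcal{O}_{\Camb_c}(x)\rvert \leq h}$ follows from \cref{thm:upper_quotient} applied to the $c$-Cambrian congruence, since $\Camb_c$ is a lattice quotient of $\Weak(W)$ via the projection $\pi_\downarrow^c$. The matching lower bound follows from \cref{lem:maximizer}, which exhibits an explicit element $\zz_c \in \Camb_c$ whose forward orbit has size exactly $h$.

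Unpacking this slightly, the key idea behind the upper bound is to define an auxiliary map $g\colon W \to W$ that agrees with $\popdown_{\Camb_c}$ on $\Camb_c$ and with $\popdown_{\Weak(W)}$ elsewhere, and to verify that $g$ is \emph{compulsive} in the sense of \cite{DefantPopCoxeter} (i.e., $g(x) \leq \popdown_{\Weak(W)}(x)$ for all $x$). One then invokes the general bound $\max_{w\in W}\lvert\Orb_g(w)\rvert \leq h$ for compulsive maps. Compulsivity reduces via \cref{lem:quotient_covers} to the observation that $\pi_\downarrow^c(a) \leq a$ for each lower cover $a$ of a $c$-sortable element $x$.

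The more substantive step is the construction of $\zz_c$. Setting $v_k := \R(\HHH_c^{\leq k})$, the heap-theoretic lemmas \cref{lem:column_h-1,lem:H_c_descents} show that $\Des(v_k) = Z(\HHH_c^k) \subseteq X_{\varepsilon(k)}$ consists of pairwise-commuting simple reflections, so $\wo(\Des(v_k)) = \R(\HHH_c^k)$. Formula \cref{eq:pop_weak} then gives $\popdown_{\Weak(W)}(v_k) = v_{k-1}$. Because $\HHH_c^{\leq k-1}$ is an order ideal of $\HHH_c$, the element $v_{k-1}$ lies in the spine of $\Camb_c$ (\cref{rem:spine}), so $\pi_\downarrow^c(v_{k-1}) = v_{k-1}$, and \cref{eq:pop_Cambrian} yields $\popdown_{\Camb_c}(v_k) = v_{k-1}$. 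Iterating from $v_{h-1} = \zz_c$ down to $v_0 = e$ produces a forward orbit of size exactly $h$.

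Since both halves are already in hand, the proof of the theorem itself is just the one-line combination of \cref{thm:upper_quotient} and \cref{lem:maximizer}, and no genuine obstacle remains. The real conceptual work has already been done: the upper bound depends on the compulsive-map machinery from \cite{DefantPopCoxeter}, while the lower bound rests on the nontrivial combinatorial fact (\cref{lem:column_h-1}) that the rank-$(h-1)$ layer of $\HHH_c$ fills the entire bipartition class $X_{\varepsilon(h-1)}$, which is what forces the peeling process to continue for the full $h-1$ steps.
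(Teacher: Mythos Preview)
Your proposal is correct and matches the paper's approach exactly: the paper's proof is the one-line combination of \cref{thm:upper_quotient} (upper bound, since $\Camb_c$ is a lattice quotient of $\Weak(W)$) and \cref{lem:maximizer} (the explicit element $\zz_c$ realizing the bound). Your unpacking of those two ingredients is also accurate and faithful to how they are proved earlier in the section.
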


\subsection{Heaps and representation theory}\label{subsec:heap_rep}
We conclude this section with a brief explanation of how the heap of a $c$-sortable word $\QQ$ relates to representation theory of $KQ_c$. 

Let $\Lambda$ be an arbitrary finite-dimensional algebra. A morphism $f\colon M \rightarrow N$ in $\mods \Lambda$ is said to be \dfn{irreducible} if it is not an isomorphism and 
whenever $L \in \mods\Lambda$, $g\colon M \rightarrow L$, and $h\colon L \rightarrow N$ are such that $f = h\circ g$, we have that either $g$ or $h$ is an isomorphism.
 For $M, N \in \mods\Lambda$, we denote by $\mathrm{irr}(M,N)$ the linear subspace of $\Hom_\Lambda(M,N)$ spanned by the irreducible morphisms. The \dfn{Auslander--Reiten (AR) quiver} of $\Lambda$ is then the directed graph defined as follows.
\begin{itemize}
    \item The vertices of the AR quiver are the indecomposable objects in $\mods\Lambda$.
    \item For any two vertices $M, N$, the number of arrows $M \rightarrow N$ in the AR quiver coincides with the dimension of $\mathrm{irr}(M,N)$.
\end{itemize}

    Now let $\QQ$ be a word over $S$. In \cite[Section~9.2]{cataland}, the Hasse diagram of $\Heap(\QQ)$ is referred to as the \dfn{combinatorial AR quiver} of $\QQ$.
    To explain this, consider the special case when $\QQ = \sort_{\sfc}(\wo)$. By \cite[Theorem~9.3.1]{cataland}, there is a bijection $\Xi\colon \Heap(\sort_{\sfc}(\wo)) \rightarrow \brick KQ_c$. Under this bijection, we have $\s_i^{(j)} \covered \s_{i'}^{(j')}$ if and only if there is an irreducible morphism from $\Xi\left(\s_i^{(j)}\right)$ to $\Xi\left(\s_{i'}^{(j')}\right)$. Thus, under the bijection $\Xi$, the Hasse diagram of $\mathrm{Heap}(\sort_{\sfc}(\wo))$ is precisely the AR quiver of $KQ_c$. (In this setting, the dimension of $\mathrm{irr}(M,N)$ does not exceed 1.) In particular, the drawing of $\mathrm{Heap}(\sort_{\sfc}(\wo))$ shown as the shaded yellow part of \cref{fig:A} agrees with the standard way of embedding the AR quiver of $KQ_c$ in the plane; see, e.g., \cite[Section~3.1]{schiffler}.

    It is also shown in \cite[Theorem~9.3.1]{cataland} that, for all $s_i \in S$, the projective module $P(i)$ satisfies $P(i) = \Xi\left(\s_i^{(1)}\right)$. Because every nonzero morphism can be written as a composition of irreducible morphisms, the first copy of $Q_c = \Heap(\sfc)$ in $\mathrm{Heap}(\sort_{\sfc}(\wo))$ thus provides a visualization of the well-known fact that $\mathrm{End}_{KQ_c}\left(\bigoplus_{i = 1}^n P(i)\right) \cong KQ_c$.

\section{Future Directions} \label{sec:conclusion}

\subsection{Images} 
Consider the \dfn{linear Coxeter element} $c^{\to}=s_1s_2\cdots s_n$ of $A_n$. The Cambrian lattice $\Camb_{c^\to}$ is the $(n+1)$-st \emph{Tamari lattice}. Hong \cite{Hong2022} proved that the size of the image of $\popdown_{\Camb_{c^\to}}$ is the $n$-th \emph{Motzkin number} (i.e., the number of Motzkin paths of length $n$). In \cref{sec:Cambrian_A}, we determined the size of the image of $\popdown_{\Camb_{c^\bip}}$, where $c^\bip$ is a bipartite Coxeter element of $A_n$. Using these formulas, one can verify that $|\popdown_{\Camb_{c^\to}}(\Camb_{c^\to})|\leq|\popdown_{\Camb_{c^\bip}}(\Camb_{c^\bip})|$. Numerical evidence has led us to conjecture that the linear and bipartite Coxeter elements are, in some sense, extremal with regard to the sizes of the images of pop-stack operators.  

\begin{conjecture}
For every Coxeter element $c$ of $A_n$, we have \[|\popdown_{\Camb_{c^\to}}(\Camb_{c^\to})|\leq|\popdown_{\Camb_{c}}(\Camb_{c})|\leq |\popdown_{\Camb_{c^\bip}}(\Camb_{c^\bip})|.\]
\end{conjecture}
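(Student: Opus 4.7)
The plan is to reduce the conjecture to a statement about the sizes of the sets $\MAD(c)$ of maximal noncrossing arc diagrams with only $c$-sortable arcs, using the identification $|\popdown_{\Camb_c}(\Camb_c)|=|\MAD(c)|$ coming from \cref{eq:MArc}. Every Coxeter element $c$ of $A_n$ is encoded by the binary string $\vv_c\colon\{2,\ldots,n\}\to\{\LL,\RR\}$; in particular $\vv_{c^\to}$ is constant while $\vv_{c^\bip}$ is alternating. I would then prove both inequalities by induction on the Hamming distance between $\vv_c$ and the target string, by exhibiting a sequence of single-position flips transforming $\vv_c$ into $\vv_{c^\to}$ (resp.\ $\vv_{c^\bip}$) and establishing the corresponding local inequality for each flip.

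The main step would be the following key lemma: if $c$ and $c'$ are Coxeter elements of $A_n$ whose strings $\vv_c,\vv_{c'}$ differ at exactly one position $k$, and if the flip at $k$ brings $\vv_c$ strictly closer to the alternating pattern, then $|\MAD(c)|\le |\MAD(c')|$. To prove this, I would attempt to build an explicit injection $\MAD(c)\hookrightarrow\MAD(c')$ as follows: given $\delta\in\MAD(c)$, replace each arc of $\delta$ that is $c$-sortable but not $c'$-sortable (necessarily an arc passing on the ``wrong'' side of the point $k$) by its mirror through $k$, i.e.\ the unique $c'$-sortable arc with the same endpoints. I would then show that the resulting collection is noncrossing (any newly introduced crossing would occur in the immediate vicinity of $k$, and the maximality of $\delta$, together with a \cref{lem:no_UD}-style description of the arcs incident to~$k$ in a maximal $c$-sortable diagram, forces the local configuration to make the swap harmless) and maximal in $\AD(c')$ (by tracking how the set of $c'$-sortable arcs that one could attempt to add after the swap is constrained by the swap itself). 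A symmetric argument would handle the reverse inequality toward $c^\to$.

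The main obstacle will be verifying noncrossing and maximality after the simultaneous swap of multiple arcs: a single flip of $\vv_c$ at position $k$ can replace many arcs at once, and both the noncrossing and maximality conditions can be destroyed in ways that a purely local argument will struggle to control. A natural refinement is to stratify the proof by the number and type of arcs of $\delta$ that are ``active'' at $k$, using the structural consequences of the combinatorial characterization in \cref{thm:combinatorial_image_description} (no double descents; no inversion in common with $c^{-1}$) to restrict the possible local pictures.

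If the combinatorial strategy proves unwieldy, I would invoke representation theory as a fallback. Since every flip of $\vv_c$ corresponds to a reorientation of a single arrow of $Q_c$, and since in the special case where the flip occurs at a source or sink of $Q_c$ one has BGP reflection functors (hence derived equivalences) between $\mods KQ_c$ and $\mods KQ_{c'}$, the machinery of \cref{sec:mutation} and \cref{thm:cambrian_hereditary} should translate such a flip into a canonical mutation of 2-term simple-minded collections. The image-of-pop-stack characterization from \cref{thm:algebraic_image_characterization} (Ext-orthogonality of $\D(\T)$ plus the absence of a nonzero projective in $\T$) then becomes an invariant that one can hope to track under these equivalences, giving a cleaner source for the desired injection.
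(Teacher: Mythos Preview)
The paper does not prove this statement: it appears in \cref{sec:conclusion} (Future Directions) as an open \emph{conjecture}, motivated only by numerical evidence. There is therefore no proof in the paper to compare against.

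As for your proposal itself, it is a reasonable outline of a strategy, but it is not a proof, and you are candid about the gaps. Let me name the two that seem most serious. First, your ``key lemma'' asserts monotonicity of $|\MAD(c)|$ along single-position flips of $\vv_c$ in the direction of the alternating (resp.\ constant) string. This is a strictly stronger statement than the conjecture itself: the conjecture only compares $c$ to the two extremal Coxeter elements, whereas your lemma would impose a partial order on \emph{all} Coxeter elements compatible with Hamming distance to $\vv_{c^\bip}$ and to $\vv_{c^\to}$. There is no a~priori reason to expect this stronger monotonicity, and it would need independent verification (e.g.\ by computer for small $n$) before committing to this route. Second, the proposed injection---replace every arc passing on the wrong side of $k$ by its mirror---can simultaneously swap several arcs, and you correctly flag that both noncrossing and maximality can fail after such a swap. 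The repair you sketch (a local case analysis near $k$ using maximality of $\delta$) is plausible in spirit, but the details are exactly where the difficulty lies, and nothing in the paper (including \cref{lem:no_UD}, which is specific to $c^\bip$) provides a ready-made tool for a general $c$.

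Your representation-theoretic fallback via BGP reflection functors is also not obviously adequate: a source/sink reflection is a derived equivalence, but the conditions in \cref{thm:algebraic_image_characterization} (Ext-orthogonality of $\D(\T)$ and absence of a projective in $\T$) are \emph{not} derived invariants in any direct sense---the notion of ``projective'' changes under the equivalence---so one cannot simply transport the image of pop-stack across such an equivalence. In short, your proposal identifies sensible lines of attack, but none of them currently closes, and the statement remains (as the paper presents it) a conjecture.
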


\subsection{Forward orbits}
Suppose $L$ is a finite lattice, and let $\upsilon_L=\max_{x\in L}|\mathcal O_L(x)|$. Let \[\Upsilon_L=\{x\in L\mid|\mathcal O_L(x)|=\upsilon_L\}\] be the set of elements of $L$ whose forward orbits under $\popdown_L$ attain the maximum possible size. 

Let $c$ be a Coxeter element of a finite irreducible Coxeter group $W$. We saw in \cref{thm:max_orbit} that $\upsilon_{\Camb_c}$ is the Coxeter number $h$, and we constructed an explicit element ${\bf z}_c\in\Upsilon_{\Camb_c}$. However, we said nothing about the other elements $\Upsilon_{\Camb_c}$. In the case when $W=A_n$ and $c$ is the linear Coxeter element $c^\to$ (i.e., $\Camb_c$ is the $(n+1)$-st Tamari lattice), it is known that $|\Upsilon_{\Camb_c}|$ is the $(n-1)$-st Catalan number \cite{DefantMeeting}. It would be interesting to understand $\Upsilon_{\Camb_c}$ for other choices of $c$. In particular, we have the following conjecture. 

\begin{conjecture}
Assume $n\geq 3$, and let $c^\bip$ be the bipartite Coxeter element of type $A_n$ given by \[c^\bip=\prod_{\substack{i\in[n]\\ i\text{ odd}}}s_i\prod_{\substack{j\in[n]\\ j\text{ even}}}s_j.\] The set of elements of $\Camb_{c^\bip}$ whose forward orbits under the pop-stack operator have size $h$ is given by \[\Upsilon_{\Camb_{c^\bip}}=\begin{cases} \{{\bf z}_c\} & \mbox{if }n\mbox{ is odd}; \\ \{{\bf z}_c,{\bf z}_cs_1\} & \mbox{if }n\mbox{ is even}.\end{cases}\]
\end{conjecture}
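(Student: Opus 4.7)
The plan is to combine \cref{cor:dynamical_weak_Camb} with the image characterization in \cref{thm:combinatorial_image_description} and the preimage description in \cref{thm:preimage}. Suppose $x \in \Upsilon_{\Camb_{c^\bip}}$ and set $y := \popdown_{\Camb_{c^\bip}}(x)$. By \cref{cor:dynamical_weak_Camb}, the forward orbit of $x$ under $\popdown_{\Camb_{c^\bip}}$ consists of $x$ followed by the forward orbit of $y$ under $\popdown_{\Weak(A_n)}$; hence $y$ lies in the image of $\popdown_{\Camb_{c^\bip}}$ and satisfies $|\mathcal O_{\Weak(A_n)}(y)| = h-1$. The task therefore splits into (i) classifying such $y$ inside $\Camb_{c^\bip}$, and (ii) enumerating the $c^\bip$-sortable preimages of each such $y$ under $\popdown_{\Camb_{c^\bip}}$.

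For step~(i), I expect to prove that the unique such $y$ is $v_{h-2}$ (in the notation of \cref{sec:orbits}). Since each application of $\popdown_{\Weak(A_n)}$ strictly decreases length at any non-identity element, $|\mathcal O_{\Weak(A_n)}(y)| = n$ forces $\ell(y) \geq n-1$. Combining this with the constraints from \cref{thm:combinatorial_image_description} and its type-$A$ specialization \cref{cor:choi_sun}—namely pairwise-commuting descents and avoidance of a specific list of left inversions of $(c^\bip)^{-1}$—should confine $y$ to a very restricted region of the spine $\spine(\Camb_c)$ described in \cref{rem:spine}. I would then argue by induction on $n$, using \cref{lem:Cambrian_recurrence} to strip off the first letter of $\sfc^\bip$ and the recursive bipartite structure of $\HHH_c$ inherited from the combinatorial AR quiver of \cref{sec:heaps}, to show that the only $c^\bip$-sortable candidate is $y = v_{h-2}$.

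For step~(ii), I would apply \cref{thm:preimage} to the 2-term simple-minded collection $(\D(v_{h-2}),\U(v_{h-2}))$, translating the equation $\popdown_{\Camb_{c^\bip}}(x) = v_{h-2}$ into explicit semibrick-pair conditions on $\D(x)$ and $\U(x)$; then checking $c^\bip$-sortability of the resulting preimages (equivalently, verifying the arc-diagram conditions of \cref{subsec:arc}) should isolate exactly $\{\zz_c\}$ when $n$ is odd and $\{\zz_c, \zz_c s_1\}$ when $n$ is even. The parity dependence is intrinsic: by \cref{lem:column_h-1}, $\Des(\zz_c) = X_{\varepsilon(h-1)}$, and whether $s_1 \in X_{\varepsilon(h-1)}$ flips with the parity of $n$, which controls both whether $\zz_c s_1$ lies above or below $\zz_c$ in $\Weak(A_n)$ and whether it remains $c^\bip$-sortable.

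The main obstacle is step~(i): sharpening the bound in \cref{thm:upper_quotient}—which rests on the compulsive-maps machinery of \cite{DefantPopCoxeter}—to a characterization of $c^\bip$-sortable elements whose $\popdown_{\Weak(A_n)}$-orbit has length exactly $n$. No such near-extremal classification currently appears in the literature, and the subtle point is that falling one short of the maximum allows small deviations from the canonical chain $v_{h-1} > v_{h-2} > \cdots > v_0$ that must nevertheless be ruled out. I expect this to succeed because the proof of \cref{thm:max_orbit} produces the extremal orbit very rigidly from the spine, so any competing $y$ would be forced to mimic a large portion of that chain—something a careful inductive analysis via \cref{lem:Cambrian_recurrence} should forbid.
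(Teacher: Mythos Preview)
The statement you are attempting to prove is presented in the paper as a \emph{conjecture} in the ``Future Directions'' section, not as a theorem. The paper offers no proof and no proof sketch; it simply records the conjecture as an open problem motivated by numerical evidence and by the explicit construction of $\zz_c$ in \cref{sec:orbits}. There is therefore nothing in the paper to compare your proposal against.

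As for the proposal itself, you have correctly identified the main obstruction: step~(i) requires a classification of $c^\bip$-sortable elements $y$ with $|\mathcal O_{\Weak(A_n)}(y)| = h-1$, and you explicitly note that no such near-extremal result exists in the literature. Your heuristic that ``any competing $y$ would be forced to mimic a large portion of that chain'' is plausible but not an argument; the compulsive-maps framework of \cite{DefantPopCoxeter} gives only an upper bound and says nothing about which elements fall exactly one short of it. The inductive use of \cref{lem:Cambrian_recurrence} you sketch would need to track not just $c^\bip$-sortability but also the precise orbit length under $\popdown_{\Weak(A_n)}$ through the recursion, and there is no indication in the paper or its references that this quantity behaves well under the passage from $W$ to $W_{S\setminus\{s\}}$. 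So what you have written is a reasonable research plan for attacking an open conjecture, but it is not a proof, and the paper does not claim one either.
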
 

The original use of the term \emph{pop-stack} comes from the setting where $L$ is the weak order on $A_n$; in this case, Ungar proved that $\max_{x\in\Weak(A_n)}|\mathcal O_{\Weak(A_n)}(x)|$ is $n+1$ (which is the Coxeter number of $A_n$). 
\begin{question}
What can be said about $|\Upsilon_{\Weak(A_n)}|$? 
\end{question}

Defant \cite{DefantPopCoxeter} proved that if $W$ is a finite irreducible Coxeter group with Coxeter number $h$, then $\max_{x\in W}|\mathcal O_{\Weak(W)}(x)|=h$. In \cref{thm:upper_quotient}, we found that $\max_{x\in L}|\mathcal O_{L}(x)|\leq h$ whenever $L$ is a lattice quotient of $\Weak(W)$, and we saw in \cref{thm:max_orbit} that this inequality is an equality whenever $L$ is a Cambrian lattice. We are naturally led to ask the following questions. 

\begin{question}
Let $W$ be a finite irreducible Coxeter group with Coxeter number $h$. For which lattice quotients $L$ of $\Weak(W)$ is it the case that $\max_{x\in L}|\mathcal O_{L}(x)|=h$? 
\end{question}

\begin{question}
Let $L'$ be a lattice quotient of a finite lattice $L$. Is it necessarily the case that \[\max_{x'\in L'}|\mathcal O_{L'}(x')|\leq\max_{x\in L}|\mathcal O_L(x)|?\] 
\end{question}

\subsection{Generalizations}
It would be interesting to see how much of our work on Cambrian lattices can be extended to more general families of lattices. For example, it could be interesting to study the pop-stack operators on \emph{$m$-Cambrian lattices}, which were introduced by Stump, Thomas, and Williams \cite{cataland}.

\appendix\section{Proof of \cref{thm:mutation_summary}}\label{sec:appendix}

In this appendix, we prove \cref{thm:mutation_summary}. More precisely, we prove only the first half of the theorem, as the second half is just the dual. We follow arguments similar to those appearing in \cite[Section~3]{HI_pairwise} and \cite[Section~7.2]{KY}. 
The main difference is that we work almost exclusively in the category $\mods\Lambda$ (with the exception of a portion of the proof of \cref{lem:mutation_1} below), while the referenced papers work primarily in the bounded derived category.

 In the arguments that follow, we recall that for any wide subcategory $\W \subseteq \mods\Lambda$ and any $M, N \in \W$, we have $\Hom_\Lambda(M,N) = \Hom_\W(M,N)$ and $\Ext^1_\Lambda(M,N) = \Ext^1_\W(M,N)$. On the other hand, we may have that $\Ext^m_\Lambda(M,N) \supsetneq \Ext^m_\W(M,N)$ for $m > 1$.

\begin{lemma}\label{lem:mutation_1} Let $(\X,\Y) \in \sbp\Lambda$ be SM compatible. Let $\X' \subseteq \X$, $X \in \X \setminus \X'$, and $Z \in \Filt(\X')$. 
        \begin{enumerate}[(1)]
            \item\label{lem:mutation_1_1} The map $\Hom_\Lambda(X'_X,Z) \rightarrow \Ext^1_\Lambda(X,Z)$ induced by the short exact sequence $\eta_{X,\X'}$ from \cref{eqn:mutation_1} is bijective.
            \item\label{lem:mutation_1_2} The map $\Ext^1_\Lambda(X'_X,Z) \rightarrow \Ext^2_\Lambda(X,Z)$ induced by the short exact sequence $\eta_{X,\X'}$ from \cref{eqn:mutation_1} is injective.
        \end{enumerate}
\end{lemma}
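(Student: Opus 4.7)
The plan is to apply $\Hom_\Lambda(-,Z)$ to the short exact sequence $\eta_{X,\X'}$ and extract (1) and (2) from the resulting long exact sequence. Writing $\alpha$ and $\beta$ for the connecting homomorphisms into $\Ext^1_\Lambda(X,Z)$ and $\Ext^2_\Lambda(X,Z)$ respectively, and setting $K := \ker(\gamma_{X,\X'})$ (so that the pushout diagram \cref{eqn:mutation_1} also yields a short exact sequence $K \hookrightarrow P_{\X,X} \twoheadrightarrow E_{X,\X'}$), statement (1) reduces to showing $\Hom_\Lambda(E_{X,\X'},Z)=0$ together with surjectivity of $\alpha$, while statement (2) reduces to showing that the restriction map $\Ext^1_\Lambda(E_{X,\X'},Z) \to \Ext^1_\Lambda(X'_X,Z)$ is zero.

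The first key input will be that $\Hom_\Lambda(P_{\X,X},Z) = \Ext^1_\Lambda(P_{\X,X},Z) = 0$ for every $Z \in \Filt(\X')$. The $\Hom$-vanishing follows because $P_{\X,X}$ is the projective cover of $X$ in the wide subcategory $\Filt(\X)$, so any nonzero quotient of $P_{\X,X}$ has top $X$ in $\Filt(\X)$, whereas (since $\Filt(\X')$ is Serre in $\Filt(\X)$) any nonzero module in $\Filt(\X')$ has top supported on $\X' \not\ni X$; the maximal $\Filt(\X')$-quotient of $P_{\X,X}$ is therefore zero. The $\Ext$-vanishing uses that $\Filt(\X)$ is closed under extensions in $\mods\Lambda$, so any extension of $P_{\X,X}$ by $Z$ remains in $\Filt(\X)$ where $P_{\X,X}$ is projective. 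The surjection $P_{\X,X} \twoheadrightarrow E_{X,\X'}$ now forces $\Hom_\Lambda(E_{X,\X'},Z) = 0$, handing us the injectivity half of (1).

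For the surjectivity of $\alpha$, I would represent a class $\xi \in \Ext^1_\Lambda(X,Z)$ by a short exact sequence $Z \hookrightarrow F \twoheadrightarrow X$, pull it back along $P_{\X,X} \twoheadrightarrow X$, and use $\Ext^1_\Lambda(P_{\X,X},Z) = 0$ to split the pulled-back sequence. Restricting a splitting to $\Omega_\X X$ produces $\sigma\colon \Omega_\X X \to Z$ representing $\xi$ under the identification $\Ext^1_\Lambda(X,Z) \cong \Hom_\Lambda(\Omega_\X X, Z)$, which is valid because $\Filt(\X) \subseteq \mods\Lambda$ is wide, $P_{\X,X}$ is projective in $\Filt(\X)$, and $\Hom_\Lambda(P_{\X,X},Z)=0$. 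Applying the approximation property of $\gamma_{X,\X'}$ to factor $\sigma = f \circ \gamma_{X,\X'}$ yields $f\colon X'_X \to Z$, and then $\alpha(f) = f_*[\eta_{X,\X'}] = [f \circ \gamma_{X,\X'}] = [\sigma] = \xi$.

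The crux will be (2). Using $\Hom_\Lambda(P_{\X,X},Z) = \Ext^1_\Lambda(P_{\X,X},Z) = 0$, the long exact sequence of $K \hookrightarrow P_{\X,X} \twoheadrightarrow E_{X,\X'}$ gives an isomorphism $\Hom_\Lambda(K,Z) \xrightarrow{\sim} \Ext^1_\Lambda(E_{X,\X'},Z)$. A short diagram chase (exploiting that pulling back the sequence $K \to P_{\X,X} \to E_{X,\X'}$ along $X'_X \hookrightarrow E_{X,\X'}$ recovers $K \to \Omega_\X X \to X'_X$, since the pushout diagram forces the preimage of $X'_X$ in $P_{\X,X}$ to equal $\Omega_\X X$) then identifies the restriction $\Ext^1_\Lambda(E_{X,\X'},Z) \to \Ext^1_\Lambda(X'_X,Z)$ with the connecting map $\delta\colon \Hom_\Lambda(K,Z) \to \Ext^1_\Lambda(X'_X,Z)$ of the sequence $K \hookrightarrow \Omega_\X X \twoheadrightarrow X'_X$. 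To prove $\delta = 0$, given $\phi\colon K \to Z$ I would form the pushout extension $Z \hookrightarrow E_\phi \twoheadrightarrow X'_X$ (which lies in $\Filt(\X')$), observe that the canonical morphism $\tilde\gamma\colon \Omega_\X X \to E_\phi$ coming from the pushout satisfies $\pi \circ \tilde\gamma = \gamma_{X,\X'}$ with $\pi\colon E_\phi \twoheadrightarrow X'_X$, and apply the universal property of $\gamma_{X,\X'}$ to produce $f\colon X'_X \to E_\phi$ with $f \circ \gamma_{X,\X'} = \tilde\gamma$. Because $\gamma_{X,\X'}$ is an epimorphism, $\pi \circ f = \mathrm{id}_{X'_X}$, so $f$ splits the pushout sequence and $\delta(\phi) = 0$. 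The main obstacle will be the bookkeeping required for that last identification between pullback and pushforward of extensions; beyond that, the argument reduces to the universal property of the approximation together with surjectivity of $\gamma_{X,\X'}$.
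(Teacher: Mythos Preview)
Your argument is correct. Part~(1) is essentially the paper's proof repackaged: both routes amount to showing that $\gamma_{X,\X'}$ induces an isomorphism $\Hom_\Lambda(X'_X,Z)\cong\Hom_\Lambda(\Omega_\X X,Z)\cong\Ext^1_\Lambda(X,Z)$, using that $K=\ker\gamma_{X,\X'}\in\lperp{\X'}$ and that $P_{\X,X}$ satisfies $\Hom_\Lambda(P_{\X,X},Z)=\Ext^1_\Lambda(P_{\X,X},Z)=0$.

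Part~(2), however, is a genuinely different and more elementary argument than the paper's. The paper works in the bounded derived category: it invokes a realization functor $\mathsf{real}\colon\Db(\mods\Gamma)\to\Db(\mods\Lambda)$ (with $\Filt(\X)$ Serre in $\mods\Gamma$) to obtain an embedding $\Hom_\Lambda(\Omega_\X^2 X,Z)\hookrightarrow\Ext^2_\Lambda(X,Z)$, and then proves injectivity at the level of second syzygies by showing the induced map $\Omega_\X^2 X\to\Omega_\X X'_X$ is surjective. Your approach bypasses $\Ext^2$ and the derived machinery entirely: you observe from the long exact sequence that injectivity of $\beta$ is equivalent to vanishing of the restriction $\Ext^1_\Lambda(E_{X,\X'},Z)\to\Ext^1_\Lambda(X'_X,Z)$, identify this (via naturality of connecting maps for the morphism of short exact sequences $K\hookrightarrow\Omega_\X X\twoheadrightarrow X'_X$ into $K\hookrightarrow P_{\X,X}\twoheadrightarrow E_{X,\X'}$) with the connecting map $\delta\colon\Hom_\Lambda(K,Z)\to\Ext^1_\Lambda(X'_X,Z)$, and kill $\delta$ directly using the left-approximation property of $\gamma_{X,\X'}$ together with its surjectivity. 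What your route buys is a self-contained proof inside $\mods\Lambda$ with no appeal to realization functors; what the paper's route buys is an explicit description of the map in terms of syzygies, which fits the derived-category framework used elsewhere in the mutation theory.
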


\begin{proof}\ 

\ref{lem:mutation_1_1} Since $\Filt(\X)$ is a wide subcategory, we have that $\Ext^1_\Lambda(X,Z) \cong \Hom_\Lambda(\Omega_\X X,Z)$. We have $\ker(\gamma_{X,\X'}) \in \Filt(\X)$ since $\Filt(\X)$ is a wide subcategory. Now, $\Filt(\X')$ is a Serre subcategory of $\Filt(\X)$, so in particular, there is a torsion pair $(\lperp{\X'},\Filt(\X'))$ in $\Filt(\X)$. This implies that $\ker(\gamma_{X,\X'}) \in \lperp{\X'}$. From the exact sequence
    $$0 \rightarrow \Hom_\Lambda(X'_X,Z) \rightarrow \Hom_\Lambda(\Omega_\X X,Z) \rightarrow \Hom_\Lambda(\ker(\gamma_{X,\X'},Z),$$we therefore conclude that the induced map $\Hom_\Lambda(X'_X,Z) \rightarrow \Hom_\Lambda(\Omega_\X X,Z) \cong \Ext^1_\Lambda(X,Z)$ is a bijection.

    \ref{lem:mutation_1_2} We first claim that $\Hom_\Lambda(\Omega^2_\X X,Z) \subseteq \Ext^2_\Lambda(X,Z)$. We know from \cref{thm:torsion_smc} that there exists $\X_u \in \sbrick\Lambda$ such that $(\X,\X_u)$ is a 2-term simple-minded collection. Then, as in \cite[Lemma~7.8]{KY}, there exist a finite-dimensional algebra $\Gamma$ and a triangle functor \[{\mathsf{real}\colon \Db(\mods\Gamma) \rightarrow \Db(\mods\Lambda)}\] such that the following hold:
    \begin{itemize}
        \item $\mods\Gamma = \Filt(\X \oplus \X_u[1])$.
        \item $\mathsf{real}$ acts as the identity on $\mods\Gamma$.
        \item For all $M, N \in \mods\Gamma$, the induced map $\Ext^1_\Gamma(M,N) \rightarrow \Hom_{\Db(\mods\Lambda)}(M,N[1])$ is a bijection, and the induced map $\Ext^2_\Gamma(M,N) \rightarrow \Hom_{\Db(\mods\Lambda)}(M,N[2])$ is an injection.
    \end{itemize}
    Now, $\Filt(\X)$ is a Serre subcategory of $\mods\Gamma$ and a wide subcategory of $\mods\Lambda$. Thus, taking $M = X$ and $N = Z$ above, we find that $\mathsf{real}$ induces an inclusion $\Ext^2_\Gamma(X,Z) \subseteq \Ext^2_\Lambda(X,Z)$. To prove the claim, it therefore suffices to show that $(\Hom_\Lambda(\Omega^2_\X X,Z) =) \Hom_\Gamma(\Omega^2_\X,Z) \subseteq \Ext^1_\Gamma(X,Z)$.

    Consider the following commutative diagram, where the first row consists of projective covers and syzygies in $\mods \Gamma$ and the second row consists of projective covers and syzygies in $\Filt(\X)$.
    $$
    \begin{tikzcd}
        \Omega^2 X \arrow[r,hookrightarrow]\arrow[d,dashed,"g"] & P_X^1 \arrow[r,two heads]\arrow[d,dashed] & \Omega X \arrow[r,hookrightarrow]\arrow[d,"f",dashed] & P_{X}^0 \arrow[r,two heads]\arrow[d,dashed] & X \arrow[d,equals]\\
         \Omega^2_\X X \arrow[r,hookrightarrow] & P_{\X,X}^1 \arrow[r,twoheadrightarrow] & \Omega_\X X \arrow[r,hookrightarrow] & P_{\X,X}^0 \arrow[r,two heads] & X.
    \end{tikzcd}
$$
Since $\Filt(\X)$ is a Serre subcategory of $\mods\Gamma$, every module in $\Filt(\X)$ has the same top in both $\Filt(\X)$ and in $\mods\Gamma$. Thus, the induced map $P_X^0 \rightarrow P_{\X,X}^0$, and so also $f$, must be surjective. Now for $S \in \X$, we get an induced exact sequence
$$0 \rightarrow \Hom_\Gamma(P_{\X,X}^0, S) \rightarrow \Hom_\Gamma(P_X^0,S) \rightarrow \Hom_\Gamma(\ker f,S) \rightarrow \Ext^1_\Gamma(P_{\X,X}^0,S) = 0,$$
where the last term is 0 because $P_{\X,X}^0$ is projective in $\Filt(\X)$. Since $P_X^0$ and $P_{\X,X}^0$ have the same top, this implies that $\Hom_\Gamma(\ker f,S) = 0$, and hence also that $\ker f \in \lperp{\X}$. Moreover, we see that the top of $\Omega_\X X$ is a direct summand of the top of $\Omega X$, and thus that the induced map $P_X^1 \twoheadrightarrow P_{\X,X}^1$ is surjective. From the snake lemma, this means there is a surjective map $\ker(f) \twoheadrightarrow \coker(g)$. On the other hand, $\coker(g) \in \Filt(\X)$ since $\Filt(\X)$ is a Serre subcategory of $\mods\Gamma$. It follows that $g$ is surjective. In particular, the induced map $\Hom_\Gamma(\Omega_\X^2 X,Z) \rightarrow \Hom_\Gamma(\Omega^2 X,Z) = \Ext^1_\Gamma(X,Z)$
is injective. This proves the claim. 

Now consider the following commutative diagram of projective covers and syzygies in $\Filt(\X)$:
$$
    \begin{tikzcd}
       \Omega_\X^2 X \arrow[r,hookrightarrow]\arrow[d,dashed,"\alpha_{X,\X'}"] & P^1_{\X,X} \arrow[r,two heads]\arrow[d,dashed] & \Omega_\X X \arrow[d,two heads,"\gamma_{X,\X'}"]\\
        \Omega_\X X'_X \arrow[r,hookrightarrow] & P_{\X,X'_X}^0 \arrow[r,two heads] & X'_X.
    \end{tikzcd}
$$
The induced map $P_{\X,X}^1 \rightarrow P_{\X,X'_X}^0$ is a (split) epimorphism. Consequently, we have a surjection $\ker(\gamma_{X,\X'}) \twoheadrightarrow \coker(\alpha_{X,\X'})$. On the other hand, we have that $\ker(\gamma_{X,\X'}) \in \lperp{\X'}$, as observed in the proof of \ref{lem:mutation_1_1}. Similarly, we have $\coker(\alpha_{X,\X'}) \in \Filt(\X')$ since $\Filt(\X')$ is a Serre subcategory of $\Filt(\X)$. We conclude that $\coker(\alpha_{X,\X'}) = 0$. In particular, the induced map $$\Ext^1_{\Filt(\X)}(X'_X,Z) = \Hom_\Gamma(\Omega_\X X'_X,Z) \rightarrow \Hom_\Gamma(\Omega^2_\X X,Z) = \Ext^2_{\Filt(\X)}(X,Z)$$
is injective. Together with the claim, the fact that $\Ext^1_\Lambda(X'_X,Z) = \Ext^1_{\Filt(\X)}(X'_X,Z)$ then implies the result.
\end{proof}

\begin{lemma}\label{lem:mutation2} Let $(\X,\Y) \in \sbp\Lambda$ be SM compatible. Let $\X' \subseteq \X$, $Y \in \Y$, and $Z \in \Filt(\X')$. 
        \begin{enumerate}[(1)]
            \item\label{lem:mutation_2_1} The map $\Hom_\Lambda(X'_Y,Z) \rightarrow \Hom_\Lambda(Y,Z)$ induced by $g_{Y,\X'}$ is bijective.
            \item\label{lem:mutation_2_2} The map $\Ext^1_\Lambda(X'_Y,Z) \rightarrow \Ext^1_\Lambda(Y,Z)$ induced by $g_{Y,\X'}$ is injective.
        \end{enumerate}
\end{lemma}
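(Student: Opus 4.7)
The plan is to split into the two cases provided by the SM compatibility hypothesis on $(\X, \Y)$ and to reduce both conclusions to vanishing statements via long exact sequences. By the defining property of a left $\Filt(\X')$-approximation, $(g_{Y,\X'})^*\colon \Hom_\Lambda(X'_Y, Z) \to \Hom_\Lambda(Y, Z)$ is always surjective, so only the injectivity claim in part (1) and the injectivity claim in part (2) need to be proved. When $g_{Y,\X'}$ is surjective with kernel $K$, the long exact sequence associated to $0 \to K \to Y \to X'_Y \to 0$ reduces both of these to the single vanishing $\Hom_\Lambda(K, Z) = 0$. When $g_{Y,\X'}$ is injective with cokernel $C$, the long exact sequence associated to $0 \to Y \to X'_Y \to C \to 0$ reduces them to the pair of vanishings $\Hom_\Lambda(C, Z) = 0$ and $\Ext^1_\Lambda(C, Z) = 0$.

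All three vanishing statements will be proved by a single uniform strategy, essentially that of \cite[Section~3]{HI_pairwise} and \cite[Section~7.2]{KY} for single-object mutations, suitably adapted to the multi-object setting: assume that a nonzero morphism or extension class is given; use it to construct an auxiliary object $E$ in $\Filt(\X')$ (using that $\Filt(\X')$ is a wide subcategory, hence closed under extensions, kernels, and cokernels) together with a morphism $Y \to E$; factor this morphism through $g_{Y,\X'}$ via the left-approximation property; and finally appeal to the minimality of $g_{Y,\X'}$ to produce a self-map of $X'_Y$ that must be an isomorphism, thereby forcing the assumed data to vanish.

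Concretely, I will construct as follows. For $\Hom_\Lambda(K, Z) = 0$, form the pushout of $K \hookrightarrow Y$ along a hypothetical nonzero $\phi\colon K \to Z$; this produces a short exact sequence $0 \to Z \to E \to X'_Y \to 0$ (so $E \in \Filt(\X')$) together with a pushout map $\alpha\colon Y \to E$ whose restriction to $K$ is the composition of $\phi$ with the inclusion, while any factorization $\alpha = \beta \circ g_{Y,\X'}$ satisfies $\alpha|_K = 0$, forcing $\phi = 0$. For $\Hom_\Lambda(C, Z) = 0$, a hypothetical nonzero $\phi\colon C \to Z$ produces a proper subobject $\ker(\phi \circ \pi) \subsetneq X'_Y$ lying in $\Filt(\X')$, through which $g_{Y,\X'}$ factors; the approximation property plus minimality forces the inclusion of this kernel into $X'_Y$ to be an isomorphism. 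For $\Ext^1_\Lambda(C, Z) = 0$, pull back a hypothetical nonzero extension along $\pi\colon X'_Y \twoheadrightarrow C$ to obtain $0 \to Z \to F \to X'_Y \to 0$ with $F \in \Filt(\X')$; the identity $\pi \circ g_{Y,\X'} = 0$ ensures that this sequence becomes split upon further pullback to $Y$, producing a lift $\widetilde{g}\colon Y \to F$ of $g_{Y,\X'}$; the approximation property then yields $\beta\colon X'_Y \to F$, and minimality forces the projection $F \twoheadrightarrow X'_Y$ to split.

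The main obstacle will be the $\Ext^1_\Lambda(C, Z) = 0$ step, since it requires an iterated pullback construction to produce the requisite lift of $g_{Y,\X'}$ before the minimality argument can be invoked, whereas the two $\Hom$-vanishings follow from a single pushout or kernel construction. Once all three vanishings are established, assembling them with the long exact sequences completes the proof.
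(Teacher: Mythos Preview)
Your approach is essentially correct but takes a more circuitous route than the paper. The paper's proof is completely uniform: it never case-splits on whether $g_{Y,\X'}$ is injective or surjective, and in fact never uses the SM compatibility hypothesis at all. For~(1), it simply takes $f\colon X'_Y \to Z$ with $f \circ g_{Y,\X'} = 0$, observes that $g_{Y,\X'}$ then factors through $\ker f \in \Filt(\X')$, and invokes the approximation property plus minimality to force $\ker f = X'_Y$. For~(2), it takes $\eta \in \Ext^1_\Lambda(X'_Y,Z)$ whose pullback along $g_{Y,\X'}$ splits, uses the resulting lift $Y \to E$ together with the approximation property to produce a section of $E \twoheadrightarrow X'_Y$ (appealing to part~(1) rather than to raw minimality at the last step), and concludes that $\eta$ itself splits. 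This is precisely the core minimality argument you deploy, but applied once directly to $X'_Y$ rather than separately to a kernel and a cokernel.

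Your case split therefore buys nothing and introduces extra bookkeeping. There is also a small gap in your injective case: your sketch for $\Ext^1_\Lambda(C,Z) = 0$ only shows that the pulled-back extension $\pi^*\eta$ splits, which does not by itself contradict $\eta \neq 0$. You would need to add that $\pi^*$ is injective on $\Ext^1$ (which follows from the long exact sequence once you know the connecting map $\Hom_\Lambda(Y,Z) \to \Ext^1_\Lambda(C,Z)$ vanishes, itself a consequence of the surjectivity in part~(1)), or simply observe that $\image(\pi^*) = 0$ is already exactly what part~(2) requires, making the full vanishing of $\Ext^1_\Lambda(C,Z)$ unnecessary. Either patch is easy, but the paper's uniform argument sidesteps the issue entirely.
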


\begin{proof}\ 

    \ref{lem:mutation_2_1} Let $f\colon X'_Y \rightarrow Z$ be such that $f\circ g_{Y,\X'} = 0$. Equivalently, we have $\image(g_{Y,\X'}) \subseteq \ker(f)$. Note that $\ker(f) \in \Filt(\X')$ since $\Filt(\X')$ is a wide subcategory. Since $g_{Y,\X'}$ is a right minimal $\Filt (\X')$-approximation, it follows that $Y_\X' = \ker f$; i.e., $f = 0$. This shows that the induced map is injective; the fact that it is surjective comes from the definition of an approximation.

    \ref{lem:mutation_2_2} Let $\eta \in \Ext^1_\Lambda(X'_Y,Z)$ be such that $\eta \circ g_{Y,\X'} = 0$. Then $\eta$ is a short exact sequence of the form
    $$
    \begin{tikzcd}
        \eta: & Z \arrow[r,hookrightarrow] & E \arrow[r,"q",two heads] & X'_Y,
    \end{tikzcd}
    $$
    so this means that there exists $h\colon Y \rightarrow E$ such that $g_{Y,\X'} = q \circ h$. Now, $E \in \Filt(\X')$, so there exists $h'\colon X'_Y \rightarrow E$ such that $h = h' \circ g_{Y,\X'}$. We therefore have
    $$q \circ h = q \circ h' \circ g_{Y,\X'} = \mathrm{Id}_{X'_Y}\circ g_{Y,\X'}.$$
    It then follows from \ref{lem:mutation_2_1} that $q \circ h' = \mathrm{Id}_{X'_Y}$; i.e., the exact sequence $\eta$ is split.
\end{proof}

\begin{proposition}[\cref{thm:mutation_summary}, \ref{thm:mutation_1a}] \label{prop:mutation}
    Let $(\X,\Y) \in \sbp\Lambda$ be SM compatible, and let $\X'\subseteq \X$. Then $\mu_{\X'}(\X,\Y)$ is a semibrick pair.
\end{proposition}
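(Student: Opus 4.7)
The plan is to verify separately three conditions for $(\X_1,\Y_1)$ to be a semibrick pair: (a) every element of $\X_1 \cup \Y_1$ is a brick; (b) distinct elements within $\X_1$ (resp.\ within $\Y_1$) are Hom-orthogonal; and (c) $\Hom_\Lambda(M,N)=0=\Ext^1_\Lambda(M,N)$ for every $M \in \X_1$ and $N \in \Y_1$. Two preliminary vanishings will be used throughout: a standard induction on filtration length upgrades the semibrick pair condition $(\X,\Y) \in \sbp\Lambda$ to $\Hom_\Lambda(Z,Y)=0=\Ext^1_\Lambda(Z,Y)$ for every $Z \in \Filt(\X)$ and $Y \in \Y$, and since $\Filt(\X')$ is a Serre subcategory of the wide subcategory $\Filt(\X)$ (whose simples are the pairwise Hom-orthogonal bricks in $\X$), a similar induction gives $\Hom_\Lambda(Z,X)=0$ for every $Z \in \Filt(\X')$ and $X \in \X \setminus \X'$.

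The core technical step is to establish the ``compact'' orthogonality $\Hom_\Lambda(M,Z)=0=\Ext^1_\Lambda(M,Z)$ for every $M \in \X_1$ and $Z \in \Filt(\X')$. For $M = E_{X,\X'}$ this follows by applying $\Hom(-,Z)$ to the short exact sequence $\eta_{X,\X'}$ from \cref{eqn:mutation_1}: the resulting long exact sequence collapses because the connecting map $\Hom(X'_X,Z)\to\Ext^1_\Lambda(X,Z)$ is the isomorphism of \cref{lem:mutation_1}\ref{lem:mutation_1_1} (encoding that $[\eta_{X,\X'}]$ corresponds to $\mathrm{Id}_{X'_X}$) and the subsequent map $\Ext^1_\Lambda(X'_X,Z)\to\Ext^2_\Lambda(X,Z)$ is injective by \cref{lem:mutation_1}\ref{lem:mutation_1_2}, while $\Hom_\Lambda(X,Z)=0$ by a preliminary vanishing. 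For $M = \coker g_{Y,\X'}$, the analogous compact orthogonality follows from \cref{lem:mutation2}: applied to the SES $0 \to Y \to X'_Y \to \coker g_{Y,\X'} \to 0$, the bijectivity of $\Hom(X'_Y,Z)\to\Hom(Y,Z)$ and injectivity of $\Ext^1(X'_Y,Z)\to\Ext^1(Y,Z)$ immediately collapse the long exact sequence to give the desired vanishings.

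With these inputs, the remaining conditions follow by routine long exact sequence arguments. For (a), applying $\Hom(M,-)$ to the SES defining $M$ and using the compact orthogonality together with the preliminary vanishings collapses the sequence to $\End_\Lambda(M) \cong \End_\Lambda(X)$ or $\End_\Lambda(Y)$, which is a division ring; the case $\ker g_{Y,\X'} \in \Y_1$ is handled similarly by applying \cref{lem:mutation2} to the SES $0 \to \ker g_{Y,\X'} \to Y \to X'_Y \to 0$, which forces $\Hom_\Lambda(\ker g_{Y,\X'},Z)=0$ for all $Z \in \Filt(\X')$ and hence $\End_\Lambda(\ker g_{Y,\X'}) \cong \End_\Lambda(Y)$. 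Condition (b) is handled by the same pattern, comparing two different SES. For (c), the pairings with $N \in \X' \subseteq \Y_1$ are immediate from the compact orthogonality; the remaining pairings $\Hom_\Lambda(M,\ker g_{Y,\X'})$ and $\Ext^1_\Lambda(M,\ker g_{Y,\X'})$ are most cleanly handled by passing to $\Db(\mods\Lambda)$. The SES defining $\ker g_{Y,\X'}$ yields a distinguished triangle $Y \to X'_Y \to \ker g_{Y,\X'}[1] \to Y[1]$, and applying $\Hom_{\Db(\mods\Lambda)}(M,-)$ produces a long exact sequence whose four flanking terms all vanish: the $X'_Y$ entries by compact orthogonality, and the $Y$ entries by applying $\Hom(-,Y)$ and $\Hom(-,Y[1])$ to the SES defining $M$ and invoking the preliminary vanishings.

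The main obstacle will be establishing the compact orthogonality for $E_{X,\X'}$, particularly the $\Ext^1$ vanishing, which crucially depends on the $\Ext^1 \hookrightarrow \Ext^2$ injection from \cref{lem:mutation_1}\ref{lem:mutation_1_2}; without this input, a residual extension class would obstruct the mixed vanishings needed for (c). Once the compact orthogonality is secured, the verifications of (a), (b), and the rest of (c) amount to formal bookkeeping via long exact sequences and the preliminary vanishings.
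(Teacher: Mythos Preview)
Your proposal is correct and follows essentially the same route as the paper: both arguments reduce everything to long exact sequences arising from the defining short exact sequences, with \cref{lem:mutation_1} and \cref{lem:mutation2} supplying the crucial bijectivity/injectivity of the boundary maps. The only differences are organizational---you isolate the ``compact orthogonality'' $\X_1 \subseteq \lperpNum{0,1}{\Filt(\X')}$ up front (the paper's Claim~2) and then split the remaining checks as brick/Hom-orthogonal/pair-condition, whereas the paper groups them as ``$\Y_1$ is a semibrick'', ``$\X_1 \perp \Y_1$'', ``$\X_1$ is a semibrick''---and you phrase the $\Ext^1(M,\ker g_{Y,\X'})$ vanishing via a rotated triangle in $\Db(\mods\Lambda)$ where the paper stays in $\mods\Lambda$ with two stacked exact sequences.
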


\begin{proof}
    The proof is similar to that of \cite[Proposition~3.5]{HI_pairwise}. For readability, let $\mu_{\X'}(\X,\Y)_d = \X_1$ and $\mu_{\X'}(\X,\Y)_u = \Y_1$.
\medskip

    \noindent {\bf Claim 1:} The set $\X_2$ is a semibrick.

\medskip
    
    Let $Z_1, Z_2 \in \X_2$. First suppose that $Z_1 \in \X'$. If $Z_2 \in \X'$, then there is nothing to show since $\X' \subseteq \X$ is a semibrick. If $Z_2 = \ker(g_{Y,\X'})$ for some $Y \in \Y$ such that $g_{Y,\X'}$ is surjective, then $\Hom_\Lambda(Z_1,Z_2) = 0$ because $Z_2 \subseteq Y$ and $\Hom_\Lambda(Z_1,Y) = 0$.

    Now suppose that there exists $Y_1 \in \Y$ such that $g_{Y_1,\X'}$ is surjective and $Z_1 = \ker(g_{Y_1,\X'}).$  We first consider the case $Z_2 \in \X'$. Then we have a long exact sequence
    $$0 \rightarrow \Hom_\Lambda(X'_Y,Z_2) \rightarrow \Hom_\Lambda(Y,Z_2) \xrightarrow{0} \Hom_\Lambda(Z,Z_2)\xrightarrow{0}$$$$\xrightarrow{0} \Ext^1_\Lambda(X'_Y,Z_2) \rightarrow \Ext^1_\Lambda(Y,Z_2),$$
    where we know the indicated maps are 0 by \cref{lem:mutation2}.

    It remains to consider the case where there exists $Y_2 \in \Y$ such that $g_{Y_2,\X'}$ is surjective and $Z_2 = \ker(g_{Y_2,\X'})$. We then have a long exact sequence
    $$0 \rightarrow \Hom_\Lambda(Z_1,Z_2) \rightarrow \Hom_\Lambda(Z_1,Y_2) \rightarrow \Hom_\Lambda(Z_1,X'_{Y_2}) = 0,$$
    where the last term is 0 by the case considered above. Similarly, we have a long exact sequence
    $$0 = \Hom_\Lambda(Y_1,Y_2) \rightarrow \Hom_\Lambda(Z_1,Y_2) \rightarrow \Ext^1_\Lambda(X'_{Y_1},Y_2).$$
    Together, these sequences show that $\Hom_\Lambda(Z_1,Z_2) \cong \Hom(Y_1,Y_2)$. In particular, setting $Z_1 = Z_2$, we conclude that $Z_1$ is a brick. This proves the claim.

    \medskip

    \noindent {\bf Claim 2:} We have $\Hom_\Lambda(Z,X_1) = 0 = \Ext^1_\Lambda(Z,X_1)$ for all $X_1 \in \X'$ and $Z \in \X_1$. 
    
    \medskip
    
    Again, there are two possibilities. Suppose first that there exists $X_2 \in \X \setminus \X'$ such that ${Z = E_{X_2,\X'}}$. Then we have a long exact sequence
    $$0 = \Hom_\Lambda(X_2,X_1) \rightarrow \Hom_\Lambda(Z,X_1) \xrightarrow{0} \Hom_\Lambda(X'_{X_2},X_1) \rightarrow$$$$\rightarrow \Ext^1_\Lambda(X_2,X_1) \xrightarrow{0} \Ext^1_\Lambda(Z,X_1) \xrightarrow{0} \Ext^1_\Lambda(X'_{X_2},X_1) \rightarrow \Ext^2_\Lambda(X_2,X_1),$$
    where we know the indicated maps are 0 by \cref{lem:mutation_1}. This proves the claim in this case.

    The other case to consider is that in which there exists $Y \in \Y$ such that $g_{Y,\X'}$ is injective and $Z = \coker(g_{Y,\X'})$. In this case, we have a long exact sequence
    $$0 \rightarrow \Hom_\Lambda(Z,X_1) \xrightarrow{0} \Hom_\Lambda(X'_Y,X_1) \rightarrow \Hom_\Lambda(Y,X_1) \xrightarrow{0}$$$$\xrightarrow{0} \Ext^1_\Lambda(Z,X_1) \xrightarrow{0} \Ext^1_\Lambda(X'_Y,X_1) \rightarrow \Ext^1_\Lambda(Y,X_1),$$
    where again we know the indicated maps are 0 by \cref{lem:mutation2}. This proves the claim.

    \medskip

    \noindent {\bf Claim 3:} We have $\Hom_\Lambda(Z,\ker(g_{Y_1,\X'})) = 0 = \Ext^1_\Lambda(Z,\ker(g_{Y_1,\X'}))$ for all $Z \in \X_1$ and $Y_1 \in \Y$ such that $g_{Y_1,\X'}$ is surjective. 
    
    \medskip
    
    Again, there are two possibilities. Suppose first that there exists $X_2 \in \X \setminus \X'$ such that ${Z = E_{X_2,\X'}}$. Then $\Hom_\Lambda(Z,\ker(g_{Y_1,\X'})) = 0$ because $Z \in \Filt(\X)$. Moreover, we have a long exact sequence
    $$0 = \Hom_\Lambda(Z,\X'_{Y_1}) \rightarrow \Ext^1_\Lambda(Z, \ker(g_{Y_1,\X'})) \rightarrow \Ext^1_\Lambda(Z,Y_1) = 0,$$
    where the first term is 0 by Claim 2 and the last term is 0 because $Z \in \Filt(\X)$. This proves the claim in this case.

    The other case to consider is that in which there exists $Y_2 \in \Y$ such that $g_{Y_2,\X'}$ is injective and $Z = \coker(g_{Y_2,\X'})$. The fact that $\Hom_\Lambda(Z,\ker(g_{Y_1,\X'})) = 0$ then follows from the fact that $Y_1 \in \X^\perp$. Moreover, we have an exact sequence $$0 = \Hom_\Lambda(Z,X'_{Y_1}) \rightarrow \Ext^1_\Lambda(Z,\ker(g_{Y_1,\X'})) \rightarrow $$$$\rightarrow\Ext^1_\Lambda(Z,Y_1) \rightarrow \Ext^1_\Lambda(Z,X'_{Y_1}) = 0,$$
    where the first and last terms are 0 by Claim 2. Likewise, we have an exact sequence
    $$0 = \Hom_\Lambda(Y_2,Y_1) \rightarrow \Ext^1_\Lambda(Z,Y_1) \rightarrow \Ext^1_\Lambda(X'_{Y_2},Y_1) = 0,$$
    where the first and last terms are 0 since $Y_1 \in Y_2^\perp \cap \X^{\perp_{0,1}}$. Together, these two exact sequences yield that $\Ext^1_\Lambda(Z,\ker(g_{Y_1,\X'})) = 0$, as desired.

    \medskip

    \noindent {\bf Claim 4:} The set $\X_1$ is a semibrick. 

    \medskip
    
    Let $Z_1, Z_2 \in \X_1$. First suppose that there exist $X_1, X_2 \in \X \setminus \X'$ such that $Z_1 = E_{X_1,\X'}$ and $Z_2 = E_{X_2,\X'}$. Then there is an exact sequence
    $$0 = \Hom_\Lambda(Z_1,X'_{X_2}) \rightarrow \Hom_\Lambda(Z_1,Z_2) \rightarrow \Hom_\Lambda(Z_1,X_2) \rightarrow \Ext^1_\Lambda(Z_1,X'_{X_2}) = 0,$$
    where the first and last terms are 0 by Claim 2. We also have a long exact sequence
    $$0 \rightarrow \Hom(X_1,X_2) \rightarrow \Hom(Z_1,X_2) \rightarrow \Hom(X'_{X_1},X_2) = 0.$$
    Together, these sequences show that $\Hom_\Lambda(Z_1,Z_2) \cong \Hom_\Lambda(X_1,X_2)$. In particular, by setting $Z_1 = Z_2$, we conclude that $Z_1$ is a brick. This proves the claim in this case.

    Now suppose there exists $Y_1 \in \Y$ such that $g_{Y_1,\X'}$ is injective and $Z_1 = \coker(g_{Y_1,\X'})$ and that there exists $X_2 \in \X \setminus \X'$ such that $Z_2 = E_{X_2,\X'}$. Then $\Hom_\Lambda(Z_1,X_2) = 0$ since $X_2 \in (\X')^\perp$. We
    then have an exact sequence
    $$0 = \Hom_\Lambda(Z_1,X'_{X_2}) \rightarrow \Hom_\Lambda(Z_1,Z_2) \rightarrow \Hom_\Lambda(Z_1,X_2) = 0,$$
    where the first term is 0 by Claim 2. Similarly, we have an exact sequence
    $$0 = \Hom_\Lambda(Z_2,X'_{Y_1}) \rightarrow \Hom_\Lambda(Z_2,Z_1) \rightarrow \Ext^1_\Lambda(Z_2,Y_1) = 0,$$
    where the first term is 0 by Claim 2 and the last term is 0 because $Y \in \X^{\perp_{0,1}}$. We conclude that $Z_1 \in Z_2^\perp \cap \lperp{Z_2}$.

    It remains to consider the case where there exist $Y_1, Y_2 \in \Y$ such that $g_{Y_1,\X'}$ and $g_{Y_2,\X'}$ are injective, $Z_1 = \coker(g_{Y_1,\X'})$, and $Z_2 = \coker(g_{Y_2,\X'})$.  As in earlier cases, we have an exact sequence
    $$0 = \Hom_\Lambda(Z_1,X'_{Y_2}) \rightarrow \Hom_\Lambda(Z_1,Z_2) \rightarrow \Hom_\Lambda(Z_1,Y_2[1]) \rightarrow \Ext^1_\Lambda(Z_1,X'_{Y_2}) = 0,$$
    where the first and last terms are 0 by Claim 2. Similarly, we have an exact sequence
    $$0 = \Hom(X'_{Y_1},Y_2) \rightarrow \Hom_\Lambda(Y_1,Y_2) \rightarrow \Ext^1_\Lambda(Z_1,Y_2) \rightarrow \Ext^1_\Lambda(X'_{Y_1},Y_2) = 0,$$
    where the first and last terms are 0 since $Y \in \X^{\perp_{0,1}}$. We again conclude that $\Hom_\Lambda(Z_1,Z_2) \cong \Hom_\Lambda(Y_1,Y_2)$. This concludes the proof.
\end{proof}

\begin{proposition}[\cref{thm:mutation_summary},\ref{thm:mutation_1b}--\ref{thm:mutation_1e}]\label{prop:mutation_completable}
    Let $(\X,\Y) \in \sbp\Lambda$ be SM compatible, and let $\X' \subseteq \X$. Write $\mu_{\X'}(\X,\Y) = (\X_1,\Y_1)$. 
    \begin{enumerate}[(1)]
        \item\label{prop:mutation_completable_1} If $(\X'_1,\Y'_1)$ is an SM compatible semibrick pair such that $\X_1 \subseteq \X'_1$ and $\Y_1 \subseteq \Y'_1$, then we have $\X \subseteq \mu_{\X'}(\X'_1,\Y'_1)_d$ and $\Y \subseteq \mu_{\X'}(\X'_1,\Y'_1)_u$.
        \item\label{prop:mutation_completable_2} $(\X,\Y) \in \smc\Lambda$ if and only if $(\X_1,\Y_1) \in \smc\Lambda$.
        \item\label{prop:mutation_completable_3} $(\X,\Y)$ is completable if and only if $(\X_1,\Y_1)$ is completable.
        \item\label{prop:mutation_completable_4} If $(\X,\Y) \in \smc\Lambda$, then $\tor(\mu_{\X'}(\X,\Y)) = \tor(\X,\Y) \cap \lperp{\X'}$.
    \end{enumerate}
\end{proposition}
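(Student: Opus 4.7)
The plan is to prove the four claims in the order (1), (2), (3), (4): part (2) will rely on (1), part (3) on both, and part (4) on (2).

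For (1), the goal is an ``involutivity'' statement: after mutating $(\X,\Y)$ at $\X'$ to obtain $(\X_1,\Y_1)$, extending to any SM compatible semibrick pair $(\X'_1,\Y'_1)$, and mutating again at $\X'$ — which now lies in $\Y'_1$, so this is the right mutation of \cref{def:mutation} — one recovers a semibrick pair containing $(\X,\Y)$. I would argue element-by-element. For each $X \in \X \setminus \X'$, the pushout \cref{eqn:mutation_1} gives the short exact sequence $X'_X \hookrightarrow E_{X,\X'} \twoheadrightarrow X$, and the key step is to check that $X'_X \hookrightarrow E_{X,\X'}$ is a minimal right $\Filt(\X')$-approximation of $E_{X,\X'}$ in $\mods\Lambda$; this follows because $\Filt(\X')$ is Serre in $\Filt(\X'_1)$, so any map from an object of $\Filt(\X')$ into $E_{X,\X'}$ lands in its maximal $\Filt(\X')$-subobject $X'_X$. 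Then $X$, as the cokernel of this approximation, lies in $\mu_{\X'}(\X'_1,\Y'_1)_d$. For $Y \in \Y$ I would argue similarly in two cases: when $g_{Y,\X'}$ is injective, $Y$ occurs as $E_{\X',\coker(g_{Y,\X'})}$ from the pullback \cref{eqn:mutation_2} for $\coker(g_{Y,\X'}) \in \X_1$; when $g_{Y,\X'}$ is surjective, $Y$ is the extension of $X'_Y$ by $\ker(g_{Y,\X'}) \in \Y_1$ produced by the right mutation formula. Matching the universal properties of these pullback/pushout squares is where \cref{lem:mutation_1,lem:mutation2} come in to control the relevant $\Hom$ and $\Ext$ groups.

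For (2), it suffices by (1) to prove the forward direction. By definition, $(\X,\Y) \in \smc\Lambda$ means $\X \oplus \Y[1]$ generates $\Db(\mods\Lambda)$ as a triangulated subcategory closed under summands. \cref{rem:mutation_def} shows that every element of $\X_1 \oplus \Y_1[1]$ is the cone of a morphism whose terms lie in the triangulated subcategory generated by $\X \oplus \Y[1]$, and rotating those same triangles expresses each element of $\X \oplus \Y[1]$ in terms of elements of $\X_1 \oplus \Y_1[1]$ together with $\X'[1]$ (which is a subset of $\Y_1[1]$). Hence the two collections generate the same triangulated subcategory, so the 2-term simple-minded property is preserved. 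Part (3) is then quick: if $(\X,\Y) \subseteq (\widetilde\X,\widetilde\Y) \in \smc\Lambda$, then $\mu_{\X'}(\widetilde\X,\widetilde\Y) \in \smc\Lambda$ by (2) and contains $(\X_1,\Y_1)$ by direct inspection of \cref{def:mutation}; the converse is proved analogously, using (1) to undo the mutation on a completion of $(\X_1,\Y_1)$.

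Part (4) reduces to a short calculation. Granting (2), we have $\tor(\X,\Y) = \lperp{\Y}$ and $\tor(\mu_{\X'}(\X,\Y)) = \lperp{\Y_1}$, so it remains to show $\lperp{\Y_1} = \lperp{\Y} \cap \lperp{\X'}$. The inclusion $\lperp{\Y_1} \subseteq \lperp{\X'}$ is immediate because $\X' \subseteq \Y_1$. For $\lperp{\Y_1} \subseteq \lperp{\Y}$, each $Y \in \Y$ fits into either $\ker(g_{Y,\X'}) \hookrightarrow Y \twoheadrightarrow X'_Y$ (surjective case) or $Y \hookrightarrow X'_Y \twoheadrightarrow \coker(g_{Y,\X'})$ (injective case), and $\Hom$-vanishing against $\X' \subseteq \Y_1$ and $\ker(g_{Y,\X'}) \in \Y_1$ yields the required vanishing against $Y$. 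The reverse inclusion $\lperp{\Y} \cap \lperp{\X'} \subseteq \lperp{\Y_1}$ uses that $\ker(g_{Y,\X'}) \subseteq Y$, so $\lperp{\Y}$-vanishing descends to each such kernel. The main obstacle is part (1); the other parts follow fairly formally. The delicacy in (1) is ensuring that the minimal approximations, projective covers, and injective envelopes used to construct $(\X_1,\Y_1)$ from $(\X,\Y)$ coincide, up to isomorphism, with the corresponding constructions performed in the opposite direction starting from $(\X'_1,\Y'_1)$; careful bookkeeping with the Serre subcategory structure of $\Filt(\X')$ inside $\Filt(\X'_1)$ will be essential.
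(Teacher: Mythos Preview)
Your overall plan matches the paper's proof closely: part~(1) is argued element-by-element by identifying each mutation output with the input of the inverse mutation, (2) is a triangulated-subcategory generation argument, (3) follows formally from (1) and (2), and (4) is the short $\lperp{(-)}$ calculation you describe. Two points in your sketch of (1) need correcting.

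First, when you justify that $X'_X \hookrightarrow E_{X,\X'}$ is a minimal right $\Filt(\X')$-approximation, you invoke that ``$\Filt(\X')$ is Serre in $\Filt(\X'_1)$''. This is not well-posed: $\X' \subseteq \Y'_1$, not $\X'_1$, so $\Filt(\X')$ need not even sit inside $\Filt(\X'_1)$. The correct ambient category is $\Filt(\X)$ (which contains $E_{X,\X'}$), where $\Filt(\X')$ is Serre; equivalently one just uses $\Hom_\Lambda(\Filt(\X'),X) = 0$. The paper phrases this via \cref{lem:mutation_1}.

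Second, your case split for $Y \in \Y$ has the two mechanisms swapped. When $g_{Y,\X'}$ is \emph{injective}, the resulting brick $\coker(g_{Y,\X'})$ lies in $\X_1 \subseteq \X'_1$, so the right mutation at $\X' \subseteq \Y'_1$ processes it via the minimal right $\Filt(\X')$-approximation $g_{\X',\coker(g_{Y,\X'})}$ (no pullback $E$-construction); one checks this approximation is the quotient map $X'_Y \twoheadrightarrow \coker(g_{Y,\X'})$, whose kernel is $Y$. When $g_{Y,\X'}$ is \emph{surjective}, the resulting brick $\ker(g_{Y,\X'})$ lies in $\Y_1 \subseteq \Y'_1 \setminus \X'$, and it is here that one must verify the pullback $E_{\X',\ker(g_{Y,\X'})}$ (built from the injective envelope in $\Filt(\Y'_1)$) coincides with $Y$; this is the most delicate step and is where the paper constructs an explicit comparison diagram. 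Your identification of the surjective case is correct, but the injective case does not involve $E_{\X',-}$ at all.

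Finally, your reduction of (2) to the forward direction ``by (1)'' is unnecessary: the triangulated argument you sketch (expressing each generator on one side as a cone over generators on the other) already gives equality of the generated thick subcategories, hence both implications at once. This is exactly how the paper proceeds, independently of (1).
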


\begin{proof}\ 

    \ref{prop:mutation_completable_1} First note that $\X' \subseteq \Y'_1$ and therefore that $\X' \subseteq \mu_{\X'}(\X'_1,\Y'_1)_d$ by construction.
    
    Now let $X \in \X \setminus \X'$ so that $E_{X,\X'} \in \Y_1'$. Then \cref{lem:mutation_1} tells us that the map $X'_X \hookrightarrow E_{X,\X'}$ is a minimal right $\Filt(\X')$-approximation of $E_{X,\X'}$ that is injective and has cokernel $X$. It follows that $X \in \mu_{\X'}(\X'_1,\Y'_1)_d$.

    It remains to consider $Y \in \Y$. Suppose first that $g_{Y,\X'}$ is injective. For any ${Z \in \Filt(\X')}$, the quotient map $q\colon X'_Y \rightarrow \coker(g_{Y,\X'})$ induces a bijection $\Hom_\Lambda(Z,X'_{Y_2}) \rightarrow \Hom_\Lambda(Z,\coker(g_{Y,\X'}))$. Thus, $q$ is a minimal right $\Filt(\X')$-approximation of $\coker(g_{Y,\X'})$ that is surjective and has kernel $Y$. We conclude that $Y \in \mu_{\X'}(\X'_1,\Y'_1)_u$.

    It remains to consider the case when $g_{Y,\X'}$ is surjective. Let $Y' = \ker(g_{Y,\X'})$. We consider the commutative diagram
    $$
    \begin{tikzcd}
        Y' \arrow[r,hookrightarrow]\arrow[d,equals] & Y \arrow[r,two heads,"g_{Y,\X'}"]\arrow[d] & X'_Y \arrow[d,"\gamma"]\\
        Y' \arrow[r,hookrightarrow] & I_{\Y'_1,Y'} \arrow[r,two heads]  &\Sigma_{\Y'_1}Y',
    \end{tikzcd}
    $$
    where the bottom row consists of the injective envelope and first cosyzygy of $Y'$ in $\Filt(\Y'_1)$. The vertical maps are induced by the fact that the top row is an element of $\Ext^1_\Lambda(X'_Y,Y')$, so in particular, the right square is a pullback. Now for $Z \in \Filt(\X')$, we have an exact sequence
    $$0 = \Hom_\Lambda(Z,Y) \rightarrow \Hom_\Lambda(Z,X'_Y) \rightarrow \Ext^1_\Lambda(Z,Y') \rightarrow \Ext^1_\Lambda(Z,Y) = 0,$$
    where the first and last terms are 0 since $Y \in \X^{\perp_{0,1}}$. Moreover, we have a canonical isomorphism $\Ext^1_\Lambda(Z,Y') \cong \Hom_\Lambda(Z,\Sigma_{\Y'_1} Y')$ since $\Filt(\X') \subseteq \Filt(\Y')$ is a wide subcategory of $\mods\Lambda$. We conclude that $\gamma$ induces a bijection $\Hom_\Lambda(Z,X'_Y) \cong \Hom_\Lambda(Z,\Sigma_{\Y'_1}Y')$, so it is a minimal right $\Filt(\X')$-approximation. Thus, the top row of our diagram coincides with the short exact sequence $\eta_{\Y'_1,Y'}$ described above \cref{def:mutation}. We conclude that $Y \in \mu_{\X'}(\X'_1,\Y'_1)_u$.

    \ref{prop:mutation_completable_2} Let $X \in \X \setminus \X'$. Then the short exact sequence $\eta_{X,\X'}$ corresponds to a triangle
    $$X[-1] \rightarrow X'_X \rightarrow E_{X,\X'} \rightarrow X$$
    in $\Db(\mods\Lambda)$. Thus, any triangulated subcategory that is closed under direct summands and contains (some shift of) $\X'$ will contain $X$ if and only if it contains $E_{X,\X'}$. Similarly, for $Y \in \Y$, we have a triangle
    $$Y \xrightarrow{g_{Y,\X'}} X'_Y \rightarrow Z \rightarrow Y[1],$$
    where either $Z = \coker(g_{Y,\X'})$ or $Z = \ker(g_{Y,\X'})[1]$. Thus again, any triangulated subcategory that is closed under direct summands and contains $\X'$ will contain $Y$ if and only if it contains $Z$. This proves the result.

    \ref{prop:mutation_completable_3} This is an immediate consequence of \ref{prop:mutation_completable_1} and \ref{prop:mutation_completable_3}.

    \ref{prop:mutation_completable_4} Recall that $\tor(\mu_{\X'}(\X,\Y)) = \lperp{\Y_1}$ and $\tor(\X,\Y) = \lperp{\Y}$ by \cref{thm:torsion_smc}.

    The fact that $\lperp{Y_1} \subseteq \lperp{\X'}$ follows from the definitions. Hence, let $M \in \lperp{\X'}$. Then for all $Y \in \Y$ such that $g_{Y,\X'}$ is injective, we necessarily have that $M \in \lperp{Y}$. It therefore suffices to show that for $Y \in \Y$ such that $g_{Y,\X'}$ is surjective, we have that $M \in \lperp{Y}$ if and only if $M \in \lperp{\ker(g_{Y,\X'})}$. The inclusion $\lperp{Y} \subseteq\lperp{\ker(g_{Y,\X'})}$ holds in general, so suppose that $M \in \lperp{\ker(g_{Y,\X'})}$. Then the fact that $M \in \lperp{\X'}$ implies that $M \in \lperp{Y}$ as well. This concludes the proof. 
\end{proof}

\section*{Acknowledgments}
Colin Defant was supported by the National Science Foundation under Award No.\ 2201907 and by a Benjamin Peirce Fellowship at Harvard University. Eric Hanson was partially supported by Canada Research Chairs CRC-2021-00120 and by NSERC Discovery Grants RGPIN-2022-03960 and RGPIN/04465-2019. A portion of this work was completed while Eric Hanson was a postdoctoral fellow at l'Universit\'e du Qu\'ebec \`a Montr\'eal and l'Universit\'e de Sherbrooke. We are grateful to Nathan Williams for helpful conversations.

\newcommand{\etalchar}[1]{$^{#1}$}
\providecommand{\bysame}{\leavevmode\hbox to3em{\hrulefill}\thinspace}
\providecommand{\MR}{\relax\ifhmode\unskip\space\fi MR }


\begin{thebibliography}{ABB{\etalchar{+}}19}

\bibitem[AM17]{AiharaMizuno}
A.~Aihara and Y.~Mizuno, \emph{Classifying tilting complexes over preprojective
  algebras of {D}ynkin type}, Algebra Number Theory \textbf{11} (2017), no.~6,
  1287--1315.

  

\bibitem[AN09]{al-nofayee}
S.~Al-Nofayee, \emph{Simple objects in the heart of a {$t$}-structure}, J. Pure
  Appl. Algebra \textbf{213} (2009), no.~1, 54--59.

\bibitem[AHI{\etalchar{+}}]{AHIKM}
T.~Aoki, A.~Higashitani, O.~Iyama, R.~Kase, and Y.~Mizuno, \emph{Fans and
  polytopes in tilting theory {I}: Foundations}, arXiv:2203.15213.

\bibitem[Asa20]{asai}
S.~Asai, \emph{Semibricks}, Int. Math. Res. Not. IMRN \textbf{2020} (2020),
  no.~16, 4993--5054.

\bibitem[AP22]{AP}
S.~Asai and C.~Pfeifer, \emph{Wide subcategories and lattices of torsion
  classes}, Algebr. Represent. Theory \textbf{25} (2022), 1611--1629.

\bibitem[ABB{\etalchar{+}}19]{ABBHL}
A.~Asinowski, C.~Banderier, S.~Biley, B.~Hackl, and S.~Linusson,
  \emph{Pop-stack sorting and its image: permutations with overlapping runs},
  Acta Math. Univ. Comenianae \textbf{88} (2019), 395--402.

\bibitem[ASS06]{ASS}
I.~Assem, D.~Simson, and A.~Skowroński, \emph{Elements of the representation
  theory of associative algebras, volume 1: techniques of representation
  theory}, London Math. Soc. Stud. Texts, vol.~65, Cambridge University Press,
  Cambridge, 2006.

\bibitem[ARS95]{ARS}
M.~Auslander, I.~Reiten, and S.~O. Smalø, \emph{Representation theory of
  {A}rtin algebras}, Cambridge Stud. Adv. Math., vol.~36, Cambridge University
  Press, Cambridge, 1995.

\bibitem[Bar19]{BarnardCJC}
E.~Barnard, \emph{The canonical join complex}, Electron. J. Combin. \textbf{26}
  (2019).

\bibitem[BCZ19]{BCZ}
E.~Barnard, A.~Carroll, and S.~Zhu, \emph{Minimal inclusions of torsion
  classes}, Algebr. Comb. \textbf{2} (2019), no.~5, 879--901.

\bibitem[BH]{BaH}
E.~Barnard and E.~J. Hanson, \emph{Exceptional sequences in semidistributive
  lattices and the poset topology of wide subcategories}, arXiv:2209.11734.

\bibitem[BH22]{BaH_preproj}
E.~Barnard and E.~J. Hanson, \emph{Pairwise compatibility for 2-simple minded collections {II}:
  preprojective algebras and full rank semibirck pairs}, Ann. Comb. \textbf{26}
  (2022), 803--855.

\bibitem[BR18]{bicat}
E. Barnard and N. Reading, \emph{Coxeter-bi{C}atalan combinatorics}, J.
  Algebraic Combin. \textbf{47} (2018), no.~2, 241--300.

\bibitem[BTZ21]{BTZ}
E.~Barnard, G.~Todorov, and S.~Zhu, \emph{Dynamical combinatorics and torsion
  classes}, J. Pure Appl. Algebra \textbf{225} (2021).

\bibitem[BB05]{BjornerBrenti}
A.~Bj\"orner and F.~Brenti, \emph{Combinatorics of coxeter groups}, Graduate
  Texts in Mathematics, vol. 231, Springer, Berlin, Heidelberg, 2005.

\bibitem[Bou02]{Bourbaki}
N.~Bourbaki, \emph{Lie groups and {L}ie algebras} Chapters 4-6, Springer--Verlag, Berlin, 
Copyright Information
Springer-Verlag Berlin, Heidelberg, 2002.

\bibitem[BY13]{BY}
T.~Br\"ustle and D.~Yang, \emph{Oriented exchange graphs, \textnormal{In (D. J. Benson, H. Krause, and A. Skowro\'nski, eds.) Advances
  in Representation Theory of Algebras}}, EMS Series of Congress Reports,
  vol.~9, 2013.

\bibitem[CS]{ChoiSun}
Y.~Choi and N.~Sun, \emph{The image of the pop operator on various lattices},
  To appear in \emph{Adv. Appl. Math.}

\bibitem[CG19]{ClaessonPop}
A.~Claesson and B.~\'A. Gu{\dh}mundsson, \emph{Enumerating permutations
  sortable by $k$ passes through a pop-stack}, Adv. Appl. Math. \textbf{108}
  (2019), 79--96.

\bibitem[CGP21]{ClaessonPop2}
A.~Claesson, B.~\'A Gu{\dh}mundsson, and J.~Pantone, \emph{Counting pop-stacked
  permutations in polynomial time}, Experiment. Math. (2021).

\bibitem[Def22a]{DefantMeeting}
C.~Defant, \emph{Meeting covered elements in $\nu$-tamari lattices}, Adv. Appl.
  Math. \textbf{134} (2022).

\bibitem[Def22b]{DefantPopCoxeter}
C.~Defant, \emph{Pop-stack-sorting for {C}oxeter groups}, Comb. Theory \textbf{2}
  (2022).

\bibitem[DKW]{ungar_games}
C.~Defant, N.~Kravitz, and N.~Williams, \emph{The {U}ngar games},
  arXiv:2302.06552.

\bibitem[DL]{ungar_chains}
C.~Defant and R.~Li, \emph{Ungarian {M}arkov chains}, arXiv:2301.08206.

\bibitem[DW23]{DefantWilliamsSemidistrim}
C.~Defant and N.~Williams, \emph{Semidistrim lattices}, Forum Math. Sigma
  \textbf{11} (2023), no.~50, 1--35.


\bibitem[DIJ17]{DIJ}
L.~Demonet, O.~Iyama, and G.~Jasso, \emph{$\tau$-tilting finite algebras,
  bricks, and $g$-vectors}, Int. Math. Res. Not. IMRN \textbf{2019} (2017),
  no.~3, 852--892.

\bibitem[DIR{\etalchar{+}}23]{DIRRT}
L.~Demonet, O.~Iyama, N.~Reading, I.~Reiten, and H.~Thomas, \emph{Lattice
  theory of torsion classes: beyond $\tau$-tilting theory}, Trans. Amer. Math.
  Soc. Ser. B \textbf{10} (2023), 542--612.

\bibitem[Dic66]{dickson}
S.~E. Dickson, \emph{A torsion theory for abelian categories}, Trans. Amer.
  Math. Soc. \textbf{121} (1966), no.~1, 223--235.

\bibitem[DR75]{DlabRingelFinite}
V.~Dlab and C.~M. Ringel, \emph{On algebras of finite representation type}, J.
  Algebra \textbf{33} (1975), 306–394.

\bibitem[DR76]{DlabRingelIndecomposable}
V.~Dlab and C.~M. Ringel, \emph{Indecomposable representations of graphs and algebras}, Mem. Am.
  Math. Soc. \textbf{6} (1976).

\bibitem[Eno21]{enomoto_bruhat}
H.~Enomoto, \emph{Bruhat inversions in {W}eyl groups and torsion-free classes
  over preprojective algebras}, Comm. Algebra \textbf{49} (2021), no.~5,
  2156--2189.

\bibitem[Eno22]{enomoto_ff}
H.~Enomoto, \emph{Rigid modules and {ICE}-closed subcategories in quiver
  representations}, J. Algebra \textbf{594} (2022), 364--388.

\bibitem[Eno23]{enomoto}
H.~Enomoto, \emph{From the lattice of torsion classes to the posets of wide
  subcategories and {ICE}-closed subcategories}, Algebr. Represent. Theory
  (2023).

\bibitem[ES22]{ES}
H.~Enomoto and A.~Sakai, \emph{{ICE}-closed subcategories and wide
  $\tau$-tilting modules}, Math. Z. \textbf{300} (2022), 541--577.

\bibitem[FJN95]{FJN}
R.~Freese, J.~Jezek, and J.~Nation, \emph{Free lattices}, Mathematical Surveys
  and Monographs, vol.~42, American Mathematical Society, 1995.

\bibitem[GIMO19]{GIMO}
A.~Garver, K.~Igusa, J.~P. Matherne, and J.~Ostroff, \emph{Combinatorics of
  exceptional sequences in type {A}}, Electron. J. Combin. \textbf{26} (2019),
  no.~1.

\bibitem[GM19]{GM}
A.~Garver and T.~McConville, \emph{Lattice properties of oriented exchange
  graphs and torsion classes}, Algebr. Represent. Theory \textbf{22} (2019),
  43--78.

\bibitem[GL91]{GL}
W.~Geigle and H.~Lenzing, \emph{Perpendicular categories with applications to
  representations and sheaves}, J. Algebra \textbf{144} (1991), no.~2,
  273--343.

\bibitem[Hana]{facial_torsion}
E.~J. Hanson, \emph{A facial order for torsion classes}, arXiv:2305.06031.

\bibitem[Hanb]{hanson}
E.~J. Hanson, \emph{$\tau$-exceptional sequences and the shard intersection order in
  type {A}}, arXiv:2303.11517.

\bibitem[HI21a]{HI_pairwise}
E.~J. Hanson and K.~Igusa, \emph{Pairwise compatibility for 2-simple minded
  collections}, J. Pure Appl. Algebra \textbf{225} (2021), no.~6.

\bibitem[HI21b]{HansonIgusa_picture}
E.~J. Hanson and K.~Igusa, \emph{$\tau$-cluster morphism categories and picture groups}, Comm.
  Alg. \textbf{49} (2021), no.~10, 4376--4415.

\bibitem[HW]{WemyssHara}
W.~Hara and M.~Wemyss, \emph{Spherical objects in dimension two and three},
  arXiv:2206.11552.

\bibitem[HY23]{HY}
E.~J. Hanson and X.~You, \emph{Morphisms and extensions between bricks over
  preprojective algebras of type {A}}, J. Algebra (2023).

\bibitem[Hon22]{Hong2022}
L.~Hong, \emph{The pop-stack-sorting operator on {T}amari lattices}, Adv. Appl.
  Math. \textbf{139} (2022).

\bibitem[ITW]{ITW}
K.~Igusa, G.~Todorov, and J.~Weyman, \emph{Picture groups of finite type and
  cohomology in type ${A}_n$}, arXiv:1609.02636.

\bibitem[IT09]{IT}
C.~Ingalls and H.~Thomas, \emph{Noncrossing partitions and representations of
  quivers}, Compos. Math. \textbf{145} (2009), no.~6, 1533--1562.

\bibitem[IRRT18]{IRRT}
O.~Iyama, N.~Reading, I.~Reiten, and H.~Thomas, \emph{Lattice structure of
  {W}eyl groups via representation theory of preprojective algebras}, Compos.
  Math. \textbf{154} (2018), no.~6, 1269--1305.

\bibitem[IRTT15]{IRTT}
O.~Iyama, I.~Reiten, H.~Thomas, and G.~Todorov, \emph{Lattice structure of
  torsion classes for path algebras}, Bull. Lond. Math. Soc. \textbf{47}
  (2015), no.~4, 639--650.

\bibitem[Jas15]{jasso}
G.~Jasso, \emph{Reduction of $\tau$-tilting modules and torsion pairs}, Int.
  Math. Res. Not. IMRN \textbf{2015} (2015), no.~16, 7190--7237.

\bibitem[KQ15]{KingQiuExchangeGraphs}
A.~King and Y.~Qiu, \emph{Exchange graphs and {E}xt quivers}, Adv. Math.
  \textbf{285} (2015), 1106--1154.

\bibitem[KY14]{KY}
S.~Koenig and D.~Yang, \emph{Silting objects, simple-minded collections,
  $t$-structures and co-$t$-structures for finite-dimensional algebras}, Doc.
  Math. \textbf{19} (2014), 403--438.

\bibitem[KS]{KontsevichSoibelmanStability}
M.~Kontsevich and Y.~Soibelman, \emph{Stability structures, motivic
  {D}onaldson-{T}homas invariants and cluster transformations},
  arXiv:0811.2435.

\bibitem[K{\"u}l17]{kulshammer}
J.~K{\"u}lshammer, \emph{Pro-species of algebras {I}: Basic properties},
  Algebr. Represent. Theory \textbf{20} (2017), no.~5, 1215--1238.

\bibitem[LW]{LuoWei}
Y.~Luo and J.~Wei, \emph{Boolean lattices of torsion classes over
  finite-dimensional algebras}, arXiv:2303.15802.

\bibitem[M{\v S}17]{MS}
F.~Marks and J.~{\v S}{\v t}ov\'i{\v c}ek, \emph{Torsion classes, wide
  subcategories, and localisations}, Bull. Lond. Math. Soc. \textbf{49} (2017),
  no.~3.

\bibitem[Miz14]{mizuno}
Y.~Mizuno, \emph{Classifying {$\tau$}-tilting modules over preprojective
  algebras of {D}ynkin type}, Math. Z. \textbf{277} (2014), no.~3-4, 665--690.

\bibitem[Miz22]{mizuno2}
Y.~Mizuno, \emph{Arc diagrams and 2-term simple-minded collections of
  preprojective algebras of type {A}}, J. Algebra \textbf{595} (2022),
  444--478.

\bibitem[MT20]{MizunoThomas}
Y.~Mizuno and H.~Thomas, \emph{Torsion pairs for quivers and the {W}eyl
  groups}, Selecta Math. \textbf{3} (2020), no.~46.

\bibitem[M\"uh19]{Muhle2019}
H.~M\"uhle, \emph{The core label order of a congruence-uniform lattice},
  Algebra Universalis \textbf{80} (2019).

\bibitem[PS19]{Pudwell}
L.~Pudwell and R.~Smith, \emph{Two-stack-sorting with pop stacks}, Australas.
  J. Combin. \textbf{74} (2019), 179--195.

\bibitem[Rea04]{ReadingLatticeCongruences}
N.~Reading, \emph{Lattice congruences of the weak order}, Order \textbf{21}
  (2004), 315--344.

\bibitem[Rea06]{ReadingCambrian}
N.~Reading, \emph{Cambrian lattices}, Adv. Math. \textbf{205} (2006), 313--353.

\bibitem[Rea07]{reading2007clusters}
N.~Reading, \emph{Clusters, {C}oxeter-sortable elements and noncrossing
  partitions}, Trans. Amer. Math. Soc. \textbf{359} (2007), no.~12, 5931--5958.

\bibitem[Rea11]{ReadingShard}
N.~Reading, \emph{Noncrossing partitions and the shard intersection order}, J.
  Algebraic Combin. \textbf{33} (2011), 483--530.

\bibitem[Rea15]{ReadingNoncrossing}
N.~Reading, \emph{Noncrossing arc diagrams and canonical join representations},
  SIAM J. Discrete Math. \textbf{29} (2015).

\bibitem[Rea16]{reading_book}
N.~Reading, \emph{Lattice theory of poset of regions, \textnormal{In (G.~Gr\"atzer and F.~Wehrung, eds.) Lattice
  Theory: Special Topics and Applications}}, vol.~2, Birkh\"auser, 2016.

\bibitem[RS11]{ReadingSpeyer2011}
N.~Reading and D.~E Speyer, \emph{Sortable elements in infinite {C}oxeter
  groups}, Trans. Amer. Math. Soc. \textbf{363} (2011), no.~2, 699--761.

\bibitem[RST21]{RST}
N.~Reading, D.~E Speyer, and H.~Thomas, \emph{The fundamental theorem of finite
  semidistributive lattices}, Selecta Math. \textbf{27} (2021), no.~59.

\bibitem[Ric02]{RickardEquivalences}
J.~Rickard, \emph{Equivalences of derived categories for symmetric algebras},
  J. Algebra \textbf{257} (2002), 460--481.

\bibitem[Rin76]{ringel}
C.~M. Ringel, \emph{Representations of {$K$}-species and bimodules}, J. Algebra
  \textbf{41} (1976), 269--302.

\bibitem[{The}23]{sagemath}
{The Sage Developers}, \emph{{S}agemath, the {S}age {M}athematics {S}oftware
  {S}ystem ({V}ersion 9.2)}, 2023, {\tt https://www.sagemath.org}.

\bibitem[Sak]{sakai}
A.~Sakai, \emph{Classifying $t$-structures via {ICE}-closed subcategories and a
  lattice of torsion classes}, arXiv:2307.11347.

\bibitem[Sch14]{schiffler}
R.~Schiffler, \emph{Quiver representations}, CMS Books in Mathematics/Ouvrages
  de Mathématiques de la SMC, Springer, Cham, 2014.

\bibitem[STW]{cataland}
C.~Stump, H.~Thomas, and N.~Williams, \emph{Cataland: Why the {F}uß}, To
  appear in Mem. Amer. Math. Soc.

\bibitem[Tho06]{ThomasTrim}
H.~Thomas, \emph{An analogue of distributivity for ungraded lattices}, Order
  \textbf{23} (2006), 249--269.

\bibitem[Tho21]{thomas_intro}
H.~Thomas, \emph{An introduction to the lattice of torsion classes}, Bull.
  Iranian Math. Soc. (2021).

\bibitem[TW19a]{TW2}
H.~Thomas and N.~Williams, \emph{Independence posets}, J. Comb. \textbf{10}
  (2019), no.~3, 545--578.

\bibitem[TW19b]{TW1}
H.~Thomas and N.~Williams, \emph{Rowmotion in slow motion}, Proc. Lon. Math. Soc. \textbf{119}
  (2019), no.~5, 1149--1178.

\bibitem[Ung82]{Ungar}
P.~Ungar, \emph{$2{N}$ noncollinear points determine at least $2{N}$
  directions}, J. Combin. Theory Ser. A \textbf{33} (1982), 343--347.

\end{thebibliography}
\end{document}